\titleformat{\section}{\normalfont\bfseries\color{black}\centering}{\thesection}{0.5em}{}
\titleformat{\subsection}{\normalfont\bfseries\color{black}}{\thesubsection}{0.5em}{}
\titleformat{\subsubsection}{\normalfont\bfseries\color{black}}{\thesubsubsection}{0.5em}{}
\numberwithin{equation}{section}
\newtheorem{theorem}{Theorem}[section]
\newtheorem{corollary}[theorem]{Corollary}
\newtheorem{lemma}[theorem]{Lemma}
\newtheorem{prop}[theorem]{Proposition}
\theoremstyle{definition}
\newtheorem{remark}[theorem]{Remark}
\theoremstyle{definition}
\theoremstyle{definition}
\def\dashint{\operatorname%
{\, \, \text{\bf-}\kern-.80em\DOTSI\intop\ilimits@\!\!}}
\newcommand\sA{\mathscr{A}}
\def\\det{\text{\det}}
\def\.5{\frac{1}{2}}
\def\bR{\mathbb{R}}
\def\bS{\mathbb{S}}
\def\cD{\mathcal{D}}
\newcommand{\RN}[1]{%
\textup{\uppercase\expandafter{\romannumeral#1}}%
}
\renewcommand{\epsilon}{\varepsilon}
\newcommand{\osc}{\operatorname{osc}}
\newcommand{\dv}{\operatorname{div}}
\newcounter{marnote}
\newcommand\dashnorm[2]{\nparallel\kern-.2em #1 \Vert_{#2}}
\begin{document}

\title[Conductivity problem with imperfect L-C interfaces]{Optimal gradient estimates for conductivity problems with imperfect low-conductivity interfaces}

\author[H. Dong]{Hongjie Dong}
\address[H. Dong]{Division of Applied Mathematics, Brown University, 182 George Street, Providence, RI 02912, USA}
\email{Hongjie\_Dong@brown.edu}
\thanks{H. Dong was partially supported by the NSF under agreement DMS-2350129.}

\author[H. Li]{Haigang Li}
\address[H. Li]{School of Mathematical Sciences, Beijing Normal University, Beijing 100875, China.}
\email{hgli@bnu.edu.cn}
\thanks{H. Li was partially Supported by Beijing Natural Science Foundation (No. 1242006), the Fundamental
Research Funds for the Central Universities (No. 2233200015), and National Natural Science Foundation of China (No. 12471191).}

\author[Y. Zhao]{Yan Zhao}
\address[Y. Zhao]{School of Mathematical Sciences, Beijing Normal University, Beijing 100875, China.}
\email{zhaoyan\_9926@mail.bnu.edu.cn}

\subjclass[2020]{35B44, 35J25, 35Q74, 74E30, 74G70}

\keywords{Gradient estimates, Conductivity problem, Imperfect bonding interfaces, Robin boundary condition.}

\begin{abstract}
This paper studies field concentration between two nearly touching conductors separated by imperfect low-conductivity interfaces, modeled by Robin boundary conditions. It is known that for any sufficiently small interfacial bonding parameter $\gamma > 0$, the gradient remains uniformly bounded with respect to the separation distance $\varepsilon$. In contrast, for the perfect bonding case ($\gamma = 0$, corresponding to the perfect conductivity problem), the gradient may blow up as $\varepsilon \to 0$ at a rate depending on the dimension. In this work, we establish optimal pointwise gradient estimates that explicitly depend on both $\gamma$ and $\varepsilon$ in the regime where these parameters are small. These estimates provide a unified framework that encompasses both the previously known bounded case ($\gamma > 0$) and the singular blow-up scenario ($\gamma = 0$), thus furnishing a complete and continuous characterization of the gradient behavior throughout the transition in $\gamma$. The key technical achievement is the derivation of new regularity results for elliptic equations as $\gamma\to0$, along with a case dichotomy based on the relative sizes of $\gamma$ and a distance function $\delta(x')$. Our results hold for strictly relatively convex conductors in all dimensions $n \geq 2$.
\end{abstract}

\maketitle

\section{Introduction and main results}

\subsection{ Introduction}

When two inclusions in a composite material approach each other while exhibiting material properties in high contrast to those of the surrounding matrix, the narrow inter-inclusion region may develop significant electric field or mechanical stress concentrations, as documented in the seminal works of Budiansky and Carrier \cite{BC}, Keller \cites{Keller,Keller2}, Markenscoff \cite{M}, and Babu\u{s}ka et.al. \cite{BASL}. Such extreme field concentrations can precipitate material failure and substantially alter the effective properties of the composite medium \cite{FlaKel}. The analysis of this singular behavior is not only important in composite material science but also represents a fundamental challenge in the theory of partial differential equations, owing to the mathematical difficulties introduced by vanishing inter-inclusion distances; see Li and Vogelius \cite{LiVe} and Li and Nirenberg \cite{LiNe}. Over the past three decades, substantial advances have been made in the quantitative analysis of field concentration for perfectly bonded inclusions, yielding optimal estimates for local fields in narrow regions and rigorous asymptotic characterizations of field enhancement.

An alternative model featuring non-ideal bonding conditions is of considerable practical significance, as it accounts for interfacial contact resistance resulting from surface roughness and possible debonding, while also providing approximations for membrane structures in biological systems. Motivated by influential physics studies of Torquato and Rintoul \cite{TorRin} and recent pioneering mathematical work of Fukushima, Ji, Kang, and Li \cite{FJKL}, this paper presents an investigation of such imperfect interface conditions. The profound influence of interfacial effects on effective material properties is well-established in various systems; see Lipton and Vernescu \cite{LipVer}, Hashin \cite{Hash}, and Benveniste \cite{Benv}. In particular, \cites{TorRin,LipVer} established rigorous bounds for the effective conductivity of random composites containing equisized spherical inclusions with imperfect interfaces through sophisticated constructions of trial fields and applications of minimum energy principles. Nevertheless, these studies did not provide precise estimates for local field behavior or characterize field concentration in the nearly-touching regime, as explicitly noted in \cite{TorRin}: ``the complexity of the microstructure prohibits one from obtaining the local fields exactly." The present work addresses this fundamental problem by establishing optimal estimates for field concentration within narrow regions between inclusions in the regime of small bonding parameter ($\gamma \ll 1$), thereby bridging a crucial gap in the existing literature.

We now describe the mathematical framework. Since our primary interest lies in the narrow region between two nearly touching inclusions, we formulate the problem with exactly two inclusions. Let $n \geq 2$, and let $\Omega \subset \mathbb{R}^n$ be a bounded open set with $C^{2}$ boundary, containing two strictly relatively convex inclusions $D_1$ and $D_2$. The boundaries $\partial{D}_{1}$ and $\partial{D}_{2}$ are of class $C^{2,\alpha}$, and their principal curvatures are bounded below by a positive constant $\kappa_0$. Denote the matrix domain exterior to the inclusions by $\tilde{\Omega}= \Omega \setminus\overline{D_{1}\cup {D}_{2}}$. The distance between the inclusions 
$$\varepsilon := \operatorname{dist}(D_1, D_2)\ll 1$$ is small, while their distance to the outer boundary $\partial\Omega$ remains of order one. If $\varepsilon$ is bounded away from zero, estimates for the gradient and derivatives follow from standard elliptic theory.

When the bonding between the inclusions and the matrix is perfect, with conductivities $k$ inside $D_{1}\cup D_{2}$ and $1$ in the matrix $\tilde{\Omega}$, the electric potential $u$ satisfies the classical transmission problem:
\begin{equation}\label{general_problem}
\left\{
\begin{aligned}
\mathrm{div}\left(a_{k}(x)\nabla{u}\right)&=0 &\mbox{in}&~\Omega,\\
u&=\varphi(x) &\mbox{on}&~\partial\Omega,
\end{aligned}
\right.
\end{equation}
where the piecewise constant coefficient $a_k(x) = k \chi_{D_1 \cup D_2} + \chi_{\widetilde{\Omega}}$ represents the conductivity distribution. A fundamental feature of perfect bonding is the continuity of both the potential and the flux across the interfaces:
\begin{equation}\label{transmission}
u |_+ = u|_- \quad \mbox{and}\quad \left.\frac{\partial{u}}{\partial\nu} \right|_+ = k \left.\frac{\partial{u}}{\partial\nu} \right|_- \quad \mbox{on}~\partial D_1\cup \partial D_{2}.
\end{equation}
Here and throughout the paper, the subscripts $\pm$ denote limits taken from the outside and inside of the inclusion, respectively, and $\nu$ is the unit inward normal vector to $D_j$ on $\partial D_j$, $j=1,2$. 

When the contrast $k$ is bounded away from $0$ and $\infty$, it was proved that the gradient of the solution remains bounded uniformly in $\varepsilon$, see, for instance, Bonnetier and Vogelius \cites{BoVe}, Li and Vogelius \cite{LiVe} and Li and Nirenberg \cite{LiNe}. Furthermore, higher-order derivative estimates for solutions to \eqref{general_problem} with piecewise coefficients and closely spaced inclusions have been derived by Dong and Zhang in \cite{dz}, Dong and Li \cite{dl}, Ji and Kang \cite{JK}, Dong and Yang \cite{DY}, Kim \cite{Kim}, and Dong and Xu \cite{dx2022}.  

However, if $k$ degenerates to either $\infty$ or $0$, the gradient $\nabla u$ may blow up as $\varepsilon \to 0$. Specifically, in the case of perfect conductors ($k = \infty$), problem \eqref{general_problem} reduces to
\begin{equation}\label{perfect}
\left\{ \begin{aligned}
\Delta u &=0&\mbox{in}&~\widetilde\Omega,\\
u&= K_j &\mbox{on}&~\partial D_j, j = 1,2,\\
u&=\varphi(x)&\mbox{on}&~\partial\Omega,
\end{aligned} \right.
\end{equation}
where the constants $K_j$ are uniquely determined by the flux conditions 
$\int_{\partial D_j} \partial_\nu u \, dS = 0$, $j=1,2.$  The optimal blow-up rates of $\nabla u$ for this limiting problem are now well-established: \begin{itemize} 
\item[---] $\varepsilon^{-1/2}$ in two dimensions, by Ammari, Kang, and collaborators \cites{AKL,AKLLL,akllz} and Yun \cite{y1};
\item[---] $|\varepsilon \log \varepsilon|^{-1}$ in three dimensions, by Bao, Li, and Yin \cites{BLY, BLY2} and Lim and Yun \cite{limyun};
\item[---] $\varepsilon^{-1}$ in dimensions four and higher, by Bao, Li, and Yin \cite{BLY}.
\end{itemize} 
Subsequent works by Kang, Lim, and Yun \cites{kly, kly2} and Li et al. \cites{lwx, lly, Li} have further refined these results by establishing precise asymptotics that capture the singular behavior of $\nabla u$ near the origin.

In the insulated case ($k = 0$), problem \eqref{general_problem} becomes
\begin{equation*}%\label{insulated}
\left\{ \begin{aligned}
\Delta u &=0&\mbox{in}&~\widetilde\Omega,\\
\partial_\nu u&=0&\mbox{on}&~\partial D_j, j = 1,2,\\
u&=\varphi(x)&\mbox{on}&~\partial\Omega.
\end{aligned} \right.
\end{equation*}
While the optimal blow-up rate for $\nabla u$ in two dimensions was established about two decades ago through a dual argument \cites{AKL,AKLLL},  the higher-dimensional case has been resolved only recently in Dong, Li, and Yang \cite{DLY} under an asymptotically radially symmetry condition. See also earlier work by Li and Yang \cite{LY1} and Weinkove \cite{Wen1}. Subsequent work \cites{DLY2, LiZhao} extended this result to strictly relatively convex insulators. The optimal blow-up rate in higher dimensions is given by $\varepsilon^{-\frac{1}{2}+\alpha(\lambda_{1})}$, where $\alpha(\lambda_{1})$ is related to the first nonzero eigenvalue $\lambda_{1}$ of an elliptic operator on $\bS^{n-2}$, which in turn is determined by the principal curvatures of the inclusions. Thus, the study of field concentration phenomena---particularly the optimal blow-up rates---is relatively complete for the case of perfect bonding. For a comprehensive review, we refer to Kang's ICM survey \cite{Kang}.

When the bonding is non-ideal, at least one of transmission conditions in \eqref{transmission} fails to hold. We consider imperfect bonding interfaces of low-conductivity type (LC-type), which arises as the limiting case of each inclusion surrounded by a low-conductivity shell as its thickness tends to zero. In this limit, the transmission conditions become
$$
\left.\frac{\partial{u}}{\partial\nu} \right|_+ = k \left.\frac{\partial{u}}{\partial\nu} \right|_- =- \gamma^{-1} (u |_+ - u|_- ),
$$
where $\gamma > 0$ is the bonding parameter; we then say the inclusion has an LC-type imperfect bonding interface. Under such conditions, the continuity of the potential is no longer guaranteed. Another type of imperfect bonding, of high-conductivity type (HC-type), violates the continuity of flux. In two dimensions, the HC-type problem is dual to the LC-type problem. For detailed derivations, we refer to \cite{FJKL} and Benveniste and Miloh \cite{BenMil}.

In this paper, we assume that $D_1$ and $D_2$ possess LC-type imperfect bonding interfaces and take $k \to \infty$. The conductivity problem then reduces to the following Robin-type boundary value problem:
\begin{equation}\label{lcctype}
\left\{ \begin{aligned}
&\Delta u =0\quad  &\mbox{in}&~\widetilde\Omega,\\
&u + \gamma \partial_\nu u = K_j \quad &\mbox{on}&~\partial D_j, \,j = 1,2,\\
&\int_{\partial D_j} \partial_\nu u \, dS= 0\quad & j&=1,2,\\
&u=\varphi(x)\quad &\mbox{on}&~\partial\Omega,
\end{aligned} \right.
\end{equation}
where $\varphi\in C^{2}(\partial \Omega)$, $\gamma>0$, $\nu$ denotes the inward unit normal to $D_j$ on $\partial D_j$, and the constant $ K_j$ is uniquely determined by the the third line of \eqref{lcctype}. This configuration models suspensions of metallic particles in epoxy matrices with Kapitza-type interfacial resistance, as studied in \cites{TorRin, ArRo}. The solution $u \in H^{1}(\widetilde\Omega)$ to problem \eqref{lcctype} is indeed the unique minimizer of the functional
\begin{equation*}%\label{minimizer}
I_{\gamma}[u] = \min_{v \in \sA}I[v],\qquad~\,\,\,\,~~I_{\gamma}[v]:= \int_{\widetilde\Omega} |\nabla v|^2+\frac{1}{\gamma}\sum_{i=1}^{2}\int_{\partial D_i} |v-(v)_{\partial D_i}|^2,
\end{equation*}
where $ (v)_{\partial D_i}:= \fint_{\partial D_i} v\, dS,\ i=1,2$, over the admissible set
$$
\sA := \{ v \in H^{1}(\widetilde\Omega):\, v = \varphi~~\mbox{on}~~\partial\Omega \}.$$
A proof of this variational characterization can be found in Lemma 2.2 of Dong, Yang, and Zhu \cite{DYZ}.

Consequently, the study of conductivity problems with imperfect bonding interfaces---especially those of low-conductivity type---is of fundamental importance in both applied mathematics and materials science. Such interfaces commonly arise in composite materials, where non-ideal bonding between phases creates thin interfacial regions with markedly reduced conductivity. These imperfect contacts significantly influence macroscopic material properties, including effective electrical and thermal conductivity, mechanical stress distribution, and fracture behavior.

From a mathematical standpoint, these problems are characterized by singularly perturbed Robin-type boundary conditions involving a small parameter $\gamma$ that governs the interface conductivity. The central challenge is to characterize the blow-up behavior of the electric field (the gradient of the solution) as the inter-inclusion distance $\varepsilon\to 0$ and the interface parameter $\gamma\to 0$. The present work provides a complete and sharp quantitative description of this regime, thereby bridging the gap to the well-understood perfect conductor case. Moreover, the mathematical framework developed here applies directly to analogous physical phenomena, such as anti-plane elasticity and ideal flow past obstacles, rendering the results broadly impactful across disciplines.

As mentioned above, when $\gamma=0$, problem \eqref{lcctype} reduces to \eqref{perfect}, which has been extensively studied. While, for $\gamma > 0$, a striking observation from \cite{FJKL} is that in two dimensions for circular inclusions, any fixed $\gamma > 0$ can completely suppress gradient blow-up for certain boundary data. This contrasts sharply with the well-known result \cite{AKL} that for  $\gamma = 0$ (the perfect conductivity), $|\nabla u|$ blows up as $\varepsilon \to 0$. This demonstrates that even a thin, low-conductivity coating can effectively inhibit gradient singularity formation. Motivated by the biological principle that living systems cannot tolerate excessive stress, the authors of \cite{FJKL} conjectured that this ``stress shielding" phenomenon should persist for arbitrary inclusion shapes and in all dimensions, provided $\gamma$ is bounded away from zero. In \cite{DYZ}, this conjecture was confirmed under certain symmetry conditions or when $\gamma$ is sufficiently small. However, for large $\gamma$, it was demonstrated in \cite{DYZ} that gradient blow-up may occur. 
The present paper further resolves this conjecture by establishing sharp quantitative estimates that simultaneously account for both small $\varepsilon$ and small $\gamma$.

\subsection{Main results}

To state our results precisely, we begin by parametrizing the boundaries of the inclusions and introducing necessary notation. Working in coordinates $x=(x',x_n)$, we position the inclusions such that $D_1=D_1^*+(0',\varepsilon)$ and $D_2=D_2^*$, where $D_1^*$ and $D_2^*$ are two open sets touching at the origin with the inner normal of $\partial D_1^*$ aligned with the positive $x_n-$axis. Near the origin, the boundaries $\partial D_1$ and $\partial D_2$ are described by
\begin{equation}\label{fg0}
x_n=\varepsilon+f_{1}(x'),\quad x_n=f_{2}(x'),\quad |x'|\leq 2R
\end{equation}
where $f_{1},f_{2}\in C^{2,\alpha}$ satisfy
\begin{equation}\label{fg}
f_{1}(0')=f_{2}(0')=0,\quad D_{x'}f_{1}(0')=D_{x'}f_{2}(0')=\vec{0}, \quad D^2(f_{1}-f_{2})(0')\geq\kappa>0.
\end{equation}

Define the separation function and a reference function by
$$\delta(x'):=\varepsilon+f_{1}(x')-f_{2}(x') \quad\mbox{and}\quad\eta(x'):=\varepsilon+|x'|^2.$$
By \eqref{fg0}--\eqref{fg} and Taylor's theorem, there exists $\kappa>0$ such that
\begin{equation*}
\kappa |x'|^2\leq f_{1}(x')-f_{2}(x')\leq \frac{1}{\kappa} |x'|^2,\quad \mbox{for} ~0\leq |x'|\leq 2R\,\, \mbox{and some}~ \kappa>0,
\end{equation*}
which implies $\eta(x')\sim \delta(x')$. Here, $A\sim B$ denotes the relation $\frac{1}{C}A<B<CA$ for a constant $C$ depending only on $n$, $R$, $\alpha$, $\kappa$, and the upper bound of $\|f_{1}\|_{C^{2,\alpha}}$ and $\|f_{2}\|_{C^{2,\alpha}}$. For $|x'_{0}|\leq\,R$, we introduce the narrow region centered at $x_0'$, 
\begin{equation}\label{definition_omegat}
\Omega_t(x_0):=\{|x'-x_0'|\leq t,~ f_{2}(x')\leq x_n\leq \varepsilon+f_{1}(x')\},
\end{equation}
with top and bottom boundaries
$$\Gamma_{1,t}(x_0):= \{|x'-x_0'|\leq t, x_n=\varepsilon+f_{1}(x')\}\quad\mbox{and}~ \Gamma_{2,t}(x_0):=\{|x'-x_0'|\leq t, x_n=f_{2}(x')\}.$$ We write $\Omega_t:=\Omega_t(0)$ and $\Gamma_{i,t} :=\Gamma_{i,t} (0')$, $i=1,2,$ when centered at the origin. 

Recall the Frobenius norm for the Hessian of $f_1 - f_2$ is given by
$$
|D^2(f_1 - f_2)(x')| = \left( \sum_{i,j=1}^{n-1} |\partial_{ij}(f_1 - f_2)(x')|^2 \right)^{1/2}.
$$
We define the uniform bound of this norm over the ball $\mathtt{B}_R := \{ |x'| < R \}$ as
$\|D^2(f_1 - f_2)\|_{L^{\infty}(\mathtt{B}_R)} = \sup_{x' \in \mathtt{B}_R} |D^2(f_1 - f_2)(x')|.$
Using this notation, we introduce the parameter $\gamma_0$ by
\begin{equation}\label{gamma00}
\gamma_0 := \frac{2} {(n+1) \|D^2(f_1 - f_2)\|_{L^{\infty}(\mathtt{B}_R)}} .
\end{equation}

The main results of this paper are the following.

\begin{theorem}\label{upperbound}
Let $u$ be the solution of \eqref{lcctype} in dimension $n \geq 2$ with boundaries $\partial D_1$ and $\partial D_2$ satisfying \eqref{fg0}--\eqref{fg}. Let $\gamma_0$ be the constant specified by \eqref{gamma00}. Then for $\varepsilon \in (0, \frac{1}{4})$ and $\gamma \in (0, \gamma_0)$, we have for $x \in \Omega_{R/2}$,
\begin{equation*}
|D u(x)|\leq
\left\{ \begin{aligned}
&\frac{C}{\sqrt{\gamma+\varepsilon+|x'|^2}}\|\varphi\|_{C^{2}(\partial\Omega)}& \mbox{for}~n=2,\\
&\frac{C}{ {(\gamma+\varepsilon+|x'|^2)}|\ln(\varepsilon+\gamma)|}\|\varphi\|_{C^{2}(\partial\Omega)}& \mbox{for}~n=3,\\
&\frac{C}{{\gamma+\varepsilon+|x'|^2}}\|\varphi\|_{C^{2}(\partial\Omega)}& \mbox{for}~n\geq 4,
\end{aligned} \right.
\end{equation*}
and
$$\|D u\|_{L^{\infty}(\tilde{\Omega}\backslash\Omega_{R/2})}\leq C\|\varphi\|_{C^{2}(\partial\Omega)}\quad\mbox{for}~ n\geq2,$$
where the constant $C>0$ depends only on $n$, $R$, $\alpha$, $\kappa$, a lower bound of $\gamma_0-\gamma$ and upper bounds of $\|f_{1}\|_{C^{2,\alpha}}$ and $\|f_{2}\|_{C^{2,\alpha}}$.
\end{theorem}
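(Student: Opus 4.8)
The strategy is to establish pointwise gradient bounds by a rescaling-and-comparison argument adapted to the narrow region $\Omega_{R/2}$, treating $\gamma$ as an effective ``extra gap.'' The crucial structural observation is that the Robin condition $u+\gamma\partial_\nu u = K_j$ behaves, after rescaling the thin neck at a point $z'=(x_0',0)$ where the local width is $\delta(x_0')\sim\varepsilon+|x_0'|^2$, like a Dirichlet condition perturbed by a term of size $\gamma/\delta(z')$; hence the natural dichotomy announced in the abstract is $\gamma\lesssim\delta(z')$ versus $\gamma\gtrsim\delta(z')$. In the first regime the problem behaves like the perfect-conductivity problem \eqref{perfect} at that scale and one expects the $\gamma=0$ blow-up heuristics; in the second regime the Robin term dominates and forces the solution to be nearly constant on the interface, which is what produces the extra decay in $\gamma$ inside $\sqrt{\gamma+\varepsilon+|x'|^2}$, $(\gamma+\varepsilon+|x'|^2)|\ln(\varepsilon+\gamma)|$, and $\gamma+\varepsilon+|x'|^2$.

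First I would reduce to the neck: away from $\Omega_{R/2}$ the domain $\tilde\Omega\setminus\Omega_{R/2}$ is a fixed domain with $C^{2,\alpha}$ boundary on which the inclusions are separated by a distance of order one, so standard elliptic estimates (Schauder estimates up to the Robin boundary, with $\gamma\in(0,\gamma_0)$ controlling the zeroth-order term uniformly) give $\|Du\|_{L^\infty(\tilde\Omega\setminus\Omega_{R/2})}\le C\|\varphi\|_{C^2(\partial\Omega)}$; this also supplies boundary data of size $O(\|\varphi\|_{C^2})$ on $\partial\Omega_{R/2}$ for the interior analysis. Next, fix $z=(z',z_n)$ with $z'=x_0'$, $|x_0'|\le R/2$, and rescale $\Omega_{\sqrt{\delta(z')}}(z)$ by the anisotropic change of variables $x'\mapsto x'/\sqrt{\delta(z')}$, $x_n\mapsto x_n/\delta(z')$, converting the flat thin neck into a region of unit size on which $v(y):=u(z'+\sqrt{\delta(z')}\,y',\,\delta(z')\,y_n)$ solves a uniformly elliptic equation with the Robin parameter becoming $\gamma/\delta(z')$. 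I would then invoke the new regularity results for elliptic equations as $\gamma\to0$ alluded to in the abstract (Schauder-type estimates for the rescaled Robin problem, uniform in the rescaled parameter) to get $\|Dv\|_{L^\infty}$ (or the $C^\mu$-seminorm version encoded in the $\|\cdot\|^*$ notation above) controlled by an oscillation bound for $v$ on the rescaled neck. Undoing the scaling converts an $L^\infty$ bound on the oscillation of $u$ across the neck into the claimed pointwise gradient bound with the factor $1/\delta(z')^{1/2}$ for $n=2$, $1/(\delta(z')|\ln(\varepsilon+\gamma)|)$ for $n=3$, $1/\delta(z')$ for $n\ge4$.

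The heart of the matter is therefore the oscillation estimate: bounding $|u|_+-(u)_{\partial D_j}|$ and more precisely $\operatorname{osc}_{\Omega_{\sqrt{\delta(z')}}} u$ in terms of $\|\varphi\|_{C^2}$ with the correct dependence on $\varepsilon$ and $\gamma$. I would extract this from the variational characterization: the energy $I_\gamma[u]=\int_{\tilde\Omega}|\nabla u|^2+\gamma^{-1}\sum_i\int_{\partial D_i}|u-(u)_{\partial D_i}|^2$ is bounded by $C\|\varphi\|_{C^2}^2$ via a competitor that is constant inside a cone over each neck, so the $\gamma^{-1}$ in the boundary term forces the interface trace to be close to its average with a gain of $\gamma$, i.e. $\int_{\partial D_i}|u-(u)_{\partial D_i}|^2\le C\gamma\|\varphi\|^2$; propagating this through the neck with a barrier/maximum-principle argument (using the harmonicity of $u$ and the explicit comparison functions built from $\eta(x')$, as in the perfect-conductivity analysis) yields $\operatorname{osc}$ bounded by the ``effective gap'' $\gamma+\delta(z')$ rather than $\delta(z')$. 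The main obstacle I anticipate is making the $\gamma$-uniform Schauder estimate for the Robin problem genuinely uniform as $\gamma/\delta(z')\to0$ (where the boundary condition degenerates to Dirichlet) and as $\gamma/\delta(z')\to\infty$, and correctly matching the two regimes of the dichotomy so that the final bound is exactly $(\gamma+\varepsilon+|x'|^2)^{-1}$ (resp.\ its $n=2,3$ analogues) with no logarithmic or polynomial losses at the crossover $\gamma\sim\delta(z')$; the $n=3$ case, where the borderline $\ln(\varepsilon+\gamma)$ factor appears, will require the most delicate bookkeeping in the barrier construction.
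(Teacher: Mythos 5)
Your local machinery---the dichotomy $\gamma\lesssim\delta(z')$ versus $\gamma\gtrsim\delta(z')$, the $\gamma$-uniform Schauder estimates for the Robin problem, and the ``effective gap'' heuristic $\delta+\gamma$---does correspond to what the paper builds in Sections \ref{sec_2}--\ref{sec_4} and to the auxiliary function $\tilde w_1=(x_n-f_2(x')+\gamma)/(\delta(x')+2\gamma)$, and, implemented correctly, it yields a bound of the form $|Du|\le C/(\gamma+\varepsilon+|x'|^2)$. But that is the sharp rate only for $n\ge 4$. The dimension-dependent gains for $n=2,3$ (the factors $\sqrt{\varepsilon+\gamma}$ and $1/|\ln(\varepsilon+\gamma)|$ implicit in the statement) are an \emph{amplitude} effect governed by the difference of the potentials on the two inclusions, and no purely local oscillation argument in the neck can produce them. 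In the paper this is handled by the decomposition $u=K_1u_1+K_2u_2+u_3$ in \eqref{decomposition}, and the decisive step is Proposition \ref{prop_constant}: $|K_1-K_2|\le C\rho_n(\varepsilon,\gamma)$, obtained from the zero-flux conditions through the linear system \eqref{equfreconstant0} together with the capacity-type bound $a_{ii}\ge \frac{1}{C}\int_{|x'|\le R/2}\frac{dx'}{\delta(x')+2\gamma}\sim \rho_n(\varepsilon,\gamma)^{-1}$ of Proposition \ref{aii}. Your proposal never invokes the flux conditions $\int_{\partial D_j}\partial_\nu u\,dS=0$ in the narrow-region analysis, so it contains no mechanism that could generate $\rho_n$.

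The specific step that fails is your proposed source of smallness. The energy bound $I_\gamma[u]\le C\|\varphi\|^2$ gives $\int_{\partial D_i}|u-(u)_{\partial D_i}|^2\,dS\le C\gamma$, which controls only the deviation of $u$ from its own average on each interface separately, and only in $L^2$; it says nothing about the difference $(u)_{\partial D_1}-(u)_{\partial D_2}$ (the analogue of $K_1-K_2$), which is precisely the quantity whose size $\rho_n(\varepsilon,\gamma)$ sets the blow-up amplitude. Consequently the claimed conclusion ``$\operatorname{osc}$ across the neck $\lesssim \gamma+\delta(z')$'' cannot be correct in general: for $n\ge 4$ the potential difference is of order one (as the lower-bound construction of Theorem \ref{lowerbound} confirms), and for $n=2,3$ the required smallness of the potential difference is a global capacity effect coming from the flux constraints, not from the $\gamma^{-1}$ penalty in the energy. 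Without this ingredient your scheme gives at best the $n\ge4$ rate in every dimension, which is strictly weaker than the stated estimates for $n=2$ and $n=3$. The local half of your plan is essentially Theorem \ref{boundedgradientw} applied to $u_1-\tilde w_1$; the missing (and essential) half is the decomposition plus the flux-condition estimate of $|K_1-K_2|$, not a refinement of the barrier.
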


This result demonstrates that the gradient in the narrow region exhibits a dimension-dependent blow-up rate, characterized explicitly by the bonding parameter $\gamma$ and the gap $\varepsilon$, with validity over the precisely quantified range $\gamma \in (0, \gamma_0)$. Moreover, the blow-up rate is shown to recover that of the perfect conductivity problem \eqref{perfect} in the limit as $\gamma \to 0$.

\begin{corollary}
Under the hypotheses of Theorem \ref{upperbound}, for $\varepsilon\in(0,\frac{1}{4})$ and $\gamma\in(0,\gamma_0)$, the following global estimates hold
\begin{equation*}
\|D u\|_{L^{\infty}(\tilde{\Omega})}\leq
\left\{ \begin{aligned}
&\frac{C}{\sqrt{\varepsilon+\gamma}}\|\varphi\|_{C^{2}(\partial \Omega)}&\mbox{for}~n=2,\\
&\frac{C}{(\varepsilon+\gamma)|\ln(\varepsilon+\gamma)|}\|\varphi\|_{C^{2}(\partial \Omega)}&\mbox{for}~n=3,\\
&\frac{C}{\varepsilon+\gamma}\|\varphi\|_{C^{2}(\partial \Omega)}&\mbox{for}~n\geq 4,
\end{aligned} \right.
\end{equation*}
with the same dependence of the constants as above.
\end{corollary}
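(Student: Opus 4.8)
The plan is to deduce the global bound directly from Theorem~\ref{upperbound} by decomposing $\widetilde\Omega$ into the narrow region $\Omega_{R/2}$ and its complement, maximizing the pointwise estimate on each piece, and then checking that the uniform exterior bound is absorbed by the blow-up rates.

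First I would invoke the second assertion of Theorem~\ref{upperbound}, which already gives $\|Du\|_{L^{\infty}(\widetilde\Omega\setminus\Omega_{R/2})}\le C\|\varphi\|_{C^{2}(\partial\Omega)}$, so that only the narrow region requires further attention. On $\Omega_{R/2}$, the definition \eqref{definition_omegat} forces $|x'|\le R/2$, and the map $x'\mapsto \gamma+\varepsilon+|x'|^2$ attains its minimum over $\{|x'|\le R/2\}$ at $x'=0'$, with value $\gamma+\varepsilon$. Substituting this into the three cases of the pointwise estimate of Theorem~\ref{upperbound} yields, respectively,
\[
\sup_{\Omega_{R/2}}|Du|\le
\begin{cases}
C(\gamma+\varepsilon)^{-1/2}\,\|\varphi\|_{C^{2}(\partial\Omega)}, & n=2,\\
C\big((\gamma+\varepsilon)\,|\ln(\varepsilon+\gamma)|\big)^{-1}\|\varphi\|_{C^{2}(\partial\Omega)}, & n=3,\\
C(\gamma+\varepsilon)^{-1}\,\|\varphi\|_{C^{2}(\partial\Omega)}, & n\ge 4,
\end{cases}
\]
which are exactly the asserted rates for the narrow region.

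It then remains to see that the exterior bound $C\|\varphi\|_{C^{2}(\partial\Omega)}$ is dominated by each of the right-hand sides above. Since $\varepsilon\in(0,\tfrac14)$ and $\gamma\in(0,\gamma_0)$, the quantity $t:=\varepsilon+\gamma$ lies in the fixed bounded interval $(0,\gamma_0+\tfrac14)$, on which $\sqrt{t}$, $t$, and $t|\ln t|$ are all bounded above by a constant depending only on $\gamma_0$, hence only on $n$, $R$, $\alpha$, $\kappa$ and the Hessian bound. Writing $C\|\varphi\|_{C^{2}(\partial\Omega)}=C\,h(t)^{-1}h(t)\,\|\varphi\|_{C^{2}(\partial\Omega)}$ with $h(t)=\sqrt t$, $t|\ln t|$, or $t$ according to the dimension, and bounding the factor $h(t)$ by this constant, converts the exterior estimate into one of exactly the stated form, at the expense of enlarging $C$. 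Taking the maximum over the two regions produces the global estimate, with constants having the same dependence as in Theorem~\ref{upperbound}.

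As for difficulty: there is essentially no analytic content beyond Theorem~\ref{upperbound}; this corollary is a bookkeeping consequence, and the only point to watch is that $t=\varepsilon+\gamma$ stays in a fixed bounded range so that the elementary functions $\sqrt t$, $t$, $t|\ln t|$ are controlled, which is immediate from the hypotheses $\varepsilon<\tfrac14$, $\gamma<\gamma_0$. Hence I do not anticipate any genuine obstacle in carrying out this plan.
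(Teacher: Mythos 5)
Your proposal is correct and is exactly the intended derivation: the paper states this corollary without a separate proof, treating it as an immediate consequence of Theorem \ref{upperbound}, and your argument (minimizing $\gamma+\varepsilon+|x'|^2$ at $x'=0'$ in the narrow region and absorbing the uniform exterior bound using that $\varepsilon+\gamma$ stays in a bounded range) is precisely that bookkeeping.
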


%\todo{Introduce a map to show the connection to the regularity theory on half ball}

These upper bounds are shown to be optimal via the following lower-bound example.

\begin{theorem}\label{lowerbound}
Let $n \geq 2$. Let $\Omega$ be the ball of radius $6$ centered at $(0', \frac{\varepsilon}{2})$, and let $D_1$, $D_2$ be the unit balls centered at $(0', 1 + \varepsilon)$ and $(0', -1)$, respectively. Let $u \in H^1(\widetilde{\Omega})$ be the solution to \eqref{lcctype} with boundary data $\varphi = x_n - \frac{\varepsilon}{2}$. Then there exist sufficiently small positive constants $\bar{\varepsilon}$ and $\bar{\gamma}$, depending only on $n$, such that for all $\varepsilon \in (0, \bar{\varepsilon})$ and $\gamma \in (0, \bar{\gamma})$,
\begin{equation*}
   \|D u\|_{L^{\infty}(\tilde{\Omega})}\geq
\left\{ \begin{aligned}
&\frac{C}{\sqrt{\varepsilon+\gamma}} &\mbox{for}~n=2,\\
&\frac{C}{(\varepsilon+\gamma)|\ln(\varepsilon+\gamma)|} &\mbox{for}~n=3,\\
&\frac{C}{\varepsilon+\gamma} &\mbox{for}~n\geq 4,
\end{aligned} \right.
\end{equation*}
where $C$ is a constant depending only on $n$.
\end{theorem}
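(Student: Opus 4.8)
\emph{Overall strategy.} The configuration is invariant under the reflection $T(x',x_n)=(x',\varepsilon-x_n)$, which swaps $D_1,D_2$, fixes $\Omega$, and maps $\varphi$ to $-\varphi$; by uniqueness $u\circ T=-u$, hence $K_1=-K_2=:K$ and $u(0',\varepsilon/2)=0$. Since the inward normal to $D_1$ at $(0',\varepsilon)$ is $e_n$, the Robin condition there reads $u(0',\varepsilon)+\gamma\,\partial_{x_n}u(0',\varepsilon)=K$, so, subtracting $u(0',\varepsilon/2)=0$,
\[
K=\int_{\varepsilon/2}^{\varepsilon}\partial_{x_n}u(0',t)\,dt+\gamma\,\partial_{x_n}u(0',\varepsilon),\qquad\text{whence}\qquad \|Du\|_{L^\infty(\widetilde\Omega)}\ \geq\ \frac{|K|}{\varepsilon+\gamma}.
\]
It thus suffices to prove $|K|$ is at least the claimed rate times $(\varepsilon+\gamma)$. (Throughout, normal derivatives are taken with respect to the outer normal of $\widetilde\Omega$, which on $\partial D_j$ is the $\nu$ of \eqref{lcctype}.)

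\emph{A workable formula for $K$.} Let $w\in H^1(\widetilde\Omega)$ solve $\Delta w=0$ in $\widetilde\Omega$, $w=0$ on $\partial\Omega$, and $w+\gamma\,\partial_\nu w=g_j$ on $\partial D_j$ with $g_1=1$, $g_2=-1$; equivalently $w$ is the unique minimizer of $J[v]:=\int_{\widetilde\Omega}|\nabla v|^2+\tfrac1\gamma\sum_{j}\int_{\partial D_j}|v-g_j|^2$ over $\{v\in H^1(\widetilde\Omega):v=0\text{ on }\partial\Omega\}$. By the symmetry, $w$ is odd under $T$, so $\mathcal A:=\int_{\partial D_1}\partial_\nu w\,dS=-\int_{\partial D_2}\partial_\nu w\,dS$ and $\int_{\partial\Omega}\partial_\nu w\,dS=0$. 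Computing $\int_{\widetilde\Omega}\nabla u\cdot\nabla w$ by integrating by parts in both directions — once using $\Delta w=0$ and the flux-free conditions $\int_{\partial D_j}\partial_\nu u\,dS=0$, once using $\Delta u=0$ and $w+\gamma\partial_\nu w=g_j$ — the $\gamma$-weighted boundary terms cancel and one is left with
\[
2\mathcal A\,K+\int_{\partial\Omega}\varphi\,\partial_\nu w\,dS=0,\qquad 2\mathcal A=J[w]=\int_{\widetilde\Omega}|\nabla w|^2+\gamma\sum_{j}\int_{\partial D_j}|\partial_\nu w|^2>0 .
\]
Therefore $K=-\bigl(\int_{\partial\Omega}\varphi\,\partial_\nu w\,dS\bigr)\big/J[w]$, and the task reduces to an upper bound for $J[w]$ and a lower bound for $\bigl|\int_{\partial\Omega}\varphi\,\partial_\nu w\,dS\bigr|$.

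\emph{The two estimates.} \textbf{(i) $J[w]$ from above.} Since $J[w]\le J[\bar v]$ for any admissible $\bar v$, I would take $\bar v$ equal in $\Omega_R$ to the affine-in-$x_n$ profile $\bar v(x)=\frac{2\bigl(x_n-f_2(x')+\gamma\bigr)}{\delta(x')+2\gamma}-1$ (chosen so that $\bar v+\gamma\partial_\nu\bar v=g_j$ on $\Gamma_{j,R}$ in the leading one-dimensional approximation and $|\bar v-g_j|=\tfrac{2\gamma}{\delta(x')+2\gamma}\le1$ there), glued by a cutoff across $\{|x'|\sim R\}$ to a fixed, bounded-gradient function that equals $g_j$ near $\partial D_j\setminus\Gamma_{j,R}$ and $0$ near $\partial\Omega$. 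Using $\delta(x')\sim\varepsilon+|x'|^2$, a direct computation bounds both the Dirichlet part and the (pointwise smaller) boundary penalty by $C\int_0^R r^{n-2}(\varepsilon+\gamma+r^2)^{-1}\,dr$; hence $J[w]\le C\int_0^R r^{n-2}(\varepsilon+\gamma+r^2)^{-1}\,dr$. \textbf{(ii) The flux from below.} Let $\Phi_0$ be harmonic in $\widetilde\Omega$ with $\Phi_0=\varphi$ on $\partial\Omega$ and $\Phi_0=0$ on $\partial D_1\cup\partial D_2$ — the ``$v_0$'' of the associated perfect-conductivity problem, for which $\|\nabla\Phi_0\|_{L^\infty(\widetilde\Omega)}\le C$ uniformly in $\varepsilon$ (equal boundary data on both inclusions). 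Integration by parts yields
\[
\int_{\partial\Omega}\varphi\,\partial_\nu w\,dS=\int_{\widetilde\Omega}\nabla w\cdot\nabla\Phi_0=\Bigl(\int_{\partial D_1}\partial_\nu\Phi_0\,dS-\int_{\partial D_2}\partial_\nu\Phi_0\,dS\Bigr)-\gamma\sum_{j}\int_{\partial D_j}\partial_\nu w\,\partial_\nu\Phi_0\,dS .
\]
The bracket equals $2b$ with $b:=\int_{\partial D_1}\partial_\nu\Phi_0\,dS$ (symmetry), and $\liminf_{\varepsilon\to0}|b|>0$: this is the classical non-degeneracy for the perfectly conducting two-ball geometry with linear datum $x_n$, as in \cites{BLY,BLY2,Li} (and can be checked directly in bispherical coordinates). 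The last term is $o(1)$ as $(\varepsilon,\gamma)\to0$: by Cauchy–Schwarz it is at most $C\sqrt{\gamma}\,J[w]^{1/2}$, using $\gamma\sum_j\int_{\partial D_j}|\partial_\nu w|^2\le J[w]$ and $\|\partial_\nu\Phi_0\|_{L^2(\partial D_j)}\le C$. Consequently $\bigl|\int_{\partial\Omega}\varphi\,\partial_\nu w\,dS\bigr|\ge c_0>0$ for $\varepsilon,\gamma$ small (depending only on $n$).

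\emph{Conclusion and the main obstacle.} Combining, $|K|\ge c_0/J[w]\ge c\bigl(\int_0^R r^{n-2}(\varepsilon+\gamma+r^2)^{-1}dr\bigr)^{-1}$, so $\|Du\|_{L^\infty}\ge|K|/(\varepsilon+\gamma)\ge c\bigl((\varepsilon+\gamma)\int_0^R r^{n-2}(\varepsilon+\gamma+r^2)^{-1}dr\bigr)^{-1}$; the elementary integral is of order $(\varepsilon+\gamma)^{-1/2}$ for $n=2$, $|\ln(\varepsilon+\gamma)|$ for $n=3$, and $1$ for $n\ge4$, reproducing exactly the three asserted rates with $\bar\varepsilon,\bar\gamma,C$ depending only on $n$. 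The routine but longest part is the competitor estimate in (i); the genuine obstacle is (ii) — keeping $\int_{\partial\Omega}\varphi\,\partial_\nu w\,dS$ bounded away from $0$ uniformly as $\varepsilon\to0$ (where a cusp develops between the inclusions) and $\gamma\to0$ (where the Robin condition degenerates to Dirichlet), which is precisely what links the imperfect-interface problem quantitatively to the perfect-conductivity limit.
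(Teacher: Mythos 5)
Your proposal is correct in outline, and it takes a genuinely different route from the paper. The paper stays with the decomposition \eqref{decomposition2}: using the symmetry and Lemma~\ref{freeconstant1} it reduces matters to $|K_1-K_2|\geq |b_1-b_2|/(C a_{11})$, bounds $a_{11}\leq C/\rho_n(\varepsilon,\gamma)$ via Proposition~\ref{aii} (which rests on the sharp pointwise estimates of Proposition~\ref{prop_gradient}, hence on the machinery of Sections \ref{sec_2}--\ref{sec_4}), proves $|b_1-b_2|\geq 1/C$ by an explicit barrier adapted to the Robin condition (the term $c_2\gamma(y_n-1-\tfrac{\varepsilon}{2})^2$ is what forces $u_3\gtrsim\gamma$ on part of $\partial D_1$, so that $\partial_\nu u_3=-u_3/\gamma$ has a definite sign and size), and finally converts $|K_1-K_2|\gtrsim\rho_n$ into the gradient lower bound through $(K_1-K_2)Du_1$ and the lower bound for $|Du_1|$ in the neck. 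You instead (a) obtain $\|Du\|_{L^\infty}\geq |K|/(\varepsilon+\gamma)$ directly from the Robin condition at $(0',\varepsilon)$ together with the odd symmetry and the fundamental theorem of calculus, a neat elementary step that bypasses Proposition~\ref{prop_gradient} altogether; (b) identify $K$ through a reciprocity identity with $w=u_1-u_2$ (the symmetric specialization of the paper's $2\times2$ system \eqref{equfreconstant0}, and your identity $2\mathcal A=J[w]$ checks out); (c) replace Proposition~\ref{aii} by an explicit competitor in the variational principle, giving $J[w]\leq C/\rho_n$ by elementary computation; and (d) get the non-degeneracy of the numerator by comparing with the perfect-conductivity function $\Phi_0$, the $\gamma$-correction being controlled by Cauchy--Schwarz as $C\sqrt{\gamma}\,J[w]^{1/2}=o(1)$. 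What your route buys is a lower-bound proof essentially independent of the paper's upper-bound machinery, and a clean quantification of how the Robin term perturbs the $\gamma=0$ problem; what it costs is the importation of two facts from the perfect-conductivity literature, namely $\|\nabla\Phi_0\|_{L^\infty}\leq C$ uniformly in $\varepsilon$ and $\liminf_{\varepsilon\to0}\bigl|\int_{\partial D_1}\partial_\nu\Phi_0\,dS\bigr|>0$ for this two-ball configuration; both are available in \cite{BLY} (and the latter can be rederived by the paper's barrier argument with $\gamma=0$), but note that this is precisely the content the paper chooses to prove from scratch in its $\gamma$-dependent form. The remaining work on your side is the routine gluing computation in the competitor estimate and the standard $C^{1}$-up-to-the-boundary regularity needed to evaluate $\partial_{x_n}u$ at $(0',\varepsilon)$; neither is an obstacle.
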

This lower bound confirms that the estimates in Theorem \ref{upperbound} are sharp---the gradient indeed blows up at the predicted rate as $\varepsilon+\gamma\to 0$.

The proof of Theorem \ref{upperbound} proceeds by flattening the boundary of the narrow region (see the transformation \eqref{variableschange} and the resulting equation \eqref{equflatten} below). For a point $x_0 \in \mathbb{R}^n$, denote by $B_r(x_0)$ the ball of radius $r$ centered at $x_0$. Let 
$$B_{r}^{+}:=B_{r}(0)\cap\{x_{n}>0\},\quad \Gamma_{r}^{0}:=B_{r}(0)\cap\{x_{n}=0\},\quad\Gamma^{+}_{r}:=\partial{B}_{r}(0)\cap\{x_{n}>0\}.$$
We need to consider the following boundary-value problem on the half unit ball
\begin{equation}\label{equgradientsmallgamma}
\left\{ \begin{aligned}
\partial_i(A^{ij}\partial_jU)&=\partial_iF^i+G& \mbox{in}&~ B_{1}^{+},\\
h\,U+\hat{\gamma} A^{ij}\partial_jU\nu_{i}&=\hat{\gamma} F^i\nu_{i}+\phi& \mbox{on}&~\Gamma_{1}^{0},
\end{aligned} \right.
\end{equation}
where $\nu=(0',-1)$ is the unit outward normal to $B_{1}^{+}$ on $\Gamma_{1}^{0}$, $\hat{\gamma}$ is a constant, $h\in C^{\alpha}(B_{1}^{+})$, and the coefficient matrix $A^{ij} \in C^{\alpha}(B_{1}^{+})$ is uniformly elliptic and bounded:
\begin{equation}\label{uniformelliptic}
\lambda|\xi|^2\leq A^{ij}(x)\xi_i\xi_j, \quad~ \forall\xi\in\bR^n,\quad |A^{ij}(x)|\le \Lambda,
\end{equation}
with positive constants $\lambda$ and $\Lambda$. Here and henceforth, we adopt the Einstein summation convention for repeated indices. The following result is of independent interest.

\begin{theorem}\label{gradientsmallgamma3}
Let $\lambda,\hat\gamma>0$ and $U\in H^{1}(B_{1}^{+})$ be a solution of \eqref{equgradientsmallgamma} with $A^{ij}\in C^{\alpha}(B_{1}^{+})$ satisfying \eqref{uniformelliptic} in $B_{1}^{+}$.
Suppose that $F^i\in C^{\alpha}(B_{1}^{+})$, $G\in L^{\infty}(B_{1}^{+})$, and $h$ satisfies 
$$h\in C^{\alpha}(B_{1}^{+}),\quad |h|>\lambda~ \mbox{on} ~B_{1}^{+},\quad~\mbox{and}\,\,~~~ h(x)\hat{\gamma}>0~\mbox{in}~ \Gamma_{1}^{0},$$
with ${\phi}/{h}\in C^{1,\alpha}(B_{1}^{+})$. Then 
\begin{equation}\label{thm1.4_equgradient}
 \|U\|_{C^{1,\alpha}(B_{1/4}^{+})}\leq C\left(\|U\|_{L^2(B_{3/4}^{+})}+\|F\|_{C^{\alpha}(B_{3/4}^{+})}+\|G\|_{L^{\infty}(B_{3/4}^{+})}+\|\frac{\phi}{h}\|_{C^{1,\alpha}(B_{3/4}^{+})}\right),
\end{equation}
where $C$ depends on $\lambda,\Lambda,n,\|A^{ij}\|_{C^{\alpha}(B_{1}^{+})}$, and the upper bound of $\|h\|_{C^{\alpha}(B_{1}^{+})}$, but is independent of $\hat{\gamma}$.
\end{theorem}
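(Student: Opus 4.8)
I would first normalize so that $h>0$ and $\hat\gamma>0$: since $h\in C^\alpha$ is continuous and $|h|>\lambda$, $h$ has a fixed sign, and multiplying the Robin condition in \eqref{equgradientsmallgamma} by $-1$ replaces $(h,\hat\gamma,\phi,F)$ by $(-h,-\hat\gamma,-\phi,-F)$ without changing the equation or the quotient $\phi/h$, while the hypothesis $h\hat\gamma>0$ becomes $h/\hat\gamma>0$ on $\Gamma_1^0$. The overall plan is a Schauder-type perturbation (Campanato iteration) argument, and the two ingredients that are not routine are: an energy estimate that is uniform in $\hat\gamma$, and a compactness lemma in which the limiting profile solves---according to the size of $\hat\gamma$---a Dirichlet, a constant-coefficient Robin, or a conormal problem. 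As usual, localization and scaling reduce everything to an estimate near the flat face $\Gamma_1^0$ (interior points being handled by the classical interior Schauder estimate, since $B_{1/4}^+$ is compactly contained in $B_{3/4}^+$ away from $\Gamma_1^0$), and one may assume $[A^{ij}]_{C^\alpha(B_1^+)}\le\epsilon_0$ with $\epsilon_0$ small, to be fixed later.

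\emph{Step 1: a $\hat\gamma$-uniform energy estimate.} I would set $\Psi:=\phi/h\in C^{1,\alpha}(B_1^+)$ and $W:=U-\Psi$. Using $h\Psi=\phi$, one checks that $W$ solves $\partial_i(A^{ij}\partial_jW)=\partial_i\widetilde F^i+G$ in $B_1^+$ and $hW+\hat\gamma A^{ij}\partial_jW\nu_i=\hat\gamma\widetilde F^i\nu_i$ on $\Gamma_1^0$, with $\widetilde F^i:=F^i-A^{ij}\partial_j\Psi\in C^\alpha(B_1^+)$; the point of subtracting $\Psi$ is that it eliminates the ``$U$'' term from the Robin data. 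Testing against $W\zeta^2$, with $\zeta\equiv1$ on $B_{1/2}^+$ and supported in $B_{3/4}^+$, the boundary integral---after inserting the Robin condition---produces a benign conormal term together with $-\hat\gamma^{-1}\int_{\Gamma_1^0}hW^2\zeta^2$, which is $\le0$ because $h/\hat\gamma>0$ and can simply be discarded. This yields $\|DW\|_{L^2(B_{1/2}^+)}\le C\bigl(\|W\|_{L^2(B_{3/4}^+)}+\|\widetilde F\|_{L^2(B_{3/4}^+)}+\|G\|_{L^2(B_{3/4}^+)}\bigr)$ and hence
\begin{equation*}
\|U\|_{H^1(B_{1/2}^+)}\le C\bigl(\|U\|_{L^2(B_{3/4}^+)}+\|F\|_{C^\alpha(B_{3/4}^+)}+\|G\|_{L^\infty(B_{3/4}^+)}+\|\phi/h\|_{C^{1,\alpha}(B_{3/4}^+)}\bigr),
\end{equation*}
with $C$ independent of $\hat\gamma$. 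Testing against $U\zeta^2$ directly would instead leave a term $\sim\hat\gamma^{-1}\int_{\Gamma_1^0}(\phi/h)^2$ that degenerates as $\hat\gamma\to0$; subtracting $\phi/h$ at the outset is precisely what removes it.

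\emph{Step 2: uniform decay by compactness.} Working from now on with $W$ (whose Robin data vanishes), the heart of the argument is the following claim: there exist $\rho\in(0,\tfrac12)$, $\epsilon_0>0$, $C_0$, depending only on $n,\lambda,\Lambda,\alpha$ and an upper bound for $\|h\|_{C^\alpha(B_1^+)}$, such that if $W$ solves the homogeneous-data problem in $B_1^+$ with $\|W\|_{L^2(B_1^+)}\le1$ and $[A^{ij}]_{C^\alpha(B_1^+)}+\|\widetilde F\|_{C^\alpha(B_1^+)}+\|G\|_{L^\infty(B_1^+)}\le\epsilon_0$, then some affine $\ell$ with $|\ell(0)|+|D\ell|\le C_0$ satisfies $\fint_{B_\rho^+}|W-\ell|^2\le\rho^{2+2\alpha}$. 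I would prove this by contradiction: along a violating sequence $(W_k,A_k,h_k,\widetilde F_k,G_k,\hat\gamma_k)$, Step 1 gives a bound on $W_k$ in $H^1(B_{3/4}^+)$ uniform in $\hat\gamma_k$, so after extracting a subsequence $W_k\rightharpoonup W_\infty$ in $H^1$, $W_k\to W_\infty$ in $L^2(B_{3/4}^+)$ and in $L^2(\Gamma_{3/4}^0)$, $A_k\to A_\infty$ (constant), $h_k\to h_\infty$ in $C^{\alpha/2}$, $\widetilde F_k\to0$, $G_k\rightharpoonup0$ weak-$*$, and $\hat\gamma_k\to\hat\gamma_\infty\in[0,\infty]$. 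Passing to the limit in the weak formulation written as $\hat\gamma_k\int A_k^{ij}\partial_jW_k\,\partial_i\varphi+\int_{\Gamma^0}h_kW_k\varphi=\hat\gamma_k\bigl(\int\widetilde F_k^i\partial_i\varphi-\int G_k\varphi\bigr)$ if $\hat\gamma_\infty<\infty$, and in the same identity divided by $\hat\gamma_k$ if $\hat\gamma_\infty=\infty$, one finds that $W_\infty$ satisfies in $B_{3/4}^+$, with constant coefficients $A_\infty$: $W_\infty=0$ on $\Gamma^0$ (Dirichlet) if $\hat\gamma_\infty=0$; $h_\infty W_\infty+\hat\gamma_\infty A_\infty^{ij}\partial_jW_\infty\nu_i=0$ (constant-coefficient Robin) if $\hat\gamma_\infty\in(0,\infty)$; and $A_\infty^{ij}\partial_jW_\infty\nu_i=0$ (conormal) if $\hat\gamma_\infty=\infty$---in every case a homogeneous constant-coefficient problem, for which $\|W_\infty\|_{C^2(B_{1/4}^+)}\le C_*\|W_\infty\|_{L^2(B_{1/2}^+)}\le C_*$ with $C_*$ independent of $\hat\gamma_\infty$ (the Dirichlet and conormal cases classical, the Robin case via a Caccioppoli inequality with the favorable boundary sign, tangential differentiation, and reading off $\partial_{nn}W_\infty$ from the equation). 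With $\ell(x):=W_\infty(0)+DW_\infty(0)\cdot x$, Taylor's theorem gives $\fint_{B_\rho^+}|W_\infty-\ell|^2\le C_*^2\rho^4$; since $4>2+2\alpha$, $\rho$ can be chosen in advance so that $C_*^2\rho^4\le\tfrac12\rho^{2+2\alpha}$, and then $W_k\to W_\infty$ in $L^2(B_\rho^+)$ makes this same $\ell$ admissible for large $k$---a contradiction.

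\emph{Step 3: iteration and conclusion.} The rescaled function $x\mapsto\rho^{-(1+\alpha)}(W-\ell)(\rho x)$ solves a problem of the same type---the coefficient seminorm becomes $\rho^\alpha[A^{ij}]_{C^\alpha}\le\epsilon_0$, the bulk-data norms shrink, the new Robin parameter $\hat\gamma/\rho$ is still positive with the same sign (and every constant above is independent of it), and the $L^2$-normalization persists by the decay just proved---so the claim can be iterated at the origin to give $\fint_{B_r^+}|W-\ell^{(0)}|^2\le Cr^{2+2\alpha}$ for small $r$. Running the same iteration recentred at each $z\in\Gamma_{1/4}^0$, together with interior Schauder, yields $\|W\|_{C^{1,\alpha}(B_{1/4}^+)}\le C\bigl(\|W\|_{L^2(B_{3/4}^+)}+\|\widetilde F\|_{C^\alpha(B_{3/4}^+)}+\|G\|_{L^\infty(B_{3/4}^+)}\bigr)$ with $C$ independent of $\hat\gamma$; undoing the initial dilation and recalling $W=U-\phi/h$ gives \eqref{thm1.4_equgradient}. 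I expect the main obstacle to be precisely this $\hat\gamma$-uniformity: the Robin condition interpolates between a Dirichlet condition ($\hat\gamma\to0$) and a conormal condition ($\hat\gamma\to\infty$), so neither the ``$U$'' formulation nor the conormal formulation is stable across the whole range; the subtraction of $\phi/h$ in Step 1 is what produces a single energy identity valid for all $\hat\gamma$, and the hypothesis $h\hat\gamma>0$ is what makes the boundary quadratic term cooperate there and again in the constant-coefficient Robin estimate of Step 2.
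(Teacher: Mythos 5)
Your Step 1 (subtracting $\phi/h$ so the Robin data becomes $\hat\gamma\widetilde F^i\nu_i$, and discarding the signed boundary term $\hat\gamma^{-1}\int_{\Gamma_1^0}hW^2\zeta^2\geq 0$) is exactly the paper's reduction and is fine, as is your sketch of the constant-coefficient Robin estimate (it is the paper's Lemma \ref{propconstantequ}). The gap is in the passage from the one-step decay to the iteration. After obtaining the affine approximation $\ell$ you rescale $\rho^{-(1+\alpha)}(W-\ell)(\rho\,\cdot)$ and assert it ``solves a problem of the same type.'' It does not: on $\Gamma_1^0$ the function $W-\ell$ satisfies $h(W-\ell)+\hat\gamma A^{ij}\partial_j(W-\ell)\nu_i=\hat\gamma\widetilde F^i\nu_i-h\ell-\hat\gamma A^{ij}(D\ell)_j\nu_i$, and the term $-h\ell$ is neither zero nor carries a factor $\hat\gamma$, so it is not admissible data for your claim. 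Even if $\ell$ is chosen compatible with the limiting boundary condition at the origin, the residual on $\Gamma^0_{\rho}$ still contains $h\,D'\ell\cdot x'$, of size about $\rho\,|D'\ell|$, whereas your rescaling divides by $\rho^{1+\alpha}$; the rescaled boundary data is then of size $\rho^{-\alpha}|D'\ell|$, and the smallness hypotheses of the claim fail at the next step. This is exactly the scale-noninvariance of the Robin condition that the paper's proof is built to avoid: there one compares $U$ on each half-ball with the solution $U_1$ of the frozen-coefficient problem with the \emph{same} Robin boundary operator and the same $\hat\gamma$, and iterates on the mean oscillation of $DU$ using the decay \eqref{constant3}; no affine subtraction and no rescaling of $\hat\gamma$ ever occur. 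Your scheme may be repairable by enlarging the induction hypothesis to admit boundary data of the form $h\cdot(\text{affine vanishing at }0)+\hat\gamma\cdot(\text{small }C^{\alpha})$ and tracking the affine coefficient through the iteration (the limiting problem with such data reduces to the constant-coefficient Robin problem after subtracting that tangential affine function), but as written the iteration does not close.

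A second, more minor, issue sits in the compactness step: $h_\infty$ is only a uniform limit of functions bounded in $C^{\alpha}$, hence in general non-constant, and your uniform-in-$\hat\gamma_\infty$ $C^2$ bound for the limiting Robin problem (via tangential differentiation) uses constancy of $h$; with variable $h_\infty$ that bound is essentially the theorem you are proving. This is fixable by adding $[h]_{C^{\alpha}}\le\epsilon_0$ to the smallness hypotheses (legitimate after the initial dilation and preserved under your rescaling). Likewise, to conclude $\widetilde F_k\to0$ and $G_k\to 0$ along the contradiction sequence you must run the contradiction with $\epsilon_0=1/k\to0$ rather than a fixed $\epsilon_0$; this is standard but should be stated, since the constants $\rho$, $C_0$ must be fixed before the sequence is chosen.
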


In the limiting case when $\hat{\gamma}=0$, the boundary condition in \eqref{equgradientsmallgamma} reduces to a Dirichlet condition, and the estimate \eqref{thm1.4_equgradient} is classical. A standard technique for analyzing elliptic equations with Robin boundary conditions is to transform them into equivalent problems with homogeneous Neumann conditions \cite{Lieberm}. For sufficiently large $\hat{\gamma}$, specifically when $\hat{\gamma} \geq \hat{\gamma}_0$ for some fixed $\hat{\gamma}_0 \gg 1$, the boundary condition approximates a conormal condition as $\hat \gamma \to \infty$, and classical theory yields a bound of order $O(1/\hat{\gamma}_0)$. However, the case of small $\hat{\gamma}$ ($0 < \hat{\gamma} < \hat{\gamma}_0$) has remained open. Theorem \ref{gradientsmallgamma3} establishes the uniform bound \eqref{thm1.4_equgradient}, which is independent of $\hat{\gamma}$, demonstrating that the gradient remains bounded even as $\hat{\gamma} \to 0^+$. This uniform estimate is essential for the analysis in narrow regions, as it provides $\hat{\gamma}$-independent control over the regularity of the solution near the Robin boundary. We further remark that  analogous estimates hold for higher order norms of the solution, given appropriate smoothness of the data.

The main technical contributions of this paper, which culminate in Theorem \ref{boundedgradientw}, are developed in Sections \ref{sec_2}--\ref{sec_4}. In Section \ref{sec_2}, we establish uniform H\"older estimates for solutions to a class of elliptic equations with Robin boundary conditions as the parameter $\hat{\gamma}$, which scales the normal derivative, tends to zero. Section \ref{sec_3} is devoted to investigating the regularity of solutions for another class of elliptic equations whose lower-order coefficients are of order $O(\frac{1}{{\gamma}})$, particularly for small ${\gamma}$. In Section \ref{sec_4}, we prove Theorem \ref{boundedgradientw} via a dichotomy based on the relative sizes of the bonding parameter $\gamma$ and a distance function $\delta(x')$. Specifically, for a sufficiently large constant $\mu>0$, we treat the two complementary cases $\gamma\le \mu\delta$ and  $\gamma>\mu\delta$ by applying the regularity theories from Sections \ref{sec_2} and \ref{sec_3}, respectively. The results from both cases are then synthesized to complete the proof. Having established these ingredients, in Section \ref{sec_5} we adapt the decomposition technique similar as in the perfect conductivity problem and construct a new and suitable auxiliary function to prove Theorem \ref{upperbound}.
%The paper is organized as follows. Section \ref{sec_2} establishes a key uniform H\"older estimate (Theorem \ref{gradientsmallgamma3}) for solutions to an auxiliary Robin-type boundary problem. Section \ref{sec_3} derives gradient estimates for elliptic equations whose lower-order coefficients are of order $O(1/\gamma)$, with explicit dependence on the small parameter $\gamma$. Using a case dichotomy, Section \ref{sec_4} proves pointwise gradient estimates in the narrow region between inclusions (Theorem \ref{boundedgradientw}), which forms the cornerstone of the main theorem. The proof of the main result (Theorem \ref{upperbound}) is presented in Section \ref{sec_5} via a solution decomposition and careful estimates of the free constants and gradient terms. 
The optimality of these bounds (Theorem \ref{lowerbound}) is established in Section \ref{sec_6} by constructing a subsolution under a specific symmetry condition. Finally, Appendix A demonstrates the convergence of solutions to the perfect conductivity problem as $\gamma \to 0$.

\section{Proof of Theorem \ref{gradientsmallgamma3}}\label{sec_2}

This section is devoted to the proof of Theorem \ref{gradientsmallgamma3}. We begin by normalizing the Robin boundary condition in \eqref{equgradientsmallgamma} so that the coefficient of the conormal derivative becomes unity, yielding
$$\frac{h}{\hat{\gamma}}u+A^{ij}\partial_{j}u\nu_{i}=F^{i}\nu_{i}+\frac{\phi}{\hat{\gamma}}.$$
Observe that as $\hat{\gamma} \to 0$, the term $|\phi|_{L^\infty(\Gamma_1^0)} / \hat{\gamma}$ may become unbounded. The key insight is to consider the function $u - \phi/h$ instead of $u$ itself, which effectively reduces the problem to the case $\phi = 0$ without loss of generality. The proof follows Campanato's method, proceeding in two stages: first, we establish higher-order derivative estimates for constant-coefficient equations in Subsection \ref{subsec2.1}; then, using a perturbation argument and an iterative technique, we complete the proof of Theorem \ref{gradientsmallgamma3} in Subsection \ref{subsec2.2}.

We now introduce notation used throughout this section. For $k \geq 1$, let $D_{x'}^k$ denote any $k$th-order tangential derivative with respect to $x' = (x_1, \dots, x_{n-1})$. For $x_0 \in \mathbb{R}^n$ and $r > 0$, define
\begin{equation*}
\begin{aligned}
B_{r}^{+}(x_0)&:=\{y=(y',y_n)\in \bR^n~\big|~y\in B_r(x_0),~y_n>0\}, \\
\Gamma_{r}^{0}(x_0)&:=\{y=(y',y_n)\in \bR^n ~\big|~y\in \partial B_{r}^{+}(x_0), |y-x_0|<r,~y_{n}=0\}, \\
\Gamma_{r}^{+}(x_0)&:=\{y=(y',y_n)\in \bR^n ~\big|~y\in \partial B_{r}^{+}(x_0), ~|y-x_0|=r,~y_{n}>0\}.
\end{aligned}
\end{equation*}
When $x_0 = 0$, we write $B_r$, $B_r^+$, $\Gamma_r^0$, and $\Gamma_r^+$. The average of a function $U$ over a domain $\mathcal{D}$ is denoted by $(U)_{\mathcal{D}}$.

\subsection{Constant coefficient equation}\label{subsec2.1}

\begin{lemma}\label{propconstantequ}
Let $U_1 \in H^1(B_{1}^{+})$ be a solution of the constant-coefficient elliptic problem
\begin{equation}\label{constantequ}
\left\{ \begin{aligned}
\partial_i(\bar{A}^{ij}\partial_j U_1)=\partial_{i}\bar{F}^i&\quad\mbox{in}~ B_{1}^{+}, \\
\bar{h}\,U_1-\hat{\gamma}\, \bar{A}^{nj}\partial_jU_1=-\hat{\gamma}\bar{F}^n&\quad\mbox{on}~\Gamma_{1}^{0},
\end{aligned} \right.
\end{equation}
where $\bar h$ and $\bar{F}^i$ are constants, and $\bar{A}^{ij}$ is a constant matrix satisfying the uniform ellipticity condition
\begin{equation}\label{uniformellipticity}
\lambda |\xi|^2\leq \bar{A}^{ij}\xi_i\xi_j\leq \Lambda |\xi|^2,\quad\forall~\xi\in \bR^n.
\end{equation}
If $\hat{\gamma}\bar{h}>0$, then for any $0 < \rho < \frac{1}{4}$, the following estimates hold:
\begin{align}
&\|U_1\|_{C^k(B_{1/2}^{+})}\leq C\|U_1\|_{L^2(B_{1}^{+})}+C\|\bar{F}\|_{L^2(B_{1}^{+})}, \quad\forall k\geq0,\label{constant1}\\ 
%&\|U_1-(U_1)_{B_{\rho}^{+}}\|^2_{L^{2}(B_{\rho}^{+})}\leq C\rho^{n+2}\| U_1-(U_1)_{B_{1}^{+}}\|^2_{L^{2}(B_{1}^{+})}, \label{constant2}\\
&\|DU_1-(DU_1)_{B_{\rho}^{+}}\|^2_{L^{2}(B_{\rho}^{+})}\leq C\rho^{n+2}\| DU_1-(DU_1)_{B_{1}^{+}}\|^2_{L^{2}(B_{1}^{+})}, \label{constant3}
\end{align}
where $C$ depends only on $\lambda$, $\Lambda$, $n$, and $k$.
\end{lemma}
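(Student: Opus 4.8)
The plan is to run the constant-coefficient base step of Campanato's scheme, the only new feature being that every estimate must be made uniform in $\hat\gamma$. First I would record that, since each $\bar F^i$ is constant, $\partial_i\bar F^i=0$, so the interior equation is merely $\bar A^{ij}\partial_{ij}U_1=0$ and $\bar F$ enters only through the boundary condition. As $\hat\gamma\bar h>0$, set $\beta:=\hat\gamma/\bar h>0$ and rewrite the Robin condition as $U_1-\beta\,\bar A^{nj}\partial_jU_1=-\beta\bar F^n$ on $\Gamma_1^0$; then $W:=U_1+\beta\bar F^n$ solves the same equation with homogeneous Robin data and $DW=DU_1$, so \eqref{constant3} is unchanged by this shift, while in \eqref{constant1} the additive constant $\beta\bar F^n$ is controlled by $C\big(\|U_1\|_{L^2(B_1^+)}+\|\bar F\|_{L^2(B_1^+)}\big)$. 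Hence one may assume $\bar F=0$.

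For \eqref{constant1}, the weak formulation reads
\[
\int_{B_1^+}\bar A^{ij}\partial_jU_1\,\partial_iv\,dx+\frac1\beta\int_{\Gamma_1^0}U_1\,v\,dS=0
\]
for all $v\in H^1(B_1^+)$ vanishing near $\Gamma_1^+$, and the boundary coefficient $1/\beta$ is positive. Testing with $v=\zeta^2U_1$ for a cutoff $\zeta$ gives the Caccioppoli estimate $\|DU_1\|_{L^2(B_{1/2}^+)}\le C\|U_1\|_{L^2(B_{3/4}^+)}$, in which the boundary term only helps, so $C$ does not depend on $\beta$ (hence not on $\hat\gamma$). Every tangential difference quotient, and in the limit every tangential derivative $D_{x'}^kU_1$, solves the very same constant-coefficient problem (both the equation and the homogeneous Robin condition are preserved by $\partial_{x'}$); iterating Caccioppoli bounds all tangential derivatives, and the purely normal derivatives are recovered from $\bar A^{nn}\partial_{nn}U_1=-\sum_{(i,j)\neq(n,n)}\bar A^{ij}\partial_{ij}U_1$ (each term on the right carries a tangential derivative and $\bar A^{nn}\ge\lambda$). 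This yields $\|D^kU_1\|_{L^2(B_{1/2}^+)}\le C\|U_1\|_{L^2(B_{3/4}^+)}$ for all $k$, and Sobolev embedding gives \eqref{constant1}.

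For \eqref{constant3}, since $(DU_1)_{B_\rho^+}$ is the best constant $L^2(B_\rho^+)$-approximation of $DU_1$, the mean value theorem along segments issuing from $0\in\overline{B_\rho^+}$ gives, for $0<\rho<\tfrac14$,
\[
\|DU_1-(DU_1)_{B_\rho^+}\|_{L^2(B_\rho^+)}^2\le\|DU_1-DU_1(0)\|_{L^2(B_\rho^+)}^2\le C\rho^{n+2}\|D^2U_1\|_{L^\infty(B_{1/4}^+)}^2,
\]
so it remains to bound $\|D^2U_1\|_{L^\infty(B_{1/4}^+)}$ by $\|DU_1-(DU_1)_{B_1^+}\|_{L^2(B_1^+)}$. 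For a second derivative with a tangential index this comes from applying the $C^1$ part of \eqref{constant1} to $\partial_kU_1-(\partial_kU_1)_{B_1^+}$ ($k<n$), which again solves a constant-data Robin problem; the remaining $\partial_{nn}U_1$ is read off from the equation. One uses the scaling invariance of the constant-coefficient problem to localize, exactly as in the classical Dirichlet setting.

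The step I expect to be the main obstacle is the uniformity in $\hat\gamma$: a priori the constants in the Caccioppoli and bootstrapping arguments could degenerate as $\beta=\hat\gamma/\bar h\to0$ (near-Dirichlet regime) or $\beta\to\infty$ (near-conormal regime). What rescues the argument is precisely the hypothesis $\hat\gamma\bar h>0$, which makes the boundary term $\tfrac1\beta\int_{\Gamma_1^0}U_1v$ a favorable term for all $\beta\in(0,\infty)$; the remaining care is in checking that this good sign persists through tangential differentiation and the algebraic recovery of the normal derivatives, and in bookkeeping the additive constants in the reductions above.
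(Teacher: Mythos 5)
Your treatment of \eqref{constant1} is essentially the paper's (Caccioppoli with the favorable boundary term, tangential difference quotients, recovery of normal derivatives from the equation, Sobolev embedding), but your proof of \eqref{constant3} has a genuine gap at its key step. You propose to bound $\|D^2U_1\|_{L^\infty(B^+_{1/4})}$ by $\|DU_1-(DU_1)_{B_1^+}\|_{L^2(B_1^+)}$ by ``applying the $C^1$ part of \eqref{constant1} to $\partial_kU_1-(\partial_kU_1)_{B_1^+}$.'' But the Robin condition is not invariant under adding constants: $V=\partial_kU_1$ ($k<n$) satisfies $\bar h V-\hat\gamma\bar A^{nj}\partial_jV=0$ on $\Gamma_1^0$, so with $c=(V)_{B_1^+}$ the function $V-c$ satisfies $\bar h(V-c)-\hat\gamma\bar A^{nj}\partial_j(V-c)=-\bar h c$ on $\Gamma_1^0$; to write this in the form \eqref{constantequ} you are forced to take $\bar F^n=\bar h c/\hat\gamma$, and \eqref{constant1} then only gives $\|V-c\|_{C^1(B^+_{1/2})}\le C\|V-c\|_{L^2(B_1^+)}+C|\bar h c|/|\hat\gamma|$. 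The extra term is not controlled by the oscillation of $DU_1$ and blows up precisely as $\hat\gamma\to0^+$, which is the regime the lemma must cover uniformly. Without a substitute you only obtain the non-subtracted bound $\|D^2U_1\|_{L^\infty(B^+_{1/4})}\le C\|DU_1\|_{L^2(B^+_{1/2})}$, which does not yield \eqref{constant3}. The paper inserts the mean subtraction by a different mechanism: the PDE estimates give $\|D^2U_1\|_{L^\infty}\le C\|DU_1\|_{L^2}$ on nested half-balls, then the purely real-variable interpolation $\|DU_1\|_{L^2}\le\epsilon\|D^2U_1\|_{L^2}+C_\epsilon\|U_1-c\|_{L^2}$ (valid for every constant $c$, no boundary condition used) combined with a summation/absorption over the scales $B^+_{1/2-2^{-k-2}}$ yields $\|DU_1\|_{L^\infty(B^+_{1/4})}\le C\|U_1-(U_1)_{B^+_{1/2}}\|_{L^2(B^+_{1/2})}$, and only then is this estimate applied to the tangential derivatives (which solve the homogeneous Robin problem), giving $\|D^2U_1\|_{L^\infty(B^+_{1/4})}\le C\|D_{x'}U_1-(D_{x'}U_1)_{B^+_{1/2}}\|_{L^2(B^+_{1/2})}$ and hence \eqref{constant3}. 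Some device of this kind is needed; your shortcut through the boundary data fails quantitatively.

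A secondary error: in your reduction to $\bar F=0$ you claim the shift constant $\beta\bar F^n$ ($\beta=\hat\gamma/\bar h$) is controlled by $C(\|U_1\|_{L^2(B_1^+)}+\|\bar F\|_{L^2(B_1^+)})$. This is false uniformly in $\hat\gamma$, which is unbounded above in the lemma (and large values arise after the rescaling used in Corollary \ref{constantlemmaR}): take $\bar A=I$, $\bar h=1$, $U_1=\epsilon x_n$, $\bar F^n=\epsilon$, $\hat\gamma=\epsilon^{-2}$, so that $\beta\bar F^n=\epsilon^{-1}$ while $\|U_1\|_{L^2}+\|\bar F\|_{L^2}\le C\epsilon$. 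This only affects the zeroth-order part of \eqref{constant1}, since $D^kW=D^kU_1$ for $k\ge1$, and it is easily repaired by estimating $\|U_1\|_{L^\infty(B^+_{1/2})}$ from $\|U_1\|_{L^2}$ and $\|DU_1\|_{L^\infty}$ by interpolation rather than through the shifted function; but as written the reduction is incorrect.
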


\begin{proof}
Without loss of generality, we assume $\bar{h} = 1$; the general case follows by dividing the boundary condition by $\bar{h}$ and replacing $\hat \gamma$ with $\hat{\gamma} / \bar{h}$.

\textbf{Proof of \eqref{constant1}.} Multiply \eqref{constantequ} by $U_1\eta^2$ where $\eta \in C_c^\infty(B_1)$ satisfies $\eta \equiv 1$ in $B_{1/2}^+$ and $|\nabla \eta| \leq C$ in $B_1^+$. Integration by parts yields
\begin{equation*}
\int_{B_{1}^{+}}(\bar{A}^{ij}\partial_jU_1-\bar{F}^i)\partial_i(U_1\eta^2)\,dx+\frac{1}{\hat{\gamma}}\int_{\Gamma_{1}^{0}}U_1^2\eta^2 \,dx'=0.
\end{equation*}
Using ellipticity and Cauchy’s inequality, we obtain the energy estimate
\begin{equation}\label{constantl2}
\int_{B_{1/2}^{+}}|DU_1|^2 \,dx \leq C \int_{B_{1}^{+}}U_1^2\,dx+\int_{B_{1}^{+}}|\bar{F}|^2\,dx.
\end{equation}
Since the coefficients $\bar{A}^{ij}$ are constant, we may differentiate the equation tangentially. The tangential derivatives $D_{x'}^k U_1$ ($k \geq 1$) still satisfy the same equation and boundary conditions as in \eqref{constantequ}. By iterating estimate \eqref{constantl2} for these derivatives, we obtain
\begin{equation*}%\label{nablapartialiw}
\int_{B_{1/2}^{+}}|D D_{x'}^kU_1|^2 \,dx\leq C\int_{B_{1}^{+}}| U_1|^2\,dx+\int_{B_{1}^{+}}|\bar{F}|^2\,dx.
\end{equation*}
To estimate the normal derivatives, we use the equation to express $\partial_n^2 U_1$:
\begin{equation}\label{partialnn}
\bar{A}^{nn}\partial_n^2U_1=-\sum_{i+j<2n}\bar{A}^{ij}\partial_{ij}U_1,\quad \mbox{in}~B_{1}^{+}.
\end{equation}
Since $\bar{A}^{nn} \geq \lambda > 0$, it follows that
\begin{equation*}%\label{nablapartialnw}
\|\partial_n^2U_1\|_{L^2(B_{1/2}^{+})}\leq C\sum_{i+j<2n} \|\partial_{x_ix_j}U_1\|_{L^2(B_{1/2}^{+})}\leq C\| U_1\|_{L^2(B_{1}^{+})}+C\|\bar{F}\|_{L^2(B_{1}^{+})}.
\end{equation*} 
By induction, for all $k \geq 0$, 
$$\|D_{x'}^kU_1\|_{L^2(B_{1/2}^{+})}+\sum_{l=1}^{2}\|\partial_n^lD_{x'}^kU_1\|_{L^2(B_{1/2}^{+})}\leq C\|U_1\|_{L^2(B_{1}^{+})}+C\|\bar{F}\|_{L^2(B_{1}^{+})}.$$
By repeatedly differentiating the equation in $x_n$, we can estimate higher-order normal derivatives (e.g., $\partial_n^3 U_1$, $\partial_n^4 U_1$, etc.). Combining these estimates and using the Sobolev embedding $H^\ell(B_{1/2}^+) \hookrightarrow C^k(\overline{B_{1/2}^+})$ for $\ell > k + n/2$, we conclude the proof of \eqref{constant1}.

\textbf{Proof of \eqref{constant3}.}
Since $D_{x_i} U_1 \,(1\leq i\leq n-1)$ satisfies \eqref{constantequ} with $\bar{F}^i=0$, we apply \eqref{constant1} to $D_{x_i}U_1$ to obtain
\begin{equation*}%\label{pfconstants2}
\|D_{x'}U_1\|_{C^1(B_{1/4}^{+})}\leq C\|D_{x'}U_1\|_{L^{2}(B_{1/2}^{+})}.
\end{equation*}
From \eqref{partialnn}, it follows that
\begin{equation}\label{pfconstant3}
\|D^2U_1\|_{L^{\infty}(B_{1/4}^{+})}\leq C\|DD_{x'}U_1\|_{L^{\infty}(B_{1/4}^{+})}\leq C\|D_{x'}U_1\|_{L^2(B_{1/2}^{+})}.
\end{equation}
Estimate \eqref{pfconstant3} can be generalized as follows:
\begin{equation}\label{pfconstantss4}
\|D^2U_1\|_{L^{\infty}(B_{\frac{1}{2}-\frac{1}{2^{k+2}}}^{+})}\leq C2^{(k+3)(1+\frac{n}{2})}\|DU_1\|_{L^{2}(B_{\frac{1}{2}-\frac{1}{2^{k+3}}}^{+})},\quad \mbox{for}~k\geq 0.
\end{equation}

Indeed, for $y\in \Gamma_1^0$ with $|y'|\leq \frac{1}{2}-\frac{1}{2^{k+2}}$, a scaling argument applied to $U_1$ in $B_{\frac{1}{2^{k+3}}}^{+}(y)$, along with estimate \eqref{pfconstant3}, yields the following $L^{\infty}$-bound on $D^2U_1$ over $B_{\frac{1}{2^{k+4}}}^{+}(y)$:
\begin{equation}\label{pfconstantss41}
\|D^2U_1\|_{L^{\infty}(B_{\frac{1}{2^{k+4}}}^{+}(y))}\leq C2^{(k+3)(1+\frac{n}{2})}\|DU_1\|_{L^{2}(B_{\frac{1}{2^{k+3}}}^{+}(y))}\leq\,C2^{(k+3)(1+\frac{n}{2})}\|DU_1\|_{L^{2}(B_{\frac{1}{2}-\frac{1}{2^{k+3}}}^{+})}.
\end{equation}
For $y\in B^{+}_{\frac{1}{2}-\frac{1}{2^{k+2}}}$ with $y_n>\frac{1}{2^{k+4}}$, standard interior estimates for elliptic equations give an $L^{\infty}$-estimate for $D^2U_1$ in $B_{\frac{1}{2^{k+5}}}(y)$:
\begin{equation}\label{pfconstantss42}
\|D^2U_1\|_{L^{\infty}(B_{\frac{1}{2^{k+5}}}(y))}\leq C2^{(k+4)(1+\frac{n}{2})}\|DU_1\|_{L^{2}(B_{\frac{1}{2^{k+4}}}(y))}\leq\,C2^{(k+4)(1+\frac{n}{2})}\|DU_1\|_{L^{2}(B_{\frac{1}{2}-\frac{1}{2^{k+3}}}^{+})}.
\end{equation}
Hence, \eqref{pfconstantss41} and \eqref{pfconstantss42} together imply \eqref{pfconstantss4}. 

From \eqref{pfconstantss4} and the interpolation inequality, we obtain, for $k\geq 0$,
\begin{align*}%\label{pfconstantss43}
&\|D^2U_1\|_{L^{\infty}(B_{\frac{1}{2}-\frac{1}{2^{k+2}}}^{+})} \\
&\leq\frac{1}{3^{2(1+\frac{n}{2})}}\|D^2U_1\|_{L^{2}(B_{\frac{1}{2}-\frac{1}{2^{k+3}}}^{+})}+C2^{2(k+3)(1+\frac{n}{2})}\|U_1-(U_1)_{B^{+}_{1/2}}\|_{L^2(B^{+}_{\frac{1}{2}-\frac{1}{2^{k+3}}})}.
\end{align*}
Multiplying both sides by $\frac{1}{3^{2(k+3)(1+\frac{n}{2})}}$ and summing over $k = 0, 1, 2, \dots$, we derive 
\begin{equation*}
\begin{aligned}
&\sum_{k=0}^{+\infty}\frac{1}{3^{2(k+3)(1+\frac{n}{2})}}\|D^2U_1\|_{L^{\infty}(B^{+}_{2^{-1}-2^{-(k+2)}})}\\
&\leq\sum_{k=0}^{+\infty}\frac{1}{3^{2(k+4)( 1+\frac{n}{2})}}\|D^2U_1\|_{L^{\infty}(B^{+}_{2^{-1}-2^{-(k+3)}})}+C\sum_{k=0}^{+\infty}(\frac{2}{3})^{(k+3)(n+2)}\|U_1-(U_1)_{B^+_{1/2}}\|_{L^2(B^+_{1/2})}.
\end{aligned}
\end{equation*}
Since $\|D^2U_1\|_{L^{\infty}(B^+_{1/2})}<+\infty$, the summation on the right-hand side can be absorbed into the left-hand side, which implies 
\begin{equation}\label{D2ULinfty1}
\|D^2U_1\|_{L^{\infty}(B^{+}_{1/4})}\leq C\|U_1-(U_1)_{B^+_{1/2}}\|_{L^2(B^+_{1/2})}.
\end{equation}
Using the interpolation inequality and \eqref{D2ULinfty1}, we have
\begin{equation}\label{ULinfty}
\|DU_1\|_{L^{\infty}(B^{+}_{1/4})}\leq C\|U_1-(U_1)_{B^+_{1/2}}\|_{L^2(B^+_{1/2})}.
\end{equation}
Applying \eqref{ULinfty} to $D_{x_i}U_1(1\leq i\leq n-1)$ and using \eqref{partialnn}, we conclude
\begin{equation}\label{D2ULinfty}
\|D^2U_1\|_{L^{\infty}(B^{+}_{1/4})}\leq C\|DD_{x'}U_1\|_{L^{\infty}(B^{+}_{1/4})}\leq C\|D_{x'}U_1-(D_{x'}U_1)_{B^+_{1/2}}\|_{L^2(B^+_{1/2})}.
\end{equation}
Estimate \eqref{constant3} follows directly from \eqref{D2ULinfty}.
\end{proof}

Applying a scaling argument to Lemma \ref{propconstantequ} and combining it with standard elliptic interior estimates yields the following corollary, which will be useful later.

\begin{corollary}\label{constantlemmaR}
 Let $U_1\in H^1(B_{1}^{+})$ satisfy the constant-coefficient problem 
\begin{equation*}%\label{constantequR}
\left\{ \begin{aligned}
\partial_i(\bar{A}^{ij}\partial_j U_1)&=\partial_{i}\bar{F}^i &\mbox{in}&~ B_{1}^{+}, \\
\bar{h}\,U_1-\hat{\gamma}\, \bar{A}^{nj}\partial_jU_1&=-\hat{\gamma}\bar{F}^n &\mbox{on}&~\Gamma_{1}^{0},
\end{aligned} \right.
\end{equation*}
with $\bar{A}^{ij}$ satisfying the uniform ellipticity condition \eqref{uniformellipticity}. If $\hat{\gamma}\bar{h}>0$, then for $x_0\in B_{1/2}^{+}$ and $0<\rho<r\leq \frac{1}{2}$, it holds that
\begin{align*}
&\|D^kU_1\|_{L^{\infty}(B_{r/2}^{+})}\leq Cr^{-n/2-k}\Big(\|U_1\|_{L^2(B_{r}^{+})}+\|\bar{F}\|_{L^2(B_{r}^{+})}\Big),\quad\forall~k\geq0, \\%\label{constantR1}\\
&\|DU_1-(DU_1)_{B_{\rho}^{+}(x_0)}\|^2_{L^{2}(B_{\rho}^{+}(x_0))}\leq C(\frac{\rho}{r})^{n+2}\| DU_1-(DU_1)_{B_{r}^{+}(x_0)}\|^2_{L^{2}(B_{r}^{+}(x_0))}, %\label{constantR3}
\end{align*} 
where $C > 0$ depends only on $\lambda$, $\Lambda$, and $n$.
\end{corollary}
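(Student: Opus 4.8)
The plan is to derive both bounds from Lemma \ref{propconstantequ} by a rescaling argument, supplemented by classical interior estimates for the homogeneous divergence-form equation $\partial_i(\bar A^{ij}\partial_j U_1)=0$ — which is the form the interior equation takes here, since $\bar F$ is a constant vector. For the first inequality, fix $0<r\le\frac12$ and set $V(y):=U_1(ry)$ on $B_1^+$. A direct computation shows that $V$ solves a problem of the type \eqref{constantequ} with the same matrix $\bar A^{ij}$, with $\hat\gamma$ replaced by $\hat\gamma/r>0$, and with $\bar F^i$ replaced by the constant vector $r\bar F^i$, so that $\bar h\,(\hat\gamma/r)>0$ still holds. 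Since the constant in \eqref{constant1} is independent of $\hat\gamma$, applying \eqref{constant1} to $V$ and undoing the scaling — via $D^\alpha V(y)=r^{|\alpha|}(D^\alpha U_1)(ry)$, $\|V\|_{L^2(B_1^+)}=r^{-n/2}\|U_1\|_{L^2(B_r^+)}$, $\|r\bar F\|_{L^2(B_1^+)}=r^{1-n/2}\|\bar F\|_{L^2(B_r^+)}$, and $r\le1$ to absorb the surplus power of $r$ — yields the asserted $L^\infty$ bound for $D^kU_1$ on $B_{r/2}^+$.

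For the Morrey-type decay of $DU_1$ about a general $x_0=(x_0',d)\in B_{1/2}^+$ with $d=(x_0)_n\ge0$, I would use the standard boundary--interior dichotomy. At scales above $d$, one compares with the projected point $\hat x_0:=(x_0',0)\in\Gamma_1^0$: since $B_\rho^+(x_0)\subset B_{2\rho}^+(\hat x_0)$ and, conversely, $B_\rho^+(x_0)$ contains a half-ball centered at $\hat x_0$ of proportional radius, the rescaled form of \eqref{constant3} — obtained by mapping $B_s^+(\hat x_0)$ to $B_1^+$, again uniformly in $\hat\gamma$ — together with the monotonicity and near-comparability of the oscillation $\|DU_1-(DU_1)_{\mathcal{D}}\|_{L^2(\mathcal{D})}$ under passage to comparable concentric half-balls, gives the decay with exponent $n+2$ down to scale $\sim d$. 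At scales below $d$, $B_\rho^+(x_0)=B_\rho(x_0)$ is an interior ball on which $\partial_i(\bar A^{ij}\partial_jU_1)=0$ with no right-hand side, so $U_1$ is smooth there and the same decay follows from the interior gradient estimate applied to $\partial_kU_1$ minus its mean, followed by a Taylor expansion. Chaining the two regimes at the transition scale $\sim d$, and noting that when $\rho\ge r/4$ the desired inequality is immediate from minimality of the mean and domain monotonicity, yields the second inequality for all $0<\rho<r\le\frac12$.

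I do not anticipate a genuine obstacle; the two points requiring care are, first, verifying that each rescaling preserves the $\hat\gamma$-independence of the constants — which is exactly what the uniform bound in Lemma \ref{propconstantequ} guarantees — and, second, the geometric bookkeeping in the dichotomy, namely the transition scale $\rho\sim d\sim r$ and the comparison between $x_0$-centered and $\hat x_0$-centered half-balls, which is routine but slightly tedious. No analytic ingredient beyond Lemma \ref{propconstantequ} and textbook interior elliptic estimates is needed.
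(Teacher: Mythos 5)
Your proposal is correct and follows essentially the same route as the paper, which proves this corollary precisely by rescaling (and translating) Lemma \ref{propconstantequ} together with standard interior estimates for the constant-coefficient homogeneous equation, combined through the usual boundary--interior dichotomy. The only imprecision is the parenthetical claim that $B_{\rho}^{+}(x_0)$ contains a half-ball centered at $\hat x_0=(x_0',0)$ of radius proportional to $\rho$ whenever $(x_0)_n\le\rho$ (this needs $(x_0)_n\lesssim\rho/2$, and in fact the chaining only requires the inclusion $B_{r/2}^{+}(\hat x_0)\subset B_{r}^{+}(x_0)$ at the outer scale together with $B_{\rho}^{+}(x_0)\subset B_{2\rho}^{+}(\hat x_0)$ at the inner scale), which is exactly the routine bookkeeping you already flag.
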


\subsection{Proof of Theorem \ref{gradientsmallgamma3}}\label{subsec2.2}

\begin{proof}[Proof of Theorem \ref{gradientsmallgamma3}]
%We employ a frozen coefficient scheme, decomposing the solution into a constant-coefficient part and a variable-coefficient correction. The core of the argument involves deriving a Campanato-type inequality for the gradient, which is iterated to establish uniform H\"older continuity independent of the parameter $\hat{\gamma}$.

By considering the function $U-\frac{\phi}{h}$, we may assume without loss of generality that $\phi = 0$. Throughout the proof, we define the quantity
\begin{equation*}
\mathcal{Q}:=\|U\|_{L^2(B_{1/2}^{+})}+\|F\|_{C^{\alpha}(B_{1/2}^{+})}+\|G\|_{L^{\infty}(B_{1/2}^{+})}.
\end{equation*}

{\bf Step 1. Decomposition of $U$.}
Let $x_0 \in B^{+}_{1/4}$ and $r\in(0,r_0)$, where $r_0\in(0,\frac{1}{8})$ will be chosen sufficiently small later. We decompose the solution via the method of frozen coefficients: 
$$U:=U_1+U_2\quad\mbox{in}~ B_{r}^{+}(x_0).$$ 
This decomposition separates the problem into a constant-coefficient part, which is tractable via classical methods, and a variable-coefficient correction that will be shown to be small. 

If $\Gamma_r^0(x_0)$ is nonempty, then $U_1$ solves the following constant–coefficient problem
\begin{equation}\label{equu1}
\left\{ \begin{aligned}
\partial_{i}(\bar{A}^{ij}\partial_jU_1)&=\partial_i\bar{F}^i &\mbox{in}&~ B_{r}^{+}(x_0), \\
\bar{h}\,U_1-\hat{\gamma}\,\bar{A}^{nj}\partial_{j}U_1&=-\hat{\gamma}\bar{F^n} &\mbox{on}&~\Gamma_{r}^{0}(x_0), \\
U_1&=U &\mbox{on}&~\Gamma^{+}_{r}(x_0), 
\end{aligned} \right.
\end{equation}
and $U_2$ satisfies the variable–coefficient problem
\begin{equation}\label{equu2}
\left\{ \begin{aligned}
\partial_{i}(\bar{A}^{ij}\partial_jU_2)=-\partial_{i}\tilde{F}^i+\partial_{i}(F^i-\bar{F}^i)&+G &\mbox{in}&~ B_{r}^{+}(x_0), \\
\bar{h}U_2-\hat{\gamma}\left(A^{nj}\partial_{j}U_2+\tilde{F}^n-F^n+\bar{F}^n\right)&=(\bar{h}-h)U &\mbox{on}&~\Gamma_{r}^{0}(x_0), \\
U_2&= 0 &\mbox{on}&~\Gamma^{+}_{r}(x_0),
\end{aligned} \right.
\end{equation}
where
$$\bar{A}^{ij} = A^{ij}(x_0),~ \bar{F}^i = F^i(x_0',0), ~\bar{h} = h(x_0',0),~\mbox{ and}~~\tilde{F}^i:=(A^{ij}-\bar{A}^{ij})\partial_{j}U,~ i=1,2,\dots,n.$$
If $\Gamma_r^0(x_0)$ is empty, then $U_1\in H^1(B_{r}(x_0))$ with $U_1=U$ on $\partial B_{r}(x_0)$ and $U_2\in H_0^1(B_{r}(x_0))$ satisfy the first line of \eqref{equu1} and \eqref{equu2}, respectively.

{\bf Step 2. Energy estimate for $U_2$.} We now derive an energy estimate to control the remainder term $U_2$. Multiply the equation in \eqref{equu2} by $U_2$ and integrate by parts over $B_{r}^{+}(x_0)$: 
\begin{equation}\label{integralU2}
\int_{B_r^{+}(x_0)}A^{ij}\partial_{j}U_2\partial_iU_2+\left(\tilde{F}^i-F^i+\bar{F}^i\right) \partial_iU_2+GU_2\,dx=\int_{\Gamma_{r}^{0}(x_0)}\frac{1}{\hat{\gamma}}U_2\Big((\bar{h}-h)U-\bar{h}U_2\Big)\,dx'.
\end{equation}
We estimate each term. By uniform ellipticity,
$$\int_{ B_{r}^{+}(x_0)}A^{ij}\partial_{j}U_2\partial_{i}U_2 \,dx\geq \lambda\int_{ B_{r}^{+}(x_0)}|DU_2|^2 \,dx.$$
For the next term, we apply Cauchy's inequality
\begin{equation*}
\begin{aligned}
&\big|\int_{ B_{r}^{+}(x_0)}\Big(\tilde{F}^i+\bar{F}^i-F^i\Big)\partial_{i}U_2\,dx\big| \\
&\leq\,\frac{\lambda}{8}\int_{ B_{r}^{+}(x_0)}|DU_2|^2\,dx +C \Big(\|DU\|_{L^{\infty}(B_{r}^{+}(x_0))}^2+\mathcal{Q}^2\Big)r^{n+2\alpha}.
\end{aligned}
\end{equation*}
For the term involving $G$, since $U_2=0$ on $\Gamma^{+}_{r}(x_0)$, we use the boundary Poincar\'{e} inequality %$\int_{B_r^{+}(x_0)}U_2^2\,dx\leq Cr^2\int_{B_r^{+}(x_0)}|DU_2|^2\,dx$ 
and Cauchy’s inequality:
\begin{equation*}
\begin{aligned}
\big|\int_{ B_{r}^{+}(x_0)}GU_2\,dx\big|&\leq \frac{\lambda}{C}\int_{ B_{r}^{+}(x_0)}\frac{U_2^2}{r^2}\,dx+C\int_{ B_{r}^{+}(x_0)}G^2r^2\,dx\\
&\leq \frac{\lambda}{8}\int_{ B_{r}^{+}(x_0)}|DU_2|^2\,dx+C\mathcal{Q}^2r^{n+2}.
\end{aligned}
\end{equation*}
Finally, we tackle the boundary integral in \eqref{integralU2}. Using the boundary condition $\frac{U}{\hat{\gamma}}=\frac{A^{nj}\partial_jU-F^n}{h}$ on $\Gamma_r^0$ along with the trace theorem and Poincar\'{e} inequality to $U_2$ in $B_r^{+}$ again, we have
\begin{equation*}
\begin{aligned}
&\int_{\Gamma_{r}^{0}(x_0)}\frac{1}{\hat{\gamma}}U_2((\bar{h}-h)U-\bar{h}U_2)\,dx'\leq\,\int_{\Gamma_{r}^{0}(x_0)}\frac{U}{\hat{\gamma}}(\bar{h}-h)U_2\,dx'\\
&\leq\,C\int_{\Gamma_{r}^{0}(x_0)}\Big(\|DU\|_{L^{\infty}(B_{r}^{+}(x_0))}+\mathcal{Q}\Big)|(\bar{h}-h)U_2|\,dx'\\
&\leq\,Cr^{n+2\alpha}\Big(\|DU\|_{L^{\infty}(B_{r}^{+}(x_0))}^2+\mathcal{Q}^2\Big)+\frac{\lambda}{Cr} \int_{\Gamma_{r}^{0}(x_0)}U_2^2\,dx'\\
&\leq\,Cr^{n+2\alpha}\Big(\|DU\|_{L^{\infty}(B_{r}^{+}(x_0))}^2+\mathcal{Q}^2\Big)+\frac{\lambda}{8}\int_{B_r^{+}}|DU_2|^2\,dx.
\end{aligned}
\end{equation*}
Substituting all these estimates into \eqref{integralU2}, we obtain 
\begin{equation}\label{equ_DU2}
\int_{ B_{r}^{+}(x_0)}|DU_2|^2\,dx\leq C\Big(\|DU\|_{L^{\infty}(B_{r}^{+}(x_0))}^2+\mathcal{Q}^2\Big)r^{n+2\alpha}.
\end{equation}

{\bf Step 3. Iterative argument for the gradient estimate.}
By Corollary \ref{constantlemmaR} and inequality \eqref{equ_DU2}, for any $0<\rho<r\leq r_0$, we have
\begin{equation}\label{holderDu}
\begin{aligned}
&\int_{B_{\rho}^{+}(x_0)}|DU-(DU)_{B_{\rho}^{+}(x_0)}|^2\,dx \\
&\leq\,C\int_{B_{\rho}^{+}(x_0)}|DU_1-(DU_1)_{B_{\rho}^{+}(x_0)}|^2\,dx+C\int_{B_{\rho}^{+}(x_0)}|DU_2-(DU_2)_{B_{\rho}^{+}(x_0)}|^2\,dx\\
&\leq\,C\left(\frac{\rho}{r}\right)^{n+2}\int_{ B_{r}^{+}(x_0)}|DU_1-(DU_1)_{ B_{r}^{+}(x_0)}|^2\,dx+C\int_{ B_{r}^{+}(x_0)}|DU_2|^2\,dx\\
&\leq\,C\left(\frac{\rho}{r}\right)^{n+2}\int_{ B_{r}^{+}(x_0)}|DU-(DU)_{ B_{r}^{+}(x_0)}|^2\,dx+C\Big(\|DU\|_{L^{\infty}(B_{r}^{+}(x_0))}^2+\mathcal{Q}^2\Big)r^{n+2\alpha}.
\end{aligned}
\end{equation}
Define the averaged quantities:
$$\varPhi_{\rho}(x_0):=\fint_{B_{\rho}^{+}(x_0)}|DU-(DU)_{B_{\rho}^{+}(x_0)}|^2\,dx,\quad \varPsi_{\rho}(x_0):=\fint_{B_{\rho}^{+}(x_0)}|DU|\,dx.$$
From \eqref{holderDu}, we derive 
$$\varPhi_{\sigma r}(x_0)\leq C_0\sigma^2\varPhi_{r}(x_0)+C\sigma^{-n}r^{2\alpha}\Big(\|DU\|_{L^{\infty}(B_{r}^{+}(x_0))}^2+\mathcal{Q}^2\Big).$$
Choose $\sigma$ such that $C_0\sigma^2\leq \frac{1}{4}$. Iterating this inequality yields
\begin{equation}\label{iterationPhi}
\varPhi_{\sigma^jr}(x_0)\leq \frac{1}{4^j}\varPhi_{r}(x_0)+C\sum_{i=1}^{j}\frac{1}{4^{j-i}}(\sigma^ir)^{2\alpha}\Big(\|DU\|_{L^{\infty}(B_{r}^{+}(x_0))}^2+\mathcal{Q}^2\Big).
\end{equation}
By the triangle inequality and H\"{o}lder's inequality, 
\begin{equation}\label{relationPhiPsi}
\begin{aligned}
|\varPsi_{\sigma r}(x_0)-\varPsi_{ r}(x_0)|\leq&\,\fint_{B_{\sigma r}^{+}(x_0)}|DU-(DU)_{B_{r}^{+}(x_0)}|\,dx\\
\leq&\,C \fint_{B_{ r}^{+}(x_0)}|DU-(DU)_{B_{r}^{+}(x_0)}|\,dx\leq C\varPhi_{ r}(x_0)^{1/2}.
\end{aligned}
\end{equation}
Using \eqref{iterationPhi} and \eqref{relationPhiPsi}, we obtain
\begin{equation*}
\begin{aligned}
\big|\varPsi_{\sigma^j r}(x_0)-\varPsi_{\sigma^{j-1} r}(x_0)\big|\leq&\,C\varPhi_{ \sigma^{j-1}r}(x_0)^{1/2}\\
\leq&\,C2^{-j+1}\varPhi_{r}(x_0)^{1/2}+C\sum_{i=1}^{j-1}\frac{1}{2^{j-i}}(\sigma^ir)^{\alpha}\Big(\|DU\|_{L^{\infty}(B_{r}^{+}(x_0))}+\mathcal{Q}\Big)
\end{aligned}
\end{equation*}
Summing from $j=1$ to $k$,
\begin{equation}\label{psisigmak}
\begin{aligned}
\varPsi_{\sigma^k r}(x_0)\leq&\,\varPsi_{ r}(x_0)+\sum_{j=1}^{k}\big|\varPsi_{\sigma^j r}(x_0)-\varPsi_{\sigma^{j-1} r}(x_0)\big|\\
\leq&\,\varPsi_{ r}(x_0)+C\varPhi_{r}(x_0)^{1/2}+C\sum_{j=1}^{k}\sum_{i=1}^{j-1}\frac{1}{2^{j-i}}(\sigma^ir)^{\alpha}\Big(\|DU\|_{L^{\infty}(B_{r}^{+}(x_0))}+\mathcal{Q}\Big)\\
\leq&\,\varPsi_{ r}(x_0)+C\varPhi_{r}(x_0)^{1/2}+C \sum_{i=1}^{k-1} (\sigma^ir)^{\alpha}\Big(\|DU\|_{L^{\infty}(B_{r}^{+}(x_0))}+\mathcal{Q}\Big).
\end{aligned}
\end{equation}
 Because $Du$ is continuous, letting $k\to\infty$ in \eqref{psisigmak} yields, for $r\in(0,r_0)$
\begin{equation}\label{Duxlinfty}
|DU(x_0)|\leq\varPsi_{ r}(x_0)+C\varPhi_{r}(x_0)^{1/2}+C_1r^{\alpha}\Big(\|DU\|_{L^{\infty}(B_{r}^{+}(x_0))}+\mathcal{Q}\Big).
\end{equation}
Choosing $r_0$ small such that $C_1r_0^{\alpha}\leq \frac{1}{3^n}$ in \eqref{Duxlinfty}. Applying H\"older's inequality, we then obtain
\begin{equation}\label{Duxlinfty1}
|DU(x_0)|\leq Cr^{-\frac{n}{2}} \Big(\|DU\|_{L^2(B^{+}_{3/8})}+\mathcal{Q}\Big)+\frac{1}{3^n}\|DU\|_{L^{\infty}(B_{r}^{+}(x_0))}.
\end{equation}
Now take $x_0\in B^{+}_{\frac{1}{4}-\frac{r_0}{4^{k+2}}}$ and $r=\frac{1}{4^{k+3}}r_0$ in \eqref{Duxlinfty1}. Since $x_0$ is arbitrary, it follows that
\begin{equation*}%\label{Duxlinfty2}
\|DU\|_{L^{\infty}(B^{+}_{\frac{1}{4}-\frac{r_0}{4^{k+2}}})}\leq C2^{n(k+3)}\Big(\|DU\|_{L^2(B^{+}_{3/8})}+\mathcal{Q}\Big)+\frac{1}{3^n}\|DU\|_{L^{\infty}(B^{+}_{\frac{1}{4}-\frac{r_0}{4^{k+3}}})}.
\end{equation*}
Multiply both sides by $\frac{1}{3^{n(k+3)}}$ and sum over $k=0,1,2,\ldots$,
\begin{equation*} 
\begin{aligned}
&\sum_{k=0}^{\infty}\frac{1}{3^{n(k+3)}}\|DU\|_{L^{\infty}(B^{+}_{\frac{1}{4}-\frac{r_0}{4^{k+2}}})}\\
&\leq\,C\sum_{k=0}^{\infty}\big(\frac{2 }{3 }\big)^{n(k+3)}\Big(\|DU\|_{L^2(B^{+}_{3/8})}+\mathcal{Q}\Big)+\sum_{k=0}^{\infty}\frac{1}{3^{n(k+4)}}\|DU\|_{L^{\infty}(B^{+}_{\frac{1}{4}-\frac{r_0}{4^{k+3}}})}.
\end{aligned}
\end{equation*}
Since $\|DU\|_{L^{\infty}(B^{+}_{1/2})}<+\infty$, we absorb the right-hand summation to obtain 
\begin{equation}\label{Duxinfty3}
 \|DU\|_{L^{\infty}(B^{+}_{1/8})}\leq C\Big(\|DU\|_{L^2(B^{+}_{3/8})}+\mathcal{Q}\Big).
\end{equation}

Recall $\phi=0$. Multiply \eqref{equgradientsmallgamma} by $U\xi^2$ where $\xi\in C_c^{\infty}(B_{1/2})$ satisfies $\xi=1$ in $B^{+}_{3/8}$ and $|D\xi|\leq C$ in $B_{1/2}^{+}$. Integrating by parts and Cauchy's inequality yield 
$\|DU\|_{L^2(B^{+}_{3/8})}\leq C\mathcal{Q}.$
Substituting into \eqref{Duxinfty3} gives
$$\|DU\|_{L^{\infty}(B^{+}_{1/8})}\leq C\mathcal{Q}. $$
From this, \eqref{holderDu}, and Campanato's characterization of H\"older continuous functions, we conclude
$$[DU]_{C^{\alpha}(B^{+}_{1/16})}\leq C\mathcal{Q}.$$
Finally, by interpolation inequalities, we use $\|U\|_{L^{2}(B^{+}_{1/8})}$ and $\|DU\|_{L^{\infty}(B^{+}_{1/8})}$ to estimate $\|U\|_{L^{\infty}(B^{+}_{1/8})}$. This completes the proof of Theorem \ref{gradientsmallgamma3}.
\end{proof} 

\begin{lemma}\label{grdeientsmallgamma6}
Let $U\in H^{1}(B_{1}^{+})$ solve \eqref{equgradientsmallgamma} with $A^{ij}\in C^{\alpha}(B_{1}^{+})$ satisfying \eqref{uniformelliptic} in $B_{1}^{+}$.
Suppose $F^i\in C^{\alpha}(B_{1}^{+})$, $G\in L^{\infty}(B_{1}^{+})$, $h\in C^{1,\alpha}(B_{1}^{+})$, $\,\phi\in C^{\alpha}(B_{1}^{+})$, and $|\hat{\gamma}|>1$. Then it holds that
\begin{multline*}%\label{gradientsmallgamma8}
 \|D U\|_{C^{\alpha}(B_{1/4}^{+})}\leq C\left(\|D U\|_{L^2(B_{3/4}^{+})}+\|\frac{U}{\hat{\gamma}}\|_{L^2(B_{3/4}^{+})}\right.\\
 \left.+\|\frac{\phi}{\hat{\gamma}}\|_{C^{\alpha}(B_{3/4}^{+})}+\|F\|_{C^{\alpha}(B_{3/4}^{+})}+\|G\|_{L^{\infty}(B_{3/4}^{+})}\right),
\end{multline*}
where $C$ depends on $\lambda,\Lambda,n,\|A^{ij}\|_{C^{\alpha}(B_{1}^{+})}$ and $\|h\|_{C^{\alpha}(B_{1}^{+})}$, but is independent of $\hat{\gamma}$.
\end{lemma}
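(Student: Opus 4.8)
\emph{Plan of proof.} The strategy is to exploit the hypothesis $|\hat\gamma|>1$, which places the problem squarely in the ``large $\hat\gamma$'' regime where the Robin condition is merely a bounded perturbation of a homogeneous Neumann condition. Dividing the boundary condition in \eqref{equgradientsmallgamma} by $\hat\gamma$ puts it in conormal form,
$$
A^{ij}\partial_j U\,\nu_i+\tfrac{h}{\hat\gamma}\,U=F^i\nu_i+\tfrac{\phi}{\hat\gamma}\qquad\text{on }\Gamma_1^0,
$$
in which the zeroth-order coefficient $h/\hat\gamma$ is uniformly bounded, $\|h/\hat\gamma\|_{C^{\alpha}(B_1^+)}\le\|h\|_{C^{\alpha}(B_1^+)}$, and every datum carrying a factor $\hat\gamma^{-1}$ is controlled uniformly in $\hat\gamma$ by the corresponding ``unscaled'' norm. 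I would then run two stages: a uniform Hölder bound for $U/\hat\gamma$, followed by a conormal Schauder estimate for $U$ itself.

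For the first stage I record that $V:=U/\hat\gamma$ solves a conormal problem of exactly the same structure, with data $F^i/\hat\gamma$, $G/\hat\gamma$, $F^i\nu_i/\hat\gamma+\phi/\hat\gamma^2$ and boundary coefficient $h/\hat\gamma$, all bounded (uniformly in $\hat\gamma$, using $|\hat\gamma|>1$) by $\|F\|_{C^{\alpha}}$, $\|G\|_{L^\infty}$, $\|\phi/\hat\gamma\|_{C^{\alpha}}$ and $\|h\|_{C^{\alpha}}$, while $\|V\|_{L^2(B_{3/4}^+)}=\|U/\hat\gamma\|_{L^2(B_{3/4}^+)}$ is precisely one of the quantities on the right-hand side of the claim. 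Since $h/\hat\gamma$ is uniformly bounded, the zeroth-order boundary term is a genuine lower-order perturbation of either sign, absorbable through the trace interpolation inequality $\int_{\Gamma_r^0}w^2\le\epsilon\int_{B_r^+}|Dw|^2+C_\epsilon\int_{B_r^+}w^2$ in the course of Moser's iteration; hence the De Giorgi--Nash--Moser estimate gives $V\in C^{\alpha_1}$ up to $\Gamma_{5/8}^0$ for some $\alpha_1\in(0,1)$ depending only on $n,\lambda,\Lambda$, with a bound of the stated form. Rewriting the boundary condition for $V$ purely conormally, $A^{ij}\partial_j V\,\nu_i=F^i\nu_i/\hat\gamma+\phi/\hat\gamma^2-(h/\hat\gamma)V$, its right-hand side then lies in $C^{\min(\alpha,\alpha_1)}$ (using $h\in C^{\alpha}$ and the bound just obtained for $V$), so the boundary Schauder estimate for the conormal problem upgrades this to $V\in C^{1,\min(\alpha,\alpha_1)}$, hence $V\in C^\alpha$ up to $\Gamma_{1/2}^0$ (one round of bootstrap suffices, since $1+\min(\alpha,\alpha_1)>\alpha$), with a constant depending only on $n,\lambda,\Lambda,\|A^{ij}\|_{C^{\alpha}},\|h\|_{C^{\alpha}}$ and independent of $\hat\gamma$.

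For the second stage I view $U$ as a solution of $\partial_i(A^{ij}\partial_j U)=\partial_i F^i+G$ with the now-\emph{fixed} conormal datum $\psi:=F^i\nu_i+\phi/\hat\gamma-hV$ on $\Gamma_1^0$; by the previous stage and $h\in C^{\alpha}$, $\|\psi\|_{C^{\alpha}(\Gamma_{1/2}^0)}$ is bounded by a constant times the right-hand side of the claim. Because the linear conormal problem with $\psi$ treated as given is invariant under $U\mapsto U+c$, I would apply the classical conormal Schauder estimate \cite{Lieberm} to $U-(U)_{B_{1/2}^+}$ and then invoke Poincaré's inequality to replace the resulting $\|U-(U)_{B_{1/2}^+}\|_{L^2(B_{1/2}^+)}$ by $\|DU\|_{L^2(B_{1/2}^+)}$, obtaining
$$
\|DU\|_{C^{\alpha}(B_{1/4}^+)}\le C\Big(\|DU\|_{L^2(B_{1/2}^+)}+\|F\|_{C^{\alpha}(B_{1/2}^+)}+\|G\|_{L^\infty(B_{1/2}^+)}+\|\psi\|_{C^{\alpha}(\Gamma_{1/2}^0)}\Big).
$$
Combining this with the bound for $\|\psi\|_{C^\alpha}$ and enlarging the domains to $B_{3/4}^+$ yields the asserted estimate, with a $\hat\gamma$-independent constant.

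The main obstacle I anticipate is bookkeeping the uniformity in $\hat\gamma$: one must verify at every step that the factor $\hat\gamma^{-1}$ only ever helps — which is exactly what $|\hat\gamma|>1$ guarantees, by turning $h/\hat\gamma$ into a uniformly bounded boundary coefficient and each ``$/\hat\gamma$'' datum into something controlled by the unscaled norms — and, relatedly, that in the last stage the translation trick is applied to the problem with $\psi$ \emph{frozen}, since $\psi$ itself contains $U$ through the term $hV$. A secondary technical point is the single bootstrap in the first stage needed to pass from the De Giorgi--Nash--Moser exponent $\alpha_1$ to the prescribed exponent $\alpha$. If a fully self-contained argument is preferred, the classical conormal Schauder and De Giorgi--Nash--Moser estimates invoked above can instead be reproved by the frozen-coefficient/Campanato scheme of Section \ref{sec_2}, dropping the sign condition $\hat\gamma\bar h>0$ from Lemma \ref{propconstantequ} and absorbing the resulting boundary term by the same trace interpolation inequality.
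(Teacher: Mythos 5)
Your proposal is correct, but it takes a more roundabout route than the paper. The paper's proof is essentially three lines: set $\widehat{U}=U-(U)_{B_{3/4}^{+}}$, divide the boundary condition by $\hat\gamma$ so that it reads $\frac{h}{\hat\gamma}\widehat{U}-A^{nj}\partial_j\widehat{U}=-F^n+\frac{1}{\hat\gamma}\bigl(\phi-h\,(U)_{B_{3/4}^{+}}\bigr)$ with bounded Robin coefficient $h/\hat\gamma$, and apply Lieberman's oblique-derivative Schauder estimate (Theorem 5.54) once; the shift by the mean is harmless because the extra boundary datum $\frac{h}{\hat\gamma}(U)_{B_{3/4}^{+}}$ is controlled by $\|U/\hat\gamma\|_{L^2}$ (H\"older), and Poincar\'e converts $\|\widehat{U}\|_{L^2}$ into $\|DU\|_{L^2}$. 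You share the two key structural moves — dividing by $\hat\gamma$ to exploit $|\hat\gamma|>1$, and subtracting a constant plus Poincar\'e to land on $\|DU\|_{L^2}$ and $\|U/\hat\gamma\|_{L^2}$ — but instead of invoking the Robin-form Schauder estimate directly, you first run De Giorgi--Nash--Moser plus one conormal bootstrap on $V=U/\hat\gamma$ to get $\|V\|_{C^{\alpha}}$ controlled by the right-hand side, and then freeze $\psi=F^i\nu_i+\phi/\hat\gamma-hV$ as pure Neumann data and apply only the conormal Schauder estimate to $U-(U)_{B_{1/2}^{+}}$; you correctly note that the translation trick is legitimate only in this frozen, zeroth-order-free form, and that no circularity arises since stage one bounds $\|U/\hat\gamma\|_{C^{\alpha}}$ rather than $\|DU\|_{C^{\alpha}}$. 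What your version buys is that it needs only the pure conormal Schauder estimate and standard boundary DGNM (both insensitive to the sign of $h/\hat\gamma$, matching the absence of a sign hypothesis in the lemma), at the cost of an extra regularity stage and the mild bookkeeping of the intermediate exponent $\alpha_1$; the paper's version is shorter but leans on the stronger black box that already handles the zeroth-order boundary term.
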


\begin{proof}
Define $\widehat{U}=U-(U)_{B_{3/4}^{+}}$. Then $\widehat{U}$ solves 
\begin{equation*} 
\left\{ \begin{aligned}
\partial_i(A^{ij}\partial_j\widehat{U})&=\partial_iF^i+G& \mbox{in}&~ B_{1}^{+},\\
\frac{h}{\hat{\gamma}}\widehat{U} - A^{nj}\partial_j\widehat{U}&=-F^n+\frac{1}{\hat{\gamma}}(\phi-h\,(U)_{B_{3/4}^{+}})& \mbox{on}&~\Gamma_{1}^{0}.
\end{aligned} \right.
\end{equation*}
By using  \cite{Lieberm}*{Theorem 5.54}, 
\begin{multline*}
\|D\widehat{U}\|_{C^{\alpha}(B_{1/2}^{+})}\leq C\left(\|\widehat{U}\|_{L^2(B_{3/4}^{+})}+\|\frac{1}{\hat{\gamma}}(\phi-h\,(U)_{B_{3/4}^{+}})\|_{C^{\alpha}(B_{3/4}^{+})}\right.\\
\left.+\|F\|_{C^{\alpha}(B_{3/4}^{+})}+\|G\|_{L^{\infty}(B_{3/4}^{+})}\right).
\end{multline*}
The Poincar\'e inequality gives $\|\widehat{U}\|_{L^2(B_{3/4}^{+})}\leq C\|D\widehat{U}\|_{L^2(B_{3/4}^{+})}.$ Using H\"{o}lder inequality, we have
$$|(U)_{B_{3/4}^{+}}|\leq C\|{U}\|_{L^2(B_{3/4}^{+})}.$$ Since $DU=D\widehat{U}$, the proof is finished.
\end{proof}
 
\section{Gradient estimate in a reduced dimensional space for small $\gamma$}\label{sec_3}

In this section, we study the elliptic equation $L\mathtt{U}=\partial_{i}\mathtt{F}^{i}+\mathtt{G}$ in $\bR^{n-1}$, where the operator $L$ is defined by 
$$L\mathtt{U}=\partial_{i}(A^{ij}(x')\partial_{j}\mathtt{U})+c(x')\mathtt{U}$$
with $c(x')=-\frac{1}{\gamma}$ for a small positive constant $\gamma$. Our primary objective is to characterize the behavior of the solution and its dependence on $\gamma$ in the small-$\gamma$ regime. In Subsection \ref{subsection31}, we consider the case where the principal coefficients $A^{ij}$ are uniformly elliptic. By combining Agmon's method with the weak Harnack inequality, we derive an iterative formula for the $L^\infty$-norm of $\mathtt{U}$, which ultimately establishes the regularity of $\mathtt{U}$ as $\gamma \to 0$. Subsection \ref{subsection32} addresses the degenerate case, employing the comparison principle, iterative techniques, and the regularity results from Subsection \ref{subsection31}. We remark that, through an argument building on Subsection \ref{subsection31}, the assumption that $c(x')$ is constant can be relaxed to the case where $c(x')$ is measurable and satisfies $\frac{\lambda}{\gamma}<-c(x')\leq\frac{\Lambda}{\gamma}$ for positive constants $\lambda$ and $\Lambda$. Consequently, the constant-coefficient assumption in the theorem of Subsection \ref{subsection32} can be similarly extended within this measurable and bounded framework.

To distinguish from the full-space $\mathbb{R}^n$ notation, we denote by $\mathtt{B}_r(x_0')$ the ball in $\mathbb{R}^{n-1}$ of radius $r$ centered at $x_0'$, writing $\mathtt{B}_r$ when centered at the origin. Functions on $\mathbb{R}^{n-1}$ are denoted by $\mathtt{U}$, $\mathtt{F}$, $\mathtt{G}$, etc.

\subsection{Interior and global estimates for uniform elliptic equations}\label{subsection31}

\begin{prop}\label{thmgammato0}
Let $n\geq 2$, and let $\mathtt{U}\in H^1(\mathtt{B}_{1})$ be a solution of
\begin{equation}\label{inGamma}
\sum\limits_{i,j=1}^{n-1}\partial_{i}\Big(A^{ij}(x')\partial_{j}\mathtt{U}\Big)-\frac{1}{\gamma}\mathtt{U}=\sum\limits_{i=1}^{n-1}\partial_i\mathtt{F}^i+\mathtt{G}\quad \mbox{in}~\mathtt{B}_{1}\subset \bR^{n-1},
\end{equation}
where the coefficients $A^{ij}\in L^{\infty}(\mathtt{B}_{1})$ satisfy the uniformly elliptic condition \eqref{uniformelliptic} in $\mathtt{B}_{1}$ and $\gamma > 0$.
 
(i) Assume $\mathtt{F}\in L^{\infty}(\mathtt{B}_{1})$ and $\mathtt{G}\in L^{\infty}(\mathtt{B}_{1})$. Then there exists $\bar{\gamma}_0=\bar{\gamma}_0(\lambda,\Lambda,n, \|A^{ij}\|_{L^{\infty}(\mathtt{B}_{1})})$ such that for all $\gamma\in(0,\bar{\gamma}_0)$,
\begin{equation}\label{equgammato021}
\|\mathtt{U}\|_{L^{\infty}(\mathtt{B}_{1/4})}\leq\, C\big(\frac{1}{2}\big)^{\frac{1}{8{C}_{0}\sqrt{\gamma}}}\|\mathtt{U}\|_{L^2(\mathtt{B}_{3/4})}+C\sqrt{\gamma}\|\mathtt{F}\|_{L^{\infty}(\mathtt{B}_{3/4})}+C\gamma\|\mathtt{G}\|_{L^{\infty}(\mathtt{B}_{3/4})}, 
\end{equation}
where $C_0$ and $C$ depend only on $\lambda,\Lambda,n$ and $\|A^{ij}\|_{L^{\infty}(\mathtt{B}_{1})}$, and are independent of  $\frac{1}{\gamma}$.
 
(ii) Assume $A^{ij}\in C^{\alpha}(\mathtt{B}_{1})$, $\mathtt{F}\in C^{\alpha}(\mathtt{B}_{1})$, and $\mathtt{G}\in L^{\infty}(\mathtt{B}_{1})$. Then there exists $\bar{\gamma}_0=\bar{\gamma}_0(\lambda,\Lambda,n, \|A^{ij}\|_{C^{\alpha}(\mathtt{B}_{1})})$ such that for all $\gamma\in(0,\bar{\gamma}_0)$,
\begin{equation}\label{equgammato0}
\|\mathtt{U}\|_{L^{\infty}(\mathtt{B}_{1/4})}\leq\, C\big(\frac{1}{2}\big)^{\frac{1}{8{C}_{0}\sqrt{\gamma}}}\|\mathtt{U}\|_{L^2(\mathtt{B}_{1/2})}+ C\gamma^{\frac{1+\alpha}{2}} [\mathtt{F}]_{C^{\alpha}(\mathtt{B}_{1/2})}+C\gamma\|\mathtt{G}\|_{L^{\infty}(\mathtt{B}_{1/2})},
\end{equation}
and
\begin{equation}\label{equgammato02}
\begin{aligned}
&\|D \mathtt{U}\|_{L^{\infty}(\mathtt{B}_{1/4})}+{\gamma}^{\frac{\alpha}{2}}[D \mathtt{U}]_{C^{\alpha}(\mathtt{B}_{1/4})}\\
&\leq C\big(\frac{1}{2}\big)^{\frac{1}{8{C}_{0}\sqrt{\gamma}}}\gamma^{-1/2}\|\mathtt{U}\|_{L^2(\mathtt{B}_{3/4})}+C\gamma^{\frac{\alpha}{2}} [\mathtt{F}]_{C^{\alpha}(\mathtt{B}_{3/4})}+C\gamma^{1/2}\|\mathtt{G}\|_{L^{\infty}(\mathtt{B}_{3/4})},
\end{aligned}
\end{equation}
where $C_0$ and $C$ depend only on $\lambda,\Lambda,n$ and $\|A^{ij}\|_{C^{\alpha}(\mathtt{B}_{1})}$, and are independent of $\frac{1}{\gamma}$.
\end{prop}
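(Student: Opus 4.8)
The plan is to exploit that the intrinsic length scale of equation \eqref{inGamma} is $\sqrt{\gamma}$. Writing $x'=x_0'+\sqrt{\gamma}\,y'$, the rescaled function $V(y'):=\mathtt{U}(x_0'+\sqrt{\gamma}\,y')$ solves $\partial_i(\tilde A^{ij}\partial_j V)-V=\sqrt{\gamma}\,\partial_i\tilde{\mathtt{F}}^i+\gamma\,\tilde{\mathtt{G}}$ on $\mathtt B_1$, where $\tilde A^{ij}(y')=A^{ij}(x_0'+\sqrt{\gamma}\,y')$, $\tilde{\mathtt{F}}(y')=\mathtt{F}(x_0'+\sqrt{\gamma}\,y')$, etc. The zero-order coefficient has become $-1$, of unit size, so every classical estimate for this rescaled problem---De Giorgi--Nash--Moser local boundedness, the weak Harnack inequality, the interior Schauder estimate---holds with constants depending only on $\lambda,\Lambda,n$ and the relevant norm of $A^{ij}$, \emph{uniformly in} $\gamma$. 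This is the device that neutralizes the large coefficient $1/\gamma$.

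The core of the argument will be a $\gamma$-uniform one-step decay estimate: I would prove that there is $\theta=\theta(n,\lambda,\Lambda)\in(0,1)$ such that for every ball $\mathtt B_{\sqrt{\gamma}}(x_0')\subset\mathtt B_{3/4}$,
\[\|\mathtt{U}\|_{L^{\infty}(\mathtt B_{\sqrt{\gamma}/4}(x_0'))}\le\theta\,\|\mathtt{U}\|_{L^{\infty}(\mathtt B_{\sqrt{\gamma}}(x_0'))}+C\sqrt{\gamma}\,\|\mathtt{F}\|_{L^{\infty}(\mathtt B_{\sqrt{\gamma}}(x_0'))}+C\gamma\,\|\mathtt{G}\|_{L^{\infty}(\mathtt B_{\sqrt{\gamma}}(x_0'))}.\]
Decompose $\mathtt{U}=\mathtt{U}_1+\mathtt{U}_2$ on $\mathtt B_{\sqrt{\gamma}}(x_0')$, with $\mathtt{U}_1$ the homogeneous solution agreeing with $\mathtt{U}$ on the boundary and $\mathtt{U}_2$ carrying the source with zero boundary data. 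For $\mathtt{U}_1$ the sign-definite zero-order term forces strict decay to the centre: comparing $\pm\mathtt{U}_1$ in rescaled variables with the radial supersolution $M_0(a+|y'|^2)/(a+1)$, where $M_0=\|\mathtt{U}_1\|_{L^{\infty}(\partial)}$ and $a=2\Lambda(n-1)$, of the operator $\partial_i(\tilde A^{ij}\partial_j\cdot)-(\cdot)$ gives $|\mathtt{U}_1(x_0')|\le\frac{a}{a+1}M_0$; this is the Agmon-type decay. For $\mathtt{U}_2$ the rescaled global boundedness/weak Harnack estimate gives $\|\mathtt{U}_2\|_{L^{\infty}(\mathtt B_{\sqrt{\gamma}}(x_0'))}=\|V_2\|_{L^{\infty}(\mathtt B_1)}\le C(\sqrt{\gamma}\|\tilde{\mathtt{F}}\|_{L^{q}(\mathtt B_1)}+\gamma\|\tilde{\mathtt{G}}\|_{L^{q}(\mathtt B_1)})$ for a fixed $q>n-1$, which is at most $C\sqrt{\gamma}\|\mathtt{F}\|_{L^{\infty}}+C\gamma\|\mathtt{G}\|_{L^{\infty}}$. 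Adding the two and covering yields the displayed inequality.

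I would then iterate. With $M(r):=\|\mathtt{U}\|_{L^{\infty}(\mathtt B_r)}$ and radii $r_k:=\tfrac14+k\sqrt{\gamma}$ for $0\le k\le N\approx\tfrac1{4\sqrt{\gamma}}$, iterating the one-step estimate gives $M(\tfrac14)\le\theta^{N}M(\tfrac12)+\tfrac{C}{1-\theta}\bigl(\sqrt{\gamma}\|\mathtt{F}\|_{L^{\infty}(\mathtt B_{3/4})}+\gamma\|\mathtt{G}\|_{L^{\infty}(\mathtt B_{3/4})}\bigr)$ with $\theta^{N}=(\tfrac12)^{c/\sqrt{\gamma}}$, $c=\tfrac14\log_2(1/\theta)$. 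A single fixed-scale local boundedness estimate bounds $M(\tfrac12)\le C\bigl(\|\mathtt{U}\|_{L^2(\mathtt B_{3/4})}+\|\mathtt{F}\|_{L^{\infty}(\mathtt B_{3/4})}+\|\mathtt{G}\|_{L^{\infty}(\mathtt B_{3/4})}\bigr)$; since $\theta^{N}$ decays faster than any power of $\gamma$ once $\gamma<\bar{\gamma}_0$, the extra contributions $\theta^{N}\|\mathtt{F}\|_{L^{\infty}}$, $\theta^{N}\|\mathtt{G}\|_{L^{\infty}}$ get absorbed into $C\sqrt{\gamma}\|\mathtt{F}\|_{L^{\infty}}+C\gamma\|\mathtt{G}\|_{L^{\infty}}$ and $\theta^{N}\|\mathtt{U}\|_{L^2}$ becomes $(\tfrac12)^{1/(8C_0\sqrt{\gamma})}\|\mathtt{U}\|_{L^2(\mathtt B_{3/4})}$ after relabelling $C_0$; this is \eqref{equgammato021}. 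For part (ii) only the $\mathtt{F}$-term changes: since $\partial_i\mathtt{F}^i$ is insensitive to adding a constant, on each $\mathtt B_{\sqrt{\gamma}}(x_0')$ I may replace $\mathtt{F}$ by $\mathtt{F}-\mathtt{F}(x_0')$, whose $L^{\infty}$-norm there is $\le\gamma^{\alpha/2}[\mathtt{F}]_{C^{\alpha}}$, so the per-step $\mathtt{F}$-error sharpens to $\gamma^{(1+\alpha)/2}[\mathtt{F}]_{C^{\alpha}}$; iterating between $\mathtt B_{1/4}$ and $\mathtt B_{1/2}$ gives \eqref{equgammato0}. For \eqref{equgammato02} I would first apply \eqref{equgammato0} on $\mathtt B_{3/8}$ to bound $\|\mathtt{U}\|_{L^{\infty}(\mathtt B_{3/8})}$, then apply the interior Schauder estimate to $V$ on each rescaled ball $\mathtt B_{2\sqrt{\gamma}}(x_0')\subset\mathtt B_{3/8}$ (its right-hand side is $\partial_i(\sqrt{\gamma}\tilde{\mathtt{F}}^i)+(\gamma\tilde{\mathtt{G}}+V)$, with $C^{\alpha}$ coefficients), getting $\|DV\|_{C^{\alpha}(\mathtt B_1)}\le C\bigl(\|V\|_{L^{\infty}(\mathtt B_2)}+\gamma^{(1+\alpha)/2}[\mathtt{F}]_{C^{\alpha}}+\gamma\|\mathtt{G}\|_{L^{\infty}}\bigr)$; undoing the scaling ($DV=\sqrt{\gamma}\,D\mathtt{U}$, $[DV]_{C^{\alpha}(\mathtt B_1)}=\gamma^{(1+\alpha)/2}[D\mathtt{U}]_{C^{\alpha}(\mathtt B_{\sqrt{\gamma}}(x_0'))}$) and inserting the $L^{\infty}$-bound for $\mathtt{U}$ produces the factor $\gamma^{-1/2}$ on the $\|\mathtt{U}\|_{L^2}$ term and the stated powers on the data, and a covering argument patches the local $C^{\alpha}$-seminorms into the global one on $\mathtt B_{1/4}$.

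\textbf{The hard part} will be establishing the $\gamma$-uniform one-step estimate with a genuine contraction factor $\theta<1$---the quantitative decay extracted from the screened term $-\mathtt{U}/\gamma$---and, just as delicate, tracking the powers of $\gamma$ through the $\sqrt{\gamma}$-rescaling and through the summation of a geometric series of $\sim1/\sqrt{\gamma}$ terms so that no hidden constant depends on $1/\gamma$. One also has to verify that the polynomial-in-$\gamma$ losses incurred when passing between $L^2$ and $L^{\infty}$ at scale $\sqrt{\gamma}$, together with the loss $\gamma^{-1/2}$ in the gradient estimate, are dominated by the super-polynomially small factor $(\tfrac12)^{c/\sqrt{\gamma}}$; this domination is precisely what pins down the threshold $\bar{\gamma}_0$.
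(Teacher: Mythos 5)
Your overall architecture (work at the intrinsic scale $\sqrt{\gamma}$, prove a $\gamma$-uniform one-step decay, iterate $\sim \gamma^{-1/2}$ times to produce the factor $(\tfrac12)^{c/\sqrt{\gamma}}$, and obtain \eqref{equgammato02} by rescaling plus interior Schauder) is sound and in fact parallels the paper's proof, which also iterates an estimate at scale $\sqrt{\gamma}$ and proves the gradient bound by the change of variables $x'=\sqrt{\gamma}y'$. The bookkeeping of the powers of $\gamma$, the treatment of $\mathtt{F}-\mathtt{F}(x_0')$ in part (ii), and the absorption of the per-step errors into a geometric series are all fine.

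The gap is in the one ingredient that everything else rests on: your proof of the per-step contraction for the homogeneous piece $\mathtt{U}_1$. You compare $\pm\mathtt{U}_1$ with the barrier $M_0(a+|y'|^2)/(a+1)$ and claim it is a supersolution of $\partial_i(\tilde A^{ij}\partial_j\,\cdot\,)-(\,\cdot\,)$ with $a=2\Lambda(n-1)$. That computation requires
\begin{equation*}
\partial_i\bigl(\tilde A^{ij}\,\partial_j(a+|y'|^2)\bigr)=2\,\mathrm{tr}\,\tilde A+2\,y_j\,\partial_i\tilde A^{ij}\leq a+|y'|^2,
\end{equation*}
and the term $2y_j\partial_i\tilde A^{ij}$ has no meaning (and no bound) when $A^{ij}$ is merely $L^{\infty}$ as in part (i), nor when it is only $C^{\alpha}$ as in part (ii); in divergence form, quadratic barriers are only admissible for (at least) Lipschitz coefficients, and $a+|y'|^2$ need not be a weak supersolution for rough $A^{ij}$. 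This is not a technicality: the whole point of the screened term $-\mathtt{U}/\gamma$ is to force decay of the \emph{value}, not just the oscillation, and for measurable coefficients the only available tools (weak Harnack, oscillation decay) do not by themselves convert into value decay. The paper's device is precisely designed to bridge this: it sets $W(x',x_n)=\mathtt{U}(x')\cos(x_n/\sqrt{\gamma})$, so that $W$ solves a divergence-form equation with measurable coefficients in one more dimension, applies the weak Harnack inequality to get oscillation decay of $W$ on cylinders of height $\sim\sqrt{\gamma}$, and then uses that $\cos(x_n/\sqrt{\gamma})$ vanishes at $x_n=\pm\tfrac{\pi}{2}\sqrt{\gamma}$ to turn $\operatorname{osc}W$ into $\|\mathtt{U}\|_{L^{\infty}}$ at that scale. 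To repair your proof you would need to replace the barrier step by such an argument (Agmon extension plus weak Harnack, or a Harnack-chain argument for the extended function), since the contraction $|\mathtt{U}_1(x_0')|\leq\theta M_0$ is true for rough coefficients but not provable by the pointwise comparison you propose. The remainder of your scheme can then be kept as is.
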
 
 
\begin{proof}
Here we adapt Agmon's idea (see \cite{Kr07}*{Lemma 5.5}) to extend the problem into a higher-dimensional space where the singular term $-\frac{1}{\gamma}\mathtt{U}$ is transformed into a regular derivative by considering an auxiliary function on a higher-dimensional space, thereby enabling the use of standard elliptic estimates.

We define the cylindrical domain $Q_R\subset\bR^n=\{(x',x_n)\,|\, x'\in\bR^{n-1},x_n\in\bR\}$ by $Q_R:=\mathtt{B}_R\times (-R,R)$, and introduce the elliptic operator 
$$L_0W:=\sum\limits_{i,j=1}^{n-1}\partial_{i}\Big(A^{ij}(x')\partial_{j}W\Big)+\partial_{nn}W.$$
Suppose $\mathtt{U}(x')$ is a solution to equation \eqref{inGamma}. Define the function $W(x',x_n)=\mathtt{U}(x')\cos\frac{x_n}{\sqrt{\gamma}}$. Then $W$ satisfies $$L_0W=\sum\limits_{i=1}^{n}\partial_i\tilde{F}^i,$$ 
where the modified right-hand side terms are given by 
$$\tilde{F}^i(x',x_n)=\mathtt{F}^{i}(x')\cos\frac{x_n}{\sqrt{\gamma}},~ 1\leq i\leq n-1,\quad \tilde{F}^n(x',x_n)=\sqrt{\gamma}\mathtt{G}(x')\sin\frac{x_n}{\sqrt{\gamma}}.$$
These terms are explicitly defined and remain controlled under the assumptions on $\mathtt{F}$ and $\mathtt{G}$.
By classical elliptic theory, we obtain the interior estimate
\begin{equation}\label{inGammaW}
  \|W\|_{L^{\infty}(Q_{1/8})}\leq C\Big(\|W\|_{L^{2}(Q_{1/4})}+\|\tilde{F}\|_{L^{\infty}(Q_{1/4})}\Big).  
\end{equation} 
Returning to the original function $\mathtt{U}$, and noting that $|W(x',t)|\leq|\mathtt{U}(x')|$, we deduce
\begin{equation}\label{inGammaU}
\|\mathtt{U}\|_{L^{\infty}(\mathtt{B}_{1/8})}\leq C\Big(\|\mathtt{U}\|_{L^{2}(\mathtt{B}_{1/4})}+\|{\mathtt{F}}\|_{L^{\infty}(\mathtt{B}_{1/4})}+\sqrt{\gamma}\|{\mathtt{G}}\|_{L^{\infty}(\mathtt{B}_{1/4})}\Big).
\end{equation}

(i) The proof of estimate \eqref{equgammato021} relies on an application of the weak Harnack inequality to $W$. For any $r\in (0,\frac{1}{8})$, applying the weak Harnack inequality (see Theorem 8.18 in \cite{GT}) to the functions $W-\underset{Q_{4r}}{\inf}\,W$ and $\underset{Q_{4r}}{\sup}\,W -W$ in $Q_{4r}$, we obtain
\begin{equation}\label{inGammaA}
\underset{Q_r}{\osc}\, W\leq \beta\, \underset{Q_{4r}}{\osc}\, W+Cr\|\tilde{F}\|_{L^{\infty}(Q_{4r})},
\end{equation}
where $\beta\in(0,1)$ is a universal constant and $C$ depends only on $\lambda,\Lambda, n$.

Define $r_0=\frac{\sqrt{\gamma}\pi}{2}$ and set $r_{i+1}=4r_i$, for $i\geq 0$. Then inequality \eqref{inGammaA} implies
\begin{equation}\label{inGammaB}
\underset{Q_{r_i}}{\osc}\, W\leq \beta\, \underset{Q_{r_{i+1}}}{\osc}\, W+Cr_{i+1}\|\tilde{F}\|_{L^{\infty}(Q_{r_{i+1}})}.
\end{equation} 
Let $k_0$ be the smallest integer such that $\beta^{k_0}\leq \frac{1}{4}$, and set ${C}_{0}=4^{k_0}\pi$. Iterating \eqref{inGammaB} from $i=0$ to $k_0-1$ yields 
\begin{equation}\label{inGammaC}
\underset{Q_{\frac{\sqrt{\gamma}\pi}{2}}}{\osc}\, W\leq \frac{1}{4}\, \underset{Q_{{C}_{0}\sqrt{\gamma}}}{\osc}\, W+C\sqrt{\gamma}\|\tilde{F}\|_{L^{\infty}(Q_{ {C}_{0}\sqrt{\gamma}})}.
\end{equation} 
Note that $\|\mathtt{U}\|_{L^{\infty}(\mathtt{B}_{\frac{\sqrt{\gamma}\pi}{2}})}\leq \underset{Q_{\frac{\sqrt{\gamma}\pi}{2}}}{\osc}\, W$ and $\underset{Q_{{C}_{0}\sqrt{\gamma}}}{\osc}\, W\leq 2\|\mathtt{U}\|_{L^{\infty}(\mathtt{B}_{ {C}_{0}\sqrt{\gamma}})}$. Therefore, \eqref{inGammaC} implies
$$\|\mathtt{U}\|_{L^{\infty}(\mathtt{B}_{\frac{\sqrt{\gamma}\pi}{2}})}\leq \frac{1}{2} \|\mathtt{U}\|_{L^{\infty}(\mathtt{B}_{\ {C}_{0}\sqrt{\gamma}})}+C \sqrt{\gamma}\|{\mathtt{F}}\|_{L^{\infty}(\mathtt{B}_{ {C}_{0}\sqrt{\gamma}})}+C\gamma\|{\mathtt{G}}\|_{L^{\infty}(\mathtt{B}_{ {C}_{0}\sqrt{\gamma}})}.$$
This provides an estimate for $\mathtt{U}$ at the scale $\sqrt{\gamma}$. To establish the global estimate over $\mathtt{B}_{1/4}$ we iterate this inequality along a chain of balls. Specifically, for any $x_0\in \mathtt{B}_{{C}_{0}k\sqrt{\gamma}}$, applying the above estimate to the ball $\mathtt{B}_{{C}_{0}\sqrt{\gamma}}$ yields
\begin{equation}\label{inGammaD}
\|\mathtt{U}\|_{L^{\infty}(\mathtt{B}_{{C}_{0}k\sqrt{\gamma}})}\leq \frac{1}{2} \|\mathtt{U}\|_{L^{\infty}(\mathtt{B}_{{C}_{0}(k+1)\sqrt{\gamma}})}+C \sqrt{\gamma}\|{\mathtt{F}}\|_{L^{\infty}( \mathtt{B}_{{C}_{0}(k+1)\sqrt{\gamma}})}+C\gamma\|{\mathtt{G}}\|_{L^{\infty}(\mathtt{B}_{{C}_{0}(k+1)\sqrt{\gamma}})}.
\end{equation}
Choose $\bar{\gamma}_0>0$ such that $C_0\sqrt{\bar{\gamma}_0}=\frac{1}{16}$. For $\gamma\in (0, \bar{\gamma}_0)$, let $k_{\gamma}$ be the largest integer less than $\frac{1}{8C_0\sqrt{\gamma}}-1$. Iterating \eqref{inGammaD} from $k=1$ to $k_{\gamma}$ gives 
\begin{equation}\label{inGammaE}
\|\mathtt{U}\|_{L^{\infty}(\mathtt{B}_{\frac{1}{16}})}\leq C\big(\frac{1}{2}\big)^{\frac{1}{8{C}_{0}\sqrt{\gamma}}} \|\mathtt{U}\|_{L^{\infty}(\mathtt{B}_{1/8})}+C\Big(\sqrt{\gamma}\|{\mathtt{F}}\|_{L^{\infty}( \mathtt{B}_{1/8})}+\gamma\|{\mathtt{G}}\|_{L^{\infty}(\mathtt{B}_{1/8})}\Big).
\end{equation}
Substituting the interior estimate \eqref{inGammaU} into \eqref{inGammaE}, we obtain
$$\|\mathtt{U}\|_{L^{\infty}(\mathtt{B}_{\frac{1}{16}})}\leq C\big(\frac{1}{2}\big)^{\frac{1}{8{C}_{0}\sqrt{\gamma}}} \|\mathtt{U}\|_{L^{2}(\mathtt{B}_{1/4})}+C\Big(\sqrt{\gamma}\|{\mathtt{F}}\|_{L^{\infty}( \mathtt{B}_{1/4})}+\gamma\|{\mathtt{G}}\|_{L^{\infty}(\mathtt{B}_{1/4})}\Big). $$
%A standard covering argument then extends this estimate to all of $\mathtt{B}_{1/4}$, thus establishing \eqref{equgammato021}.
 
(ii) Under the higher regularity assumptions in (ii), the same general strategy applies, but we now employ Schauder estimates. Specifically, in \eqref{inGammaW}, the term $\|\tilde{F}\|_{L^{\infty}({Q}_{1/4})}$ may be replaced with $C (\sum_{i=1}^{n-1}[\tilde{F}^{i}]_{C_{x'}^{\alpha}(Q_{\frac{1}{4}})}+[\tilde{F}^{n}]_{C_{x_n}^{\alpha}(Q_{\frac{1}{4}})})$, and in \eqref{inGammaA}, the term $Cr\|\tilde{F}\|_{L^{\infty}(Q_{4r})}$ is replaced with $Cr^{1+\alpha}(\sum_{i=1}^{n-1}[\tilde{F}^{i}]_{C_{x'}^{\alpha}(Q_{4r})}+[\tilde{F}^{n}]_{C_{x_n}^{\alpha}(Q_{4r})})$. Here, we employ the notation for a partial H\"older semi-norm of the function $f$ with respect to $x'$ over domain $\mathcal{D}$ as :$$[f]_{C_{x'}^{\alpha} (\mathcal{D})}=\underset{\substack{(z',x_n),(y',x_n)\in \mathcal{D}\\ z'\neq y'}}{\sup}\frac{|f(z',x_n)-f(y',x_n)|}{|z'-y'|^{\alpha}},$$ and a partial H\"older semi-norm of the function $f$ with respect to $x_n$ over domain $\mathcal{D}$ as:
$$[f]_{C_{x_n}^{\alpha}(\mathcal{D})}=\underset{\substack{(x',z_n),(x',y_n)\in  \mathcal{D}\\ z_n\neq y_n}}{\sup}\frac{|f(x',z_n)-f(x',y_n)|}{|z_n-y_n|^{\alpha}}.$$
 
To prove the gradient estimate \eqref{equgammato02}, we perform a change of variables: set $x'=\sqrt{\gamma}y'$ with $y'\in \mathtt{B}_{1}$ and define 
$\widetilde{\mathtt{U}}(y')=\mathtt{U}(\sqrt{\gamma}y')= \mathtt{U}(x')$. 
This rescaling absorbs the parameter $\gamma$ into the spatial variable, resulting in a uniformly elliptic equation with order-one coefficients. Specifically, $\widetilde{\mathtt{U}}\in H^1(\mathtt{B}_{1})$ satisfies 
\begin{equation*}
\partial_{i}(\widetilde{A}^{ij}\partial_{j}\widetilde{\mathtt{U}})-\tilde{\mathtt{U}}=\dv(\sqrt{\gamma}\widetilde{\mathtt{F}})+\gamma \widetilde{\mathtt{G}}, \quad\mbox{in}~\mathtt{B}_{1},
\end{equation*}
where $\widetilde{A}^{ij}(y')=A^{ij}(\sqrt{\gamma}y')$, $\widetilde{\mathtt{F}}(y')=\mathtt{F}(\sqrt{\gamma}y')$ and $\widetilde{\mathtt{G}}(y')=\mathtt{G}(\sqrt{\gamma}y')$. Applying standard elliptic estimates yields
\begin{equation*}%\label{inGammau5} 
\|D \widetilde{\mathtt{U}}\|_{C^{\alpha}(\mathtt{B}_{1/2})}\leq\, C\|\widetilde{\mathtt{U}}\|_{L^2(\mathtt{B}_{1})}+C[\sqrt{\gamma}\widetilde{\mathtt{F}}]_{C^{\alpha}(\mathtt{B}_{1})}+C\|\gamma \widetilde{\mathtt{G}}\|_{L^{\infty}(\mathtt{B}_{1})}.
\end{equation*}
Reverting to the original variables, we obtain
\begin{equation}\label{inGammaF}
\begin{aligned}
{\gamma}^{1/2}\|D \mathtt{U} \|_{L^{\infty}(\mathtt{B}_{\frac{\sqrt{\gamma}}{2}})}+\gamma^{\frac{1+\alpha}{2}}[D \mathtt{U} ]_{C^{\alpha}(\mathtt{B}_{\frac{\sqrt{\gamma}}{2}})}
\leq\, C \|\mathtt{U}\|_{L^{\infty}(\mathtt{B}_{\sqrt{\gamma}})} +C\gamma^{\frac{1+\alpha}{2}}[\mathtt{F}]_{C^{\alpha}(\mathtt{B}_{\sqrt{\gamma}})}+C\gamma \|\mathtt{G}\|_{L^{\infty}(\mathtt{B}_{\sqrt{\gamma}})}.
\end{aligned}
\end{equation}
Finally, using estimate \eqref{equgammato0} to bound $ \|\mathtt{U}\|_{L^{\infty}(\mathtt{B}_{\sqrt{\gamma}})}$ in \eqref{inGammaF} yields the desired gradient estimate \eqref{equgammato02}. 
\end{proof}

For the solution to equation \eqref{inGamma}, standard elliptic theory yields the corresponding estimates for $\gamma\ge \bar{\gamma}_0$. Combined with the results of Proposition \ref{thmgammato0} for $\gamma\in(0,\bar{\gamma}_0)$, we obtain the following corollary.

\begin{corollary}\label{corthmgammato0}
Let $\mathtt{U}\in H^{1}(\mathtt{B_1})$ be a solution of  \eqref{inGamma}. Suppose the coefficients $A^{ij}\in L^{\infty}(\mathtt{B}_{1})$ satisfy the uniformly elliptic condition \eqref{uniformelliptic} in $\mathtt{B}_{1}$, $\mathtt{F}\in L^{\infty}(\mathtt{B}_1)$ and $\mathtt{G}\in L^{\infty}(\mathtt{B}_1)$. 

(i) For any $\gamma\in(0, +\infty)$, 
\begin{equation*}%\label{corequgammato021}
\|\mathtt{U}\|_{L^{\infty}(\mathtt{B}_{1/4})}\leq\, C \|\mathtt{U}\|_{L^2(\mathtt{B}_{3/4})}+C\sqrt{\gamma}\|\mathtt{F}\|_{L^{\infty}(\mathtt{B}_{3/4})}+C\gamma\|\mathtt{G}\|_{L^{\infty}(\mathtt{B}_{3/4})}, 
\end{equation*}
where the constant $C$ depends only on $\lambda,\Lambda,n$ and $\|A^{ij}\|_{L^{\infty}(\mathtt{B}_{1})}$.

(ii) If, in addition, $A^{ij}\in C^{\alpha}(\mathtt{B}_1)$ $\mathtt{F}\in C^{\alpha}(\mathtt{B}_1)$, then for any $\gamma\in (0, +\infty)$, 
\begin{equation*}%\label{corequgammato0}
\|\mathtt{U}\|_{L^{\infty}(\mathtt{B}_{1/4})}\leq\, C\|\mathtt{U}\|_{L^2(\mathtt{B}_{1/2})}+ C\gamma^{\frac{1+\alpha}{2}} [\mathtt{F}]_{C^{\alpha}(\mathtt{B}_{1/2})}+C\gamma\|\mathtt{G}\|_{L^{\infty}(\mathtt{B}_{1/2})},
\end{equation*}
and
\begin{equation*}%\label{corequgammato02}
%\begin{aligned}
\|D \mathtt{U}\|_{L^{\infty}(\mathtt{B}_{1/4})} 
\leq C\|\mathtt{U}\|_{L^2(\mathtt{B}_{3/4})}+C\gamma^{\frac{\alpha}{2}} [\mathtt{F}]_{C^{\alpha}(\mathtt{B}_{3/4})}+C\gamma^{1/2}\|\mathtt{G}\|_{L^{\infty}(\mathtt{B}_{3/4})},
%\end{aligned}
\end{equation*}
where $C$ depends only on $\lambda,\Lambda,n$, and $\|A^{ij}\|_{C^{\alpha}(\mathtt{B}_{1})}$.
 \end{corollary}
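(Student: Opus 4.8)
The plan is to split the range of $\gamma$ at the threshold $\bar\gamma_0$ provided by Proposition \ref{thmgammato0}: for $\gamma\in(0,\bar\gamma_0)$ the three inequalities are essentially a restatement of that proposition, while for $\gamma\ge\bar\gamma_0$ they follow from classical elliptic theory because the zeroth-order coefficient $-1/\gamma$ is then bounded uniformly in $\gamma$. At the end one simply takes the larger of the two constants.

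\textbf{The regime $\gamma\in(0,\bar\gamma_0)$.} Here all three estimates are contained in Proposition \ref{thmgammato0}, and I only need to check that the $\gamma$-dependent prefactors appearing there are harmless. Since $(\tfrac12)^{1/(8C_0\sqrt\gamma)}\le 1$ for every $\gamma>0$, the leading terms of \eqref{equgammato021} and \eqref{equgammato0} are bounded by $C\|\mathtt U\|_{L^2}$, which is exactly what the corollary asks for. For the gradient bound, with $t:=\gamma^{-1/2}$ the function $t\,(\tfrac12)^{t/(8C_0)}$ is bounded on $(0,\infty)$ by a constant depending only on $C_0$, hence only on $\lambda,\Lambda,n,\|A^{ij}\|_{C^\alpha}$; therefore the prefactor $(\tfrac12)^{1/(8C_0\sqrt\gamma)}\gamma^{-1/2}$ in \eqref{equgammato02} is bounded, and \eqref{equgammato02} yields precisely the stated gradient estimate (the radii already coincide with those in the corollary, so no covering argument is needed).

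\textbf{The regime $\gamma\ge\bar\gamma_0$.} Since $\bar\gamma_0=\bar\gamma_0(\lambda,\Lambda,n,\|A^{ij}\|)$, the zeroth-order coefficient obeys $0<1/\gamma\le 1/\bar\gamma_0$ with a bound independent of $\gamma$, so \eqref{inGamma} is a uniformly elliptic divergence-form equation in $\bR^{n-1}$ whose lower-order term (which, incidentally, has the favorable sign) is controlled by a fixed constant. Part (i) then follows from the classical local boundedness estimate, e.g. \cite{GT}*{Theorem 8.17}, and the two estimates in part (ii) from the interior $C^{1,\alpha}$ Schauder estimate for divergence-form equations combined with part (i) on slightly larger radii to replace $\|\mathtt U\|_{L^\infty}$ by $\|\mathtt U\|_{L^2}$ on the right-hand side; in both steps one uses that, since $\mathtt F$ may be shifted by a constant without affecting the equation, in the H\"older case only the oscillation of $\mathtt F$—hence only $[\mathtt F]_{C^\alpha}$—enters. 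This produces inequalities of the shape $\|\mathtt U\|_{L^\infty(\mathtt B_{1/4})}\le C(\|\mathtt U\|_{L^2}+\|\mathtt F\|_{L^\infty}+\|\mathtt G\|_{L^\infty})$ for part (i) and the analogues with $[\mathtt F]_{C^\alpha}$ for part (ii), with $C$ depending only on $\lambda,\Lambda,n$ and the norm of $A^{ij}$. Finally, because $\bar\gamma_0<1$ is fixed and $\gamma\ge\bar\gamma_0$, one has $1\le(\gamma/\bar\gamma_0)^{\theta}$ for each exponent $\theta\in\{\tfrac12,\tfrac\alpha2,\tfrac{1+\alpha}2,1\}$ occurring in the statement, so every bare data norm is dominated by the corresponding $\gamma$-weighted term at the cost of enlarging $C$; this converts the bounds into exactly those asserted.

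\textbf{Main obstacle.} There is no genuine analytic difficulty here beyond Proposition \ref{thmgammato0} itself; the one point requiring attention is the uniformity of constants across the threshold $\bar\gamma_0$. In the small-$\gamma$ regime this reduces to the elementary observation that the exponentially small factor $(\tfrac12)^{1/(8C_0\sqrt\gamma)}$ absorbs the negative power $\gamma^{-1/2}$, and in the large-$\gamma$ regime to the fact that the coefficient $1/\gamma$ is bounded by the fixed constant $1/\bar\gamma_0$, so the De Giorgi–Nash–Moser and Schauder constants do not degenerate as $\gamma\to\infty$.
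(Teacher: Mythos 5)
Your proposal is correct and takes essentially the same route as the paper, which also obtains the corollary by combining Proposition \ref{thmgammato0} for $\gamma\in(0,\bar\gamma_0)$ with standard elliptic theory for $\gamma\ge\bar\gamma_0$, where the zeroth-order coefficient $1/\gamma\le 1/\bar\gamma_0$ is uniformly bounded. Your two supplementary observations---that $\bigl(\tfrac12\bigr)^{1/(8C_0\sqrt\gamma)}\gamma^{-1/2}$ is bounded uniformly in $\gamma$, and that for $\gamma\ge\bar\gamma_0$ the bare data norms are absorbed into the $\gamma$-weighted terms at the cost of a constant depending only on $\bar\gamma_0$---correctly supply the details the paper leaves implicit.
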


Combining Corollary~\ref{corthmgammato0}~(i) with a standard extension argument, we obtain the following estimate near the boundary.

\begin{corollary}\label{corbdgammato0}
For $n\geq2$, let $\mathtt{U}\in H^1(\mathtt{B}_{1}^{+})$ satisfy
\begin{equation*}%\label{inGamma1}
\left\{ \begin{aligned}
\sum\limits_{i,j=1}^{n-1}\partial_{i}(A^{ij}\partial_{j}\mathtt{U})-\frac{1}{\gamma}\mathtt{U}&=\sum\limits_{i=1}^{n-1}\partial_i\mathtt{F}^i+\mathtt{G} \quad&\mbox{in}&&~ \mathtt{B}_{1}^{+}&\subset \bR^{n-1},\\
 \mathtt{U}&=0\quad&\mbox{on}&&~\mathtt{\Gamma}_{1}^{0}&:=\mathtt{B}_{1}\cap\{x_{n-1}=0\},
 \end{aligned} \right.
\end{equation*}
with $A^{ij}\in L^{\infty}(\mathtt{B}_{1}^{+})$ satisfying \eqref{uniformelliptic} in $\mathtt{B}_{1}^{+}.$
Assume $\mathtt{F},\mathtt{G}\in L^{\infty}(\mathtt{B}_{1}^{+})$. Then for $\gamma\in(0,+\infty)$,
\begin{equation*}
\|\mathtt{U}\|_{L^{\infty}(\mathtt{B}_{1/4}^+)}\leq C\|\mathtt{U}\|_{L^2(\mathtt{B}^{+}_{3/4})}+C\sqrt{\gamma}\|\mathtt{F}\|_{L^{\infty}(\mathtt{B}_{3/4}^{+})}+C\gamma\|\mathtt{G}\|_{L^{\infty}(\mathtt{B}_{3/4}^{+})},
\end{equation*}
where $C$ depends only on $\lambda,\Lambda,n$ and $\|A^{ij}\|_{L^{\infty}(\mathtt{B}_{1})}$.
\end{corollary}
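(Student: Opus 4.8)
The plan is to reduce the claim to the interior estimate of Corollary~\ref{corthmgammato0}(i) by an odd reflection across the flat portion $\mathtt{\Gamma}_1^0=\mathtt{B}_1\cap\{x_{n-1}=0\}$. Write $x'=(x'',x_{n-1})$, let $\mathcal{R}(x'',x_{n-1})=(x'',-x_{n-1})$, and define $\tilde{\mathtt{U}}$ on $\mathtt{B}_1$ by $\tilde{\mathtt{U}}=\mathtt{U}$ on $\mathtt{B}_1^{+}$ and $\tilde{\mathtt{U}}(x)=-\mathtt{U}(\mathcal{R}x)$ on $\mathtt{B}_1\cap\{x_{n-1}<0\}$. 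Since $\mathtt{U}$ has zero trace on $\mathtt{\Gamma}_1^0$, we get $\tilde{\mathtt{U}}\in H^1(\mathtt{B}_1)$. Reflect the coefficients and data according to parity in $x_{n-1}$: $\tilde{A}^{ij}$ is the even reflection of $A^{ij}$ when $i,j$ are both in $\{1,\dots,n-2\}$ or both equal $n-1$, and the odd reflection when exactly one of $i,j$ equals $n-1$; $\tilde{\mathtt{F}}^i$ is the odd reflection of $\mathtt{F}^i$ for $i\le n-2$, $\tilde{\mathtt{F}}^{n-1}$ is the even reflection of $\mathtt{F}^{n-1}$, and $\tilde{\mathtt{G}}$ is the odd reflection of $\mathtt{G}$. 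A direct change of variables in the equation on the lower half shows that with these choices $\tilde{\mathtt{U}}$ solves $\sum_{i,j}\partial_i(\tilde{A}^{ij}\partial_j\tilde{\mathtt{U}})-\tfrac1\gamma\tilde{\mathtt{U}}=\sum_i\partial_i\tilde{\mathtt{F}}^i+\tilde{\mathtt{G}}$ in each of the two open half-balls.

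The next step is to check that this equation holds weakly across $\mathtt{\Gamma}_1^0$, i.e.\ in all of $\mathtt{B}_1$. Given $\varphi\in C_c^\infty(\mathtt{B}_1)$, decompose $\varphi=\varphi_e+\varphi_o$ into its even and odd parts in $x_{n-1}$. By the parity assignments above, every integrand arising when $\tilde{\mathtt{U}}$ is tested against $\varphi_e$ is odd in $x_{n-1}$, so its integral over the symmetric ball vanishes identically on both sides; meanwhile $\varphi_o$ restricted to $\mathtt{B}_1^{+}$ vanishes on $\mathtt{\Gamma}_1^0$ and is supported away from $\Gamma_1^{+}$, hence is an admissible test function for the original problem on $\mathtt{B}_1^{+}$ (approximable in $H^1$ by $C_c^\infty(\mathtt{B}_1^{+})$ functions precisely because of its zero trace on $\mathtt{\Gamma}_1^0$), and the resulting identity, doubled by symmetry, is exactly the whole-ball weak formulation. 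I would also record that $\tilde{A}^{ij}$ satisfies \eqref{uniformelliptic} with the \emph{same} constants $\lambda,\Lambda$: at a lower-half point the reflected matrix is obtained from $A^{ij}(\mathcal{R}x)$ by conjugation with $\operatorname{diag}(1,\dots,1,-1)$, an orthogonal transformation, so $\tilde{A}^{ij}(x)\xi_i\xi_j=A^{ij}(\mathcal{R}x)\eta_i\eta_j$ with $|\eta|=|\xi|$, and $\|\tilde{A}^{ij}\|_{L^\infty(\mathtt{B}_1)}=\|A^{ij}\|_{L^\infty(\mathtt{B}_1^{+})}$; likewise the $L^\infty$ norms of $\tilde{\mathtt{F}},\tilde{\mathtt{G}}$ on $\mathtt{B}_{3/4}$ equal those of $\mathtt{F},\mathtt{G}$ on $\mathtt{B}_{3/4}^{+}$.

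Finally, apply Corollary~\ref{corthmgammato0}(i) to $\tilde{\mathtt{U}}$ on $\mathtt{B}_1$, which gives
$$\|\tilde{\mathtt{U}}\|_{L^\infty(\mathtt{B}_{1/4})}\le C\|\tilde{\mathtt{U}}\|_{L^2(\mathtt{B}_{3/4})}+C\sqrt{\gamma}\,\|\tilde{\mathtt{F}}\|_{L^\infty(\mathtt{B}_{3/4})}+C\gamma\,\|\tilde{\mathtt{G}}\|_{L^\infty(\mathtt{B}_{3/4})},$$
and then undo the reflection: $\|\mathtt{U}\|_{L^\infty(\mathtt{B}_{1/4}^{+})}=\|\tilde{\mathtt{U}}\|_{L^\infty(\mathtt{B}_{1/4}^{+})}\le\|\tilde{\mathtt{U}}\|_{L^\infty(\mathtt{B}_{1/4})}$ and $\|\tilde{\mathtt{U}}\|_{L^2(\mathtt{B}_{3/4})}=\sqrt{2}\,\|\mathtt{U}\|_{L^2(\mathtt{B}_{3/4}^{+})}$, which yields the claimed bound with $C$ depending only on $\lambda,\Lambda,n$ and $\|A^{ij}\|_{L^\infty(\mathtt{B}_1)}$. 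The only genuinely delicate point — the one I would write out most carefully — is the second step: verifying that the odd extension picks up no singular interface term on $\mathtt{\Gamma}_1^0$, which relies both on the correct parity bookkeeping for the mixed entries of $A^{ij}$ and for $\mathtt{F}^{n-1}$, and on the Dirichlet condition $\mathtt{U}|_{\mathtt{\Gamma}_1^0}=0$ that makes $\varphi_o|_{\mathtt{B}_1^{+}}$ an admissible test function.
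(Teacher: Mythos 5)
Your proposal is correct, and it is essentially the paper's own argument: the paper simply invokes ``Corollary~\ref{corthmgammato0}~(i) combined with a standard extension argument,'' which is precisely the odd reflection across $\mathtt{\Gamma}_1^0$ (with the parity bookkeeping for the mixed entries of $A^{ij}$ and for $\mathtt{F}^{n-1}$, and the zero-trace admissibility of the odd test functions) that you carry out in detail.
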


\subsection{Pointwise estimates for degenerate equations}\label{subsection32}

For $l,s>0$, $\tilde{\varepsilon}\in(0,\frac{1}{4})$, we define the weighted norm
$$\|\mathtt{F}\|_{l,\mathtt{B}_s}:=\underset{x'\in \mathtt{B}_s}{\sup}\frac{|\mathtt{F}(x')|}{(\tilde{\varepsilon}+|x'|^2)^l}.$$ 

\begin{prop}\label{epsilongammato0}
Let $n\geq 2$, $\tilde \lambda,\tilde \Lambda>0$, and suppose that $v\in H^1(\mathtt{B}_{1})$ satisfies 
\begin{equation}\label{equ_v}
\sum\limits_{i=1}^{n-1}\partial_i \Big(\big(\tilde{\varepsilon}+t(x')\big)\partial_i v(x')\Big)-\frac{1}{\gamma}v(x')=\sum\limits_{i=1}^{n-1}\partial_i\mathtt{F}^i+\mathtt{G} \quad \mbox{for}~x'\in \mathtt{B}_1\subset \bR^{n-1},
\end{equation}
where $t\in C^1(\mathtt{B}_{1})$ satisfies %{\color{red}(We used $\lambda,\Lambda$ in the ellipticity condition.)}
\begin{equation}\label{assump_t}
\tilde{\lambda} |x'|^2\leq   t(x')\leq \tilde{\Lambda} |x'|^2 \quad\mbox{and}~ |D t(x')|\leq 2\tilde{\Lambda} |x'|.
\end{equation}
Assume $\mathtt{F},G\in L^{\infty}(\mathtt{B}_{1})$. Then, for any $\sigma > 0$, if we define
\begin{equation}\label{bargamma0}
\tilde{\gamma}_0 = \left( \tilde{\Lambda}\sigma\big(n+1+2(\frac{\sigma}{2}-1)_{+}\big) \right)^{-1},
\end{equation}
the estimate
\begin{equation*}
|v(x')| \leq C  (\tilde{\varepsilon} + |x'|^2)^{\sigma/2} \left( \|v\|_{L^{\infty}(\partial \mathtt{B}_{1})} + \sqrt{\gamma} \|\mathtt{F}\|_{\frac{\sigma+1}{2},\mathtt{B}_{1}} + \gamma \|\mathtt{G}\|_{\frac{\sigma}{2},\mathtt{B}_{1}} \right)
\end{equation*}
holds whenever $0 < \gamma < \tilde{\gamma}_0$.  The constant $C > 0$ depends only on $n$, $\tilde{\lambda}$, $\tilde{\Lambda}$, $\sigma$ and a lower bound of $\tilde{\gamma}_0-\gamma$.
\end{prop}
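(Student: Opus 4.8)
Here is the plan.

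The plan is to combine a comparison (barrier) argument — which generates the weight $(\tilde{\varepsilon}+|x'|^2)^{\sigma/2}$ and disposes of the boundary values of $v$ and of the datum $\mathtt{G}$ directly — with a dyadic, scale‑by‑scale iteration that feeds in the regularity estimates of Subsection~\ref{subsection31} to handle the divergence‑form datum $\partial_i\mathtt{F}^i$. Write $Lv:=\sum_i\partial_i\bigl((\tilde{\varepsilon}+t)\partial_i v\bigr)-\tfrac1\gamma v$ and split $v=v_1+v_2$, where $v_1$ solves \eqref{equ_v} with $\mathtt{F}\equiv0$ (keeping $\mathtt{G}$ and the boundary values of $v$) and $v_2\in H^1_0(\mathtt{B}_1)$ solves it with $\mathtt{G}\equiv0$ (keeping $\mathtt{F}$); it suffices to bound $v_1$ and $v_2$ separately.

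For $s>0$ set $w_s(x'):=(\tilde{\varepsilon}+|x'|^2)^{s/2}$. A direct computation from \eqref{assump_t} (bounding $|Dt\cdot x'|\le 2\tilde{\Lambda}|x'|^2$, $\tilde{\varepsilon}+t\le\tilde{\Lambda}(\tilde{\varepsilon}+|x'|^2)$, and discarding the favorably‑signed second‑order term when $s<2$) gives
$$\sum_i\partial_i\bigl((\tilde{\varepsilon}+t)\partial_i w_s\bigr)\le\tilde{\Lambda}\,s\bigl(n+1+2(\tfrac{s}{2}-1)_+\bigr)\,w_s,$$
so $Lw_s\le 0$ precisely when $\gamma\le\bigl(\tilde{\Lambda}s(n+1+2(s/2-1)_+)\bigr)^{-1}$; for $s=\sigma$ this threshold is exactly $\tilde{\gamma}_0$ from \eqref{bargamma0}. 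Since $\gamma<\tilde{\gamma}_0$ with $\tilde{\gamma}_0-\gamma\ge\delta_0>0$, continuity in $s$ lets us fix $s_*=\sigma+\delta>\sigma$ (with $\delta>0$ depending only on $n,\tilde{\Lambda},\sigma,\delta_0$) for which $w_{s_*}$ is still a positive supersolution of $L$ on $\mathtt{B}_1$; this extra room is what will make the iteration below contract without a logarithm. For $v_1$: $L$ has a good‑signed zeroth‑order term, so the weak maximum principle holds, and with $\Psi:=C\bigl(\|v\|_{L^\infty(\partial\mathtt{B}_1)}+\gamma\|\mathtt{G}\|_{\sigma/2,\mathtt{B}_1}\bigr)w_\sigma$ one checks $\Psi\ge v_1$ on $\partial\mathtt{B}_1$ for $C\ge1$ and, using $\tfrac1\gamma-\tfrac1{\tilde{\gamma}_0}\ge\tfrac{\delta_0}{\tilde{\gamma}_0}\cdot\tfrac1\gamma$, that $L\Psi\le-\|\mathtt{G}\|_{\sigma/2,\mathtt{B}_1}w_\sigma\le\mathtt{G}$ for $C$ large in terms of $\tilde{\gamma}_0,\delta_0$; comparison then gives $|v_1|\le\Psi$.

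The main point is the bound for $v_2$. Put $E:=\sqrt\gamma\,\|\mathtt{F}\|_{\frac{\sigma+1}{2},\mathtt{B}_1}$ and let $k_*$ be the integer with $2^{-k_*}\sim\sqrt{\tilde{\varepsilon}}$. For $1\le k\le k_*$ decompose $v_2=p_k+q_k$ on $\mathtt{B}_{2^{-k+1}}$, with $Lp_k=0$, $p_k=v_2$ on $\partial\mathtt{B}_{2^{-k+1}}$, and $q_k\in H^1_0(\mathtt{B}_{2^{-k+1}})$ solving $Lq_k=\partial_i\mathtt{F}^i$. Testing the $q_k$–equation with $q_k$ and using $(\tilde{\varepsilon}+t)^{-1}\le C(\tilde{\varepsilon}+|x'|^2)^{-1}$ together with $|\mathtt{F}|\le\|\mathtt{F}\|_{\frac{\sigma+1}{2},\mathtt{B}_1}(\tilde{\varepsilon}+|x'|^2)^{(\sigma+1)/2}$ gives $\tfrac1\gamma\|q_k\|_{L^2(\mathtt{B}_{2^{-k+1}})}^2\le C\|\mathtt{F}\|_{\frac{\sigma+1}{2},\mathtt{B}_1}^2\,2^{-k(2\sigma+n-1)}$ (since $\tilde{\varepsilon}+|x'|^2\le C2^{-2k}$ on $\mathtt{B}_{2^{-k+1}}$ for $k\le k_*$). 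Comparing $\pm p_k$ with the radially increasing supersolution $w_{s_*}$ yields $\sup_{\mathtt{B}_{2^{-k}}}|p_k|\le\rho_k\sup_{\partial\mathtt{B}_{2^{-k+1}}}|v_2|$ with $\rho_k=\bigl((\tilde{\varepsilon}+2^{-2k})/(\tilde{\varepsilon}+2^{-2k+2})\bigr)^{s_*/2}<1$, and $\rho_k2^\sigma<1$ once $2^{-k}\gg\sqrt{\tilde{\varepsilon}}$ because $s_*>\sigma$. On the annulus $A_k:=\{2^{-k-1}\le|x'|\le2^{-k}\}$, rescaling by $2^{-k}$ turns \eqref{equ_v} into a uniformly elliptic equation with $O(1)$ constants, so Corollary~\ref{corthmgammato0}(i) on a covering of $A_k$ by balls of radius $\sim2^{-k}$ gives $\sup_{A_k}|q_k|\le C2^{k(n-1)/2}\|q_k\|_{L^2(\mathtt{B}_{2^{-k+1}})}+C\sqrt\gamma\,2^k\|\mathtt{F}\|_{L^\infty(\mathtt{B}_{2^{-k+1}})}\le CE\,2^{-k\sigma}$. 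Writing $B_k:=\sup_{A_k}|v_2|$ (so $\sup_{\partial\mathtt{B}_{2^{-k+1}}}|v_2|\le B_{k-1}$), these combine to the recursion $B_k\le\rho_kB_{k-1}+CE\,2^{-k\sigma}$; since $\rho_k2^\sigma\le1-\eta$ away from the bottom and $\rho_k<1$ over the remaining $O(1)$ scales, iterating from $B_1\le CE$ (a boundary estimate via Corollary~\ref{corbdgammato0} and the energy bound, with $p_1\equiv0$) yields $B_k\le CE\,2^{-k\sigma}$ for all $k\le k_*$, i.e.\ $|v_2(x')|\le CE\,(\tilde{\varepsilon}+|x'|^2)^{\sigma/2}$ on the region $|x'|\gtrsim\sqrt{\tilde{\varepsilon}}$. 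For the innermost ball $\mathtt{B}_{2\sqrt{\tilde{\varepsilon}}}$, the change of variables $x'=\sqrt{\tilde{\varepsilon}}\,y'$ makes \eqref{equ_v} uniformly elliptic on $\mathtt{B}_2$; splitting once more as $p^*+q^*$ and applying Corollary~\ref{corthmgammato0}(i) with the $L^2$–term controlled by the bound just obtained on $\{|x'|\sim\sqrt{\tilde{\varepsilon}}\}$ gives $\sup_{\mathtt{B}_{\sqrt{\tilde{\varepsilon}}}}|v_2|\le CE\,\tilde{\varepsilon}^{\sigma/2}$. Adding the bounds for $v_1$ and $v_2$ completes the proof.

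The main obstacle is the datum $\partial_i\mathtt{F}^i$: a pointwise barrier cannot dominate a distributional divergence, which forces the scale‑by‑scale scheme, and the two delicate points are (i) converting the only available $L^2$–control of $v_2$ into the pointwise weighted bound by means of the rescaled estimates of Subsection~\ref{subsection31}, and (ii) making the recursion strictly contracting — precisely where the strict inequality $\gamma<\tilde{\gamma}_0$, quantified by the lower bound $\tilde{\gamma}_0-\gamma\ge\delta_0$, is used, to promote the barrier exponent from $\sigma$ to some $s_*>\sigma$ and thereby avoid a spurious $|\ln\tilde{\varepsilon}|$ factor.
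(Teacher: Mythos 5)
Your argument is correct and uses the same toolkit as the paper — a power-type barrier $(\tilde{\varepsilon}+|x'|^2)^{s}$ that is a supersolution of $\mathcal{L}_\gamma$ exactly when $\gamma$ is below the threshold \eqref{gammatildesigma}, energy estimates for the divergence datum, the rescaled $L^\infty$ estimates of Subsection~\ref{subsection31} on balls of radius comparable to $|x'|$, a geometric iteration over scales, and a separate rescaling for the core $\mathtt{B}_{\sqrt{\tilde{\varepsilon}}}$ — but it arranges them differently. The paper splits $v$ at \emph{every} scale $r$ into an $\mathcal{L}_\gamma$-harmonic part (Lemma~\ref{epsilongammato01}, barrier with exponent $\bar\sigma>\sigma/2$, i.e.\ your $s_*>\sigma$) and an inhomogeneous part carrying \emph{both} $\mathtt{F}$ and $\mathtt{G}$ (Lemma~\ref{epsilongammato02}, energy plus Corollary~\ref{corbdgammato0} and iteration), and then runs the contraction \eqref{iterationepsilongammato0} on the full solution $v$; you instead split once by data, absorb $\mathtt{G}$ and the boundary values directly into the barrier (using the quantified gap $\tilde{\gamma}_0-\gamma\geq\delta_0$ to dominate $\mathtt{G}$ — a clean shortcut that reproduces the factor $\gamma\|\mathtt{G}\|_{\sigma/2}$ without energy estimates), and run the dyadic contraction only on the $\mathtt{F}$-part $v_2$, with the strict exponent gain $s_*>\sigma$ making the recursion $B_k\le\rho_kB_{k-1}+CE2^{-k\sigma}$ contract; your handling of the $O(1)$ scales near $|x'|\sim\sqrt{\tilde{\varepsilon}}$ where $\rho_k2^{\sigma}$ may exceed $1$ is also correct. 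One small slip to fix: the inequality $\tilde{\varepsilon}+t(x')\le\tilde{\Lambda}(\tilde{\varepsilon}+|x'|^2)$ used in your supersolution computation fails when $\tilde{\Lambda}<1$, so with the weight $(\tilde{\varepsilon}+|x'|^2)^{s/2}$ you would only reach the threshold with $\tilde\Lambda$ replaced by $\max(1,\tilde\Lambda)$ in part of the expression; use instead the paper's weight $\tilde\eta(x')=\tilde{\varepsilon}/\tilde{\Lambda}+|x'|^2$ (comparable to $\tilde{\varepsilon}+|x'|^2$ up to constants depending only on $\tilde{\Lambda}$), which yields exactly the stated $\tilde{\gamma}_0$ and leaves the rest of your argument, including the value of $\rho_k$ up to harmless constants, unchanged.
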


For notational simplicity, we define the operator
$$\mathcal{L}_{\gamma}v:= \sum\limits_{i=1}^{n-1}\partial_{i} \Big(\big(\tilde{\varepsilon}+t(x')\big)\partial_{i} v(x')\Big)-\frac{1}{\gamma}v(x').$$
The proof relies on a comparison principle with a barrier function that captures the decay near the origin. We decompose the solution into a homogeneous part, controlled via the maximum principle, and an inhomogeneous part, estimated via energy methods and a blow-up argument.

Specifically, for $0<r<1$, we write
$$v:=v_1+v_2,\quad\mbox{in}~ \mathtt{B}_r,$$
where $v_{1}\in H^1(\mathtt{B}_{r})$ satisfies the homogeneous equation
\begin{equation*}
\left\{ \begin{aligned}
 \mathcal{L}_{\gamma}v_1&= 0&\mbox{in}&~\mathtt{B}_{r}, \\
 v_1&=v&\mbox{on}&~\partial \mathtt{B}_{r}, 
\end{aligned} \right.
\end{equation*}
and $v_2\in H_0^1(\mathtt{B}_r)$ solves the inhomogeneous problem
\begin{equation*}
\left\{ \begin{aligned}
 \mathcal{L}_{\gamma}v_2&=\sum\limits_{i=1}^{n-1}\partial_i\mathtt{F}^i+\mathtt{G}&\mbox{in}&~\mathtt{B}_{r}, \\
v_2&=0&\mbox{on}&~\partial \mathtt{B}_{r}.
\end{aligned} \right.
\end{equation*}
We need the following two lemmas.  

\begin{lemma}\label{epsilongammato01}
For any $\tilde{\sigma}>0$, let ${\gamma}({\tilde{\sigma}})$ be defined as 
\begin{equation}\label{gammatildesigma}
	 {\gamma}({\tilde{\sigma}}):=\frac{1}{2\tilde{\Lambda}\tilde{\sigma}(n+1)+4\tilde{\Lambda}\tilde{\sigma}(\tilde{\sigma}-1)_{+} }
\end{equation} 
Then for any $\gamma\in(0, {\gamma}({\tilde{\sigma}})]$ and $r\in (0,1)$,
\begin{equation*}
	  	|v_1(x')|\leq  \Big(\frac{\tilde{\varepsilon}+\tilde{\Lambda}|x'|^2}{\tilde{\varepsilon}+\tilde{\Lambda}r^2}\Big)^{\tilde{\sigma}}\|v_1\|_{L^{\infty}(\partial \mathtt{B}_r)}.
\end{equation*}
\end{lemma}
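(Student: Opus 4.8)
The plan is to build an explicit radial barrier for $\mathcal{L}_{\gamma}$ and conclude by comparison. Since $\tilde{\varepsilon}>0$, the function $w(x'):=(\tilde{\varepsilon}+\tilde{\Lambda}|x'|^2)^{\tilde{\sigma}}$ is smooth on $\mathtt{B}_1$, and it is constant, equal to $(\tilde{\varepsilon}+\tilde{\Lambda}r^2)^{\tilde{\sigma}}$, on $\partial\mathtt{B}_r$; after rescaling it will therefore dominate $v_1$ on the boundary. First I would compute $\mathcal{L}_{\gamma}w$ directly. With $\partial_i w=2\tilde{\Lambda}\tilde{\sigma}\,x_i(\tilde{\varepsilon}+\tilde{\Lambda}|x'|^2)^{\tilde{\sigma}-1}$, the expansion of $\sum_{i}\partial_i\big((\tilde{\varepsilon}+t)\partial_i w\big)$ produces three terms: one proportional to $Dt\cdot x'$, one carrying the factor $\tilde{\sigma}-1$ coming from differentiating the power, and one carrying $n-1$ from $\sum_i\partial_i x_i$.

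The central step is to estimate these three terms by means of the hypotheses \eqref{assump_t} so that the whole divergence term is bounded by $\gamma(\tilde{\sigma})^{-1}w$. Using $\tilde{\varepsilon}+t\le\tilde{\varepsilon}+\tilde{\Lambda}|x'|^2$, $\tilde{\Lambda}|x'|^2\le\tilde{\varepsilon}+\tilde{\Lambda}|x'|^2$, and $|Dt\cdot x'|\le 2\tilde{\Lambda}|x'|^2\le 2(\tilde{\varepsilon}+\tilde{\Lambda}|x'|^2)$, the first term is $\le 2w$ and the third is $\le (n-1)w$. For the middle term a dichotomy on the sign of $\tilde{\sigma}-1$ is needed: when $\tilde{\sigma}\ge 1$ it is nonnegative and bounded by $2(\tilde{\sigma}-1)w$, while when $0<\tilde{\sigma}<1$ it is nonpositive and may simply be discarded. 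In both cases one obtains
\[
\sum_{i=1}^{n-1}\partial_i\big((\tilde{\varepsilon}+t(x'))\partial_i w\big)\le 2\tilde{\Lambda}\tilde{\sigma}\big(n+1+2(\tilde{\sigma}-1)_+\big)\,w=\frac{1}{\gamma(\tilde{\sigma})}\,w,
\]
with $\gamma(\tilde{\sigma})$ exactly as in \eqref{gammatildesigma}. Hence $\mathcal{L}_{\gamma}w\le\big(\gamma(\tilde{\sigma})^{-1}-\gamma^{-1}\big)w\le 0$ in $\mathtt{B}_r$ for every $\gamma\in(0,\gamma(\tilde{\sigma})]$. Since $t\in C^1$ this holds classically, so $w$ is in particular a weak supersolution.

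For the comparison, I would set $\psi:=(\tilde{\varepsilon}+\tilde{\Lambda}r^2)^{-\tilde{\sigma}}\|v_1\|_{L^{\infty}(\partial\mathtt{B}_r)}\,w$, so that $\mathcal{L}_{\gamma}\psi\le 0=\mathcal{L}_{\gamma}v_1$ in $\mathtt{B}_r$ while $\psi\equiv\|v_1\|_{L^{\infty}(\partial\mathtt{B}_r)}\ge v_1$ on $\partial\mathtt{B}_r$; thus $\mathcal{L}_{\gamma}(v_1-\psi)\ge 0$. Because the zeroth-order coefficient $-1/\gamma$ of $\mathcal{L}_{\gamma}$ is negative, the weak maximum principle (e.g.\ \cite{GT}*{Theorem 8.1} and its standard extension to nonpositive zeroth-order coefficient) yields $v_1\le\psi$ in $\mathtt{B}_r$; applying the same argument to $-v_1$ gives $|v_1|\le\psi$, which is precisely the asserted bound.

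I expect the barrier computation to be the only genuinely delicate point — specifically, carrying the estimates so that the universal constant comes out to be exactly $2\tilde{\Lambda}\tilde{\sigma}\big(n+1+2(\tilde{\sigma}-1)_+\big)$, which is what pins down the threshold $\gamma(\tilde{\sigma})$ and forces the case split $\tilde{\sigma}\ge 1$ versus $0<\tilde{\sigma}<1$. After that the comparison is entirely routine.
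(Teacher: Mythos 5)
Your proposal is correct and follows essentially the same route as the paper: the same power-of-$\eta$ barrier (yours is $(\tilde{\varepsilon}+\tilde{\Lambda}|x'|^2)^{\tilde{\sigma}}=\tilde{\Lambda}^{\tilde{\sigma}}\tilde{\eta}^{\tilde{\sigma}}$, a harmless renormalization of the paper's $\tilde{\eta}^{\tilde{\sigma}}$), the same term-by-term estimate using \eqref{assump_t} with the split on the sign of $\tilde{\sigma}-1$ yielding exactly the threshold $2\tilde{\Lambda}\tilde{\sigma}(n+1)+4\tilde{\Lambda}\tilde{\sigma}(\tilde{\sigma}-1)_{+}$, and the same comparison of $v_1\pm\psi$ via the maximum principle for the operator with zeroth-order coefficient $-1/\gamma<0$. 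The only blemish is that your per-term bookkeeping drops the common factor $2\tilde{\Lambda}\tilde{\sigma}$ in the intermediate statements, but the final displayed inequality and constant are exactly right.
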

	  
\begin{proof}
A direct computation shows that for $\tilde{\eta}(x')= \tilde{\varepsilon}/\tilde{\Lambda}+|x'|^2$ and any $\tilde{\sigma}>0$,
\begin{equation*}
	  \begin{aligned}
	  			\partial_i\tilde{\eta}^{\tilde{\sigma}}= 2\tilde{\sigma}\tilde{\eta}^{\tilde{\sigma}-1}x_i, \quad 
	  			\partial_{ij}\tilde{\eta}^{\tilde{\sigma}}= 4\tilde{\sigma}(\tilde{\sigma}-1)x_ix_j\tilde{\eta}^{\tilde{\sigma}-2}+2\tilde{\sigma}\tilde{\eta}^{\tilde{\sigma}-1}\delta_i^j,
	  \end{aligned}
\end{equation*}
where $\delta_i^j$ is the Kronecker symbol. Using the assumptions on $t$ \eqref{assump_t}, we obtain
\begin{equation*}
	\begin{aligned}
	  	\mathcal{L}_{\gamma}\tilde{\eta}^{\tilde{\sigma}}=&\, \sum\limits_{i=1}^{n-1}\partial_{i}\big(\tilde{\varepsilon}+t(x')\big)\partial_{i}\tilde{\eta}(x')^{\tilde{\sigma}}+\big(\tilde{\varepsilon}+t(x')\big)\Delta\tilde{\eta}(x')^{\tilde{\sigma}}-\frac{1}{\gamma}\tilde{\eta}(x')^{\tilde{\sigma}}\\
	  	\leq&\,4\tilde{\Lambda}\tilde{\sigma} \tilde{\eta}(x')^{\tilde{\sigma}}+4\tilde{\Lambda}\tilde{\sigma}(\tilde{\sigma}-1)_{+}\tilde{\eta}(x')^{\tilde{\sigma}}+2\tilde{\Lambda}(n-1)\tilde{\sigma}\tilde{\eta}(x')^{\tilde{\sigma}}-\frac{1}{\gamma}\tilde{\eta}(x')^{\tilde{\sigma}}\\
	  	=&\,\Big(2\tilde{\Lambda}\tilde{\sigma}(n+1)+4\tilde{\Lambda}\tilde{\sigma}(\tilde{\sigma}-1)_{+}-\frac{1}{\gamma}\Big)\tilde{\eta}(x')^{\tilde{\sigma}}.
	  \end{aligned} 
\end{equation*}
Therefore, for all $ \gamma\in(0, \gamma({\tilde{\sigma}})]$,  we have $\mathcal{L}_{\gamma}\tilde{\eta}^{\tilde{\sigma}}\leq0$. 
	  	
Thus, for any $0<r<1$,
\begin{equation*}
	  \left\{ \begin{aligned}
	  			\mathcal{L}_{\gamma}\Big(v_1+\frac{\tilde{\eta}(x')^{\tilde{\sigma}}}{( \tilde{\varepsilon}/\tilde{\Lambda}+r^2)^{\tilde{\sigma}}}\|v_1\|_{L^{\infty}(\partial \mathtt{B}_r)}\Big)&\leq 0&\mbox{in}&~\mathtt{B}_r, \\
	  			\quad \quad\,\,\,v_1+\frac{\tilde{\eta}(x')^{\tilde{\sigma}}}{( \tilde{\varepsilon}/\tilde{\Lambda}+r^2)^{\tilde{\sigma}}}\|v_1\|_{L^{\infty}(\partial \mathtt{B}_r)}&\geq 0&\mbox{on}&~\partial \mathtt{B}_r,
	  \end{aligned} \right.
\end{equation*}
and
\begin{equation*}
	  \left\{ \begin{aligned}
	  	\mathcal{L}_{\gamma}\Big(v_1-\frac{\tilde{\eta}(x')^{\tilde{\sigma}}}{( \tilde{\varepsilon}/\tilde{\Lambda}+r^2)^{\tilde{\sigma}}}\|v_1\|_{L^{\infty}(\partial \mathtt{B}_r)}\Big)&\geq 0&\mbox{in}&~\mathtt{B}_r, \\
	  	\quad \quad\,\,\, v_1-\frac{\tilde{\eta}(x')^{\tilde{\sigma}}}{( \tilde{\varepsilon}/\tilde{\Lambda}+r^2)^{\tilde{\sigma}}}\|v_1\|_{L^{\infty}(\partial \mathtt{B}_r)}&\leq 0&\mbox{on}&~\partial \mathtt{B}_r.
	  \end{aligned} \right.
\end{equation*}
The result follows by applying the maximum principle to $v_1\pm\frac{\tilde{\eta}(x')^{\tilde{\sigma}}}{(\tilde{\varepsilon}/\tilde{\Lambda}+r^2)^{\tilde{\sigma}}}\|v_1\|_{L^{\infty}(\partial \mathtt{B}_r)}$.
\end{proof}

\begin{lemma}\label{epsilongammato02}
Suppose that $\|\mathtt{F}\|_{\frac{\sigma+1}{2},\mathtt{B}_r}+\|\mathtt{G}\|_{\frac{\sigma}{2},\mathtt{B}_r}<\infty$.  
 Let $\tilde{\gamma}_0$ be defined as \eqref{bargamma0}. Then for any $\gamma\in(0,\tilde\gamma_0)$ and $r\in (0,1)$,
\begin{equation*}%\label{step2u2}
\|v_2\|_{L^{\infty}(\mathtt{B}_r)}\leq C\Big(\sqrt{\gamma}\|\mathtt{F}\|_{\frac{\sigma+1}{2},\mathtt{B}_r}+\gamma\|\mathtt{G}\|_{\frac{\sigma}{2},\mathtt{B}_r}\Big)(\tilde{\varepsilon}+r^2)^{\sigma/2}.
\end{equation*}
where $C$ depends only on $\tilde{\lambda}$, $\tilde{\Lambda}$, $n$ and $\sigma$, and is independent of $\tilde{\varepsilon}$ and $\gamma$.
\end{lemma}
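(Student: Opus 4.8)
The plan is to isolate the divergence part of the source. By linearity, write $v_2=v_2'+v_2''$ with $\mathcal{L}_\gamma v_2'=\sum_i\partial_i\mathtt{F}^i$, $\mathcal{L}_\gamma v_2''=\mathtt{G}$ and $v_2',v_2''\in H_0^1(\mathtt{B}_r)$. Since $t(x')\ge\tilde\lambda|x'|^2$, the coefficient $\tilde\varepsilon+t$ is strictly positive on $\mathtt{B}_r$, so $\mathcal{L}_\gamma$ obeys the maximum principle and $v_2''$ can be controlled by a barrier. With $\tilde\eta=\tilde\varepsilon/\tilde\Lambda+|x'|^2$, the computation in the proof of Lemma~\ref{epsilongammato01} gives $\mathcal{L}_\gamma\tilde\eta^{\tilde\sigma}\le\bigl(\mu(\tilde\sigma)-\tfrac1\gamma\bigr)\tilde\eta^{\tilde\sigma}$, where $\mu(\tilde\sigma)=2\tilde\Lambda\tilde\sigma(n+1)+4\tilde\Lambda\tilde\sigma(\tilde\sigma-1)_{+}$ is strictly increasing and $\mu(\sigma/2)=1/\tilde\gamma_0$. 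For $\gamma<\tilde\gamma_0/2$ one gets $\mu(\sigma/2)-\tfrac1\gamma<-\tfrac1{2\gamma}$, so comparing $v_2''$ with $\pm A\tilde\eta^{\sigma/2}$ for $A=C\gamma\|\mathtt{G}\|_{\frac\sigma2,\mathtt{B}_r}$ yields $|v_2''|\le C\gamma\|\mathtt{G}\|_{\frac\sigma2,\mathtt{B}_r}(\tilde\varepsilon+r^2)^{\sigma/2}$ on $\mathtt{B}_r$; for the compact range $\gamma\in[\tilde\gamma_0/2,\tilde\gamma_0)$ the barrier $\tilde\eta^{\sigma/4}$ satisfies $\mathcal{L}_\gamma\tilde\eta^{\sigma/4}\le-c_0\tilde\eta^{\sigma/4}$ with the fixed gap $c_0=\mu(\sigma/2)-\mu(\sigma/4)>0$, and the same comparison (absorbing the factor $\gamma\sim1$ into the constant) gives the same bound.

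For $v_2'$ the divergence structure rules out a barrier, so I would argue by a dyadic energy scheme. If $\tilde\varepsilon>r^2$, then $\tilde\varepsilon+t\sim\tilde\varepsilon$ throughout $\mathtt{B}_r$; one rescaling $x'\mapsto r^{-1}x'$ and division by $\tilde\varepsilon r^{-2}$ put the equation into the form \eqref{inGamma} with uniformly elliptic coefficients and rescaled parameter $\tilde\varepsilon\gamma/r^2$, and Corollary~\ref{corthmgammato0}(i) (uniform in that parameter) together with the global energy estimate and a covering (interior estimates away from $\partial\mathtt{B}_r$, boundary estimates near it where $v_2'=0$) settles this case. So assume $\tilde\varepsilon\le r^2$; put $r_k=2^{-k}r$ and let $K$ be the first index with $r_K\le2\sqrt{\tilde\varepsilon}$. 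Testing the equation for $v_2'$ with $v_2'\zeta_k^2$, where $\zeta_k\equiv1$ on $\mathtt{B}_{r_k}$, is supported in $\mathtt{B}_{2r_k}$ and $|D\zeta_k|\lesssim r_k^{-1}$, and using $\tilde\varepsilon+t(x')\sim r_k^2$ on $\{r_k\le|x'|\le2r_k\}$, the weighted bound $|\mathtt{F}|\le\|\mathtt{F}\|_{\frac{\sigma+1}{2},\mathtt{B}_r}(\tilde\varepsilon+|x'|^2)^{(\sigma+1)/2}$ and the elementary inequality $\tilde\varepsilon+t\gtrsim\tilde\varepsilon+|x'|^2$, the favourable sign of the term $\tfrac1\gamma\int(v_2')^2\zeta_k^2$ produces, with $a_k:=\|v_2'\|_{L^2(\mathtt{B}_{r_k})}^2$,
\[
a_k\le C\gamma\,a_{k-1}+C\gamma\,r_k^{2\sigma+n-1}\|\mathtt{F}\|_{\frac{\sigma+1}{2},\mathtt{B}_r}^2,\qquad a_0\le C\gamma\,r^{2\sigma+n-1}\|\mathtt{F}\|_{\frac{\sigma+1}{2},\mathtt{B}_r}^2,
\]
the bound on $a_0$ coming from the global energy estimate. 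Since each step costs only a factor $O(\gamma)$, for $\gamma$ below an explicit threshold $\gamma_\ast(\tilde\lambda,\tilde\Lambda,n,\sigma)$ (so that $C\gamma 2^{n-1}<1$) the iteration sums to $a_k\le C\gamma\,r^{2\sigma}r_k^{n-1}\|\mathtt{F}\|_{\frac{\sigma+1}{2},\mathtt{B}_r}^2$, that is, $r_k^{-(n-1)/2}\|v_2'\|_{L^2(\mathtt{B}_{2r_k})}\le C\sqrt\gamma\,\|\mathtt{F}\|_{\frac{\sigma+1}{2},\mathtt{B}_r}(\tilde\varepsilon+r^2)^{\sigma/2}$.

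To upgrade this to a pointwise bound scale by scale, I would rescale $x'=r_kz'$: the equation for $v_2'$ on $\{r_k/4\le|x'|\le2r_k\}$ becomes one of the type \eqref{inGamma} on $\{1/4\le|z'|\le2\}$ whose coefficient $r_k^{-2}(\tilde\varepsilon+t(r_kz'))$ is uniformly elliptic with constants depending only on $\tilde\lambda,\tilde\Lambda$ (using $\tilde\varepsilon\le r_k^2$), with $\gamma$ unchanged and source $r_k^{-1}\mathtt{F}(r_kz')$. Covering the annulus by unit-scale balls and applying Corollary~\ref{corthmgammato0}(i), whose constant does not depend on $\gamma$, gives $\|v_2'\|_{L^\infty(\{r_{k+1}\le|x'|\le r_k\})}\le C r_k^{-(n-1)/2}\|v_2'\|_{L^2(\mathtt{B}_{2r_k})}+C\sqrt\gamma\,r_k^{-1}\|\mathtt{F}\|_{L^\infty(\mathtt{B}_{2r_k})}$, and both terms are $\le C\sqrt\gamma\,\|\mathtt{F}\|_{\frac{\sigma+1}{2},\mathtt{B}_r}(\tilde\varepsilon+r^2)^{\sigma/2}$ since $r_k^{-1}r_k^{\sigma+1}=r_k^\sigma\le(\tilde\varepsilon+r^2)^{\sigma/2}$. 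Taking the supremum over $0\le k<K$ and handling the core ball $\mathtt{B}_{r_K}$ (on which $\tilde\varepsilon+t\sim\tilde\varepsilon$, so rescale by $\sqrt{\tilde\varepsilon}$) in the same way bounds $\|v_2'\|_{L^\infty(\mathtt{B}_r)}$; in the complementary range $\gamma\in[\gamma_\ast,\tilde\gamma_0)$, where $1/\gamma$ is bounded and the zeroth-order term is a harmless bounded coefficient, the same scheme (or classical estimates for degenerate elliptic equations, $\gamma$ being comparable to $1$) again produces $\gamma$-independent constants. Combining the bounds for $v_2'$ and $v_2''$ finishes the proof.

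I expect the main obstacle to be the localized Caccioppoli estimates and closing the dyadic recursion with constants independent of $\gamma$ and $\tilde\varepsilon$: one must extract the genuine contraction factor $O(\gamma)$ per dyadic step from the good sign of $-\tfrac1\gamma v_2'$, keep the accumulated geometric series bounded uniformly in the number of scales (which is of order $|\log\tilde\varepsilon|$ and grows as $\tilde\varepsilon\to0$), and dovetail this with the $\gamma$-uniform interior estimate of Corollary~\ref{corthmgammato0}(i) that converts the scale-by-scale $L^2$ control into the pointwise bound---while keeping the small-$\gamma$ regime cleanly separated from the bounded range $[\gamma_\ast,\tilde\gamma_0)$.
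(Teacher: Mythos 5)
Your treatment of the $\mathtt{G}$-part by the barrier $\pm A\tilde\eta^{\sigma/2}$ (and $\tilde\eta^{\sigma/4}$ on the compact range) is fine, and your small-$\gamma$ scheme for the divergence part is essentially workable, but there is a genuine gap: the intermediate range $\gamma\in[\gamma_\ast,\tilde\gamma_0)$ for $v_2'$ is not actually handled. Your dyadic recursion $a_k\le C\gamma a_{k-1}+C\gamma r_k^{2\sigma+n-1}\|\mathtt{F}\|^2_{\frac{\sigma+1}{2},\mathtt{B}_r}$ only localizes the $L^2$ mass (which you genuinely need, since the global energy bound alone gives $r_k^{-(n-1)/2}\|v_2'\|_{L^2(\mathtt{B}_{2r_k})}\lesssim\sqrt{\gamma}\,\|\mathtt{F}\|_{\frac{\sigma+1}{2},\mathtt{B}_r}r^{\sigma}(r/r_k)^{(n-1)/2}$, which blows up as $r_k\to\sqrt{\tilde\varepsilon}$) when the coefficient $C\gamma$ beats the dyadic growth, i.e.\ $C\gamma 2^{2\sigma+n-1}<1$ (note: $2^{2\sigma+n-1}$, not $2^{n-1}$, because $r_{k-j}^{2\sigma+n-1}=2^{j(2\sigma+n-1)}r_k^{2\sigma+n-1}$). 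For $\gamma$ comparable to $1$ the iteration over the $\sim\log(r/\sqrt{\tilde\varepsilon})$ scales loses a factor growing like a power of $r/\sqrt{\tilde\varepsilon}$, so ``the same scheme'' fails; and the fallback to ``classical estimates for degenerate elliptic equations'' is not available either, because the bound you need in that range, $\|v_2'\|_{L^{\infty}(\mathtt{B}_r)}\le C\|\mathtt{F}\|_{\frac{\sigma+1}{2},\mathtt{B}_r}(\tilde\varepsilon+r^2)^{\sigma/2}$ uniformly as $\tilde\varepsilon\to0$, is precisely a weighted estimate exploiting the match between the decay $|\mathtt{F}|\lesssim(\tilde\varepsilon+|x'|^2)^{(\sigma+1)/2}$ and the degeneracy $\tilde\varepsilon+t\sim\tilde\varepsilon+|x'|^2$; it is essentially the content of the lemma itself in that regime, and a bounded zeroth-order coefficient does not produce it. Since $\gamma_\ast$ depends on the Caccioppoli constants (hence on $\tilde\lambda$, $\tilde\Lambda$) while $\tilde\gamma_0$ in \eqref{bargamma0} does not, the range $[\gamma_\ast,\tilde\gamma_0)$ is nonempty in general and cannot be waved away.

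The paper closes exactly this point by a different iteration that needs no smallness of $\gamma$: at each dyadic radius $\rho$ it splits $v_2=v_{21}+v_{22}$ in $\mathtt{B}_\rho$, where $v_{21}$ is the $\mathcal{L}_\gamma$-harmonic extension of the boundary values (controlled with coefficient exactly $1$ by the maximum principle, no barrier and no contraction factor required) and $v_{22}$ has zero boundary data and is controlled by a rescaled energy estimate plus the $\gamma$-uniform $L^\infty$ bounds of Corollary~\ref{corthmgammato0}/Corollary~\ref{corbdgammato0}; the resulting recursion $\|v_2\|_{L^{\infty}(\partial\mathtt{B}_{\rho_i})}\le\|v_2\|_{L^{\infty}(\partial\mathtt{B}_{\rho_{i+1}})}+C(\cdots)\rho_{i+1}^{\sigma}$ sums geometrically through $\sum_i\rho_i^{\sigma}\lesssim r^{\sigma}$ because $\sigma>0$, for every $\gamma>0$. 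Replacing your energy recursion by this maximum-principle transfer (or at least invoking it on the range $[\gamma_\ast,\tilde\gamma_0)$) would repair the argument; as written, the proposal proves the lemma only for $\gamma$ below a threshold strictly smaller than $\tilde\gamma_0$.
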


\begin{proof}
 \textbf{Step 1.} We show that for $\rho\in(\sqrt{\tilde{\varepsilon}}, r]$,
\begin{equation}\label{step1u21}
\begin{aligned}
\|v_2\|_{L^{\infty}(\mathtt{B}_{\rho}\backslash \mathtt{B}_{\rho/8})}
\leq \|v_2\|_{L^{\infty}(\partial \mathtt{B}_{\rho})}+C\Big(\sqrt{\gamma}\|\mathtt{F}\|_{\frac{\sigma+1}{2},\mathtt{B}_r}+\gamma\|\mathtt{G}\|_{\frac{\sigma}{2},\mathtt{B}_r}\Big)(\tilde{\varepsilon}+\rho^2)^{\sigma/2}.
\end{aligned}
\end{equation}

To prove this, we decompose 
$$v_2:=v_{21}+v_{22}\quad\mbox{in}~ \mathtt{B}_{\rho},$$ where 
\begin{equation*}%\label{decom1}
\left\{ \begin{aligned}
 \mathcal{L}_{\gamma}v_{21}&=0&\mbox{in}&~\mathtt{B}_{\rho}, \\
v_{21}&=v_2&\mbox{on}&~\partial \mathtt{B}_{\rho}, 
\end{aligned} \right.
\quad\mbox{and}\quad 
\left\{ \begin{aligned}
 \mathcal{L}_{\gamma}v_{22}&=\sum\limits_{i=1}^{n-1}\partial_i\mathtt{F}^i+\mathtt{G}&\mbox{in}&~\mathtt{B}_{\rho}, \\
 v_{22}&=0&\mbox{on}&~\partial \mathtt{B}_{\rho}.
 \end{aligned} \right.
\end{equation*}
The claim then follows from the maximum principle and a rescaling argument.

First, the maximum principle gives
\begin{equation}\label{step1v1}
\|v_{21}\|_{L^{\infty}(\mathtt{B}_{\rho})}\leq \|v_2\|_{L^{\infty}(\partial \mathtt{B}_{\rho})}.
\end{equation}
For $v_{22}$, we rescale by setting $x'=\rho y'$ for $y'\in \mathtt{B}_{1}\subset \bR^{n-1}$ and define $$\tilde{v}_{22}(y')=v_{22}(\rho y'),\quad\hat{\varepsilon}=\frac{\tilde{\varepsilon}}{\rho^2},\quad \tilde{t}(y')=\frac{1}{\rho^2}t(\rho y'),\quad \tilde{\mathtt{F}}(y')=\frac{1}{\rho}\mathtt{F}(\rho y'),\quad\mbox{and}~ \tilde{\mathtt{G}}(y')=\mathtt{G}(\rho y').$$
Then $\tilde{v}_{22}\in H_0^1(\mathtt{B}_{1})$ satisfies
\begin{equation*}%\label{step1v2}
 \sum\limits_{i=1}^{n-1}\partial_{i} \Big((\hat{\varepsilon}+\tilde{t}(y'))\partial_{i}\tilde{v}_{22}\Big)-\frac{1}{\gamma}\tilde{v}_{22}=\dv \tilde{\mathtt{F}}+\tilde{\mathtt{G}}\quad\mbox{in}~\mathtt{B}_{1}\subset \bR^{n-1}.
\end{equation*}
Multiplying both sides by $\tilde{v}_{22}$ and integrating by parts gives
\begin{equation*}
\int_{\mathtt{B}_{1}} (\hat{\varepsilon}+\tilde{t}(y'))|D \tilde{v}_{22}|^2+\frac{1}{\gamma}\tilde{v}_{22}^2dy'=\int_{\mathtt{B}_{1}} \tilde{\mathtt{F}}^i\partial_{i}{\tilde v}_{22}-\tilde{\mathtt{G}}\tilde v_{22}dy'.
\end{equation*}
By H\"{o}lder's inequality, 
\begin{equation*}
\int_{\mathtt{B}_{1}}\Big( (\hat{\varepsilon}+\tilde{t}(y'))|D {\tilde v}_{22}|^2+\frac{1}{\gamma} {\tilde v}_{22}^2\Big)dy'\leq C\int_{\mathtt{B}_{1}}\frac{|\tilde{\mathtt{F}}|^2}{\hat{\varepsilon}+\tilde{t}(y')}+\gamma\tilde{\mathtt{G}}^2dy'.
\end{equation*}
Multiplying both sides by $\gamma$ and using the bounds on $t$, we have
\begin{equation}\label{step1tildev2}
 \| {\tilde v}_{22}\|_{L^2(\mathtt{B}_{1})}\leq C\Big(\sqrt{\gamma}\,\Big\|\frac{\tilde{\mathtt{F}}}{\sqrt{ \hat{\varepsilon}+|y'|^2}}\Big\|_{L^{\infty}(\mathtt{B}_{1})}+\gamma\,\|\tilde{\mathtt{G}}\|_{L^{\infty}(\mathtt{B}_{1})}\Big).
\end{equation}
By using a flattening transform, and Corollary \ref{corbdgammato0} and the corresponding interior estimates on a standard ball, together with \eqref{step1tildev2}, we obtain
\begin{equation*}
\begin{aligned}
\|{\tilde v}_{22}\|_{L^{\infty}(\mathtt{B}_{1}\backslash \mathtt{B}_{1/8})}\leq&\,C \|\tilde{v}_{22}\|_{L^2(\mathtt{B}_{1}\backslash \mathtt{B}_{1/16})}+C\big(\sqrt{\gamma}\|\tilde{\mathtt{F}}\|_{L^{\infty}(\mathtt{B}_{1}\backslash \mathtt{B}_{1/16})}+\gamma\|\tilde{\mathtt{G}}\|_{L^{\infty}(\mathtt{B}_{1}\backslash \mathtt{B}_{1/16})}\big)\\
\leq&\,C\Big(\sqrt{\gamma}\,\Big\|\frac{\tilde{\mathtt{F}}}{\sqrt{ \hat{\varepsilon}+|y'|^2}}\Big\|_{L^{\infty}(\mathtt{B}_{1})}+\gamma\,\|\tilde{\mathtt{G}}\|_{L^{\infty}(\mathtt{B}_{1})}\Big).
\end{aligned}
\end{equation*}
Returning to $v_{22}$, we have 
\begin{equation*}%\label{step1v22}
\|v_{22}\|_{L^{\infty}(\mathtt{B}_{\rho}\backslash \mathtt{B}_{\rho/8})}\leq C\Big(\sqrt{\gamma}\|\mathtt{F}\|_{\frac{\sigma+1}{2},\mathtt{B}_r}+\gamma\|\mathtt{G}\|_{\frac{\sigma}{2},\mathtt{B}_r}\Big)(\tilde{\varepsilon}+\rho^2)^{\sigma/2}.
\end{equation*}
Combining this with \eqref{step1v1} proves \eqref{step1u21}.

\textbf{Step 2.} Using a covering argument and iteration, we prove 
\begin{equation*}%\label{step2u222}
\|v_2\|_{L^{\infty}(\mathtt{B}_{r}\backslash \mathtt{B}_{\sqrt{\tilde{\varepsilon}}}))}\leq C\Big(\sqrt{\gamma}\|\mathtt{F}\|_{\frac{\sigma+1}{2},\mathtt{B}_r}+\gamma\|\mathtt{G}\|_{\frac{\sigma}{2},\mathtt{B}_r}\Big)(\tilde{\varepsilon}+r^2)^{\sigma/2}.
\end{equation*}

Since $v_2\in H_0^1(\mathtt{B}_r)$, \eqref{step1u21} gives
\begin{equation}\label{step2u21}
\|v_2\|_{L^{\infty}(\mathtt{B}_{r}\backslash \mathtt{B}_{r/4})}\leq C\Big(\sqrt{\gamma}\|\mathtt{F}\|_{\frac{\sigma+1}{2},\mathtt{B}_r}+\gamma\|\mathtt{G}\|_{\frac{\sigma}{2},\mathtt{B}_r}\Big)(\tilde{\varepsilon}+r^2)^{\sigma/2}.
\end{equation}
From the classical theory for elliptic equations, $v_2\in C^{\beta}(\mathtt{B}_r)$ for some $\beta\in(0,1)$. For any $\rho\in(\sqrt{\tilde{\varepsilon}},\frac{r}{4})$, we choose $\rho_0=\rho$, $\rho_{i}=2^{i}\rho$, $i=1,2,\ldots,k$, and $k$ such that $\frac{r}{4}\leq2^{k-1}\rho_0<2^k\rho_0<r$. By \eqref{step1u21},
\begin{equation}\label{iteru2}
\|v_2\|_{L^{\infty}(\partial \mathtt{B}_{\rho_i})}\leq \|v_2\|_{L^{\infty}(\partial \mathtt{B}_{\rho_{i+1}})}+C\Big(\sqrt{\gamma}\|\mathtt{F}\|_{\frac{\sigma+1}{2},\mathtt{B}_r}+\gamma\|\mathtt{G}\|_{\frac{\sigma}{2},\mathtt{B}_r}\Big)\rho_{i+1}^{\sigma}.
\end{equation}
Iterating \eqref{iteru2} from $i=0$ to $k-1$, and using \eqref{step2u21}, we obtain
\begin{equation*}%\label{step2u22}
\begin{aligned}
\|v_2\|_{L^{\infty}(\partial \mathtt{B}_{\rho_0})} \leq&\,\|v_2\|_{L^{\infty}(\partial \mathtt{B}_{2^k\rho_0})}+C\Big(\sqrt{\gamma}\|\mathtt{F}\|_{\frac{\sigma+1}{2},\mathtt{B}_r}+\gamma\|\mathtt{G}\|_{\frac{\sigma}{2},\mathtt{B}_r}\Big)\sum_{i=1}^{k}(2^i\rho_0)^{\sigma}\\
\leq&\,Cr^{\sigma}\Big(\sqrt{\gamma}\|\mathtt{F}\|_{\frac{\sigma+1}{2},\mathtt{B}_r}+\gamma\|\mathtt{G}\|_{\frac{\sigma}{2},\mathtt{B}_r}\Big).
\end{aligned}
\end{equation*}
Combining this with \eqref{step2u21} gives
\begin{equation}\label{step2u233}
\|v_2\|_{L^{\infty}(\mathtt{B}_r\backslash \mathtt{B}_{\sqrt{\tilde{\varepsilon}}})}\leq C \Big(\sqrt{\gamma}\|\mathtt{F}\|_{\frac{\sigma+1}{2},\mathtt{B}_r}+\gamma\|\mathtt{G}\|_{\frac{\sigma}{2},\mathtt{B}_r}\Big)(\tilde{\varepsilon}+r^2)^{\sigma/2}.
\end{equation}

\textbf{Step 3.} We estimate $v_2$ in $\mathtt{B}_{\sqrt{\tilde{\varepsilon}}}$ via a similar decomposition and scaling.

Decompose $$v_2:=v_{21}+v_{22}\quad\mbox{in}~ \mathtt{B}_{2\sqrt{\tilde{\varepsilon}}},$$ as in Step 1, with $\rho=2\sqrt{\tilde{\varepsilon}}$.  For $v_{21}$, we apply the maximum principle. For $v_{22}$, we utilize Corollary \ref{corbdgammato0}, the corresponding interior estimates on a standard ball, and a scaling argument. By combining the estimates for $v_{21}$ and  $v_{22}$, we derive
\begin{equation*}%\label{step2u23}
\|v_2\|_{L^{\infty}(\mathtt{B}_{\sqrt{\tilde{\varepsilon}}})}\leq \|v_2\|_{L^{\infty}(\partial \mathtt{B}_{2\sqrt{\tilde{\varepsilon}}})}+C\Big(\sqrt{\gamma}\|\mathtt{F}\|_{\frac{\sigma+1}{2},\mathtt{B}_r}+\gamma\|\mathtt{G}\|_{\frac{\sigma}{2},\mathtt{B}_r}\Big)\tilde{\varepsilon}^{\sigma/2}.
\end{equation*}
By using \eqref{step2u233},
$$\|v_2\|_{L^{\infty}(\mathtt{B}_{\sqrt{\tilde{\varepsilon}}})}\leq C\Big(\sqrt{\gamma}\|\mathtt{F}\|_{\frac{\sigma+1}{2},\mathtt{B}_r}+\gamma\|\mathtt{G}\|_{\frac{\sigma}{2},\mathtt{B}_r}\Big)\tilde{\varepsilon}^{\sigma/2}.$$
This, together with \eqref{step2u233}, completes the proof.
\end{proof}

\begin{proof}[Proof of Proposition \ref{epsilongammato0}]
Without loss of generality, we assume $\|v\|_{L^{\infty}(\partial \mathtt{B}_{1})}+\sqrt{\gamma}\|\mathtt{F}\|_{\mathtt{B}_{1}, \frac{\sigma+1}{2}}+\gamma\|\mathtt{G}\|_{\mathtt{B}_{1}, \sigma/2}\leq 1.$ 

Let $\tilde{\gamma}_0$ and $\gamma({\tilde{\sigma}})$ be defined as in \eqref{bargamma0} and \eqref{gammatildesigma}, respectively. One can easily verify that $\tilde{\gamma}_0 = \gamma({\frac{\sigma}{2}})$ and that $\gamma({\tilde{\sigma}})$ is strictly decreasing with respect to $\tilde{\sigma}$. Therefore, for any $\gamma \in (0, \tilde{\gamma}_0)$, there exists $\bar{\sigma} > \frac{\sigma}{2}$ such that $\gamma \in (0, \gamma({\bar{\sigma}})]$.

Then by Lemmas \ref{epsilongammato01} and \ref{epsilongammato02}, for $0<\rho<r<1$, we obtain
\begin{equation}\label{estw1w2}
\begin{aligned}
\|v\|_{L^{\infty}(\partial \mathtt{B}_{\rho})}\leq&\,\|v_1\|_{L^{\infty}(\partial \mathtt{B}_{\rho})}+\|v_2\|_{L^{\infty}(\partial \mathtt{B}_{\rho})}
\leq\, \Big(\frac{\tilde{\varepsilon}+\tilde{\Lambda}\rho^2}{\tilde{\varepsilon}+\tilde{\Lambda}r^2}\Big)^{\bar{\sigma}} \|v\|_{L^{\infty}(\partial \mathtt{B}_r)}+C (\tilde{\varepsilon}+r^2)^{\sigma/2}.
\end{aligned}
\end{equation}
For $\rho\in(\sqrt{\tilde{\varepsilon}},r)$, this implies
\begin{equation}\label{iterationepsilongammato0}
\|v\|_{L^{\infty}(\partial \mathtt{B}_{\rho})}\leq \Big(\frac{(\tilde{\Lambda}+1)\rho^2}{\tilde{\Lambda}r^2}\Big)^{\bar{\sigma}}\|v\|_{L^{\infty}(\partial \mathtt{B}_r)}+Cr^{\sigma}.
\end{equation} 
Fix $\mu\in(0, 1)$ such that $\frac{\tilde{\Lambda}+1}{\tilde{\Lambda}}\mu^{2\bar{\sigma}}\leq \frac{1}{2}\mu^{\sigma}$. Iterating \eqref{iterationepsilongammato0} along the scales $\rho=\mu^{i+1}$ and $r=\mu^i$ in \eqref{iterationepsilongammato0} from $i=0$ to $k-1$, with $\rho_k:=\mu^k\in(\sqrt{\tilde{\varepsilon}},1)$, yields
\begin{equation}\label{iteraw11}
\begin{aligned}
\|v\|_{L^{\infty}(\partial \mathtt{B}_{\rho_{k}})} 
\leq&\,\frac{1}{2^k}\mu^{k\sigma}\|v\|_{L^{\infty}(\partial \mathtt{B}_{1})}+C\sum_{i=1}^{k} (\frac{1}{2}\mu^{\sigma})^{i-1}(\mu^{k-i})^{\sigma}
\leq\,C\mu^{k\sigma}.
\end{aligned}
\end{equation}

For $|x'|\in(\sqrt{\tilde{\varepsilon}},1)$, choose $k$ such that $\mu^{k+1}<|x'|\leq\mu^k$, then by \eqref{iterationepsilongammato0} and \eqref{iteraw11}, 
\begin{equation}\label{estimatew}
|v(x')|\leq \|v\|_{L^{\infty}(\partial \mathtt{B}_{|x'|})}\leq C\|v\|_{L^{\infty}(\partial \mathtt{B}_{\mu^k})}+C\mu^{k\sigma}\leq C|x'|^{\sigma}\leq C(\tilde{\varepsilon}+|x'|^2)^{\sigma/2}.
\end{equation}
For $x'\in \mathtt{B}_{\sqrt{\tilde{\varepsilon}}}$, we use \eqref{estw1w2} and \eqref{estimatew} to obtain 
\begin{equation*}%\label{epsilongammato022}
\|v\|_{L^{\infty}(\mathtt{B}_{\sqrt{\tilde{\varepsilon}}})}\leq C\underset{|x'|\leq \sqrt{\tilde{\varepsilon}}}{\sup}\|v\|_{L^{\infty}(\partial \mathtt{B}_{|x'|})}\leq \|v\|_{L^{\infty}(\partial \mathtt{B}_{2\sqrt{\tilde{\varepsilon}}})}+C{\tilde{\varepsilon}}^{\sigma/2}\leq C{\tilde{\varepsilon}}^{\sigma/2}.
\end{equation*}
Combining these estimates completes the proof of Proposition \ref{epsilongammato0}.
\end{proof}

\subsection{Pointwise gradient estimates for degenerate equations}

\begin{prop}\label{prop_gradient_v}
 Let  $v \in H^1(\mathtt{B}_{1})$  be a solution to \eqref{equ_v}, and let  $t(x')$  satisfy \eqref{assump_t}. Suppose  $\mathtt{F} \in C^{\alpha}(\mathtt{B}_{1})$ ,  $\mathtt{G} \in L^{\infty}(\mathtt{B}_{1})$ , and  $\|\mathtt{F}\|_{\frac{\sigma+1}{2},\mathtt{B}_{1}} + \|\mathtt{G}\|_{\frac{\sigma}{2},\mathtt{B}_{1}} < \infty$ . For any  $\sigma > 0$ , let  $\tilde{\gamma}_0$  be as defined in \eqref{bargamma0}. Then, for all  $x' \in \mathtt{B}_{\frac{1}{2}}$ , the estimate
\begin{equation*}
\begin{aligned}
|D v(x')| \leq\, &\, C\tilde{\eta}(x')^{\frac{\sigma-1}{2}}\Big(\|v\|_{L^{\infty}(\partial \mathtt{B}_{1})} + \sqrt{\gamma}\|\mathtt{F}\|_{\frac{\sigma+1}{2},\mathtt{B}_{1}} + \gamma\|\mathtt{G}\|_{\frac{\sigma}{2},\mathtt{B}_{1}}\Big)\\
&+ C\tilde{\eta}(x')^{-1+\frac{\alpha}{2}}\gamma^{\frac{\alpha}{2}}[\mathtt{F}]_{C^{\alpha}(\mathtt{B}_{\frac{1}{8}\sqrt{\tilde{\eta}(x')}}(x'))} + C\gamma^{1/2}\tilde{\eta}(x')^{-1/2}\|\mathtt{G}\|_{L^{\infty}(\mathtt{B}_{\frac{1}{8}\sqrt{\tilde{\eta}(x')}}(x'))}
\end{aligned}
\end{equation*}
holds for all  $0 < \gamma < \tilde\gamma_0$ . Here, $\tilde\eta(x')=\tilde{\varepsilon}/\tilde{\Lambda}+|x'|^2$ and the constant C depends only on  $n$, $\tilde{\lambda}$, $\tilde{\Lambda}$, $\sigma$ and a lower bound of $\tilde{\gamma}_0-\gamma$.
\end{prop}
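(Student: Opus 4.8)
The plan is to derive the pointwise gradient bound from a rescaling argument performed at the natural length scale $d:=\sqrt{\tilde\eta(x_0')}$, combining the zeroth-order decay estimate of Proposition~\ref{epsilongammato0} with the gradient estimate of Corollary~\ref{corthmgammato0}~(ii) for equations carrying the singular term $-\frac1\gamma v$; recall that the latter estimate is valid for every $\gamma\in(0,\infty)$. The key geometric observation is that, although $t$ degenerates at the origin, on the ball $\mathtt{B}_{\theta d}(x_0')$ the coefficient $\tilde\varepsilon+t(x')$ is comparable to $d^2$, with constants depending only on $\tilde\lambda$ and $\tilde\Lambda$, once $\theta=\theta(\tilde\lambda,\tilde\Lambda)$ is taken small enough: if $|x_0'|^2\gtrsim\tilde\varepsilon$ then $|x'|\sim|x_0'|\sim d$ on that ball and one invokes \eqref{assump_t}, while if $|x_0'|^2\lesssim\tilde\varepsilon$ then $\tilde\varepsilon$ dominates both $t(x')$ and $d^2$ there. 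The same dichotomy gives $\tilde\varepsilon+|x'|^2\sim\tilde\eta(x')\sim d^2$ on $\mathtt{B}_{\theta d}(x_0')$, which is exactly what is needed to feed Proposition~\ref{epsilongammato0} into the argument.

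To carry this out, fix $x_0'\in\mathtt{B}_{1/2}$, shrink $\theta$ further if necessary so that $\mathtt{B}_{\theta d}(x_0')\subset\mathtt{B}_1$, and set $\tilde v(z'):=v(x_0'+\theta d\,z')$ for $z'\in\mathtt{B}_1$. Substituting into \eqref{equ_v} and normalizing the leading coefficient by dividing through by $d^2$, one checks that $\tilde v\in H^1(\mathtt{B}_1)$ solves
\begin{equation*}
\sum_{i=1}^{n-1}\partial_{i}\bigl(b(z')\,\partial_{i}\tilde v\bigr)-\frac{\theta^2}{\gamma}\,\tilde v=\frac{\theta}{d}\sum_{i=1}^{n-1}\partial_{i}\tilde{\mathtt F}^{i}+\theta^2\tilde{\mathtt G}\qquad\text{in }\mathtt{B}_1,
\end{equation*}
where $b(z'):=d^{-2}\bigl(\tilde\varepsilon+t(x_0'+\theta d\,z')\bigr)$ and $\tilde{\mathtt F}^{i}(z'):=\mathtt F^{i}(x_0'+\theta d\,z')$, $\tilde{\mathtt G}(z'):=\mathtt G(x_0'+\theta d\,z')$. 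By the comparability just described together with the bound $|Dt|\le 2\tilde\Lambda|x'|$, the scalar coefficient $b$ is uniformly elliptic and $\|b\|_{C^{\alpha}(\mathtt{B}_1)}$ is controlled in terms of $\tilde\lambda,\tilde\Lambda$ only; moreover the zeroth-order coefficient $-\theta^2/\gamma$ is again of the form $-1/\gamma'$ with $\gamma'=\gamma/\theta^2>0$, so this is an admissible instance of the equations treated in Subsection~\ref{subsection31}, the data $\mathtt F\in C^{\alpha}$, $\mathtt G\in L^{\infty}$ being of the required type.

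Applying the gradient estimate of Corollary~\ref{corthmgammato0}~(ii) (with $\gamma'$ replacing $\gamma$) to $\tilde v$ and absorbing every power of $\theta$ into the constant yields
\begin{equation*}
\|D\tilde v\|_{L^{\infty}(\mathtt{B}_{1/4})}\le C\|\tilde v\|_{L^{2}(\mathtt{B}_{3/4})}+C\gamma^{\alpha/2}d^{-1}[\tilde{\mathtt F}]_{C^{\alpha}(\mathtt{B}_{3/4})}+C\gamma^{1/2}\|\tilde{\mathtt G}\|_{L^{\infty}(\mathtt{B}_{3/4})}.
\end{equation*}
Reverting to the original variables, $[\tilde{\mathtt F}]_{C^{\alpha}(\mathtt{B}_{3/4})}=(\theta d)^{\alpha}[\mathtt F]_{C^{\alpha}(\mathtt{B}_{3\theta d/4}(x_0'))}$, $\|\tilde{\mathtt G}\|_{L^{\infty}(\mathtt{B}_{3/4})}=\|\mathtt G\|_{L^{\infty}(\mathtt{B}_{3\theta d/4}(x_0'))}$, and $\|\tilde v\|_{L^{2}(\mathtt{B}_{3/4})}\le C\|v\|_{L^{\infty}(\mathtt{B}_{3\theta d/4}(x_0'))}$; for the last quantity Proposition~\ref{epsilongammato0}, together with $\tilde\varepsilon+|x'|^2\sim d^2$ on that ball, gives
\begin{equation*}
\|v\|_{L^{\infty}(\mathtt{B}_{3\theta d/4}(x_0'))}\le C\,d^{\sigma}\Bigl(\|v\|_{L^{\infty}(\partial\mathtt{B}_1)}+\sqrt\gamma\,\|\mathtt F\|_{\frac{\sigma+1}{2},\mathtt{B}_1}+\gamma\,\|\mathtt G\|_{\frac{\sigma}{2},\mathtt{B}_1}\Bigr).
\end{equation*}
Since $|Dv(x_0')|=(\theta d)^{-1}|D\tilde v(0')|\le C\,d^{-1}\|D\tilde v\|_{L^{\infty}(\mathtt{B}_{1/4})}$, inserting these three bounds, collecting the resulting powers $d^{\sigma-1}$, $\gamma^{\alpha/2}d^{\alpha-2}$ and $\gamma^{1/2}d^{-1}$, substituting $d^2=\tilde\eta(x_0')$, and enlarging $\mathtt{B}_{3\theta d/4}(x_0')\subset\mathtt{B}_{\frac{1}{8}\sqrt{\tilde\eta(x_0')}}(x_0')$ (choosing $\theta<1/6$) by monotonicity of the semi-norms produces precisely the claimed inequality. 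The only genuinely delicate step is the $\tilde\varepsilon$- and $\gamma$-independent ellipticity and H\"older bound for $b$ near the origin described above; everything else is scaling bookkeeping built on Proposition~\ref{epsilongammato0} and Corollary~\ref{corthmgammato0}.
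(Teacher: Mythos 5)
Your proposal is correct and follows essentially the same route as the paper: rescale to unit scale at the natural length $\sqrt{\tilde\eta(x_0')}$ (the paper uses radius $\tfrac18\sqrt{\tilde\eta(x_0')}$ and keeps the zeroth-order coefficient as $-1/\gamma$, whereas you normalize the leading coefficient and invoke Corollary \ref{corthmgammato0}~(ii) with $\gamma'=\gamma/\theta^2$ — an immaterial difference since that corollary holds for all $\gamma>0$), then feed in Proposition \ref{epsilongammato0} to control $\|v\|_{L^\infty}$ on the small ball and scale back. Your explicit dichotomy verifying $\tilde\varepsilon+t\sim\tilde\eta(x_0')$ and the $C^\alpha$ bound on the rescaled coefficient is a detail the paper leaves implicit, but the argument and the resulting bookkeeping of powers coincide with the paper's proof.
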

\begin{proof}
 For any $x_0'\in \mathtt{B}_{1/2}$, we use a change of variables in $\mathtt{B}_{\frac{1}{8}\sqrt{\tilde{\eta}(x_0')}}(x_0)$ by setting 
\begin{equation*}
x'=x_0'+\frac{1}{8}\sqrt{\tilde{\eta}(x_0')}y',\quad y'\in \mathtt{B}_{1},
\end{equation*}
and set $\tilde{v}(y)=v(x_0'+\frac{1}{8}\sqrt{\tilde{\eta}(x_0')}y')$. Then $\tilde{v}(y')$ solves 
$$\sum\limits_{i=1}^{n-1}\partial_i(\tilde{a}^{ii}\partial_i \tilde{v})-\frac{1}{\gamma}\tilde{v}=\sum\limits_{i=1}^{n-1}\partial_i\tilde{\mathtt{F}}^i+\tilde{\mathtt{G}}\quad \mbox{in}~\mathtt{B}_{1},$$
where $\tilde{a}^{ii}(y')=\frac{64}{\tilde{\eta}(x_0')}\delta(x_0'+\frac{1}{8}\sqrt{\tilde{\eta}(x_0')}y')$, and
$$\tilde{\mathtt{F}}^i(y')=\frac{8}{\sqrt{\tilde{\eta}(x_0')}}\mathtt{F}^i(x_0'+\frac{1}{8}\sqrt{\tilde{\eta}(x_0')}y'), \quad \tilde{\mathtt{G}}(y')=\mathtt{G}(x_0'+\frac{1}{8}\sqrt{\tilde{\eta}(x_0')}y').$$
Applying Corollary~ \ref{corthmgammato0}~(ii) to $\tilde{v}$ and rescaling back, we have 
\begin{equation*}
\begin{aligned}
|D v(x_0')|\leq&\,C\tilde{\eta}(x_0')^{-1/2}\|v\|_{L^{\infty}(\mathtt{B}_{\frac{1}{8}\sqrt{\tilde{\eta}(x_0')}}(x_0'))}\\
&+C\tilde{\eta}(x_0')^{-1+\frac{\alpha}{2}}\gamma^{\frac{\alpha}{2}}[\mathtt{F}]_{C^{\alpha}(\mathtt{B}_{\frac{1}{8}\sqrt{\tilde{\eta}(x_0')}}(x_0'))}+C\gamma^{1/2}\tilde{\eta}(x_0')^{-1/2}\|\mathtt{G}\|_{L^{\infty}(\mathtt{B}_{\frac{1}{8}\sqrt{\tilde{\eta}(x_0')}}(x_0'))}.
\end{aligned}
\end{equation*}
Combining with Proposition \ref{epsilongammato0}, we complete the proof of Proposition \ref{prop_gradient_v}.
\end{proof}

\section{Gradient estimates in the narrow region}\label{sec_4}

In this section, we establish gradient estimates for solutions to the nonhomogeneous equation in the narrow region $\Omega_{2R}$, subject to nonvanishing boundary conditions:
\begin{equation}\label{equw}
\left\{ \begin{aligned}
\Delta {w}&=\mathfrak{R}(x)&\mbox{in}&~\Omega_{2R},\\
{w}+\gamma\partial_{\nu}{w}&=\Psi_1(x')&\mbox{on}&~\Gamma_{1,2R},\\
{w}+\gamma\partial_{\nu}{w}&=\Psi_2(x')&\mbox{on}&~\Gamma_{2,2R},
\end{aligned} \right.
\end{equation}
where $\nu$ is the unit normal vector, pointing upward on $\Gamma_{1,2R}$ and downward on $\Gamma_{2,2R}$. The boundary profiles $f_{1}$ and $f_{2}$ satisfy assumptions \eqref{fg0}--\eqref{fg}. We show that under suitable growth conditions on the right-hand side and boundary data, the gradient remains bounded by a constant independent of both $\varepsilon$ and $\gamma$.

The proof of gradient boundedness proceeds via a case analysis based on the conductivity regime. In the low-conductivity regime $\gamma \leq \mu \eta(x')$, we combine an iterative estimate for $w$ with a rescaled version of Theorem \ref{gradientsmallgamma3}, directly yielding a gradient bound. In the high-conductivity regime ($\gamma > \mu \eta(x')$), a more refined approach is adopted. Inspired by studies of Neumann boundary conditions \cite{DLY}, we employ a dimension reduction technique, examining the vertical average $\bar{w}$ of $w$. This average satisfies an elliptic equation with degenerate principal coefficients and lower-order terms of order $1/\gamma$ for small $\gamma$. The regularity theory from Section \ref{sec_3} then provides a gradient estimate for $\bar{w}$ in this context. The final boundedness of $Dw$ in $\Omega_R$ is achieved by synthesizing these case-specific estimates. This systematic division into regimes, along with the tailored methods applied in each case, yields the global gradient bound. Here, $\mu$ is a fixed large constant, to be specified in Subsection \ref{subsectionboundedgra}.

To streamline notation, we introduce a weighted H\"older norm. For a domain $\mathcal{D}\subset \bR^n$ with diameter $d = \text{diam}\, \mathcal{D}$, define
\begin{equation*}
\begin{aligned}
\|\Psi\|^*_{C^{\alpha}(\mathcal{D})}:=&\,\|\Psi\|_{L^{\infty}(\mathcal{D})}+d^{\alpha}[\Psi]_{C^{\alpha}(\mathcal{D})}, \\ 
\|\Psi\|^*_{C^{1,\alpha}(\mathcal{D})}:=&\,\|\Psi\|_{L^{\infty}(\mathcal{D})}+d\,\|\nabla\Psi\|_{L^{\infty}(\mathcal{D})}+d^{1+\alpha}\underset{1\leq i\leq n}{\sup}[\partial_i\Psi]_{C^{\alpha}(\mathcal{D})}.
\end{aligned}
\end{equation*}

We recall several facts and fix notation. Let $\Omega_r(x_0)$ be as in \eqref{definition_omegat}, and set $\eta(x') := \varepsilon + |x'|^2$ and $\delta(x') := \varepsilon + f_1(x') - f_2(x')$. Under assumptions \eqref{fg0}--\eqref{fg}, we have $\eta(x') \sim \delta(x')$ and
$$|Df_{1}(x')|+|Df_{2}(x')|\leq C\eta(x')^{1/2}$$ for all $x\in\Omega_{R}$. We may assume without loss of generality that $R \leq 1/32$. Under these conditions, for $\varepsilon \leq \frac{1}{4}R^2$, the following inclusions and comparisons hold:
 \begin{itemize}
 \item[---] $\Omega_{\eta(x')}(x)\subset \Omega_{R}$ for all $x\in \Omega_{\frac{3}{4}R}$;
 \item[---] $\Omega_{\eta(x')}(x)\subset \Omega_{\frac{1}{16}\sqrt{\eta(x')}}(x)$ for all $x\in\Omega_ {\frac{3}{4}R}$;
 \item[---] $\Omega_{\frac{1}{8}\sqrt{\eta(x')}}(x)\subset\Omega_{\frac{3}{4}R}$ for all $x\in\Omega_{\frac{R}{2}}$;
 \item[---] for any $x\in \Omega_{R}$ and $x_0\in \Omega_{\frac{1}{8}\sqrt{\eta(x')}}(x)$, we have $\eta(x')\sim\eta(x_0')$.
\end{itemize}
    
\begin{theorem}\label{boundedgradientw}
Let $w \in H^1(\Omega_{2R})$ be a solution to \eqref{equw} with boundary profiles $f_1$ and $f_2$ satisfying \eqref{fg}.  Assume the normalization conditions :
$$\|w\|_{L^{\infty}(\Omega_{2R})} \leq 1,  $$
and 
\begin{equation}\label{estH}
	|\mathfrak{R}(x_0)| \leq \frac{1}{\gamma + \delta(x_0')}, \quad
	\|\Psi_i\|^*_{C^{1,\alpha}(\Omega_{\frac{3}{4}\eta(x_0')}(x_0))} \leq \frac{\gamma\delta(x_0')}{\gamma + \delta(x_0')}, 
	\quad  i=1,2,\,\, \forall x_0 \in \Omega_R.
\end{equation}
Let $\gamma_0$ be given by \eqref{gamma00}. Then, for any $\varepsilon \in (0, \varepsilon_0)$ and $\gamma \in (0, \gamma_0)$, there holds
\begin{equation}\label{equboundedgradientw}
	  \|Dw\|_{L^{\infty}(\Omega_{R/2})} \leq C.
\end{equation}
Here, $\varepsilon_0 > 0$ and $C > 0$ depend only on $n$, $R$, $\alpha$, $\kappa$, a lower bound of $\gamma_0-\gamma$ and upper bounds of $\|f_{1}\|_{C^{2,\alpha}}$ and $\|f_{2}\|_{C^{2,\alpha}}$.
\end{theorem}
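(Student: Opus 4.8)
The plan is to fix $x_0\in\Omega_{R/2}$ and prove the pointwise estimate $|Dw(x_0)|\le C$ with $C$ depending only on the allowed quantities; since nothing in the argument depends on $x_0$, taking the supremum yields \eqref{equboundedgradientw}. Fix a large constant $\mu$, depending only on $n$, $\kappa$ and the $C^{2,\alpha}$-bounds on $f_1,f_2$, and split into the \emph{moderate-gap regime} $\gamma\le\mu\,\delta(x_0')$ and the \emph{small-gap regime} $\gamma>\mu\,\delta(x_0')$. Throughout, the only data retained are $\|w\|_{L^\infty(\Omega_{2R})}\le1$ and the structural bounds \eqref{estH}; having $\gamma$ in the denominator of the bound for $|\mathfrak{R}|$ and essentially in the numerator of the bound for $\|\Psi_i\|^*_{C^{1,\alpha}}$ is exactly what makes the error terms below carry compensating powers of $\gamma$ and remain bounded as $\gamma\to0^+$.

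\textbf{Moderate-gap regime.} When $\gamma\le\mu\,\delta(x_0')$, the narrow region near $x_0$ is isotropic at the scale $\delta(x_0')$: the subregion $\Omega_{c\,\delta(x_0')}(x_0)\subset\Omega_R$ is comparable to a cube of side $\delta(x_0')$, which after the flattening change of variables \eqref{variableschange} and dilation by $\delta(x_0')$ becomes a unit half-ball $B_1^+$ (if $x_0$ is within $c\,\delta(x_0')$ of $\Gamma_{1,2R}$ or $\Gamma_{2,2R}$) or a unit ball $B_1$ (otherwise). The rescaled solution then solves a problem of exactly the form \eqref{equgradientsmallgamma} with $C^{\alpha}$ coefficients satisfying \eqref{uniformelliptic}, with $h>0$ in $C^{\alpha}$, and with rescaled parameter $\hat\gamma=\gamma/\delta(x_0')\in(0,\mu]$; moreover the weighted $C^{1,\alpha}$-norm in \eqref{estH} rescales precisely onto $\|\phi/h\|_{C^{1,\alpha}}$, and the bound on $|\mathfrak{R}|$ rescales into a bounded $\|G\|_{L^\infty}$. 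Applying Theorem \ref{gradientsmallgamma3} — its conclusion being independent of $\hat\gamma$, hence uniform as $\hat\gamma\to0^+$, which is the essential new input — gives a bound for the $C^{1,\alpha}$-norm of the rescaled solution. Because a raw rescaling back would reintroduce a factor $\delta(x_0')^{-1}$, this is combined with an iterative (Campanato-type) estimate for the normalized deviation $\fint_{\Omega_r(x_0)}|Dw-(Dw)_{\Omega_r(x_0)}|^2$, run together with energy estimates across the narrow region over dyadic scales $r$ from $\sqrt{\eta(x_0')}$ down to $\sim\delta(x_0')$; the geometric decay of this quantity, with errors controlled by \eqref{estH}, absorbs the scaling factor and yields $|Dw(x_0)|\le C$.

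\textbf{Small-gap regime.} When $\gamma>\mu\,\delta(x_0')$, $x_0$ lies near the contact point and I reduce the dimension in the spirit of \cite{DLY}. Set $\bar w(x'):=\delta(x')^{-1}\int_{f_2(x')}^{\varepsilon+f_1(x')}w(x',x_n)\,dx_n$ on $\mathtt{B}_{2R}\subset\bR^{n-1}$. Integrating $\Delta w=\mathfrak{R}$ over vertical fibres gives $\partial_n w|_{\Gamma_1}-\partial_n w|_{\Gamma_2}+\int\Delta_{x'}w\,dx_n=\int\mathfrak{R}\,dx_n$; substituting the Robin conditions for $\partial_n w|_{\Gamma_j}$, using the Leibniz identity $\partial_i\big(\int w\,dx_n\big)=\int\partial_i w\,dx_n+w|_{\Gamma_1}\partial_i f_1-w|_{\Gamma_2}\partial_i f_2$ to rewrite $\int\partial_i w\,dx_n=\delta\,\partial_i\bar w+\partial_i f_1(\bar w-w|_{\Gamma_1})-\partial_i f_2(\bar w-w|_{\Gamma_2})$, and observing that the first-order tangential-trace contributions cancel, one obtains the degenerate elliptic equation
\[
\sum_{i=1}^{n-1}\partial_i\big((\varepsilon+t(x'))\partial_i\bar w\big)+c(x')\,\bar w=\sum_{i=1}^{n-1}\partial_i\mathtt{F}^i+\mathtt{G}\quad\text{in }\mathtt{B}_{2R},
\]
with $t=f_1-f_2$ satisfying \eqref{assump_t} (with $\tilde\Lambda\le\tfrac12\|D^2(f_1-f_2)\|_{L^\infty(\mathtt{B}_R)}$ via Taylor's theorem), with $-c(x')=\gamma^{-1}\big(\sqrt{1+|\nabla f_1|^2}+\sqrt{1+|\nabla f_2|^2}\big)$, so that $\tfrac{2}{\gamma}\le -c(x')\le\tfrac{2+C\eta(x')}{\gamma}$ (admissible by the remark opening Section \ref{sec_3}), and with $\mathtt{F}^i$ collecting $\partial_i f_j(\bar w-w|_{\Gamma_j})$ and the contributions of $\Psi_j$, and $\mathtt{G}$ collecting $\delta^{-1}\int\mathfrak{R}\,dx_n$ and the $\gamma^{-1}\Psi_j$- and $\gamma^{-1}(w|_{\Gamma_j}-\bar w)$-contributions. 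Taking $\sigma=1$ in \eqref{bargamma0}, the corresponding threshold $\tilde\gamma_0$ is at least $\gamma_0$ of \eqref{gamma00}, so the hypothesis $\gamma\in(0,\gamma_0)$ lies in the range of Proposition \ref{prop_gradient_v}; since the weight $\tilde\eta(x')^{(\sigma-1)/2}$ is then constant, Proposition \ref{prop_gradient_v} gives $|D\bar w(x_0')|\le C$ once the weighted norms $\|\mathtt{F}\|_{1,\mathtt{B}_{2R}}$, $\|\mathtt{G}\|_{1/2,\mathtt{B}_{2R}}$ and the local seminorms entering its last two terms are controlled, so that $\sqrt\gamma\|\mathtt{F}\|$, $\gamma\|\mathtt{G}\|$, $\gamma^{\alpha/2}[\mathtt{F}]_{C^{\alpha}}$ and $\gamma^{1/2}\|\mathtt{G}\|_{L^\infty}$ are bounded — here the bounds \eqref{estH} and the inequality $\eta(x')<\gamma/\mu$ are used to absorb the powers of $\eta/\gamma$. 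Finally, $|Dw(x_0)|$ is recovered from $|D\bar w(x_0')|$ together with the fibrewise oscillation $|w|_{\Gamma_j}-\bar w|$ and the fibre average of $\nabla_{x'}w$, all bounded at $x_0$ by the same data.

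\textbf{Main obstacle.} The delicate point is precisely the control of the fibrewise error terms $\bar w-w|_{\Gamma_j}$, the averaged tangential gradient of $w$ in $\mathtt{F}^i,\mathtt{G}$, and the reconstruction step: a priori all of these involve $Dw$ itself, so the argument is not a single application of Proposition \ref{prop_gradient_v} but an interlocking bootstrap — first a crude scale-dependent oscillation/gradient bound for $w$ on thin slabs via energy estimates and Poincar\'e's inequality in the thin direction, then the reduced estimate for $\bar w$, then an improvement — and it must be checked that the two regimes agree across the transition set $\gamma\sim\mu\,\delta(x')$, where the moderate-gap bound already supplies $|Dw|\le C$ and thereby controls the error terms in the $\bar w$-equation there. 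Choosing $\mu$ large enough so that all the perturbative terms, in the flattened Robin problem and in the reduced degenerate equation alike, are genuinely small is what glues the two cases together and closes the estimate.
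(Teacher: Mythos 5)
There is a genuine gap, and it sits in the small-gap regime. You take $\sigma=1$ in \eqref{bargamma0} so that the weight $\tilde\eta^{(\sigma-1)/2}$ in Proposition \ref{prop_gradient_v} is constant, and you note that the threshold is then $\ge\gamma_0$. But with $\sigma=1$ the machinery of Section \ref{sec_3} only yields $|\overline w|\lesssim \eta^{1/2}$ and $|D\overline w(x_0')|\le C\big(\|Dw\|_{L^\infty(\Omega_R)}+1\big)$ with an $O(1)$ constant in front of $\|Dw\|_{L^\infty(\Omega_R)}$ (the term $\sqrt\gamma\,\|\mathtt F\|_{1,\mathtt B_R}\sim \|Dw\|_{L^\infty}$ is no longer multiplied by a small weight). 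Since $\overline w$ and $D\overline w$ are only intermediate quantities, the final step must reconstruct $Dw(x_0)$ through a local Robin estimate at scale $\eta(x_0')$ (Lemma \ref{lemmac1alphaest}), which brings in $\frac1\gamma\|w\|$ locally; with only $|\overline w|\lesssim\eta^{1/2}$ this term behaves like $\eta^{1/2}/\gamma$, which is unbounded as $\gamma\to0$ even under $\eta<\gamma/\mu$, and in any case the resulting inequality has the form $|Dw(x_0)|\le C\|Dw\|_{L^\infty(\Omega_R)}+C$ with a non-small $C$, so the self-improvement cannot be closed by any choice of $\mu$, $\varepsilon_0$, $R_0$. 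The correct choice is $\sigma=2$ — this is exactly why $\gamma_0$ in \eqref{gamma00} coincides with the $\sigma=2$ threshold for $\tilde\Lambda=\tfrac14\|D^2(f_1-f_2)\|_{L^\infty}$ — which gives the quadratic decay $|\overline w|\lesssim\eta$ (so that $\frac1\gamma\overline w\lesssim \eta/\gamma\le 1/\mu$) and $|D\overline w|\le C\big(\gamma^{\alpha/2}\eta^{\frac{1-\alpha}{2}}+(\eta/\gamma)^{1/2}\big)\big(\|Dw\|_{L^\infty(\Omega_R)}+1\big)$, i.e.\ a prefactor that is made $\le\tfrac12$ by taking $\eta$ small and $\mu$ large; that smallness is the entire absorption mechanism in \eqref{estgradientw}, and your vaguely described ``interlocking bootstrap'' does not supply a substitute for it.

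A secondary, less fatal issue is the moderate-gap regime. You correctly observe that rescaling Theorem \ref{gradientsmallgamma3} at scale $\delta(x_0')$ reintroduces a factor $\delta(x_0')^{-1}$, but your proposed cure — a Campanato-type decay for $\fint|Dw-(Dw)|^2$ over anisotropic slabs between scales $\delta$ and $\sqrt\eta$ — is not established anywhere and is not the mechanism that actually works. What compensates the $\delta^{-1}$ is the smallness of $w$ itself: the Robin boundary term $\frac1\gamma\int_\Gamma w^2$ in the energy identity, together with $\gamma\le\mu\eta$, $|\Psi_i|\lesssim\gamma$, and the thin-direction Poincar\'e inequality, yields after an iteration over scales $t_i=\eta+4i\sqrt{C_0}\,\eta$ up to $\tfrac1{16}\sqrt\eta$ the bound $\dashnorm{w}{L^2(\Omega_{\eta(x_0')}(x_0))}\le C\eta(x_0')$ (Lemma \ref{lemmagamma<eta}), which exactly cancels the $\eta^{-1}$ in \eqref{C1allgamma}. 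Without an argument of this type (or a proof of your oscillation-decay claim on thin slabs), the moderate-gap case also remains open, though here the needed statement is at least plausible; the $\sigma=1$ choice above is the step that genuinely fails.
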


\begin{remark}
Under the hypotheses of Theorem \ref{boundedgradientw}, for a sufficiently large constant $\mu$, the quantity $\eta(x_0')^{-1}\sum_{i=1}^2\|\Psi_i\|^*_{C^{1,\alpha}(\Omega_{\frac{3}{4}\eta(x_0')}(x_0))} 
+\eta(x_0')\|\mathfrak{R}\|_{L^{\infty}(\Omega_{\frac{3}{4}\eta(x_0')}(x_0))}$ on the right-hand side of \eqref{C1allgamma} below remains bounded when $\gamma\leq\mu\eta(x_0')$. Conversely, when $\gamma>\mu\eta(x_0')$, the expression $\frac{1}{\gamma}\sum_{i=1}^2\|\Psi_i\|^*_{C^{\alpha}(\Omega_{\frac{3}{4}\eta(x_0')}(x_0))}+\eta(x_0')\|\mathfrak{R}\|_{L^{\infty}(\Omega_{\frac{3}{4}\eta(x_0')}(x_0))}\sim\frac{\eta(x_0')}{\gamma}$ on the right-hand side of \eqref{C1largegamma} is much smaller than $1$. This dichotomy is essential for establishing the uniform gradient bound in Theorem \ref{boundedgradientw}. 
\end{remark}

In the next two subsections, we flatten the boundary via the coordinate transformation
\begin{equation}\label{variableschange}
\left\{ \begin{aligned}
y'&=x',\\
y_n&=2\eta(x_0')\Big(\frac{x_n-f_2(x')}{\varepsilon+f_1(x')-f_2(x')}-\frac{1}{2}\Big),
\end{aligned} \right.
\end{equation}
which maps the domain $\Omega_{s}(x_0)$ to the cylinder $Q_{s,\eta(x_0')}(x_0)$ of height $\eta(x_0')$, where
\begin{equation*}
Q_{s,t}(x_0):=\{y=(y',y_n)\in\bR^n~\big|~|y'-x_0'|<s,~|y_n|<t~\}.
\end{equation*}
Denote the upper and lower boundaries of the cylinder by $\Gamma_{s,t}^{\pm}(x_0)$. If $w$ is a solution to \eqref{equw}, then the function $\tilde{w}(y)=w(x)$ satisfies
\begin{equation}\label{equflatten}
\left\{\begin{aligned}
\partial_{i}(a^{ij}(y)\partial_{j}\tilde{w}(y))&=\tilde{\mathfrak{R}}(y)& \mbox{in}&~Q_{s,\eta(x_0')}(x_0),\\
h_1(y')\tilde{w}(y)+\gamma a^{nj}(y)\partial_{j}\tilde{w}(y)&= \tilde{\Psi}_{1}(y')& \mbox{on}&~\Gamma_{s,\eta(x_0')}^{+}(x_0),\\
h_2(y')\tilde{w}(y)-\gamma a^{nj}(y)\partial_{j}\tilde{w}(y)&= \tilde{\Psi}_{2}(y')& \mbox{on}&~\Gamma_{s,\eta(x_0')}^{-}(x_0),
\end{aligned}
\right.
\end{equation}
where the coefficients and data are given by:
$$a^{ij}(y)=\sum_{k=1}^{n}\frac{\partial y_i}{\partial x_{k}}\frac{\partial y_j}{\partial x_{k}}(\det \partial_{x}y)^{-1},\quad\tilde{\mathfrak{R}}(y)=\mathfrak{R}(x)(\det \partial_{x}y)^{-1},$$ $$h_i(y')=\sqrt{1+|\nabla_{y'}f_i(y')|^2}, \quad\mbox{and}~ \tilde{\Psi}_{i}(y')=\sqrt{1+|\nabla_{y'}f_i(y')|^2}\Psi_{i}(y')\,\,\mbox{for}~ i=1,2.$$ 
The coefficients $a^{ij}(y)$ can be computed explicitly as:
\begin{equation*}%\label{preciseaij}
\begin{aligned}
&a^{ii}(y)=\frac{\delta(y')}{2\eta(x_0')}\quad \mbox{for}~1\leq i\leq n-1;\quad\quad\,a^{ij}=0\quad \mbox{for}~1\leq i,j\leq n-1,\,i\neq j;\\
&a^{in}(y)=a^{ni}(y)=\partial_{i}f_{2}(y')\frac{y_n-\eta(x_0')}{2\eta(x_0')}-\partial_{i}f_{1}(y')\frac{y_n+\eta(x_0')}{2\eta(x_0')}\quad \mbox{for}~1\leq i\leq n-1;\\
&a^{nn}(y)=\frac{2\eta(x_0')}{\delta(y')}+\sum_{i=1}^{n-1}\frac{((\partial_{i}f_2)(y_n-\eta(x_0'))-(\partial_{i}f_1)(y_n+\eta(x_0')))^2}{2\eta(x_0')\delta(y')}.
\end{aligned}
\end{equation*}
 A direct computation shows that for $y\in Q_{\frac{1}{4}\sqrt{\eta(x_0')},\eta(x_0')}(x_0)$:
\begin{equation}\label{aijtransformed}
\begin{aligned}
 \frac{1}{C}\leq a^{ii}(y)\leq C \,~\mbox{for}&~1\leq i\leq n;\quad\,|a^{in}(y)|=|a^{ni}(y)| \leq C\eta(x_0')^{1/2}\, ~\mbox{for}~1\leq i\leq n-1;\\
 & |D\tilde{w}(y)|\sim |Dw(x)|;\quad\quad |\tilde{\mathfrak{R}}(y)|\sim|\mathfrak{R}(x)|;
\end{aligned}
\end{equation}
and 
\begin{equation}\label{aijtransformed1}
\begin{aligned}
 &[a^{ij}]_{C^{\alpha}(Q_{\frac{1}{4}\sqrt{\eta(x_0')},\eta(x_0')}(x_0))}\leq C\eta(x_0')^{-\alpha/2};\\
 &\|h_i\|_{C^{\alpha}(Q_{\frac{1}{4}\sqrt{\eta(x_0')},\eta(x_0')}(x_0))}\leq C\quad \mbox{for}~i=1,2;\\
 &\|\Psi_i\|^{*}_{C^{1,\alpha}(\Omega_{s,\eta(x_0')}(x_0))}\sim \|\tilde{\Psi}_i\|^{*}_{C^{1,\alpha}(Q_{ s,\eta(x_0')}(x_0))} \quad \mbox{for}~0<s<\frac{1}{4}\sqrt{\eta(x_0')}.
\end{aligned}
\end{equation}
For further details, we refer to Page 20 of \cite{DYZ}.

\subsection{Proof of Theorem \ref{boundedgradientw} for the case $\gamma\leq\mu\eta(x')$.}

The proof in this regime relies on the following two lemmas. For any domain $\mathcal{D}$, we denote the averaged $L^p$ norm of $u$ over $\mathcal{D}$ by
$$
\dashnorm{u}{L^p(\cD)}:= \left( \fint_{\cD} |u|^p \right)^{1/p}.
$$

\begin{lemma}\label{lemmac1alphaest1}
Let $w\in H^1(\Omega_{\eta(x_0')}(x_0))$ be a solution to the boundary value problem
\begin{equation*}
\left\{ \begin{aligned}
\Delta w&= \mathfrak{R}(x)&\mbox{in}&~ \Omega_{\eta(x_0')}(x_0), \\
w+\gamma \partial_{\nu}w&=\Psi_1(x)&\mbox{on}&~\Gamma_{1,\eta(x_0')}(x_0), \\
w+\gamma\partial_{\nu}w&=\Psi_2(x)&\mbox{on}&~\Gamma_{2,\eta(x_0')}(x_0),
\end{aligned} \right.
\end{equation*}
with $\gamma>0$ and $f_1,f_2$ satisfying \eqref{fg0}--\eqref{fg}. Assume $\mathfrak{R}\in L^{\infty}(\Omega_{\eta(x_0')}(x_0))$ and $\Psi_i\in C^{1,\alpha}(\Omega_{\eta(x_0')}(x_0))$, $i=1,2$. Then
\begin{equation}\label{C1allgamma}
\begin{aligned}
\|Dw\|_{L^{\infty}(\Omega_{\frac{1}{2}\eta(x_0')}(x_0))}
\leq&\,C\Big(\eta(x_0')^{-1}\dashnorm{w}{L^2(\Omega_{\frac{3}{4}\eta(x_0')}(x_0))}+\eta(x_0')^{-1}\sum_{i=1}^2\|\Psi_i\|^*_{C^{1,\alpha}(\Omega_{\frac{3}{4}\eta(x_0')}(x_0))}\\
&\quad\quad
+\eta(x_0')\|\mathfrak{R}\|_{L^{\infty}(\Omega_{\frac{3}{4}\eta(x_0')}(x_0))}\Big),
\end{aligned}
\end{equation} 
where the constant $C$ depends only on $n, \alpha, \kappa, \|f_{1}\|_{C^{2,\alpha}(B_R')}$, and $\|f_{2}\|_{C^{2,\alpha}(B_R')}$.
\end{lemma}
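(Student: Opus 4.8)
The plan is to reduce \eqref{C1allgamma} to an application of Theorem \ref{gradientsmallgamma3}, via the flattening change of variables \eqref{variableschange} followed by a rescaling to unit size. Fix $x_0\in\Omega_R$ and write $\eta_0:=\eta(x_0')$. Since $\eta_0\le\sqrt{\eta_0}$ for $\eta_0$ small, the map \eqref{variableschange} carries $\Omega_{\eta_0}(x_0)$ into the cylinder $Q_{\eta_0,\eta_0}(x_0)\subset Q_{\frac14\sqrt{\eta_0},\eta_0}(x_0)$, on which the transformed coefficients satisfy \eqref{aijtransformed}--\eqref{aijtransformed1}, and $\tilde w$ solves \eqref{equflatten} with $h_i=\sqrt{1+|\nabla f_i|^2}\ge 1$ and $\tilde\Psi_i=h_i\Psi_i$. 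I would then rescale: set $z=(y-(x_0',0))/\eta_0$, $W(z)=\tilde w(x_0'+\eta_0 z',\eta_0 z_n)$, and $\hat a^{ij}(z):=a^{ij}(x_0'+\eta_0 z',\eta_0 z_n)$. Then $W$ solves an equation of the form \eqref{equgradientsmallgamma} on a fixed cylinder with two flat Robin faces, with $F\equiv 0$, right-hand side $\hat G(z)=\eta_0^2\,\tilde{\mathfrak R}(x_0'+\eta_0 z)$, parameter $\hat\gamma=\gamma/\eta_0$, boundary coefficient $h_i(x_0'+\eta_0 z')$ and data $\tilde\Psi_i(x_0'+\eta_0 z')$; crucially $\phi/h=\Psi_i(x_0'+\eta_0\,\cdot)$.

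All hypotheses of Theorem \ref{gradientsmallgamma3} hold with constants independent of $x_0$, $\varepsilon$, and $\gamma$: uniform ellipticity and $|\hat a^{in}|\le C\eta_0^{1/2}\le C$ follow from \eqref{aijtransformed}; the seminorm bound $[\hat a^{ij}]_{C^{\alpha}}=\eta_0^{\alpha}[a^{ij}]_{C^{\alpha}(Q_{\frac14\sqrt{\eta_0},\eta_0}(x_0))}\le C\eta_0^{\alpha/2}\le C$ and $\|h_i\|_{C^{\alpha}}\le C$ follow from \eqref{aijtransformed1}; and $h_i\ge 1$ with $h_i\hat\gamma>0$. Since $\hat\gamma$ can be arbitrarily small when $\gamma\ll\eta_0$, it is essential that the constant in Theorem \ref{gradientsmallgamma3} be independent of $\hat\gamma$; I would use the two-sided cylinder analogue of that theorem, obtained by applying Theorem \ref{gradientsmallgamma3} near each flat face (reflecting across the top) together with standard interior Schauder estimates for $\partial_i(\hat a^{ij}\partial_j W)=\hat G\in L^{\infty}$ in the bulk.

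This yields
\[
\|W\|_{C^{1,\alpha}}\le C\Big(\|W\|_{L^2}+\|\hat G\|_{L^{\infty}}+\|\Psi_i(x_0'+\eta_0\,\cdot)\|_{C^{1,\alpha}}\Big)
\]
on the relevant sub-cylinders. I would then translate each norm back: $\|W\|_{L^2}\sim\eta_0^{-n/2}\|w\|_{L^2(\Omega_{\frac34\eta_0}(x_0))}\sim\dashnorm{w}{L^2(\Omega_{\frac34\eta_0}(x_0))}$ (using $|\Omega_{\frac34\eta_0}(x_0)|\sim\eta_0^n$ and $\det\partial_x y\sim 2$); $\|\hat G\|_{L^{\infty}}=\eta_0^2\|\tilde{\mathfrak R}\|_{L^{\infty}}\sim\eta_0^2\|\mathfrak R\|_{L^{\infty}(\Omega_{\frac34\eta_0}(x_0))}$; and, by the chain rule, $\|\Psi_i(x_0'+\eta_0\,\cdot)\|_{C^{1,\alpha}}\sim\|\Psi_i\|^*_{C^{1,\alpha}(\Omega_{\frac34\eta_0}(x_0))}$, since the weighted norm $\|\cdot\|^*_{C^{1,\alpha}}$ at diameter $\sim\eta_0$ is exactly what the rescaling produces. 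Finally $|Dw(x)|\sim|D_y\tilde w(y)|=\eta_0^{-1}|D_zW(z)|\le\eta_0^{-1}\|W\|_{C^{1,\alpha}}$ by \eqref{aijtransformed}, which supplies the common factor $\eta_0^{-1}$ in front of each term and gives precisely \eqref{C1allgamma}. A harmless enlargement of the cylinders' radii upgrades the estimate on $\Omega_{\frac14\eta_0}(x_0)$ that comes directly from Theorem \ref{gradientsmallgamma3} to one on $\Omega_{\frac12\eta_0}(x_0)$.

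The main obstacle is not conceptual but organizational: one must record the two-sided cylinder version of Theorem \ref{gradientsmallgamma3} with an $\hat\gamma$-independent constant (checking that the Campanato iteration of Section \ref{sec_2} applies verbatim near both flat faces), and then carry the norm bookkeeping through the composition of the degenerate flattening \eqref{variableschange} with the rescaling, verifying that ellipticity constants, $C^{\alpha}$ seminorms of the coefficients, Jacobians, and the weighted boundary norms are all uniform in $x_0$, $\varepsilon$, and $\gamma$. The bounds \eqref{aijtransformed}--\eqref{aijtransformed1} are exactly what make this uniformity work.
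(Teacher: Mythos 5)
Your proposal is correct and follows essentially the same route as the paper: flatten the narrow region via \eqref{variableschange}, rescale $Q_{\eta(x_0'),\eta(x_0')}(x_0)$ to the unit cylinder so that the Robin parameter becomes $\hat\gamma=\gamma/\eta(x_0')$, apply the $\hat\gamma$-independent estimate of Theorem \ref{gradientsmallgamma3} in half-balls centered on the two flat faces together with interior elliptic estimates, and then undo the scaling using \eqref{aijtransformed}--\eqref{aijtransformed1}, which is exactly how the weighted norms $\|\Psi_i\|^*_{C^{1,\alpha}}$ and the factors $\eta(x_0')^{-1}$, $\eta(x_0')$ arise in \eqref{C1allgamma}.
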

 
\begin{proof}
We flatten the domain via the change of variables given in \eqref{variableschange}. Let $\tilde{w}(y)=w(x)$. Then $\tilde{w}$ satisfies \eqref{equflatten}. Introduce the rescaled variables 
\begin{equation*}
\left\{ \begin{aligned}
y'&=x_0'+\eta(x_0')z', \\
y_n&= \eta(x_0')z_n,
\end{aligned} \right.
\end{equation*}
which maps $Q_{\eta(x_0'),\eta(x_0')}(x_0)$ onto $Q_{1,1}(0)$.
The rescaled function $\hat{w}(z)=\tilde{w}(y)$ satisfies 
 \begin{equation*} 
\left\{ \begin{aligned}
 \partial_{i}(\hat{a}^{ij}( z)\partial_{j}\hat{w}( z))&=\hat{\mathfrak{R}}(z) &\mbox{in}&~Q_{1,1}(0),\\
\hat{h}_{1}(z)\hat{w}(z) +\hat{\gamma} \hat{a}^{nj}( z)\partial_{j} \hat{w}(z)&= \hat{\Psi}_1(z') &\mbox{on}&~\Gamma_{1,1}^{+}( 0),\\
\hat{h}_{2}(z)\hat{w}(z) -\hat{\gamma} \hat{a}^{nj}( z)\partial_{j} \hat{w}(z)&= \hat{\Psi}_2(z') &\mbox{on}&~\Gamma_{1,1}^{-}(0),
\end{aligned} \right.
\end{equation*}
where
\begin{equation*}
\begin{aligned}
 &\hat{a}^{ij}(z')=a^{ij}(y'),\quad\hat{\gamma}=\frac{\gamma}{\eta(x_0')}, \quad \hat{h}_{i}(z')=h_i(y')\,\,\,\mbox{for}~i=1,2,\\
&\hat{\mathfrak{R}}(z)=\eta(x_0')^2\tilde{\mathfrak{R}}(y),\quad\quad\mbox{and}~\hat{\Psi}_i(z')=\tilde{\Psi}_{i}(y')\,\,\,\mbox{for}~i=1,2. 
\end{aligned}
\end{equation*}
 
For any $z_0\in \Gamma^{\pm}_{\frac{1}{2},1}(0)$, applying the uniform estimate from Theorem \ref{gradientsmallgamma3} to $\hat{w}$ in $B_{\frac{1}{4}}(z_0)\cap Q_{1,1}(0)$ and combining it with standard interior estimates for elliptic equations, we obtain
$$\|\nabla \hat{w}\|_{L^{\infty}(Q_{\frac{1}{2},1}(0))}\leq C\|\hat{w}\|_{L^2(Q_{\frac{3}{4},1}(0))}+C\|\hat{\mathfrak{R}}\|_{L^{\infty}(Q_{\frac{3}{4},1}(0))}+\sum_{i=1}^{2}\|\hat{\Psi}_{i}\|_{C^{1,\alpha}(Q_{\frac{3}{4},1}(0))}.$$
Rescaling back to $\tilde{w}$ yields
\begin{equation*}
\begin{aligned}
 &\eta(x_0')\|\nabla \tilde{w}\|_{L^{\infty}(Q_{\eta(x_0')/{2},\eta(x_0')}(x_0))}\leq C\dashnorm{\tilde{w}}{L^2(Q_{3\eta(x_0')/4,\eta(x_0')}(x_0))}\\
&\quad\quad\quad\quad\quad\quad+C\eta(x_0')^2\|\tilde{\mathfrak{R}}\|_{L^{\infty} (Q_{3\eta(x_0')/4,\eta(x_0')}(x_0))}+\sum_{i=1}^{2}\|\tilde{\Psi}_{i}\|^{*}_{C^{1,\alpha}(Q_{(Q_{3\eta(x_0')/4,\eta(x_0')}(x_0))}(x_0))}.
\end{aligned}
\end{equation*}
Returning to $w$ and using \eqref{aijtransformed} and \eqref{aijtransformed1}, we obtain the desired estimate \eqref{C1allgamma}.
\end{proof}

\begin{lemma}\label{lemmagamma<eta}
Under the assumptions of Theorem \ref{boundedgradientw}, let $x_0\in\Omega_{R}$. If $\gamma\leq\mu \eta(x_0')$ for some constant $\mu>0$, then the following estimates hold
\begin{equation}\label{gamma<eta}
\begin{aligned}
&\dashnorm{w}{L^2(\Omega_{\eta(x_0')}(x_0))}\leq\,C \eta(x_0') \Big(\int_{\Omega_{\frac{1}{16}\sqrt{\eta(x_0')}}(x_0)}w^2\,dx\Big)^{1/2}+C\eta(x_0'),\\
&\hspace{-2.0cm}\mbox{and}\\
&\|\nabla w\|_{L^{\infty}(\Omega_{\eta(x_0')/2}(x_0))}\leq\,C,
\end{aligned}
\end{equation}
where $C$ depends on $n$, $\kappa$, $\alpha$, $\|f_{1}\|_{C^{2,\alpha}}$ and $\|f_{2}\|_{C^{2,\alpha}}$, and the upper bound of $\mu$, but is independent of $\gamma$ and $\varepsilon$.
\end{lemma}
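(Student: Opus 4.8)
The plan is to establish the two estimates in turn. The $L^2$ bound in \eqref{gamma<eta} will come from a Caccioppoli-type energy inequality that exploits the large good boundary term $\gamma^{-1}\int_\Gamma w^2$ produced by the Robin condition, combined with a vertical Poincar\'e inequality and a dyadic iteration; the gradient bound then follows by inserting the $L^2$ bound into Lemma~\ref{lemmac1alphaest1}. Throughout, write $\eta=\eta(x_0')$.

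First I would record that, in the regime $\gamma\le\mu\eta$, assumptions \eqref{fg} and \eqref{estH} yield, on every cylinder $\Omega_t(x_0)$ with $\eta\le t\le\tfrac1{16}\sqrt\eta$: $\delta(x')\sim\eta(x')\sim\eta$, hence $\gamma+\delta(x')\sim\eta$, so that $\|\mathfrak R\|_{L^\infty(\Omega_t(x_0))}\le C/\eta$ and, applying \eqref{estH} at each point, $|\Psi_i|\le\tfrac{\gamma\delta}{\gamma+\delta}\le\gamma\le\mu\eta$ on $\Omega_t(x_0)\cup\Gamma_{1,t}(x_0)\cup\Gamma_{2,t}(x_0)$; moreover $|\Omega_t(x_0)|\sim t^{n-1}\eta$, and all these cylinders stay inside $\Omega_{2R}$ once $\varepsilon_0$ is small. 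The central step is then the iteration inequality: for $\eta\le t\le\tfrac1{16}\sqrt\eta$,
\[
\int_{\Omega_t(x_0)}w^2 \;\le\; \frac{C_*\eta^2}{t^2}\int_{\Omega_{2t}(x_0)}w^2 \;+\; C_*\,\eta\,t^{n+1},
\]
with $C_*$ depending only on $n,\mu,\kappa,\alpha,\|f_1\|_{C^{2,\alpha}},\|f_2\|_{C^{2,\alpha}}$. To obtain it, test the equation against $w\zeta^2$ with $\zeta=\zeta(x')$, $\zeta\equiv1$ on $\mathtt B_t(x_0')$, $\operatorname{supp}\zeta\subset\mathtt B_{2t}(x_0')$, $|\nabla\zeta|\le C/t$; integrating by parts and using $\partial_\nu w=\gamma^{-1}(\Psi_i-w)$ on $\Gamma_1\cup\Gamma_2$ gives
\[
\int_{\Omega_t(x_0)}|\nabla w|^2+\frac1\gamma\int_{\Gamma_{1,t}(x_0)\cup\Gamma_{2,t}(x_0)}w^2 \le \frac{C}{t^2}\int_{\Omega_{2t}(x_0)}w^2+\frac{C}{\eta^{1/2}}t^{\frac{n-1}{2}}\|w\|_{L^2(\Omega_{2t}(x_0))}+C\gamma t^{n-1},
\]
where the $\Psi_i$ boundary term has been absorbed by Young's inequality into the good term $\gamma^{-1}\int_\Gamma w^2$, while the source term is deliberately kept in $L^2$. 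Combining this with the vertical Poincar\'e inequality $\int_{\Omega_t(x_0)}w^2\le C\eta\int_{\Gamma_{2,t}(x_0)}w^2+C\eta^2\int_{\Omega_t(x_0)}|\nabla w|^2$ (integrate $w(x',x_n)-w(x',f_2(x'))=\int\partial_n w$ and use $\delta\sim\eta$), then using $\eta\gamma\le\mu\eta^2$ and a final Young's inequality to absorb the source contribution into the principal term $\tfrac{\eta^2}{t^2}\int_{\Omega_{2t}(x_0)}w^2$, produces the iteration inequality.

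Iterating along $t_k=2^k\eta$, $0\le k\le K$ with $2^K\eta\in[\tfrac1{32}\sqrt\eta,\tfrac1{16}\sqrt\eta]$ (so $K\sim\tfrac12\log_2(1/\eta)$) and setting $b_k=\int_{\Omega_{t_k}(x_0)}w^2$, the inequality reads $b_k\le C_*4^{-k}b_{k+1}+C_*\eta^{n+2}2^{k(n+1)}$, hence
\[
b_0\le\Big(\prod_{j=0}^{K-1}\frac{C_*}{4^j}\Big)b_K+\eta^{n+2}\sum_{j=0}^{K-1}\Big(\prod_{i=0}^{j-1}\frac{C_*}{4^i}\Big)C_*\,2^{j(n+1)}.
\]
The inhomogeneous sum converges to a constant depending only on $n$ and $C_*$, since the Gaussian factor $2^{-j(j-1)}$ beats $C_*^j2^{j(n+1)}$, so that term is $\le C\eta^{n+2}$; the homogeneous coefficient $C_*^K4^{-K(K-1)/2}$ has logarithm bounded above by a negative multiple of $(\log_2(1/\eta))^2$, hence is $\le\eta^{n+2}$ provided $\eta<\eta_*(n,C_*)$, so that term is $\le\eta^{n+2}b_K\le\eta^{n+2}\int_{\Omega_{\frac1{16}\sqrt\eta}(x_0)}w^2$. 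Dividing by $|\Omega_\eta(x_0)|\sim\eta^n$ and taking square roots gives the first estimate in \eqref{gamma<eta} when $\eta(x_0')<\eta_*$; for $\eta(x_0')\ge\eta_*$ it is immediate from $\dashnorm{w}{L^2(\Omega_{\eta(x_0')}(x_0))}\le\|w\|_{L^\infty}\le1\le\eta_*^{-1}\eta(x_0')$. For the second estimate, $\|w\|_{L^\infty(\Omega_{2R})}\le1$ gives $\int_{\Omega_{\frac1{16}\sqrt{\eta(x_0')}}(x_0)}w^2\le C\eta(x_0')^{(n+1)/2}$, so the first estimate yields $\dashnorm{w}{L^2(\Omega_{\frac34\eta(x_0')}(x_0))}\le C\eta(x_0')$; substituting this, $\eta(x_0')^{-1}\|\Psi_i\|^*_{C^{1,\alpha}(\Omega_{\frac34\eta(x_0')}(x_0))}\le\mu$, and $\eta(x_0')\|\mathfrak R\|_{L^\infty(\Omega_{\frac34\eta(x_0')}(x_0))}\le C$ into Lemma~\ref{lemmac1alphaest1} gives $\|\nabla w\|_{L^\infty(\Omega_{\eta(x_0')/2}(x_0))}\le C$.

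I expect the power counting in the iteration to be the main obstacle. A crude Caccioppoli--Poincar\'e combination that estimates $w$ in $L^\infty$ inside the source term yields only $\int_{\Omega_\eta(x_0)}w^2\lesssim\eta^{n+1}$, i.e. $\dashnorm{w}{L^2}\lesssim\sqrt\eta$, a half power short; recovering the sharp power $\eta^{n+2}$ (consistent with $|w|\sim\Psi_i\sim\gamma\sim\eta$ on $\Omega_\eta(x_0)$) forces the $L^2$-bookkeeping of the source term with the Young absorption indicated above, and then the verification that the dyadic product $\prod_j C_*4^{-j}$ is small enough — which is where the number $K\sim\tfrac12\log_2(1/\eta)$ of available scales and the threshold $\eta_*$ enter, and which also explains the extra summand $C\eta(x_0')$ in the statement (it covers the non-narrow case $\eta(x_0')\ge\eta_*$).
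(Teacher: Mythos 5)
Your proposal is correct and follows essentially the same route as the paper: a Caccioppoli inequality exploiting the good Robin term $\gamma^{-1}\int_{\Gamma}w^2$, the vertical Poincar\'e inequality combined with $\gamma\le\mu\eta(x_0')$, an iteration from scale $\eta(x_0')$ up to $\tfrac{1}{16}\sqrt{\eta(x_0')}$ producing the $\eta^{n+2}$ power, and finally Lemma~\ref{lemmac1alphaest1} for the gradient bound. The only real difference is in the iteration bookkeeping: the paper marches through $\sim\eta^{-1/2}$ arithmetic steps of size $4\sqrt{C_0}\,\eta$ with a fixed contraction factor $\tfrac18$ (treating the case of $\eta$ bounded below separately via Theorem~\ref{gradientsmallgamma3}), whereas you use $\sim\log(1/\eta)$ dyadic steps with scale-dependent contraction $C_*4^{-k}$, so smallness comes from the Gaussian-type product and needs the threshold $\eta<\eta_*$ — both versions are valid and lead to the same conclusion.
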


\begin{proof}
Multiply the equation in \eqref{equw} by $w\xi^2$ and integrate by parts, where the cut-off function $\xi\in C^{\infty}(\Omega_{\frac{1}{16}\sqrt{\eta(x_0')}}(x_0))$ satisfies 
$$\xi=1~\mbox{in}~ \Omega_{s}(x_0),\quad \xi=0~ \mbox{in}~ \Omega_{t}(x_0)\backslash\Omega_{s}(x_0),~ 0<t<s<\frac{1}{16}\sqrt{\eta(x_0')},\quad\mbox{and}~ |D \xi|\leq \frac{C}{t-s}.$$ 
This yields 
\begin{equation*}
\begin{aligned}
\int_{\Omega_t(x_0)}|Dw|^2\xi^2\,dx\,+&\,\frac{1}{\gamma}\int_{\Gamma_{1,t}(x_0)\cup\Gamma_{2,t}(x_0)}w^2\xi^2dS\\
=&\, \int_{\Omega_t(x_0)}\Big(-\mathfrak{R}w\xi^2-2w\xi DwD\xi\Big) \,dx+\frac{1}{\gamma}\sum_{i=1}^{2}\int_{\Gamma_{i,t}(x_0)}\Psi_iw\xi^2dS.
\end{aligned}
\end{equation*}
Applying Young's inequality to the right–hand side gives
\begin{equation}\label{gamma<eta2}
\begin{aligned}
&\int_{\Omega_t(x_0)}|Dw|^2\xi^2\,dx+\frac{1}{\gamma}\int_{\Gamma_{1,t}(x_0)\cup\Gamma_{2,t}(x_0)}w^2\xi^2dS\\
&\leq C\left(\int_{\Omega_t(x_0)}w^2|D \xi|^2+|\mathfrak{R}w\xi^2|\,dx+\frac{1}{\gamma}\sum_{i=1}^{2}\int_{\Gamma_{i,t}(x_0)}\Psi_i^2\xi^2dS\right).
\end{aligned}
\end{equation}

Define the boundary trace 
\begin{equation}\label{boundarytrace}
w_{\Gamma_{1}}(x)=w(x',\varepsilon+f_{1}(x')),\quad w_{\Gamma_{2}}(x)=w(x', f_{2}(x')).
\end{equation}
 For $0<s<t<\frac{1}{16}\sqrt{\eta(x')}$, using \eqref{gamma<eta2} and the assumption $\gamma\leq \mu\eta(x_0')$, we obtain
\begin{equation*}%\label{gamma<eta3}
\begin{aligned}
\int_{\Omega_s(x_0)}|w|^2\,dx
\leq&\,C\int_{\Omega_s(x_0)}|w-w_{\Gamma_{2}}|^2\,dx+C\int_{\Omega_s(x_0)}|w_{\Gamma_{2}}|^2\,dx\\
\leq&\,C\eta(x_0')^2\int_{\Omega_s(x_0)}|Dw|^2\,dx+C\eta(x_0')\int_{\Gamma_{2,s}(x_0)}w^2dS\\
\leq&\,C\Big(\eta(x_0')^2+\eta(x_0')\gamma\Big)\left(\int_{\Omega_s(x_0)}|Dw|^2\,dx+\frac{1}{\gamma}\int_{\Gamma_{2,s}(x_0)}w^2dS\right)\\
\leq&\,C\eta(x_0')^2\left(\int_{\Omega_t(x_0)}w^2|D \xi|^2+|\mathfrak{R}w\xi^2|\,dx+\frac{1}{\gamma}\sum_{i=1}^{2}\int_{\Gamma_{i,t}(x_0)}\Psi_i^2\xi^2dS\right).
\end{aligned}
\end{equation*}
Let $C_0$ and $C_1$ be fixed universal constants. From \eqref{estH} and the above estimate, it follows that
\begin{equation}\label{gamma<eta4}
\int_{\Omega_{s}(x_0)}|w|^2\,dx\leq C_0\left(\frac{\eta(x_0')^2}{(t-s)^2}+\frac{1}{16C_0}\right)\int_{\Omega_{t}(x_0)}|w|^2\,dx+C_1\eta(x_0')^3t^{n-1}.
\end{equation} 
The remainder of the proof is split into two cases.

\emph{\bf Case 1.} If $\eta(x_0')+4\sqrt{C_0}\eta(x_0')>\frac{1}{16}\sqrt{\eta(x_0')}$, then $\eta(x_0')\geq \frac{1}{16^2 (4\sqrt{C_0}+1)^2}$, and the estimates in \eqref{gamma<eta} follow immediately from Theorem \ref{gradientsmallgamma3}.

\emph{\bf Case 2.}
If $\eta(x_0')+4\sqrt{C_0}\eta(x_0')\leq\frac{1}{16}\sqrt{\eta(x_0')}$, define a sequence $t_i$ by $$t_0=\eta(x_0'),\quad\mbox{and}~ t_{i+1}=t_i+4\sqrt{C_0}\eta(x_0').$$ Let 
$\omega(s)=\int_{\Omega_{s}(x_0)}|w|^2\,dx.$ Then we have the iteration formula
\begin{equation*}
\omega(t_i)\leq \frac{1}{8}\omega(t_{i+1})+C\eta(x_0')^3t_{i+1}^{n-1}.
\end{equation*}
Iterating from $i=0$ to $k(x_0)-1$, where $k(x_0)=\left[\frac{\frac{1}{16}\sqrt{\eta(x_0')}-\eta(x_0')}{4\sqrt{C_0}\eta(x_0')}\right]$, and using $(\frac{1}{8})^{k(x_0)}\leq C\eta(x_0')^{n+2}$, we obtain
\begin{equation}\label{gamma<eta41}
\begin{aligned} 
\omega(t_0)\leq&\,(\frac{1}{8})^{k(x_0)}\omega(t_{k(x_0)})+\sum_{i=1}^{k(x_0)}\frac{C\eta(x_0')^3}{8^{i-1}} \Big(\eta(x_0')+4i\sqrt{C_0}\eta(x_0')\Big)^{n-1}\\
\leq&\,C\eta(x_0')^{n+2}\int_{\Omega_{\frac{1}{16}\sqrt{\eta(x_0')}}(x_0)}|w|^2\,dx+C\eta(x_0')^{n+2}.
\end{aligned}
\end{equation}
This establishes the first estimate in \eqref{gamma<eta}. The gradient estimate follows by combining this result with Lemma \ref{lemmac1alphaest1}.
\end{proof}

\subsection{Theorem \ref{boundedgradientw} for the case $\gamma>\mu\eta(x')$.}

In this subsection, we estimate $|\nabla w(x)|$ in the regime $\gamma>\mu\eta(x')$. The key idea is to analyze the vertical average of the solution, defined by
\begin{equation}\label{barw}
\overline{w}(x'):=\fint_{f_{2}(x')}^{\varepsilon+f_{1}(x')}w(x',x_n)\,dx_n.
\end{equation}
Taking average to equation \eqref{equflatten} in the $y_n$-variable and applying the coordinate transformation from $y$ to $x$, we find that $\overline{w}$ satisfies the degenerate elliptic equation in the $n-1$ dimensional variable $x'$:
\begin{equation}\label{equbarw}
\sum\limits_{i=1}^{n-1}\partial_{i} (\delta(x')\partial_{i}\overline{w})-\frac{2}{\gamma}\overline{w}=\sum\limits_{i=1}^{n-1}\partial_iF^i+G,\quad \mbox{for}~x\in \mathtt{B}_{R},
\end{equation}
where
\begin{equation*}
\begin{aligned}
F^i(x'):=&\, \int_{f_{2}(x')}^{\varepsilon+f_{1}(x')}\Big(\frac{x_n-f_{2}(x')}{\delta(x')}\partial_{x_i}f_{1}(x')-\frac{x_n-\varepsilon-f_{1}(x')}{\delta(x')}\partial_{x_i}f_{2}(x')\Big)\partial_{x_n}w(x)\,dx_n,\\
G(x'):=&\,-\frac{2}{\gamma}\overline{w}(x')+\frac{1}{\gamma}\sum_{i=1}^{2}\Big(w_{\Gamma_{i}}(x') -\Psi_i(x')\Big)\sqrt{1+|D_{x'}f_{i}|^2}+\int_{f_{2}(x')}^{\varepsilon+f_{1}(x')}\mathfrak{R}(x)\,dx_n,
\end{aligned}
\end{equation*}
 $\delta(x')=\varepsilon+f_{1}(x')-f_{2}(x')$, and $w_{\Gamma_{i}}$ is the boundary trace given by \eqref{boundarytrace}.

Under assumptions \eqref{fg0}--\eqref{fg}, the following bounds hold for $i=1,2$:% and $x' \in \mathtt{B}_{\frac{1}{8}\sqrt{\eta(x')}}(x')$: 
\begin{equation}\label{fg1}
\begin{aligned}
&\|f_i\|_{L^{\infty}(\mathtt{B}_{\frac{1}{8}\sqrt{\eta(x')}}(x'))}\leq C\eta(x'), \\
&\|D f_i\|_{L^{\infty}(\mathtt{B}_{\frac{1}{8}\sqrt{\eta(x')}}(x'))}\leq C\eta(x')^{1/2},\quad\|D^2 f_i\|_{L^{\infty}(\Omega_{R})}\leq C, \\ 
&[f_i]_{C^{\alpha}(\mathtt{B}_{\frac{1}{8}\sqrt{\eta(x')}} (x'))}\leq C\|D f_i\|_{L^{\infty}(\mathtt{B}_{\frac{1}{8}\sqrt{\eta(x')}}(x'))}\eta(x')^{\frac{1-\alpha}{2}}\leq C\eta(x')^{1-\frac{\alpha}{2}},
%& [Df_i]_{C^{\alpha}(\mathtt{B}_{\frac{1}{8}\sqrt{\eta(x_0')}}(x_0'))}\leq C\|D^2 f_i\|_{L^{\infty}(\mathtt{B}_{\frac{1}{8}\sqrt{\eta(x_0')}}(x_0'))}\eta(x_0')^{\frac{1-\alpha}{2}}\leq C\eta(x_0')^{\frac{1-\alpha}{2}}.
\end{aligned}
\end{equation}

For convenience, define
$$\mathcal{F}^{i}(x):=\frac{x_n-f_{2}(x')}{\delta(x')}\partial_{x_i}f_{1}(x')-\frac{x_n-\varepsilon-f_{1}(x')}{\delta(x')}\partial_{x_i}f_{2}(x').$$
Then, using \eqref{fg1}, we obtain
\begin{equation}\label{mathcalFi}
\begin{aligned}
& \|\mathcal{F}^{i}\|_{L^{\infty}(\mathtt{B}_{\frac{1}{8}\sqrt{\eta(x')}} (x'))}\leq C\eta(x')^{1/2},\\
&[\mathcal{F}^{i}]_{C_{x'}^{\alpha}(\Omega_{\frac{1}{8}\sqrt{\eta(x')}} (x))}\leq C\|D_{x'}\mathcal{F}^{i}\|_{L^{\infty}(\Omega_{\frac{1}{8}\sqrt{\eta(x')}} (x))}\eta(x')^{\frac{1-\alpha}{2}}\leq C\eta(x')^{\frac{1-\alpha}{2}}.
\end{aligned} 
\end{equation}

From \eqref{fg1} and \eqref{mathcalFi}, we estimate the H\"older norm of $F^{i}$:
$$|F^i(x')|\leq\, C\eta(x')^{\frac{3}{2}}\|\partial_n w\|_{L^{\infty}(\Omega_{R})},$$
and
\begin{equation}\label{Ficalpha}
\begin{aligned}
\left[F^{i}\right]_{C^{\alpha}(\mathtt{B}_{\frac{1}{8}\sqrt{\eta(x')}} (x'))}\leq&\, C\sum_{j=1}^{2}[f_j]_{C^{\alpha}(\mathtt{B}_{\frac{1}{8}\sqrt{\eta(x')}} (x'))}\|\mathcal{F}^{i}\partial_nw\|_{L^{\infty}(\mathtt{B}_{\frac{1}{8}\sqrt{\eta(x')}} (x'))}\\
 &+C\eta(x')[\mathcal{F}^{i}]_{C_{x'}^{\alpha}(\mathtt{B}_{\frac{1}{8}\sqrt{\eta(x')}} (x'))}\|\partial_nw\|_{L^{\infty}(\Omega_{\frac{1}{8}\sqrt{\eta(x')}} (x))}\\
 &+C\eta(x')[\partial_nw]_{C^{\alpha}(\Omega_{\frac{1}{8}\sqrt{\eta(x')}} (x'))}\|\mathcal{F}^{i}\|_{L^{\infty}(\mathtt{B}_{\frac{1}{8}\sqrt{\eta(x')}} (x'))}\\
 \leq&\, C\eta(x')^{\frac{3}{2}-\alpha}\|\partial_nw\|_{L^{\infty}(\Omega_{\frac{1}{8}\sqrt{\eta(x')}} (x))}+C\eta(x')^{\frac{3}{2}}[\partial_nw]_{C^{\alpha}(\Omega_{\frac{1}{8}\sqrt{\eta(x')}} (x'))}.
\end{aligned}
\end{equation}

Under the assumption \eqref{estH}, the term $G$ satisfies
\begin{equation}\label{Ginfty}
\begin{aligned}
 |G(x')|\leq&\, C\sum_{i=1}^{2}\frac{1}{\gamma}|\bar{w}(x')-w_{\Gamma_{i}}(x')|_{L^{\infty}(\mathtt{B}_{\frac{1}{8}\sqrt{\eta(x')}} (x'))}\\
&+C\sum_{i=1}^{2}\frac{1}{\gamma}|\Psi_i(x')|+C\eta(x')\underset{f_2(x')\leq x_n\leq \varepsilon+f_{1}(x')}{\sup}|\mathfrak{R}(x',x_n)|\\
\leq&\, C\frac{\eta(x')}{\gamma}\|Dw\|_{L^{\infty}(\Omega_{R})}+C\frac{\eta(x')}{\gamma+\eta(x')}.
\end{aligned}
\end{equation}

\subsubsection{Pointwise estimate for $\overline{w}(x')$}

Using a scaling argument and Lemma \ref{grdeientsmallgamma6}, we obtain the following estimate.

\begin{lemma}\label{lemmac1alphaest}
Let $w\in H^1(\Omega_{\eta(x_0')}(x_0))$, $x_0\in\Omega_{\frac{3}{4}R}$, be a solution to
\begin{equation*}
\left\{ \begin{aligned}
\Delta w&= \mathfrak{R}(x)&\mbox{in}&~ \Omega_{\eta(x_0')}(x_0), \\
w+\gamma \partial_{\nu}w&=\Psi_1(x)&\mbox{on}&~\Gamma_{1,\eta(x_0')}(x_0), \\
w+\gamma\partial_{\nu}w&=\Psi_2(x)&\mbox{on}&~\Gamma_{2,\eta(x_0')}(x_0),
\end{aligned} \right.
\end{equation*}
with $\gamma>0$ and $f_1,f_2$ satisfying \eqref{fg0}--\eqref{fg}. Assume $\mathfrak{R}\in L^{\infty}(\Omega_{\eta(x_0')}(x_0))$ and $\Psi_i\in C^{1,\alpha}(\Omega_{\eta(x_0')}(x_0))$, $i=1,2$. If $\gamma>\eta(x_0')$, then 
\begin{equation}\label{C1largegamma}
\begin{aligned}
&\|Dw\|_{L^{\infty}(\Omega_{\frac{1}{2}\eta(x_0')}(x_0))}+\eta(x_0')^{\alpha}[Dw]_{C^{\alpha}(\Omega_{\frac{1}{2}\eta(x_0')}(x_0))}\\
&\leq C\Bigg(\frac{1}{\gamma}\dashnorm{w}{L^2(\Omega_{\frac{3}{4}\eta(x_0')}(x_0))}+\dashnorm{Dw}{L^2(\Omega_{\frac{3}{4}\eta(x_0')}(x_0))}+\frac{1}{\gamma}\sum_{i=1}^2\|\Psi_i\|^*_{C^{\alpha}(\Omega_{\frac{3}{4}\eta(x_0')}(x_0))}\\
& \quad +\eta(x_0')\|\mathfrak{R}\|_{L^{\infty}(\Omega_{\frac{3}{4}\eta(x_0')}(x_0))}\Bigg),
\end{aligned}
\end{equation}
where $C$ depends only on $n$, $\alpha$, $\kappa$, and the upper bound of $\|f_i\|_{C^{2,\alpha}(\mathtt{B}_{R})}$ for $i=1,2$.
\end{lemma}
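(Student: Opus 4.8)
The plan is to reduce the problem to the case $\gamma>\eta(x_0')$ by rescaling, and then apply the estimate from \cite{Lieberm} for Robin problems with a \emph{large} normalized Robin coefficient, exactly as in Lemma~\ref{grdeientsmallgamma6}. First I would flatten the narrow region $\Omega_{\eta(x_0')}(x_0)$ via the change of variables \eqref{variableschange}, setting $\tilde w(y)=w(x)$, so that $\tilde w$ solves \eqref{equflatten} on the cylinder $Q_{\eta(x_0'),\eta(x_0')}(x_0)$ with coefficients $a^{ij}$, data $\tilde{\mathfrak R}$, $\tilde\Psi_i$, and Robin coefficients $h_i$. Then I would rescale the cylinder to unit size by $y'=x_0'+\eta(x_0')z'$, $y_n=\eta(x_0')z_n$, obtaining a function $\hat w(z)=\tilde w(y)$ solving a problem of the same structure on $Q_{1,1}(0)$ with normalized Robin parameter $\hat\gamma=\gamma/\eta(x_0')$. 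The hypothesis $\gamma>\eta(x_0')$ is precisely what gives $\hat\gamma>1$, so Lemma~\ref{grdeientsmallgamma6} (equivalently \cite{Lieberm}*{Theorem 5.54}) applies to $\hat w$ on the flat portion of the boundary $\Gamma_{1,1}^\pm(0)$, and standard interior Schauder estimates handle the part of $Q_{1/2,1}(0)$ away from the boundary.

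The second step is to assemble the rescaled estimate. Applying Lemma~\ref{grdeientsmallgamma6} to $\hat w$ in balls $B_{1/4}(z_0)\cap Q_{1,1}(0)$ for $z_0\in\Gamma^\pm_{1/2,1}(0)$ and combining with interior estimates yields
\begin{equation*}
\|D\hat w\|_{L^\infty(Q_{1/2,1}(0))}+[D\hat w]_{C^\alpha(Q_{1/2,1}(0))}
\leq C\Big(\tfrac{1}{\hat\gamma}\|\hat w\|_{L^2(Q_{3/4,1}(0))}+\|D\hat w\|_{L^2(Q_{3/4,1}(0))}+\tfrac{1}{\hat\gamma}\sum_{i=1}^2\|\hat\Psi_i\|^*_{C^\alpha(Q_{3/4,1}(0))}+\|\hat{\mathfrak R}\|_{L^\infty(Q_{3/4,1}(0))}\Big),
\end{equation*}
where the factor $1/\hat\gamma$ on the $\hat w$-term and the $\hat\Psi_i$-term comes from the form of the normalized boundary condition $\tfrac{h}{\hat\gamma}\widehat U-A^{nj}\partial_j\widehat U=-F^n+\tfrac{1}{\hat\gamma}(\phi-h(U)_{B_{3/4}^+})$ in Lemma~\ref{grdeientsmallgamma6} — here $\phi$ plays the role of $\hat\Psi_i$ and $(U)_{B_{3/4}^+}$ the role of the average of $\hat w$. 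I would keep the $\|D\hat w\|_{L^2}$ term on the right rather than converting it to $\|\hat w\|_{L^2}$ (since the Robin coefficient is small in the original variable and one cannot absorb boundary terms cheaply), which is why \eqref{C1largegamma} retains a $\dashnorm{Dw}{L^2}$ term, unlike \eqref{C1allgamma}.

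Finally I would undo the two changes of variables. Under $y'=x_0'+\eta(x_0')z'$, derivatives scale by $\eta(x_0')$ and $L^2$ norms by $\eta(x_0')^{n/2}$ (in the $n$-variable cylinder), while $\hat\gamma=\gamma/\eta(x_0')$ and $\hat{\mathfrak R}=\eta(x_0')^2\tilde{\mathfrak R}$; tracking powers converts $1/\hat\gamma$ to $\eta(x_0')/\gamma$, the interior $\|D\hat w\|_{L^\infty}$ to $\eta(x_0')\|D\tilde w\|_{L^\infty}$, and $\|\hat{\mathfrak R}\|_{L^\infty}$ to $\eta(x_0')^2\|\tilde{\mathfrak R}\|_{L^\infty}$, and similarly for the $C^\alpha$ seminorm of $D\tilde w$ which picks up $\eta(x_0')^{1+\alpha}$. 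Then the flattening map is bi-Lipschitz with controlled constants on $Q_{\frac14\sqrt{\eta(x_0')},\eta(x_0')}(x_0)$, so by \eqref{aijtransformed}, \eqref{aijtransformed1} the quantities $|D\tilde w|\sim|Dw|$, $|\tilde{\mathfrak R}|\sim|\mathfrak R|$, and $\|\tilde\Psi_i\|^*_{C^\alpha}\sim\|\Psi_i\|^*_{C^\alpha}$, while the averaged $L^2$ norms $\dashnorm{\tilde w}{L^2}\sim\dashnorm{w}{L^2}$ and $\dashnorm{D\tilde w}{L^2}\sim\dashnorm{Dw}{L^2}$ up to uniform constants; dividing through by $\eta(x_0')$ produces exactly \eqref{C1largegamma}. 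The main obstacle is bookkeeping: one must carefully verify that the Hölder norms of the \emph{transformed} coefficients $a^{ij}$, which carry the factor $\eta(x_0')^{-\alpha/2}$ from \eqref{aijtransformed1}, scale correctly under the second rescaling so that $\hat a^{ij}$ has $C^\alpha$-norm bounded uniformly in $\varepsilon$ and $x_0'$ — this is what makes the constant $C$ in Lemma~\ref{grdeientsmallgamma6} (which depends on $\|A^{ij}\|_{C^\alpha}$) independent of $\gamma$ and $\varepsilon$. Since $\hat a^{ij}(z)=a^{ij}(x_0'+\eta(x_0')z')$ has $C^\alpha$ seminorm $\eta(x_0')^\alpha[a^{ij}]_{C^\alpha(Q)}\leq C\eta(x_0')^{\alpha/2}\leq C$, this is fine, but it must be stated.
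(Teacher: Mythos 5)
Your proposal is correct and follows essentially the same route the paper intends: the paper derives Lemma \ref{lemmac1alphaest} precisely by flattening via \eqref{variableschange}, rescaling the cylinder to unit size so that the normalized Robin parameter becomes $\hat\gamma=\gamma/\eta(x_0')>1$, and then invoking Lemma \ref{grdeientsmallgamma6} near $\Gamma^{\pm}_{1,1}(0)$ together with interior estimates, exactly as in the proof of Lemma \ref{lemmac1alphaest1}. Your bookkeeping of the scaling factors (including the $C^{\alpha}$ bound $\eta(x_0')^{\alpha}[a^{ij}]_{C^{\alpha}}\le C\eta(x_0')^{\alpha/2}$ for the rescaled coefficients and the retention of the $\dashnorm{Dw}{L^2}$ term) matches the paper's argument.
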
 
 
We now apply the pointwise estimates for degenerate equations from Section \ref{sec_3} (Propositions \ref{epsilongammato0} and \ref{prop_gradient_v}) to equation \eqref{equbarw}, which yields control over $\overline{w}$ and its gradient in terms of $\|Dw\|_{L^{\infty}(\Omega_R)}$.

\begin{prop}\label{epsilongammato2}
Under the assumptions of Theorem~\ref{boundedgradientw}, let $w \in H^1(\Omega_{R})$ be a solution to \eqref{equw}, with $\overline{w}$ defined as in \eqref{barw} and $\gamma_0$ given by \eqref{gamma00}. Then, for any $\varepsilon \in (0, \varepsilon_0]$ and $\gamma \in (0, \gamma_0)$, the following estimates hold:
\begin{enumerate}
\item[(i)] For $ |x'| \leq R $,
\begin{equation}\label{pointwisebarw}
|\overline{w}(x')| \leq C\, \eta(x') \left( \|Dw\|_{L^{\infty}(\Omega_R)} + 1 \right).
\end{equation}
\item[(ii)] For all $ x \in \Omega_{R/2} $ with $ \eta(x') < \gamma $,
\begin{equation}\label{pointwisegradientbarw}
|D \overline{w}(x')| \leq C \left( \gamma^{\frac{\alpha}{2}} \eta(x')^{\frac{1-\alpha}{2}} + \left( \frac{\eta(x')}{\gamma} \right)^{1/2} \right) \left( \|Dw\|_{L^{\infty}(\Omega_R)} + 1 \right).
\end{equation}
\end{enumerate}
The constants $\varepsilon_0$ and $C>0$ depend only on $n$, $\alpha$, $\kappa$, $R$,  a lower bound of $\gamma_0-\gamma$ and upper bounds of $\|f_{1}\|_{C^{2,\alpha}}$ and $\|f_{2}\|_{C^{2,\alpha}}$.
\end{prop}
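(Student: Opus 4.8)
The plan is to feed the averaged function $\overline w$, which by \eqref{equbarw} solves the degenerate elliptic equation $\sum_{i=1}^{n-1}\partial_i(\delta(x')\partial_i\overline w)-\tfrac{2}{\gamma}\overline w=\sum_i\partial_iF^i+G$ (valid wherever $w$ solves \eqref{equw}, in particular on $\mathtt{B}_{2R}$), into Propositions~\ref{epsilongammato0} and \ref{prop_gradient_v} with the exponent $\sigma=2$. After the trivial rescaling $x'\mapsto x'/(2R)$ the principal part $\delta=\varepsilon+(f_1-f_2)$ matches the template $\tilde\varepsilon+t(x')$ of \eqref{equ_v} with $t=f_1-f_2$, $\tilde\lambda\sim\kappa$, and (by Taylor's theorem and \eqref{fg}) $\tilde\Lambda=\tfrac{1}{2}\|D^2(f_1-f_2)\|_{L^\infty(\mathtt{B}_R)}(2R)^2$, while the zeroth order term $-\tfrac{2}{\gamma}\overline w$ makes the effective small parameter $\gamma/(2(2R)^2)$. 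The definition \eqref{gamma00} of $\gamma_0$ is calibrated precisely so that, for $\sigma=2$ (hence $(\sigma-1)_+=0$), the threshold $\tilde\gamma_0$ of \eqref{bargamma0} — equal to $\gamma(1)$ in the notation of Lemma~\ref{epsilongammato01} — translates the condition $\gamma<\gamma_0$ exactly into the applicability condition for the rescaled equation, and a lower bound on $\gamma_0-\gamma$ yields one on the corresponding difference. Taking $\varepsilon_0:=\tfrac14 R^2$ ensures $\varepsilon<1/4$ (needed for the weighted norms) and the inclusions listed at the start of Section~\ref{sec_4}.

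For (i), Proposition~\ref{epsilongammato0} with $\sigma=2$ gives $|\overline w(x')|\le C\eta(x')\big(\|\overline w\|_{L^\infty(\partial\mathtt{B}_{2R})}+\sqrt\gamma\,\|F\|_{\frac{3}{2},\mathtt{B}_{2R}}+\gamma\,\|G\|_{1,\mathtt{B}_{2R}}\big)$. Here $\|\overline w\|_{L^\infty(\partial\mathtt{B}_{2R})}\le\|w\|_{L^\infty(\Omega_{2R})}\le1$; the bound $|F^i(x')|\le C\eta(x')^{3/2}\|\partial_nw\|_{L^\infty(\Omega_R)}$ and $\gamma<\gamma_0\le C$ give $\sqrt\gamma\,\|F\|_{\frac{3}{2},\mathtt{B}_{2R}}\le C\|Dw\|_{L^\infty(\Omega_R)}$; and \eqref{Ginfty} together with \eqref{estH} gives $\gamma\,\|G\|_{1,\mathtt{B}_{2R}}\le C(\|Dw\|_{L^\infty(\Omega_R)}+1)$. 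Summing yields \eqref{pointwisebarw}. Since $|w-\overline w|\le\delta(x')\|\partial_nw\|_{L^\infty}\le C\eta(x')\|Dw\|_{L^\infty(\Omega_R)}$, this also yields the auxiliary bound $|w(x)|\le C\eta(x')(\|Dw\|_{L^\infty(\Omega_R)}+1)$ throughout $\Omega_R$, which is used in (ii).

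For (ii), fix $x$ with $|x'|\le R/2$ and $\eta(x')<\gamma$, and apply Proposition~\ref{prop_gradient_v} with $\sigma=2$. The leading term $C\tilde\eta(x')^{1/2}(\cdots)$ is estimated exactly as in (i), and since $\eta(x')<\gamma$ one has $\eta(x')^{1/2}\le\gamma^{\alpha/2}\eta(x')^{(1-\alpha)/2}$. The term $C\gamma^{1/2}\tilde\eta(x')^{-1/2}\|G\|_{L^\infty}$, using $|G|\le C\tfrac{\eta}{\gamma}\|Dw\|_{L^\infty(\Omega_R)}+C\tfrac{\eta}{\gamma+\eta}$ and $\eta\sim\eta(x')$ on the small ball, contributes $C(\eta(x')/\gamma)^{1/2}(\|Dw\|_{L^\infty(\Omega_R)}+1)$. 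The delicate term is $C\tilde\eta(x')^{-1+\alpha/2}\gamma^{\alpha/2}[F]_{C^\alpha(\mathtt{B}_{\frac18\sqrt{\tilde\eta(x')}}(x'))}$: by \eqref{Ficalpha} it suffices to show that $[\partial_nw]_{C^\alpha}$ over a ball of radius $\sim\sqrt{\eta(x')}$ is $\le C\eta(x')^{-\alpha}(\|Dw\|_{L^\infty(\Omega_R)}+1)$. This follows by covering that ball with narrow regions $\Omega_{\frac12\eta(\xi')}(\xi)$ — on which $\eta$ is essentially constant, and $\gamma>\eta(\xi')$ once $\eta(x')$ lies below a fixed fraction of $\gamma$ — applying Lemma~\ref{lemmac1alphaest}, whose right-hand side is $\le C\eta^{-\alpha}(\|Dw\|_{L^\infty(\Omega_R)}+1)$ after inserting \eqref{estH} and the $O(\eta)$ bound on $w$ from (i), and absorbing long-range differences of $\partial_nw$ into $\|\partial_nw\|_{L^\infty}$. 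This produces the term $C\gamma^{\alpha/2}\eta(x')^{(1-\alpha)/2}(\|Dw\|_{L^\infty(\Omega_R)}+1)$, and adding the three contributions gives \eqref{pointwisegradientbarw}. In the remaining borderline subregime where $\eta(x')$ and $\gamma$ are comparable, one instead invokes Lemma~\ref{lemmagamma<eta} with a fixed $\mu$ to get $\|\nabla w\|_{L^\infty(\Omega_{\eta(x')/2}(x))}\le C$, hence $|D\overline w(x')|\le C$, which is absorbed since the right-hand side of \eqref{pointwisegradientbarw} is $\gtrsim1$ there.

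The main obstacle is the last term of (ii): the Hölder seminorm $[\partial_nw]_{C^\alpha}$ on scale $\sqrt{\eta(x')}$ is an a priori unknown quantity that must be reduced to the global $\|Dw\|_{L^\infty(\Omega_R)}$. This is where the interior $C^{1,\alpha}$ estimate of Lemma~\ref{lemmac1alphaest} (available precisely in the regime $\gamma>\eta$), the structural hypotheses \eqref{estH}, and the $O(\eta)$ pointwise control of $w$ established in (i) all enter in an essential way, together with the care needed to pass through the transition $\eta(x')\sim\gamma$ without losing the decay in $\eta$.
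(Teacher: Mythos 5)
Your proposal is correct and takes essentially the same route as the paper: part (i) is Proposition~\ref{epsilongammato0} with $\sigma=2$ applied to the rescaled equation \eqref{equbarw} (with the rescaling calibrated so the threshold \eqref{bargamma0} becomes exactly $\gamma_0$), and part (ii) is Proposition~\ref{prop_gradient_v} combined with Lemma~\ref{lemmac1alphaest} and the bound \eqref{pointwisebarw} to reduce $[\partial_n w]_{C^{\alpha}}$ on scale $\sqrt{\eta(x')}$ to $\|Dw\|_{L^{\infty}(\Omega_R)}$, which is precisely the paper's estimates \eqref{FG1}--\eqref{FG2}. The only deviations are cosmetic: you should run the argument on $\mathtt{B}_{R}$ rather than $\mathtt{B}_{2R}$, since \eqref{estH} is only assumed for $x_0\in\Omega_R$, while your explicit covering argument for the H\"older seminorm and your treatment of the borderline regime $\eta(x')\sim\gamma$ via Lemma~\ref{lemmagamma<eta} simply make explicit steps the paper leaves implicit.
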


\begin{proof}
(i) Using \eqref{Ficalpha} and \eqref{Ginfty}, we obtain
\begin{equation}\label{FG1}
\begin{aligned}
\sqrt{\gamma}\|F\|_{\frac{3}{2},\mathtt{B}_{R}}+\gamma\|G\|_{1,\mathtt{B}_{R}}\leq&\, C\|Dw\|_{L^{\infty}(\Omega_R)}+C,\\
\gamma^{1/2}\eta(x')^{-1/2}\|G\|_{L^{\infty}(\mathtt{B}_{\frac{1}{8}\sqrt{\eta(x')}} (x'))}\leq&\, C(\frac{1}{\gamma}\eta(x'))^{1/2}\Big(\|Dw\|_{L^{\infty}(\Omega_R)}+1\Big).
\end{aligned}
\end{equation}
By \eqref{fg}, for $x\in\Omega_{R}$ we have 
\begin{equation}\label{f1f2tx}
\begin{aligned}
&|f_1(x')-f_2(x')|\leq  \frac{1}{2}\|D^2(f_1-f_2)\|_{L^{\infty}(\Omega_R)}|x'|^2,\\
&|Df_1(x')-Df_2(x')|\leq  \|D^2(f_1-f_2)\|_{L^{\infty}(\Omega_R)}|x'|. 
\end{aligned}
\end{equation}
Let $x'=Ry'$ for $y'\in\mathtt{B}_1$, and define $t(y')=(f_1(Ry')- f_2(Ry'))/(2R^2)$. Then from \eqref{f1f2tx}, we choose $\tilde{\Lambda} = \frac{1}{4} \|D^2(f_1 - f_2)\|_{L^{\infty}(\Omega_R)}$, which allows us to apply Proposition~\ref{epsilongammato0} to $\bar{w}(Ry')$ with $\tilde{\varepsilon} = \varepsilon/(2R^2)$ and $\sigma = 2$, thereby obtaining \eqref{pointwisebarw} for all $\gamma \in (0, \gamma_0)$. 

(ii) For $x_0 \in \Omega_{3R/4}$, we use the assumption $$\frac{1}{\gamma}\sum_{i=1}^2\|\Psi_i\|^*_{C^{\alpha}(\Omega_{\frac{3}{4}\eta(x_0')}(x_0))}+\eta(x_0')\|\mathfrak{R}\|_{L^{\infty}(\Omega_{\frac{3}{4}\eta(x_0')}(x_0))}\leq C\frac{\eta(x_0')}{\gamma}.$$ 
Then, by Lemma~\ref{lemmac1alphaest} and \eqref{pointwisebarw}, we obtain
\begin{equation*}
\begin{aligned}
\|D w\|^{*}_{C^{\alpha}(\Omega_{\frac{1}{2}\eta(x_0')}(x_0))}\leq&\, C\|Dw\|_{L^{\infty}(\Omega_{\frac{1}{2}\eta(x_0')}(x_0))}+C\eta(x_0')^{\alpha}[Dw]_{C^{\alpha}(\Omega_{\frac{1}{2}\eta(x_0')}(x_0))}\\
\leq&\, C\frac{\eta(x_0')}{\gamma}\Big(\|Dw\|_{L^{\infty}(\Omega_R)}+1\Big)+C\|Dw\|_{L^{\infty}(\Omega_R)}.
\end{aligned}
\end{equation*}
Since $\eta(x_0')\sim\eta(x')$ for $x_0\in \Omega_{\frac{1}{8}\sqrt{\eta(x')}}(x')$, it follows that for $x \in \Omega_{R/2}$ with $\eta(x') < \gamma$,
$$ \underset{x_0\in \Omega_{\frac{1}{8}\sqrt{\eta(x')}} (x)}{\sup} \|D w\|^{*}_{C^{\alpha}(\Omega_{\frac{1}{2}\eta(x_0')}(x_0))}\leq C\|Dw\|_{L^{\infty}(\Omega_R)}+C.$$
Substituting this into \eqref{Ficalpha} gives
\begin{equation}\label{FG2}
\begin{aligned}
\left[F^i\right]_{C^{\alpha}(\mathtt{B}_{\frac{1}{8}\sqrt{\eta(x')}} (x'))}\leq&\, C\eta(x')^{\frac{3}{2}-\alpha}\|\partial_nw\|_{L^{\infty}(\Omega_{\frac{1}{8}\sqrt{\eta(x')}} (x))}+C\eta(x')^{\frac {3}{2}}[\partial_nw]_{C^{\alpha}(\Omega_{\frac{1}{8}\sqrt{\eta(x')}} (x'))}\\
\leq&\, C\eta(x')^{\frac{3}{2}-\alpha}\underset{x_0\in \Omega_{\frac{1}{8}\sqrt{\eta(x')}} (x)}{\sup} \|D w\|^{*}_{C^{\alpha}(\Omega_{\frac{1}{2}\eta(x_0')}(x_0))}\\
\leq&\,C\eta(x')^{\frac{3}{2}-\alpha}\Big(\|Dw\|_{L^{\infty}(\Omega_R)}+1\Big).
\end{aligned}
\end{equation}
Estimate \eqref{pointwisegradientbarw} now follows from Proposition~\ref{prop_gradient_v} together with \eqref{FG1} and \eqref{FG2}.
\end{proof}
 
\subsubsection{Energy estimate for $w-\overline{w}$}
\begin{lemma}\label{estgradientwbarw}
Under the assumptions of Theorem \ref{boundedgradientw}, let $w$ be a solution to \eqref{equw} and let $\overline{w}$ be defined as in \eqref{barw}. For $x_0\in\Omega_{\frac{R}{2}}$,
\begin{equation*}%\label{smalletagamma}
 \int_{\Omega_{\eta(x_0')}(x_0)}|D(w-\overline{w})|^2\,dx\leq C\eta(x_0')^n\left( 1+Q(x_0)^2\right),
\end{equation*}
where $$Q(x_0):=\underset{x\in\Omega_{\frac{1}{16}\sqrt{\eta(x_0')}}(x_0)}{\sup}\left(|D \overline{w}(x)|+\frac{1}{\gamma} |\overline{w}(x)| \right).$$
The constant $C$ depends only on $n$, $\alpha$, $\kappa$, $\|f_{1}\|_{C^{2,\alpha}}$, $\|f_{2}\|_{C^{2,\alpha}}$, and $R$. 
\end{lemma}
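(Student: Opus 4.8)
The plan is to prove Lemma \ref{estgradientwbarw} by a Caccioppoli-type energy estimate for the difference $v := w - \overline{w}$, iterated across a chain of scales in the spirit of Lemma \ref{lemmagamma<eta}. The feature I will exploit is that $v$ has \emph{vanishing vertical average}: $\int_{f_2(x')}^{\varepsilon+f_1(x')} v(x',x_n)\,dx_n = 0$ for every $x'$, since $\overline{w}(x')$ is by definition the average of $w(x',\cdot)$ over that segment. By the one-dimensional Poincaré--Wirtinger inequality on each vertical segment (whose length $\delta(x')\sim\eta(x_0')$ throughout $\Omega_{\frac{1}{16}\sqrt{\eta(x_0')}}(x_0)$), this yields, for $t\le\tfrac{1}{16}\sqrt{\eta(x_0')}$, both $\int_{\Omega_t(x_0)}|v|^2\le C\eta(x_0')^2\int_{\Omega_t(x_0)}|\partial_n v|^2$ and the trace bound $\int_{\Gamma_{i,t}(x_0)}|v|^2\,dS\le C\eta(x_0')\int_{\Omega_t(x_0)}|\partial_n v|^2$ (the latter by writing $v$ on $\Gamma_{i,t}$ as the integral of $\partial_n v$ from a vanishing point of $v$). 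I will also use the uniform global bound $\int_{\Omega_{3R/4}}|\nabla w|^2\le C$, which follows from a standard Caccioppoli estimate for \eqref{equw} together with $\|w\|_{L^\infty}\le 1$, the integrability $\int_{\Omega_{3R/4}}|\mathfrak{R}|\le\int_{|x'|\le 3R/4}\tfrac{\delta(x')}{\gamma+\delta(x')}\,dx'\le C$ coming from \eqref{estH}, and the pointwise bound $|\Psi_i|\le\tfrac{\gamma\delta}{\gamma+\delta}\le\gamma$ also from \eqref{estH}.

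The core step is a local estimate: for $\eta(x_0')\le s<t\le\tfrac{1}{16}\sqrt{\eta(x_0')}$ with $t-s=K\eta(x_0')$ (for a large universal $K$), I would test $\Delta w=\mathfrak{R}$ against $v\xi^2$, with $\xi=\xi(x')$ a cutoff equal to $1$ on $\Omega_s(x_0)$, supported in $\Omega_t(x_0)$ and $|\nabla\xi|\le C/(t-s)$; integrate by parts; and replace $\partial_\nu w$ on $\Gamma_{i,t}(x_0)$ by $\tfrac1\gamma(\Psi_i-w)$ via the Robin condition. The nonnegative boundary term $\tfrac1\gamma\sum_i\int_{\Gamma_{i,t}(x_0)}v^2\xi^2$ moves to the left, and the right-hand side consists of the cross term $\int\nabla\overline{w}\cdot\nabla v\,\xi^2$, the cutoff terms $\int v\xi\nabla v\cdot\nabla\xi$ and $\int v\xi\nabla\overline{w}\cdot\nabla\xi$, the forcing term $\int\mathfrak{R}v\xi^2$, and the leftover boundary term $\tfrac1\gamma\sum_i\int_{\Gamma_{i,t}(x_0)}(\Psi_i-\overline{w})v\xi^2$. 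Each is handled by Young's inequality together with the Poincaré/trace bounds above, the bounds $|\mathfrak{R}|\le\tfrac1{\gamma+\delta}\le C\eta(x_0')^{-1}$ and $\tfrac1\gamma|\Psi_i|\le\tfrac{\delta}{\gamma+\delta}\le 1$, and --- crucially --- the estimate $|\nabla\overline{w}|+\tfrac1\gamma|\overline{w}|\le Q(x_0)$, which is precisely why the dangerous $\tfrac1\gamma$ factors are tamed and why $Q(x_0)$ appears. Writing $E(\rho):=\int_{\Omega_\rho(x_0)}|\nabla v|^2$, absorbing the terms carrying $\xi^2$ on $|\nabla v|^2$, and using $|\Omega_t(x_0)|\sim t^{n-1}\eta(x_0')$, one obtains $E(s)\le(\tfrac1{14}+\tfrac{C}{K^2})E(t)+C(1+Q(x_0))^2|\Omega_t(x_0)|$; fixing $K$ large gives the iteration inequality $E(s)\le\tfrac18 E(t)+C(1+Q(x_0))^2|\Omega_t(x_0)|$.

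To finish I would split into two regimes. If $\eta(x_0')$ exceeds a small universal threshold $c_1\sim(16(K+1))^{-2}$, then $\eta(x_0')\sim 1$, $\Omega_{\eta(x_0')}(x_0)\subset\Omega_{3R/4}$, and directly $E(\eta(x_0'))\le 2\int_{\Omega_{3R/4}}|\nabla w|^2+2Q(x_0)^2|\Omega_{\eta(x_0')}(x_0)|\le C(1+Q(x_0)^2)\le C\eta(x_0')^n(1+Q(x_0)^2)$. If $\eta(x_0')\le c_1$, I would iterate the inequality along $t_i=(1+iK)\eta(x_0')$ for $0\le i\le k$, with $k$ the largest index for which $t_k\le\tfrac1{16}\sqrt{\eta(x_0')}$, so $k\sim 1/(K\sqrt{\eta(x_0')})$. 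This gives $E(\eta(x_0'))\le 8^{-k}E(t_k)+C(1+Q(x_0))^2\sum_{i\ge 1}8^{-i}|\Omega_{t_i}(x_0)|$; the sum is $\le C\eta(x_0')^n(1+Q(x_0))^2\sum_i i^{n-1}8^{-i}\le C\eta(x_0')^n(1+Q(x_0))^2$, while $E(t_k)\le E(\tfrac1{16}\sqrt{\eta(x_0')})\le 2\int_{\Omega_{3R/4}}|\nabla w|^2+CQ(x_0)^2\eta(x_0')^{(n+1)/2}\le C(1+Q(x_0)^2)$ and $8^{-k}=e^{-c/\sqrt{\eta(x_0')}}\le C\eta(x_0')^n$ for $\eta(x_0')\le c_1$. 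Since $|Dv|^2=|\partial_n v|^2+|\nabla_{x'}v|^2$, one has $\int_{\Omega_{\eta(x_0')}(x_0)}|Dv|^2=E(\eta(x_0'))$, which is the assertion.

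The step I expect to be the main obstacle is exactly the necessity of the scale-by-scale iteration: the Poincaré--Wirtinger constant controlling $\int|v|^2$ by $\eta(x_0')^2\int|\partial_n v|^2$ is of size $O(1)$, not small, so the energy estimate cannot be closed by a single absorption on one annulus. One must instead reproduce the geometric iteration of Lemma \ref{lemmagamma<eta}, taking care that the accumulated source $\sum_i 8^{-i}|\Omega_{t_i}(x_0)|$ is comparable to $\eta(x_0')^n$ and that the leftover top-scale term $8^{-k}E(\tfrac1{16}\sqrt{\eta(x_0')})$ is absorbed via the uniform bound $\int_{\Omega_{3R/4}}|\nabla w|^2\le C$; the regime split at $\eta(x_0')\sim c_1$ is needed precisely because the iteration produces enough steps only when $\eta(x_0')$ is small.
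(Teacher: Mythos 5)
Your proposal is correct and follows essentially the same route as the paper's proof: test the equation against $(w-\overline{w})\xi^2$, use the Robin condition so that the $-\tfrac1\gamma\int_{\Gamma_{i,t}}(w-\overline w)^2\xi^2$ term has a favorable sign, control the remaining bulk and boundary terms via the thin-direction Poincar\'e/trace inequalities coming from the vanishing vertical average together with the bound $|D\overline w|+\tfrac1\gamma|\overline w|\le Q(x_0)$ and \eqref{estH}, then iterate the resulting Caccioppoli inequality over $O(\eta(x_0')^{-1/2})$ annuli of width $\sim\eta(x_0')$ and close at the top scale with the global bound $\int_{\Omega_{3R/4}}|Dw|^2\le C$. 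The only deviations are harmless simplifications (a direct Young bound on the cross term $\int\xi^2 D\overline w\cdot D(w-\overline w)$ in place of the paper's identity yielding $Q_t[f_1,f_2]$, and the triangle inequality with $|D\overline w|\le Q(x_0)$ at the top scale), which produce source terms of the same admissible size $Ct^{n-1}\eta(x_0')(1+Q(x_0))^2$.
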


\begin{proof}
We multiply \eqref{equw} by $(w-\overline{w})\xi^2$ and integrate by parts, where $\xi(x)=\xi(x')$ satisfies $\xi\in C_c^{\infty}(\Omega_t(x_0))$, $\xi=1$ in $\Omega_s(x_0)$, where $0<s<t<\frac{1}{16}\sqrt{\eta(x_0')}$, and $|D \xi|\leq\frac{C}{|s-t|}$. This gives
\begin{equation}\label{integralv}
\begin{aligned}
&\int_{\Omega_t(x_0)}D w\cdot D\big((w-\overline{w})\xi^2\big)+\mathfrak{R}(w-\overline{w})\xi^2\,dx=\sum_{i=1}^{2} \int_{\Gamma_{i,t}(x_0)}\frac{\tilde{\Psi}_i-w}{\gamma} \,(w-\overline{w})\xi^2dS.
\end{aligned}
\end{equation}

Let $w_{\Gamma_{i}}$ be the boundary trace given by \eqref{boundarytrace}.
Note that $$\partial_i\overline{w}(x')=\fint_{f_{2}(x')}^{\varepsilon+f_{1}(x')}\partial_iw (x',x_n)\,dx_n+\frac{\partial_if_{1}(x')}{\delta(x')}(w_{\Gamma_{1}} -\overline{w})(x')-\frac{\partial_if_{2}(x')}{\delta(x')}(w_{\Gamma_{2}}-\overline{w})(x'),$$ and 
$\int_{f_{2}(x')}^{\varepsilon+f_{1}(x')}(w-\overline{w})\,dx_n=0$, $\partial_n \overline{w}(x)=\partial_n \overline{w}(x')=0$. Hence,
\begin{align}\label{integralbarv}
&\int_{\Omega_t(x_0)}D\overline{w}\cdot D((w-\overline{w})\xi^2)\,dx\nonumber\\
&=\int_{|x'-x_0'|\leq t}(D_{x'}\overline{w}\cdot D_{x'}\xi^2)\Big(\int_{f_{2}(x')}^{\varepsilon+f_{1}(x')}(w-\overline{w})\,dx_n\Big)\,dx'+\int_{\Omega_t(x_0)}\xi^2 D_{x'}\overline{w}\cdot D_{x'}(w-\overline{w})\,dx\nonumber\\
&=\int_{\Omega_t(x_0)}\xi^2\frac{\overline{w}-w_{\Gamma_{1}}}{\delta(x')} D_{x'}\overline{w}\cdot D_{x'}f_{1}+\xi^2\frac{w_{\Gamma_{2}}-\overline{w}}{\delta(x')} D_{x'}\overline{w}\cdot D_{x'}f_{2}\,dx\\
&:=Q_{t}[f_{1},f_{2}].\nonumber
 \end{align}
Subtracting \eqref{integralbarv} from \eqref{integralv} yields
\begin{equation*}%\label{integralv-barv}
\begin{aligned}
&\int_{\Omega_t(x_0)}D(w-\overline{w})\cdot D\big((w-\overline{w})\xi^2\big)\,dx+Q_{t}[f_{1},f_{2}]+ \int_{\Omega_t(x_0)}\mathfrak{R}(w-\overline{w})\xi^2\,dx\\
&= \frac{1}{\gamma}\sum_{i=1}^{2}\int_{\Gamma_{i,t}(x_0)}(\tilde{\Psi}_i-w)(w-\overline{w})\xi^2dS.
\end{aligned}
\end{equation*}
We now estimate each term.

{\bf Step 1. Estimating the first term on the left-hand side.} Using Cauchy inequality, 
\begin{equation}\label{lefthand1}
\begin{aligned}
&\int_{\Omega_t(x_0)} D(w-\overline{w})\cdot D\big((w-\overline{w})\xi^2\big)\,dx\\
&= \int_{\Omega_t(x_0)}| D(w-\overline{w})|^2\xi^2+ 2\xi(w-\overline{w}) D\xi\cdot D (w-\overline{w})\,dx\\
&\geq \int_{\Omega_{t}(x_0)}|D(w-\overline{w})|^2\xi^2\,dx-\frac{1}{8}\int_{\Omega_{t}(x_0)}|D(w-\overline{w})|^2\xi^2\,dx-C\int_{\Omega_{t}(x_0)}(w-\overline{w})^2|D \xi|^2\,dx.
\end{aligned}
\end{equation}
By H\"{o}lder's inequality, 
\begin{align}\label{lefthand11}
|w(x)-\overline{w}(x)|\leq&\,\int_{f_2(x')}^{\varepsilon+f_1(x')}|\partial_{n} w(x', x_n)|\,dx_n
\leq\, C\eta(x_0')^{1/2}\left(\int_{f_2(x')}^{\varepsilon+f_1(x')}|\partial_{n} w(x', x_n)|^2\,dx_n\right)^{1/2}\nonumber\\
\leq&\,C\eta(x_0')^{1/2}\left(\int_{f_2(x')}^{\varepsilon+f_1(x')}|D(w-\overline{w})(x', x_n)|^2\,dx_n\right)^{1/2}.
\end{align}
Substituting \eqref{lefthand11} into \eqref{lefthand1}, we obtain
\begin{equation*}%\label{lefthand2}
\begin{aligned}
& \int_{\Omega_t(x_0)} D(w-\overline{w})\cdot D\big((w-\overline{w})\xi^2\big)\,dx\\
&\geq \frac{7}{8}\int_{\Omega_{t}(x_0)}|D(w-\overline{w})|^2\xi^2\,dx -C\eta(x_0')^2\int_{\Omega_{t}(x_0)}|D(w-\overline{w})|^2|D \xi|^2\,dx.
\end{aligned}
\end{equation*}

{\bf Step 2. Estimating the second and third terms on the left-hand side.}
For $x\in\Omega_{\frac{1}{16}\sqrt{\eta(x_0')}}(x_0)$, we have $|D f_{1}(x')| + |D f_{2}(x')| \leq C \eta(x_0')^{1/2}$. Then, for $0 \leq t \leq \frac{1}{16} \sqrt{\eta(x_0')}$, 
 \begin{align*}
|Q_{t}[f_{1},f_{2}]|\leq&\,\int_{\Omega_t(x_0)}\xi^2\Big( | D_{x'}\overline{w}\cdot D_{x'}f_{1}| + | D_{x'}\overline{w}\cdot D_{x'}f_{2}|\Big) \fint_{f_{2}(x')}^{\varepsilon+f_{1}(x')}|\partial_n w|\,dx_n\,dx\\
=&\,\int_{\Omega_t(x_0)}\xi^2 \Big(| D_{x'}\overline{w}\cdot D_{x'}f_{1}|+ | D_{x'}\overline{w}\cdot D_{x'}f_{2}|\Big)|\partial_nw|\,dx\\
\leq&\,\frac{1}{16}\int_{\Omega_t(x_0)}\xi^2|D(w-\overline{w})|^2\,dx+C\eta(x_0')\int_{\Omega_t(x_0)}\xi^2|D\overline{w}|^2\,dx.
 \end{align*}
 
{\bf Step 3. Estimating the remaining terms.} Using \eqref{lefthand11} and H\"{o}lder's inequality again,
\begin{equation*}%\label{lefthand4}
\int_{\Omega_t(x_0)}\mathfrak{R}(w-\overline{w})\xi^2\,dx\leq \frac{1}{16}\int_{\Omega_t(x_0)}|D(w-\overline{w})|^2\xi^2\,dx+C\eta(x_0')^2\int_{\Omega_t(x_0)}\mathfrak{R}^2\,dx.
\end{equation*}
Similarly, for the boundary integrals on the right-hand side, 
\begin{equation*}%\label{rightside1}
\begin{aligned}
&\frac{1}{\gamma}\int_{\Gamma_{1,t}(x_0)}(\tilde{\Psi}_1-w) (w-\overline{w})\xi^2dS\\
&= \frac{1}{\gamma}\int_{\Gamma_{1,t}(x_0)}\tilde{\Psi}_1(w-\overline{w})\xi^2-(w-\bar{w})^2\xi^2-\bar{w}(w-\bar{w})\xi^2dS\\
&\leq\frac{1}{\gamma}\int_{\Gamma_{1,t}(x_0)}|\tilde \Psi_1||w-\overline{w}|\xi^2+ |\overline{w}(w-\overline{w})|\,dS\\
&\leq\frac{1}{16} \int_{\Omega_t(x_0)}|D(w-\overline{w})|^2\xi^2\,dx +C\frac{1}{\gamma^2}\eta(x_0')\int_{\Gamma_{1,t}(x_0)}\Big(\tilde \Psi_1^2+\overline{w}^2\Big)\,dS,
\end{aligned}
\end{equation*}
and the same estimate holds on $\Gamma_{2,t}(x_0)$ with $\tilde{\Psi}_2$ in place of $\tilde{\Psi}_1$. 

Combining these estimates, we arrive at
\begin{equation}\label{iterationv-barv}
\begin{aligned}
&\int_{\Omega_{t}(x_0)}|D(w-\overline{w})|^2\xi^2\,dx\\
&\leq \frac{1}{2}\int_{\Omega_{t}(x_0)}|D(w-\overline{w})|^2\xi^2\,dx+C_{0}\eta(x_0')^2\int_{\Omega_{t}(x_0)}|D(w-\overline{w})|^2|D\xi|^2\,dx\\
&\quad +C_{0}\eta(x_0')\int_{\Omega_{t}(x_0)}|D\overline{w}|^2\,dx
+C\int_{\Omega_{t}(x_0)}\Big(\frac{{1}}{\gamma^2}(|\tilde\Psi_1|^2+|\tilde\Psi_2|^2+|\overline{w}|^2)
+\eta(x_0')^2\mathfrak{R}^2\Big)\,dx,
\end{aligned}
\end{equation}
where $C_0$ is a fixed universal constant. Let $\tilde{C}_0:=2C_0$. From \eqref{iterationv-barv} and the assumption \eqref{estH}, we obtain
\begin{equation*}%\label{iterationv-barv1}
\begin{aligned}
 \int_{\Omega_{s}(x_0)}|D(w-\overline{w})|^2\,dx
\leq&\,\, \tilde{C}_0\frac{\eta(x_0')^2}{(s-t)^2}\int_{\Omega_{t}(x_0)}|D(w-\overline{w})|^2\,dx+ Ct^{n-1}\eta(x_0')(Q(x_0)+1)^2.
\end{aligned}
\end{equation*}

{\bf Step 4. Iteration and conclusion.} 
Proceeding with an iteration argument (as in Lemma~\ref{lemmagamma<eta}, from \eqref{gamma<eta4} to \eqref{gamma<eta41}), and noting that
$$\int_{\Omega_{\frac{1}{16}\sqrt{\eta(x_0')}}(x_0)}|D(w-\overline{w})|^2\,dx\leq \int_{\Omega_{\frac{ 1}{16}\sqrt{\eta(x_0')}}(x_0)}|Dw|^2\,dx,$$ we obtain
\begin{equation}\label{estequw-barw}
\int_{\Omega_{\eta(x_0')}(x_0)}|D(w-\overline{w})|^2\,dx\leq C\eta(x_0')^n\Big( \int_{\Omega_{\frac{ 1}{16}\sqrt{\eta(x_0')}}(x_0)}|Dw|^2\,dx+Q(x_0)^2+1\Big).
\end{equation}
Finally, we estimate the integral on the right-hand side of \eqref{estequw-barw}. Multiply the equation in \eqref{equw} by $w\xi^2$ and integrate by parts, where 
$$\xi\in C_c^{\infty}(\Omega_R),\quad  \xi=1~\mbox{in}~ \Omega_{\frac{3}{4}R},\quad\mbox{and}~ |D\xi|\leq \frac{C}{R},$$ using H\"{o}lder's inequality, we have
\begin{equation*}% \label{estequwomegar}
\begin{aligned}
&\int_{\Omega_{\frac{3}{4}R}}|Dw|^2\xi^2\,dx+\frac{1}{\gamma}\int_{\Gamma_{1,\frac{3}{4}R}\cup\,\Gamma_{2,\frac{3}{4}R} }w^2\xi^2dS\\
&\leq C\left(\int_{\Omega_R }w^2|D \xi|^2+|\mathfrak{R}w\xi^2|\,dx+\frac{1}{\gamma}\sum_{i=1}^{2}\int_{\Gamma_{i,R} }\Psi_i^2\xi^2dS\right).
\end{aligned}
\end{equation*}
 By above inequality, the assumption that $\|w\|_{L^{\infty}(\Omega_R)}\leq 1$ and the fact that $\Omega_{\frac{1}{16}\sqrt{\eta(x_0')}}(x_0)\subset \Omega_{\frac{3}{4}R}$ for $x_0\in\Omega_{\frac{R}{2}}$, we have
$$\int_{\Omega_{\frac{ 1}{16}\sqrt{\eta(x_0')}}(x_0)}|Dw|^2\,dx\leq\int_{{\Omega_{\frac{3}{4}R}}}|Dw|^2\xi^2\,dx\leq C.$$
Substituting this into \eqref{estequw-barw} completes the proof.
\end{proof}

\subsection{Proof of Theorem \ref{boundedgradientw}}\label{subsectionboundedgra}

\begin{proof}[Proof of Theorem \ref{boundedgradientw}]
We now complete the proof by decoupling the estimates. Using the energy estimate from Lemma~\ref{estgradientwbarw}, we compare the gradient of $w$ with that of its vertical average $\overline{w}$. Combining this with estimates for $\overline{w}$ yields a pointwise bound on $Dw(x_0)$ in terms of the global norm $\|Dw\|_{L^{\infty}(\Omega_R)}$. A subsequent iteration over points then establishes the uniform boundedness of $\|Dw\|_{L^{\infty}(\Omega_R)}$.

We first prove that for every $x_0\in\Omega_{R/2}$ satisfying $\eta(x_0')<\gamma$ the following estimate holds:
\begin{equation}\label{estgradientw}
|Dw(x_0)|\leq C_1\left(\eta(x_0')^{\frac{1-\alpha}{2}}+\big(\frac{1}{\gamma}\eta(x_0')\big)^{1/2}\right)\Big(\|Dw\|_{L^{\infty}(\Omega_R)}+1\Big)+C_1, 
\end{equation}
where $C_1>1$ is a universal constant independent of $\varepsilon$, $\gamma$, and $x_0$.

To show this, we apply Lemma~\ref{lemmac1alphaest} together with the triangle inequality and the bounds $|\mathfrak{R}(x)|\leq \frac{C}{\delta(x')}$, and $\|\Psi_i\|^*_{C^{\alpha}(\Omega_{\frac{3}{4}\eta(x_0')}(x_0))}\leq C\gamma$ from~\eqref{estH}, obtaining
\begin{equation*}%\label{equgradientwx0}
\begin{aligned}
|Dw(x_0)|\leq&\,C\left(\frac{1}{\gamma}\|w\|_{L^{\infty}(\Omega_{\frac{3}{4}\eta(x_0')}(x_0))}+\eta(x_0')^{-n/2}\|Dw\|_{L^2(\Omega_{\frac{3}{4}\eta(x_0')}(x_0))}\right)+C\\
\leq&\,\frac{C}{\gamma}\left(\|w-\overline{w}\|_{L^{\infty}(\Omega_{\frac{3}{4}\eta(x_0')}(x_0))}+\|\overline{w}\|_{L^{\infty}(\Omega_{\frac{3}{4}\eta(x_0')}(x_0))}\right)\\
&+C\eta(x_0')^{-n/2}\|D(w-\overline{w})\|_{L^2(\Omega_{\frac{3}{4}\eta(x_0')}(x_0))}+C\|D \overline{w}\|_{L^{\infty}(\Omega_{\frac{3}{4}\eta(x_0')}(x_0))}+C.
\end{aligned}
\end{equation*}
By the mean-value theorem, $\|w-\overline{w}\|_{L^{\infty}(\Omega_{\frac{3}{4}\eta(x_0')}(x_0))}\leq C\eta(x_0')\|Dw\|_{L^{\infty}(\Omega_R)}$. Lemma~\ref{estgradientwbarw} implies
\begin{equation*}
\eta(x_0')^{-n/2}\|D(w-\overline{w})\|_{L^2(\Omega_{\eta(x_0')}(x_0))}\leq C\underset{x\in\Omega_{\frac{1}{16}\sqrt{\eta(x_0')}}(x_0)}{\sup}\left(|D \overline{w}(x)|+\frac{1}{\gamma}\overline{w}(x)\right)+C.
\end{equation*}
From Proposition~\ref{epsilongammato2}, we obtain
\begin{equation*}%\label{equgradientwx1}
\underset{x\in\Omega_{\frac{1}{16}\sqrt{\eta(x_0')}}(x_0)}{\sup}\left(|D \overline{w}(x)|+\frac{1}{\gamma}\overline{w}(x)\right)\leq C \left(\gamma^{\frac{\alpha}{2}}\eta(x_0')^{\frac{1-\alpha}{2}}+\big(\frac{1}{\gamma}\eta(x_0')\big)^{1/2}\right)\Big(\|Dw\|_{L^{\infty}(\Omega_R)}+1\Big).
\end{equation*}
Combining these estimates yields \eqref{estgradientw}.

Let $C_1$ be the large universal constant appearing in \eqref{estgradientw}. Define $\varepsilon_0$ and $R_0$ by
$\varepsilon_0 = \frac{1}{2} \left(\frac{1}{16C_1^2}\right)^{\frac{1}{1-\alpha}}$ and $\quad R_0 = \frac{1}{\sqrt{2}} \left( \frac{1}{4C_1} \right)^{\frac{1}{1-\alpha}}.$
By choosing $C_1$ sufficiently large, we ensure that $\varepsilon_0 < 1/4$ and $R_0 < R/2$. Then, for any $\varepsilon \in (0, \varepsilon_0)$ and any $x_0 \in \Omega_{R_0}$, it follows from the definition $\eta(x') = \varepsilon + |x'|^2$ that
$\eta(x_0')^{\frac{1-\alpha}{2}} \leq \left( \varepsilon_0 + R_0^2 \right)^{\frac{1-\alpha}{2}} \leq \frac{1}{4C_1}.$

Now set $\mu = 16C_1^2$ and proceed by considering two cases:

\textbf{Case 1: $\gamma \in (0, \mu\varepsilon]$.} 
Since $\gamma \leq \mu\varepsilon \leq \mu\eta(x')$ for all $x \in \Omega_{\frac{R}{2}}$, Lemma~\ref{lemmagamma<eta} directly yields \eqref{equboundedgradientw}.

\textbf{Case 2: $\gamma \in (\mu\varepsilon, \gamma_0)$.} 
Taking the supremum in \eqref{estgradientw} over all $x_0 \in \Omega_{\frac{R_0}{2}}$ with $\mu\eta(x_0') < \gamma$, we obtain
$$\underset{x \in \Omega_{R_0/2}, \mu\eta(x') < \gamma}{\sup} |Dw(x)| \leq \frac{1}{2} \|Dw\|_{L^{\infty}(\Omega_R)} + C.$$
Rearranging this inequality gives
$$ \underset{x \in \Omega_{R_0/2}, \mu\eta(x') < \gamma}{\sup} |Dw(x)| \leq \underset{x \in \Omega_{R}, \mu\eta(x') \geq \gamma}{\sup} |Dw(x)| + 2C.$$
By Lemma~\ref{lemmagamma<eta}, $|Dw(x)| \leq C$ whenever $\mu\eta(x') \geq \gamma$, hence
$$\underset{x \in \Omega_{R_0/2}}{\sup} |Dw(x)| \leq C.$$
Moreover, by Theorem~\ref{gradientsmallgamma3}, we have $\sup_{x \in \Omega_R \setminus \Omega_{R_0/2}} |Dw(x)| \leq C.$ Combining these estimates completes the proof of Theorem~\ref{boundedgradientw}.
\end{proof}

\section{Proof of Theorem \ref{upperbound}}\label{sec_5}

We prove Theorem \ref{upperbound} in this section via a decomposition technique adapted from \cite{BLY}. The strategy consists of separating the solution into singular and regular parts, determining the free constants multiplying the singular parts, and combining the resulting estimates to derive optimal gradient bounds. The estimates for the singular part rely on a specially constructed auxiliary function together with the results of Section \ref{sec_4}, while those for the regular parts follow directly from the latter.

Let $u_{j, \gamma, \varepsilon} \in C^2(\overline{\widetilde{\Omega}^{\varepsilon}})$, for $j = 1, 2, 3$, be the solutions to the following boundary value problems:
\begin{equation}\label{equv1}
 \left\{ \begin{aligned}
 \Delta u_{1,\gamma, \varepsilon}&=0 &\mbox{in}&~\tilde{\Omega}^{\varepsilon},\\
 u_{1,\gamma, \varepsilon}+\gamma\partial_{\nu}u_{1,\gamma, \varepsilon}&=1 &\mbox{on}&~\partial D_1^{\varepsilon},\\
 u_{1,\gamma, \varepsilon}+\gamma\partial_{\nu}u_{1,\gamma, \varepsilon}&=0 &\mbox{on}&~\partial D_2^{\varepsilon},\\
 u_{1,\gamma, \varepsilon}&=0 &\mbox{on}&~\partial \Omega,
     \end{aligned} \right.\quad\,\,\,\,\,\,
 ~~\left\{ \begin{aligned}
 \Delta u_{2,\gamma, \varepsilon}&=0 &\mbox{in}&~\tilde{\Omega}^{\varepsilon},\\
 u_{2,\gamma, \varepsilon}+\gamma\partial_{\nu}u_{2,\gamma, \varepsilon}&=0 &\mbox{on}&~\partial D_1^{\varepsilon},\\
 u_{2,\gamma, \varepsilon}+\gamma\partial_{\nu}u_{2,\gamma, \varepsilon}&=1 &\mbox{on}&~\partial D_2^{\varepsilon},\\
 u_{2,\gamma, \varepsilon}&=0 &\mbox{on}&~\partial \Omega,
 \end{aligned} \right.
\end{equation}
 and
\begin{equation}\label{equv3}
 \left\{ \begin{aligned}
 \Delta u_{3,\gamma, \varepsilon}&=0 &\mbox{in}&~\tilde{\Omega}^{\varepsilon},\\
 u_{3,\gamma, \varepsilon}+\gamma\partial_{\nu}u_{3,\gamma, \varepsilon}&=0 &\mbox{on}&~\partial D_1^{\varepsilon}\cup\partial D_2^{\varepsilon},\\
 u_{3,\gamma, \varepsilon}&=\phi &\mbox{on}&~\partial \Omega.
 \end{aligned} \right.
\end{equation}
When no confusion arises, we abbreviate $u_{j,\gamma, \varepsilon}$ as $u_j$ for $j = 1, 2, 3$, and drop the superscript $\varepsilon$ by writing $\widetilde{\Omega} := \widetilde{\Omega}^{\varepsilon}$, $D_1 := D_1^{\varepsilon}$, and $D_2 := D_2^{\varepsilon}$.
 
 By linearity, the solution may be expressed as
 \begin{equation}\label{decomposition}
 u=K_{1}u_{1}+K_{2}u_{2}+u_{3}\quad\text{in }\tilde{\Omega},
 \end{equation}
 where $K_i = K_i( \gamma , \varepsilon)$ ($i=1,2$) are free constants uniquely determined by the flux conditions in \eqref{lcctype}. This decomposition isolates specific physical effects: $u_{1}$ and $u_{2}$ are capacitary-type functions associated with each inclusion, whose gradients $\nabla u_{i}$ become singular in the narrow region between $\partial{D}_{1}$ and $\partial{D}_{2}$, while $|\nabla u_{3}|$ remains a regular term.

From \eqref{decomposition}, the gradient decomposes as
\begin{equation}\label{decomposition2}
D u=(K_1-K_2)Du_1+ K_2D(u_1+u_2)+Du_3,\quad\mbox{in}~ \tilde{\Omega}.
\end{equation}
The key step is to show that the singular behavior is captured entirely by $(K_1-K_2)D u_1$, while $K_2D(u_1+u_2)$ and $D u_3$ remain uniformly bounded. These estimates are established in Propositions~\ref{prop_gradient} and~\ref{prop_constant}, whose proofs are given in Subsections~\ref{sec_52} and~\ref{sec_53}, respectively.

Without loss of generality, we assume $\|\varphi\|_{C^{2,\alpha}(\partial \Omega)} \leq 1$; the general case follows by scaling $u/\|\varphi\|_{C^{2,\alpha}(\partial \Omega)}$. 

\begin{prop}\label{prop_gradient}
Let $u_1, u_2 \in H^1(\widetilde{\Omega})$ be the solutions to \eqref{equv1}, and let $u_3 \in H^1(\widetilde{\Omega})$ be the solution to \eqref{equv3} for $n \geq 2$, under assumptions \eqref{fg0}--\eqref{fg} and the normalization $\|\varphi\|_{C^{2,\alpha}(\partial\Omega)} \leq 1$. Let $\gamma_0$ be the constant specified by \eqref{gamma00}. Then, for all $\varepsilon \in (0, 1/4)$ and $\gamma \in (0, \gamma_0)$, the following estimates hold:
\begin{align}
&\|D u_j\|_{L^{\infty}(\tilde{\Omega}\backslash\Omega_R)}\leq C,\quad j=1,2,3,\label{gradient_outside}\\
&\|D(u_1+u_2)\|_{L^{\infty}(\tilde{\Omega})}\leq C,\quad\|D u_3\|_{L^{\infty}(\tilde{\Omega})}\leq C,\label{gradient_u1+u2}
\end{align}
and
\begin{align}
\frac{1}{C}\frac{1}{\gamma+\eta(x')}-C\leq |D u_j(x)|\leq \frac{C}{\gamma+\eta(x')}+C \quad\mbox{for}~x\in\,\Omega_R,~ j=1,2,\label{gradient_u1}
\end{align}
where the constant $C$ depends only on $n$, $R$, $\alpha$ and $\kappa$, a lower bound of $\gamma_0-\gamma$ and upper bounds of $\|f_{1}\|_{C^{2,\alpha}}$ and $\|f_{2}\|_{C^{2,\alpha}}$.
\end{prop}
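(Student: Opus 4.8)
The plan is to reduce each assertion to Theorem~\ref{boundedgradientw} in the narrow region, and to Theorem~\ref{gradientsmallgamma3} together with classical interior and boundary Schauder estimates away from it. First, since $\nu$ is the \emph{outward} normal of $\widetilde\Omega$ along $\partial D_j$ and $\gamma>0$, the condition $u+\gamma\partial_\nu u=K_j$ has the good sign, so the weak maximum principle and Hopf's lemma give $0\le u_1,u_2\le1$ and $\|u_3\|_{L^\infty(\widetilde\Omega)}\le\|\varphi\|_{L^\infty(\partial\Omega)}\le1$; in particular $\|u_j\|_{L^\infty(\widetilde\Omega)}\le C$. Estimate \eqref{gradient_outside} then follows by splitting $\widetilde\Omega\setminus\Omega_{R/2}$ into the region bounded away from $\partial D_1\cup\partial D_2\cup\partial\Omega$ (interior estimates), a neighborhood of $\partial\Omega$ (boundary Schauder for the Dirichlet data $\varphi\in C^2(\partial\Omega)$), and a neighborhood of $\partial D_1\cup\partial D_2$ staying at positive distance from the touching point; in the latter we flatten $\partial D_j$ at unit scale and apply the $\hat\gamma$-uniform bound of Theorem~\ref{gradientsmallgamma3} (with $\hat\gamma=\gamma$, $h=\sqrt{1+|\nabla f|^2}\ge1$, and $h\hat\gamma>0$), getting constants independent of $\varepsilon$ and $\gamma$; the same reasoning covers $\Omega_R\setminus\Omega_{R/2}$, where $\delta(x')\sim R^2$ is bounded below.

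For \eqref{gradient_u1+u2} observe that $w:=u_1+u_2-1$ is harmonic in $\widetilde\Omega$, equals $-1$ on $\partial\Omega$, and satisfies $w+\gamma\partial_\nu w=0$ on $\partial D_1\cup\partial D_2$ (add the two boundary conditions in \eqref{equv1} and use $\partial_\nu 1=0$); likewise $u_3$ already solves \eqref{equw} in $\Omega_{2R}$ with $\mathfrak R\equiv0$ and $\Psi_1\equiv\Psi_2\equiv0$. Since $\|w\|_{L^\infty(\Omega_{2R})}\le3$ and $\|u_3\|_{L^\infty(\Omega_{2R})}\le1$, and the vanishing data trivially satisfy \eqref{estH}, Theorem~\ref{boundedgradientw} applied to $w/3$ and to $u_3$ yields $\|D(u_1+u_2)\|_{L^\infty(\Omega_{R/2})}\le C$ and $\|Du_3\|_{L^\infty(\Omega_{R/2})}\le C$; combining with \eqref{gradient_outside} gives \eqref{gradient_u1+u2}.

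The core is \eqref{gradient_u1} for $j=1$, which we obtain by subtracting the explicit barrier adapted to the Robin condition,
$$\bar u_1(x):=\frac{(x_n-f_2(x'))+\gamma}{\delta(x')+2\gamma},\qquad x\in\Omega_{2R}.$$
This is affine in $x_n$, so $\partial_n\bar u_1=(\delta(x')+2\gamma)^{-1}$ and $\partial_n^2\bar u_1=0$ identically, and on each vertical fibre $\bar u_1$ is exactly the solution of the one-dimensional two-point Robin problem with value $0$ at $x_n=f_2(x')$ and $1$ at $x_n=\varepsilon+f_1(x')$. Using $|Df_i(x')|\lesssim\eta(x')^{1/2}$, $|D^2f_i|\lesssim1$, $\eta\sim\delta$, and $\delta+2\gamma\sim\gamma+\eta$, a direct computation gives $0\le\bar u_1\le1$, $|D\bar u_1(x)|\le C(\gamma+\eta(x'))^{-1}$, $|\Delta\bar u_1(x)|\le C(\gamma+\delta(x'))^{-1}$, and, on $\Gamma_{1,2R}$ and $\Gamma_{2,2R}$,
$$\bar u_1+\gamma\partial_\nu\bar u_1=1+\Psi_1,\qquad \bar u_1+\gamma\partial_\nu\bar u_1=\Psi_2,$$
where, since the leading terms cancel identically, the errors $\Psi_i$ come only from the tilt of $\nu$ away from $\pm e_n$ (of size $O(\eta^{1/2})$) and from the quadratic Taylor remainders of $f_i$; tracking the $\eta(x_0')$-weights in the norm $\|\cdot\|^*_{C^{1,\alpha}(\Omega_{\frac34\eta(x_0')}(x_0))}$ one checks $\|\Psi_i\|^*_{C^{1,\alpha}(\Omega_{\frac34\eta(x_0')}(x_0))}\le C\,\gamma\delta(x_0')/(\gamma+\delta(x_0'))$ for $x_0\in\Omega_R$.

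Hence $w:=u_1-\bar u_1$ solves \eqref{equw} in $\Omega_{2R}$ with $\Delta\bar u_1$, $-\Psi_1$, $-\Psi_2$ in the roles of $\mathfrak R,\Psi_1,\Psi_2$, which satisfy \eqref{estH} up to a fixed constant, and $\|w\|_{L^\infty(\Omega_{2R})}\le C$; dividing by that constant and invoking Theorem~\ref{boundedgradientw} gives $\|Dw\|_{L^\infty(\Omega_{R/2})}\le C$. Then, for $x\in\Omega_{R/2}$,
$$|Du_1(x)|\le|D\bar u_1(x)|+|Dw(x)|\le\frac{C}{\gamma+\eta(x')}+C,$$
$$|Du_1(x)|\ge|\partial_n\bar u_1(x)|-|\partial_nw(x)|\ge\frac{1}{\delta(x')+2\gamma}-C\ge\frac{1}{C(\gamma+\eta(x'))}-C,$$
and on $\Omega_R\setminus\Omega_{R/2}$ the weight $(\gamma+\eta(x'))^{-1}$ is bounded above and below while $|Du_1|\le C$ by \eqref{gradient_outside}, so after enlarging $C$ both bounds hold on all of $\Omega_R$. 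For $j=2$ there is no need to repeat the construction: by \eqref{gradient_u1+u2}, $|Du_2(x)-Du_1(x)|\le|D(u_1+u_2)(x)|\le C$, so the two-sided bound transfers to $u_2$ after adjusting $C$. The main obstacle is the verification that the commutator errors $\Delta\bar u_1$ and $\Psi_i$ meet the sharp growth and weighted $C^{1,\alpha}$ conditions in \eqref{estH}; this is precisely what forces the choice of $\bar u_1$ (affine in $x_n$, with effective gap $\delta+2\gamma$) and demands careful bookkeeping of the $\eta(x_0')$-weights, but it is a finite computation rather than a conceptual difficulty.
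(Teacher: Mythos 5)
Your proposal is correct and follows essentially the same route as the paper: the same maximum-principle $L^\infty$ bounds, the same reduction of \eqref{gradient_outside} and \eqref{gradient_u1+u2} to Theorem \ref{gradientsmallgamma3} and Theorem \ref{boundedgradientw} (applied to $u_1+u_2-1$ and $u_3$ with vanishing data), and the identical barrier $\tilde w_1=\frac{x_n-f_2(x')+\gamma}{\delta(x')+2\gamma}$ with the same verification of \eqref{estH} for $u_1-\tilde w_1$. Two harmless remarks: your transfer of \eqref{gradient_u1} to $u_2$ via the boundedness of $D(u_1+u_2)$ is a slightly slicker alternative to the paper's repetition with $1-u_2$, but the displayed inequality should read $\bigl||Du_2|-|Du_1|\bigr|\le|D(u_1+u_2)|$ rather than $|Du_2-Du_1|\le|D(u_1+u_2)|$; and since Theorem \ref{boundedgradientw} is stated only for $\varepsilon\in(0,\varepsilon_0)$, the regime $\varepsilon\in[\varepsilon_0,1/4)$ needs the one-line observation (as in the paper) that there the gap is bounded below and Theorem \ref{gradientsmallgamma3} alone suffices.
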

Thus, the decomposition \eqref{decomposition2} isolates the singular contributions of $Du_{1}$ and $Du_{2}$ from the regular terms $D(u_{1}+u_{2})$ and $D u_{3}$.

The constants $K_{1}$ and $K_{2}$ are determined by the zero-flux conditions
$$\int_{\partial D_j} \partial_\nu u \, dS = 0\quad j=1,2,$$
which lead to the $2 \times 2$ linear system:
\begin{equation}\label{equfreconstant0}
\left\{ \begin{aligned}
a_{11}K_1+a_{12} K_2+b_1=0,\\
a_{21}K_1+a_{22} K_2+b_2=0,
\end{aligned} \right.
\end{equation}
where 
\begin{equation}\label{definitionaijbi0}
a_{ij}=\int_{\partial{D_i}} \frac{\partial u_j}{\partial \nu} dS,\quad \quad b_i=\int_{\partial{D_i}}\frac{\partial u_3}{\partial \nu}dS,\quad i,j=1,2.
\end{equation}

Lemma~\ref{freeconstant1} below shows that the diagonal entries $a_{ii}$ are positive, the off-diagonals $a_{ij} (i\neq j)$ are negative, and $a_{1j}+a_{2j}$ is bounded away from zero, ensuring the system is well-posed. The magnitude of $|K_{1}-K_{2}|$ is controlled by the size of $a_{ii}$. Proposition \ref{aii} provides upper and lower bounds for $a_{ii}$ using the gradient estimates from Proposition~\ref{prop_gradient}. 

The following estimates for $|K_{1} - K_{2}|$ partially cancel the singularity of $Du_{1}$, yielding sharp bounds for $Du$.

\begin{prop}\label{prop_constant}
Under the hypotheses of Proposition~\ref{prop_gradient}, we have $|K_1| + |K_2| \leq C$. Furthermore, for $n \geq 2$, $\varepsilon \in (0, 1/4)$, and $\gamma \in (0, \gamma_0)$, the following estimate holds:
\begin{equation*}
|K_1-K_2|\leq\,C\rho_{n}(\varepsilon,\gamma),~\mbox{where}~\rho_{n}(\varepsilon,\gamma):=
\left\{ \begin{aligned}
&\sqrt{\varepsilon+\gamma} &n=2,\\
&\dfrac{1}{|\ln(\varepsilon+\gamma)|} &n=3,\\
&1&n\geq 4,
\end{aligned} \right.
\end{equation*}
where the constant $C$ depends only on $n$, $R$, $\alpha $, $\kappa$, a lower bound of $\gamma_0-\gamma$ and upper bounds of $\|f_{1}\|_{C^{2,\alpha}}$ and $\|f_{2}\|_{C^{2,\alpha}}$.
\end{prop}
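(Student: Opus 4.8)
The plan is to derive Proposition~\ref{prop_constant} purely from the $2\times 2$ linear system \eqref{equfreconstant0}, once the sizes of its coefficients $a_{ij}$ and of the data $b_i$ from \eqref{definitionaijbi0} have been pinned down using the gradient bounds of Proposition~\ref{prop_gradient}, the sign and non-degeneracy information of Lemma~\ref{freeconstant1}, and the two-sided bound for the diagonal entries $a_{ii}$ of Proposition~\ref{aii}.

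First I would record four facts about the entries. \emph{Symmetry:} testing the $u_1$-equation in \eqref{equv1} against $u_2$ (and conversely) and integrating by parts, the interface contributions assemble into the symmetric form $\int_{\widetilde\Omega}\nabla u_1\cdot\nabla u_2+\gamma\sum_{i=1}^2\int_{\partial D_i}\partial_\nu u_1\,\partial_\nu u_2$, so $a_{12}=a_{21}$. \emph{Column sums:} integrating $\Delta u_j=0$ over $\widetilde\Omega$ gives $a_{1j}+a_{2j}=-\int_{\partial\Omega}(\partial u_j/\partial n)\,dS$; since $\partial\Omega\subset\widetilde\Omega\setminus\Omega_R$, \eqref{gradient_outside} bounds this by $C$, and Lemma~\ref{freeconstant1} bounds it below by some $c_0>0$, so $c_0\le a_{1j}+a_{2j}\le C$ for $j=1,2$. \emph{Data:} $\|Du_3\|_{L^\infty(\widetilde\Omega)}\le C$ by \eqref{gradient_u1+u2}, hence $|b_i|\le C$. \emph{Diagonal:} Proposition~\ref{aii} provides matching bounds $C^{-1}\rho_n(\varepsilon,\gamma)^{-1}\le a_{ii}\le C\rho_n(\varepsilon,\gamma)^{-1}$; since $a_{12}=a_{21}<0$ and $a_{ii}+a_{12}\ge c_0$, also $0<-a_{12}\le a_{ii}$, so every entry satisfies $|a_{ij}|\le C\rho_n(\varepsilon,\gamma)^{-1}$.

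Next I would bound the determinant $\Delta:=a_{11}a_{22}-a_{12}a_{21}=a_{11}a_{22}-a_{12}^2$ from below. Writing $N:=-a_{12}>0$ and $a_{ii}=N+r_i$, the facts above force $r_i=a_{1i}+a_{2i}\in[c_0,C]$, so $\Delta=N(r_1+r_2)+r_1r_2\ge c_0^2$; and when $\varepsilon+\gamma$ is small, $\rho_n(\varepsilon,\gamma)^{-1}$ is large, whence $N=a_{11}-r_1\ge\tfrac12 a_{11}\ge C^{-1}\rho_n(\varepsilon,\gamma)^{-1}$ and $\Delta\ge 2c_0N\ge C^{-1}\rho_n(\varepsilon,\gamma)^{-1}$. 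As $\rho_n$ is itself bounded, a short case split then yields $\Delta\ge c\,\rho_n(\varepsilon,\gamma)^{-1}$ uniformly. By Cramer's rule $K_1=\Delta^{-1}(-b_1a_{22}+b_2a_{12})$ and $K_2=\Delta^{-1}(-b_2a_{11}+b_1a_{21})$; with $|b_i|\le C$, $|a_{ij}|\le C\rho_n(\varepsilon,\gamma)^{-1}$ and $\Delta\ge c\,\rho_n(\varepsilon,\gamma)^{-1}$ this already gives $|K_1|+|K_2|\le C$. For the difference, using $a_{12}=a_{21}$,
\[
K_1-K_2=\frac{-b_1\,(a_{22}+a_{12})+b_2\,(a_{11}+a_{21})}{\Delta},
\]
and since $a_{22}+a_{12}$ and $a_{11}+a_{21}$ are precisely the column sums, which are $\le C$, while $|b_i|\le C$ and $\Delta\ge c\,\rho_n(\varepsilon,\gamma)^{-1}$, we conclude $|K_1-K_2|\le C\rho_n(\varepsilon,\gamma)$.

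The genuine difficulty lies not in this proposition but in the inputs I am assuming: the sharp two-sided bound $a_{ii}\sim\rho_n(\varepsilon,\gamma)^{-1}$ (Proposition~\ref{aii}), obtained by integrating $|\nabla u_i|^2$ against the optimal weight $(\gamma+\eta(x'))^{-1}$ over the narrow region and therefore resting on both inequalities in \eqref{gradient_u1}, together with the global boundedness of $Du_3$ and $D(u_1+u_2)$ and the sign structure of Lemma~\ref{freeconstant1}. Within the argument above the only mildly delicate point is the lower bound $\Delta\ge c\,\rho_n(\varepsilon,\gamma)^{-1}$: one has to treat separately the regime where $\varepsilon+\gamma$ is small—where $a_{ii}$ is large and governs $\Delta$—and the regime where $\varepsilon+\gamma$ is bounded below, where $\Delta\ge c_0^2$ already does the job because $\rho_n$ is then comparable to a constant.
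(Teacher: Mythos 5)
Your proposal is correct and follows essentially the same route as the paper: Cramer's rule for the system \eqref{equfreconstant0}, boundedness of $b_i$ and of the column sums from Lemma~\ref{freeconstant1} and Proposition~\ref{prop_gradient}, a lower bound for the determinant of order $a_{11}\gtrsim \rho_n(\varepsilon,\gamma)^{-1}$ from Proposition~\ref{aii}, and a bounded numerator for $K_1-K_2$. The only cosmetic difference is that you first prove the symmetry $a_{12}=a_{21}$ via Green's identity and organize the determinant and numerator bounds around it, whereas the paper avoids this by using $a_{22}>-a_{12}>0$ together with the factorization $a_{11}a_{22}-a_{12}a_{21}=(a_{22}+a_{12})\bigl(a_{11}-\tfrac{a_{11}+a_{21}}{a_{22}+a_{12}}a_{12}\bigr)$ and bounds the numerator through $\|D(u_1+u_2)\|_{L^{\infty}(\tilde\Omega)}\leq C$; both versions reduce to $|K_1-K_2|\leq C/a_{11}$ and then invoke Proposition~\ref{aii}.
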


\subsection{Proof of Theorem \ref{upperbound}}

\begin{proof}[Proof of Theorem \ref{upperbound}]
Substitute the gradient estimates from Proposition~\ref{prop_gradient} and the constant estimates from Proposition~\ref{prop_constant} into the decomposition \eqref{decomposition2}:
\begin{equation*}
|D u|\leq |K_1-K_2|\,|D u_1|+| K_2|\,|D(u_1+u_2)|+|D u_3|.
\end{equation*}

In the narrow region $\tilde{\Omega}$, the term $|K_1-K_2|\,|D u_1(x)|$ is bounded by 
$$\frac{C\rho_{n}(\varepsilon,\gamma)}{\gamma+\varepsilon+|x'|^{2}},$$ yielding the dimension-dependent blow-up rate.  The terms $| K_2|\,|D(u_1+u_2)|$ and $|D u_3|$ are uniformly bounded. Outside the narrow region, all terms are uniformly bounded. Combining these cases completes the proof of Theorem \ref{upperbound}.
\end{proof}

\subsection{Proof of Proposition \ref{prop_gradient}}\label{sec_52}

This subsection is devoted to establishing pointwise gradient estimates for the functions $u_j~ (j=1,2,3)$, distinguishing between the narrow region $\Omega_R$ and the bulk $\tilde{\Omega}\backslash\Omega_R$. We begin by proving uniform boundedness of the solutions by the maximum principle, taking careful account of the Robin boundary conditions.

\begin{lemma}\label{Linftyui}
Under the assumption of Proposition \ref{prop_gradient}, we have
$$\|u_j\|_{L^{\infty}(\tilde{\Omega})}\leq 1,\quad\mbox{for}~j=1,2,3.$$
\end{lemma}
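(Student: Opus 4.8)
The plan is to prove each bound $\|u_j\|_{L^\infty(\widetilde\Omega)}\le 1$ by the maximum principle, with the only subtlety being the Robin condition $u_j+\gamma\partial_\nu u_j = g$ on the inclusion boundaries, where $g$ is either $0$ or $1$ and $\nu$ is the \emph{inward} normal to $D_i$ (hence the \emph{outward} normal to $\widetilde\Omega$). First I would record the sign convention: on $\partial D_i$, $\nu$ points into $D_i$, so for the harmonic function $u_j$ on $\widetilde\Omega$, $\partial_\nu u_j$ is the outward normal derivative of $u_j$ relative to the domain $\widetilde\Omega$. The key elementary fact is that a nonconstant harmonic function attains its max/min only on $\partial\widetilde\Omega$, and at an interior maximum point of the \emph{closure} lying on a smooth piece of $\partial\widetilde\Omega$, the outward normal derivative is strictly positive (Hopf's lemma), while at a minimum point on the boundary it is strictly negative.

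For $u_1$: suppose $M:=\max_{\overline{\widetilde\Omega}} u_1 > 1$. Since $u_1 = 0 < 1 < M$ on $\partial\Omega$, the maximum is attained at some $x^*\in\partial D_1^\varepsilon\cup\partial D_2^\varepsilon$. By Hopf's lemma, $\partial_\nu u_1(x^*) > 0$ (outward normal derivative at a boundary max), hence $u_1(x^*)+\gamma\partial_\nu u_1(x^*) > u_1(x^*) = M > 1$. But the boundary condition forces this quantity to equal $1$ on $\partial D_1^\varepsilon$ and $0$ on $\partial D_2^\varepsilon$ — contradiction. Thus $M\le 1$. Similarly, suppose $m:=\min_{\overline{\widetilde\Omega}} u_1 < 0$; then $m$ is attained on $\partial D_1^\varepsilon\cup\partial D_2^\varepsilon$, Hopf gives $\partial_\nu u_1 < 0$ there, so $u_1+\gamma\partial_\nu u_1 < u_1 = m < 0$, contradicting that this equals $0$ or $1$. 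Hence $0\le u_1\le 1$, i.e. $\|u_1\|_{L^\infty(\widetilde\Omega)}\le 1$. The argument for $u_2$ is identical with the roles of $\partial D_1^\varepsilon$ and $\partial D_2^\varepsilon$ swapped. For $u_3$, the boundary data on $\partial\Omega$ is $\varphi$ with $\|\varphi\|_{C^{2,\alpha}(\partial\Omega)}\le 1$, so $|\varphi|\le 1$ on $\partial\Omega$; if the max over $\overline{\widetilde\Omega}$ exceeds $1$ it must be attained on $\partial D_1^\varepsilon\cup\partial D_2^\varepsilon$ (since on $\partial\Omega$ we have $u_3=\varphi\le 1$), and then Hopf gives $u_3+\gamma\partial_\nu u_3 > u_3 > 1 > 0$, contradicting the homogeneous Robin condition; the minimum is handled symmetrically.

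The one point requiring care — and the main (minor) obstacle — is justifying Hopf's lemma at the boundary extremum: this needs the interior ball condition at $x^*$ and enough regularity of $u_j$ up to the boundary, both of which are available since $\partial D_i$ is $C^{2,\alpha}$ and $u_j\in C^2(\overline{\widetilde\Omega^\varepsilon})$ by hypothesis, and one must also handle the degenerate possibility that $u_j$ is constant on a connected component of $\widetilde\Omega$ (then the constant equals its boundary value, which is $0$, $1$, or a value of $\varphi$, all in $[-1,1]$, so the bound holds trivially). A clean way to avoid separate casework on constancy is to note that if $u_j$ is nonconstant the strong maximum principle places the extremum on $\partial\widetilde\Omega$ and Hopf applies; if it is constant the bound is immediate from the boundary value. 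Assembling these observations completes the proof.
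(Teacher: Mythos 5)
Your proof is correct. For $u_1$ and $u_2$ it is essentially the paper's own argument: locate a hypothetical extremum violating $0\le u_j\le 1$ on $\partial D_1\cup\partial D_2$ via the maximum principle, use the sign of the outward (relative to $\widetilde\Omega$) normal derivative there, and contradict the Robin data. Where you diverge is $u_3$: you apply the same boundary-point argument directly to $u_3$, using $|\varphi|\le 1$ on $\partial\Omega$, whereas the paper first proves $0\le \tilde u_3\le 1$ for the auxiliary solution $\tilde u_3$ with boundary data $1$ on $\partial\Omega$ and then compares, applying the maximum principle to $\hat u_3=u_3-\|\varphi\|_{C^{2,\alpha}}\tilde u_3$. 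Both routes are valid; yours is a bit more economical (the paper still needs the direct argument for $\tilde u_3$ anyway), and in fact it yields the slightly sharper bound $\|u_3\|_{L^\infty}\le\|\varphi\|_{L^\infty(\partial\Omega)}$, while the paper's comparison packages the estimate as $|u_3|\le\|\varphi\|_{C^{2,\alpha}(\partial\Omega)}\tilde u_3$, which is convenient without the normalization. One small simplification available to you: strict Hopf positivity is not needed — at a boundary extremum the one-sided normal derivative already has the right sign ($\partial_\nu u\ge 0$ at a max, $\le 0$ at a min), which gives the same contradiction and makes your separate treatment of the constant case unnecessary (and $\widetilde\Omega$ is connected here in any event).
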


\begin{proof}
We first show $\|u_1\|_{L^{\infty}(\tilde{\Omega})}\leq 1$; the case for $u_{2}$ follows by symmetry.

Suppose, for contradiction, that
$$u_1(x_0)=\underset{x\in \bar{\tilde{\Omega}}}{\min}\, u_1(x)<0\quad~\mbox{for~ some}~ x_0\in \bar{\tilde{\Omega}}.$$ By the strong maximum principle, $x_0\in \partial D_1\cup\partial D_2$. At this point, we would have $ u_1(x_0)+\gamma\frac{\partial u_1}{\partial \nu}(x_0)<0$, which contradicts the boundary condition $ u_1+\gamma\frac{\partial u_1}{\partial \nu}=\delta_{1j}$ on $\partial D_j$. Hence, $\underset{x\in \bar{\tilde{\Omega}}}{\min}\, u_1(x)\geq 0$. A similar argument shows that $\underset{x\in \bar{\tilde{\Omega}}}{\max}\, u_1(x)\leq 1$.

To bound $u_3$, let $\tilde{u}_3\in H^1(\tilde{\Omega})$ be the solution to 
\begin{equation*}
\left\{ \begin{aligned}
\Delta \tilde{u}_3&=0 &\mbox{in}& ~\tilde{\Omega}\\
\tilde{u}_3+\gamma\partial_{\nu}\tilde{u}_3&=0&\mbox{on}&~\partial D_1\cup \partial D_2,\\
\tilde{u}_3&=1 &\mbox{on}&~\partial \Omega.
\end{aligned} \right.
\end{equation*}
The same reasoning gives $0\leq \tilde{u}_3(x)\leq 1$. Define
$$\hat{u}_3=u_3-\|\varphi\|_{C^{2,\alpha}(\partial{\Omega})}\tilde{u}_3.$$ Then $\hat{u}_3$ satisfies
\begin{equation*}
\left\{ \begin{aligned}
\Delta \hat{u}_3&=0 &\mbox{in}&~\tilde{\Omega},\\
\hat{u}_3+\gamma\partial_{\nu}\tilde{u}_3&=0 &\mbox{on}&~\partial D_1\cup \partial D_2,\\
\hat{u}_3&\leq0&\mbox{on}&~\partial \Omega.
\end{aligned} \right.
\end{equation*}
The maximum principle implies $\hat{u}_3(x)\leq 0$, so $u_3(x)\leq \|\varphi\|_{C^{2,\alpha}(\partial{\Omega})}\tilde{u}_3(x)$ for $x\in \bar{\tilde{\Omega}}$. Replacing $\hat{u}_3$ by $\hat{u}_3=-u_3-\|\varphi\|_{C^{2,\alpha}(\partial{\Omega})}\tilde{u}_3$ yields the lower bound. Thus,
$$\|u_3\|_{L^{\infty}(\tilde\Omega)}\leq \|\varphi\|_{C^{2,\alpha}(\partial{\Omega})}\leq 1.$$
The lemma is proved.
\end{proof}

\begin{proof}[Proof of Proposition \ref{prop_gradient}]

Let $\varepsilon_0$ be as in Theorem \ref{boundedgradientw}. 

For $\varepsilon \in [\varepsilon_0, 1/4)$, the estimates \eqref{gradient_outside}--\eqref{gradient_u1} follow directly from Theorem \ref{gradientsmallgamma3} and Lemma \ref{Linftyui}. 

We now focus on the essential case $\varepsilon \in (0, \varepsilon_0)$. Here, the exterior estimate \eqref{gradient_outside} again follows from Theorem \ref{gradientsmallgamma3} and Lemma \ref{Linftyui}.  To establish the gradient estimates in the narrow region $\Omega_{R}$, we apply Lemma \ref{Linftyui} and Theorem \ref{boundedgradientw} with $\mathfrak{R}=0$, $\Psi_1=\Psi_2=0$ to $1 - u_1 - u_2$ and $u_3$, which yields \eqref{gradient_u1+u2}.

 The core of the proof is to establish estimate \eqref{gradient_u1} in $\Omega_R$ for small $\varepsilon \in (0, \varepsilon_0)$. We present the details for $u_1$ only, since the case of $1 - u_2$ is identical. Define
\begin{equation}\label{def_tildew}
\tilde{w}_1=\frac{x_n-f_{2}(x')+\gamma}{\delta(x')+2\gamma}.
\end{equation}
To prove~\eqref{gradient_u1}, it suffices to show
\begin{equation}\label{u1minustildew1}
\|D(u_1-\tilde{w}_{1})\|_{L^{\infty}(\Omega_{R})}\leq C.
\end{equation}

Let $w_1:=u_1-\tilde{w}_1$. We verify that $w_1$satisfies equation~\eqref{equw} with right-hand side and boundary data satisfying the assumption~\eqref{estH}, so that Theorem~\ref{boundedgradientw} applies. 

We begin by some basic computation. Under assumption~\eqref{fg}, the following estimates hold for $i = 1, 2$: 
\begin{equation}\label{estfi}
\begin{aligned}
&\, |f_i(x')|\leq C|\eta(x')|,\quad |Df_i(x')|\leq C\eta(x')^{1/2},\quad |D^2f_i(x')|\leq C,\\
&\, [f_i]_{C^{\alpha}({\Omega_{\eta(x_0')}(x_0)})}\leq C |Df_i|_{L^{\infty}({\Omega_{\eta(x_0')}(x_0)})} \eta(x_0')^{1-\alpha}\leq C\eta(x_0')^{\frac{3}{2}-\alpha },\\
&\, [Df_i]_{C^{\alpha}({\Omega_{\eta(x_0')}(x_0)})}\leq C\eta(x_0')^{1-\alpha }.
\end{aligned}
\end{equation}
These imply, for $i=1,2$,
\begin{equation}\label{estsqrtfi}
\begin{aligned}
&\,|(1+|Df_i(x')|^2)^{-1/2}-1|\leq C\eta(x'),\quad 1\leq (1+|Df_i(x')|^2)^{-1/2}\leq C,\\ 
&\, |D(1+|Df_i(x')|^2)^{-1/2}|\leq C\eta(x')^{1/2}, \quad [(1+|Df_i(x')|^2)^{-1/2}]_{C^{\alpha}({\Omega_{\eta(x_0')}(x_0)})}\leq \eta(x')^{\frac{3}{2}-\alpha},\\
&\, [D(1+|Df_i(x')|^2)^{-1/2}]_{C^{\alpha}({\Omega_{\eta(x_0')}(x_0)})}\leq C.
\end{aligned}
\end{equation}

We now verify that $w_1$ satisfies \eqref{equw} with right-hand side and boundary data satisfying the assumptions \eqref{estH}. By virtue of \eqref{estfi}, a direct computation yields, for $1 \leq i \leq n-1$,
\begin{equation}\label{partialiwww}
\begin{aligned}
&\partial_{i} \tilde{w}_1 =\frac{-\partial_{i}f_{2}(\delta(x')+2\gamma)-(x_n-f_{2}+\gamma)\partial_{i}(f_1-f_2)}{(\delta(x')+2\gamma)^2},\\
&\partial_{n} \tilde{w}_1 =\frac{1}{\delta(x')+2\gamma},\quad\partial_{nn} \tilde{w}_1 =0;\\
&|D_{x'}\tilde{w}_{1}(x)|\leq \frac{C\delta(x')^{1/2}}{\gamma+\delta(x')},\quad |\partial_{n}\tilde{w}_{1}(x)|\leq \frac{C}{\gamma+\delta(x')},\\
&|D_{x'} ^2\tilde{w}_{1}(x)|\leq \frac{C}{\gamma+\delta(x')},\quad |D ^2\tilde{w}_{1}(x)|\leq \frac{C\delta(x')^{1/2}}{(\gamma+\delta(x'))^2},\\
& [D\tilde{w}_{1}(x)]_{C^{\alpha}({\Omega_{\eta(x_0')}(x_0)})}\leq \frac{C\delta(x')^{\frac{3}{2}-\alpha}}{(\gamma+\delta(x'))^2},\quad [D^{2}\tilde{w}_{1}(x)]_{C^{\alpha}({\Omega_{\eta(x_0')}(x_0)})}\leq \frac{C\delta(x_0')^{1-\alpha}}{(\delta(x_0')+\gamma)^2}.
\end{aligned}
\end{equation}
Since $\Delta u_1=0$ in $\Omega_{R}$, by \eqref{partialiwww} we have 
\begin{equation}\label{Deltaw1}
|\Delta w_1(x)|\leq\sum_{i=1}^{n-1}|\partial_{ii}\tilde{w}_1|
\leq \frac{C}{\delta(x')+\gamma}.
\end{equation}
Define the functions
\begin{equation*}
\begin{aligned}
&\Psi_{1}(x):=-\gamma\Big(\big({(1+|D_{x'}f_{1}|^2)}^{-1/2}-1\big)\partial_n\tilde{w}_1-{(1+|D_{x'}f_{1}|^2)}^{-1/2}(D_{x'}\tilde{w}_1\cdot D_{x'}f_{1})\Big), \\
&\Psi_{2}(x):=\gamma\Big(\big( {(1+|D_{x'}f_{2}|^2)}^{-1/2}-1\big)\partial_n\tilde{w}_1- {(1+|D_{x'}f_{2}|^2)}^{-1/2}(D_{x'}f_{2}\cdot D_{x'}\tilde{w}_1)\Big).
\end{aligned}
 \end{equation*}
On $\Gamma_{1,2R}$, a direct calculation gives 
\begin{equation*}%\label{w1robin}
\begin{aligned}
&\,w_1+\gamma\partial_{\nu}w_1
=\,(u_1+\gamma\partial_{\nu}u_1)-(\tilde{w}_1+\gamma\partial_{\nu}\tilde{w}_1)\\
&=1-\Big(\frac{\delta(x')+\gamma}{\delta(x')+2\gamma}+\gamma {(1+|D_{x'}f_{1}|^2)}^{-1/2}\big(\partial_n\tilde{w}_1-D_{x'}\tilde{w}_1\cdot D_{x'} f_1(x')\big)\Big)\\
&=1-\Big(1+\gamma\Big(\big({(1+|D_{x'}f_{1}|^2)}^{-1/2}-1\big)\partial_n\tilde{w}_1-{(1+|D_{x'}f_{1}|^2)}^{-1/2}(D_{x'}\tilde{w}_1\cdot D_{x'}f_{1})\Big)\Big)\\
&= \Psi_{1}(x).
\end{aligned}
\end{equation*}
Similarly, ${w}_1+\gamma\partial_{\nu}{w}_1= \Psi_{2}(x)$ on $\Gamma_{2,2R}$.

Using~\eqref{estfi}, \eqref{estsqrtfi}, and~\eqref{partialiwww}, we obtain, for $i=1,2$, 
\begin{equation}\label{Psirobin}
|\Psi_i(x)|\leq C\gamma\delta(x')|\partial_{n}\tilde{w}_{1}| +C\gamma |D_{x'}\tilde{w}_1||D_{x'}f_i|\leq C\frac{\gamma\delta(x')}{\gamma+\delta(x')},
\end{equation}
and
\begin{equation}\label{nablaPsirobin}
\begin{aligned}
|D\Psi_{i}(x')|\leq&\, C\gamma \eta(x')^{1/2}|\partial_n\tilde{w}_1|+C\gamma\eta(x')|D^{2}\tilde{w}_1|+C\gamma\eta(x')^{1/2}|D_{x'}\tilde{w}_1||D_{x'}f_i|\\
&+C\gamma |D^{2}\tilde{w}_1||D_{x'}f_i|+C\gamma |D_{x'}\tilde{w}_1||D^{2} f_i|\\
\leq&\, C\frac{\gamma}{\gamma+\delta(x')};\\
[D\Psi_{i}]_{C^{\alpha}({\Omega_{\eta(x_0')}(x_0)})}\leq&\, C\gamma\eta(x_0')^{1/2}[D^2\tilde{w}_{1}]_{C^{\alpha}({\Omega_{\eta(x_0')}(x_0)})}+C\gamma\eta(x_0')^{1-\alpha}\|D^2\tilde{w}_1\|_{L^{\infty}({\Omega_{\eta(x_0')}(x_0)})}\\
&+C\gamma[D\tilde{w}_{1}]_{C^{\alpha}({\Omega_{\eta(x_0')}(x_0)})}+ C\gamma\|D\tilde{w}_1\|_{L^{\infty}({\Omega_{\eta(x_0')}(x_0)})} \\
\leq&\, C\Big(\frac{\gamma\delta(x_0')^{\frac{3}{2}-\alpha}}{(\gamma+\delta(x_0'))^2}+\frac{\gamma}{\gamma+\delta(x_0')}\Big).
\end{aligned}
\end{equation}
From \eqref{Deltaw1}, \eqref{Psirobin}, and \eqref{nablaPsirobin}, we conclude that
$$|\Delta w_1(x)|\leq \frac{C_1}{\gamma+\delta(x')}\,\,\, \mbox{and}~\,\|\Psi_{i}\|^{*}_{C^{1,\alpha}({\Omega_{\eta(x_0')}(x_0)})} \leq C_1\frac{\gamma\delta(x_0')}{\gamma+\delta(x_0')}.$$
By Lemma \ref{Linftyui}, we have $\|w_1\|_{L^{\infty}(\Omega_{2R})}\leq 2$. Theorem~\ref{boundedgradientw} now applies to $\frac{w_1}{C_1+2}$, and \eqref{u1minustildew1} is proved. Combining with \eqref{partialiwww}, we have \eqref{gradient_u1}.
\end{proof}

\begin{remark}
The proof of Proposition~\ref{prop_gradient} actually yields the stronger estimate:
\begin{equation*}
\Big|D (u_1(x)-\frac{x_n-f_{2}(x')+\gamma}{\delta(x')+2\gamma})\Big|\leq C\quad \forall x\in\Omega_{R/2}.
\end{equation*}
\end{remark}

\subsection{Proof of Proposition \ref{prop_constant}}\label{sec_53}

To bound the free constants $K_1$ and $K_2$ and compute $|K_1-K_2|$, we require several basic properties of the coefficients $a_{ij}$ and $b_i$ defined in \eqref{definitionaijbi0}.

\begin{lemma}\label{freeconstant1}
Let $a_{ij}$ and $b_i$ be defined by \eqref{definitionaijbi0}. Then 
 
(1) For any $\gamma\in(0,\infty)$, we have $ a_{ii}>0$ and $a_{ij}<0,\,\mbox{for}~i\neq j,$

(2) For any $M>0$ and $\gamma\in (0,M)$,  
$$
\frac{1}{C}<\sum_{i=1}^2a_{ij}<C\,\,\,\mbox{for}~j=1,2,
$$
where $C$ depend only on $n$, $R$, $\alpha $ and $\kappa$, $M$, $\|f_{1}\|_{C^{2,\alpha}}$, and $\|f_{2}\|_{C^{2,\alpha}}$.

(3) For $\gamma\in(0,\infty)$, 
$$\|\nabla u_3\|_{L^{\infty}(\tilde{\Omega})}\leq C\|\varphi\|_{C^{2,\alpha}(\tilde{\Omega})},\quad |b_i|<C\|\varphi\|_{C^{2,\alpha}(\tilde{\Omega})}\quad\mbox{for}~i=1,2.$$
\end{lemma}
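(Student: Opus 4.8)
The plan is to dispatch parts (1) and (3) quickly and then concentrate on the lower bound in part (2), which is the only substantial point.

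For part (1), I would combine $0\le u_j\le 1$ from Lemma~\ref{Linftyui} with Hopf's lemma, read off through the Robin conditions in \eqref{equv1}. Fix $j$. On $\partial D_j$ the boundary condition gives $\partial_\nu u_j=(1-u_j)/\gamma\ge 0$; if $u_j$ attained the value $1$ at some $x_0\in\partial D_j$, that point would be a global maximum (since $u_j\le 1$ while $u_j\equiv 0$ on $\partial\Omega$), and, as $\tilde\Omega$ satisfies an interior ball condition at $\partial D_j$, Hopf's lemma would force $\partial_\nu u_j(x_0)>0$, contradicting $\partial_\nu u_j(x_0)=0$; hence $u_j<1$ on $\partial D_j$ and $a_{jj}=\int_{\partial D_j}\partial_\nu u_j\,dS>0$. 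Symmetrically, on $\partial D_i$ with $i\ne j$ one has $\partial_\nu u_j=-u_j/\gamma\le 0$, and the same argument at a hypothetical boundary minimum (where $u_j=0$, using $u_j\not\equiv 0$) gives $u_j>0$ there, so $a_{ij}<0$. For part (3), Lemma~\ref{Linftyui} gives $\|u_3\|_{L^\infty(\tilde\Omega)}\le\|\varphi\|_{C^{2,\alpha}(\partial\Omega)}$, and the bound $\|\nabla u_3\|_{L^\infty(\tilde\Omega)}\le C\|\varphi\|_{C^{2,\alpha}}$ follows from the narrow-region estimate \eqref{gradient_u1+u2} (for $\gamma<\gamma_0$) together with its analogues for larger $\gamma$: after the flattening and rescaling of Section~\ref{sec_4}, the homogeneous-data problem for $u_3$ is covered uniformly in the rescaled parameter by Theorem~\ref{gradientsmallgamma3} (small values) and Lemma~\ref{grdeientsmallgamma6} (large values), with classical Schauder theory on the compact intermediate range; outside $\Omega_R$ the geometry is fixed and standard. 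Then $|b_i|=\big|\int_{\partial D_i}\partial_\nu u_3\,dS\big|\le|\partial D_i|\,\|\nabla u_3\|_{L^\infty}\le C\|\varphi\|_{C^{2,\alpha}}$.

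For part (2), integrating $\Delta u_j=0$ over $\tilde\Omega$ and using that $\nu$ is the outward normal of $\tilde\Omega$ along $\partial D_1\cup\partial D_2$ gives the identity
\[
a_{1j}+a_{2j}=\int_{\partial D_1\cup\partial D_2}\partial_\nu u_j\,dS=-\int_{\partial\Omega}\partial_n u_j\,dS ,
\]
with $\partial_n$ the outward normal of $\Omega$. The upper bound is immediate: on a fixed collar of $\partial\Omega$ disjoint from $D_1\cup D_2$, $u_j$ is harmonic, vanishes on $\partial\Omega$, and satisfies $\|u_j\|_{L^\infty}\le 1$, so standard boundary estimates give $\|\nabla u_j\|\le C$ there and hence $|a_{1j}+a_{2j}|\le C$. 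The lower bound is obtained by a barrier. Let $\chi$ be a smooth cutoff on $\partial D_1$ equal to $1$ on $\partial D_1\setminus\Omega_{R/2}$ and to $0$ on $\partial D_1\cap\Omega_{R/4}$, and let $\Phi$ be harmonic in $\tilde\Omega$ with $\Phi=\chi$ on $\partial D_1$ and $\Phi=0$ on $\partial D_2\cup\partial\Omega$, so $0\le\Phi\le 1$. The essential point is that $\|\partial_\nu\Phi\|_{L^\infty(\partial D_1)}\le C_\Phi$ with $C_\Phi$ independent of $\varepsilon$ and $\gamma$: away from the contact point the data and geometry are fixed, while inside $\Omega_{R/8}$ the function $\Phi$ vanishes on both $\Gamma_{1,R/8}$ and $\Gamma_{2,R/8}$ and is therefore in the non-singular regime of the narrow-region estimates (equivalently, it is exponentially small near the origin), so its normal derivative there stays bounded uniformly in $\varepsilon$. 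Now set $\psi:=(1+\gamma C_\Phi)^{-1}\Phi$. Then $\psi\equiv 0=u_1$ on $\partial\Omega$; on $\partial D_1$, $\psi+\gamma\partial_\nu\psi=(1+\gamma C_\Phi)^{-1}(\chi+\gamma\partial_\nu\Phi)\le 1=u_1+\gamma\partial_\nu u_1$ because $\chi\le 1$ and $\partial_\nu\Phi\le C_\Phi$; and on $\partial D_2$, $\psi+\gamma\partial_\nu\psi=\gamma\partial_\nu\psi\le 0=u_1+\gamma\partial_\nu u_1$ because $\Phi\ge 0$ vanishes on $\partial D_2$, whence $\partial_\nu\Phi\le 0$ there. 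The maximum principle with the Robin condition (a positive maximum of $\psi-u_1$ cannot occur on $\partial\Omega$, and by Hopf cannot occur on $\partial D_i$ without violating the boundary inequality there) yields $u_1\ge\psi$, hence $u_1\ge(1+MC_\Phi)^{-1}\Phi\ge c(M)>0$ on a fixed ball $B^*\subset\tilde\Omega$ lying at positive distance from $D_1\cup D_2$ and from $\partial\Omega$, for all $\gamma\in(0,M)$. A Harnack chain of uniformly bounded length transfers this to the inner face of the collar of $\partial\Omega$, and comparison there with the explicit harmonic function vanishing on $\partial\Omega$ gives $-\partial_n u_1\ge c'(M)>0$ on $\partial\Omega$, so $a_{11}+a_{21}\ge 1/C$. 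The case $j=2$ is symmetric, exchanging $D_1$ and $D_2$.

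The main obstacle is precisely the lower bound in (2): one needs a subsolution whose normal derivative on $\partial D_1$ does not degenerate as $\varepsilon\to 0$, which is why $\Phi$ is built with Dirichlet data vanishing near the contact point (so that $\Phi$ avoids the singular narrow-region behaviour) and why the constant $(1+\gamma C_\Phi)^{-1}$ is tuned so that $\psi$ lies below the Robin data on $\partial D_1$ while remaining bounded below at a fixed interior point. A secondary, purely technical, point is ensuring that the gradient estimates invoked in part (3) are genuinely uniform across $\gamma\in(0,\infty)$, which requires patching Theorem~\ref{gradientsmallgamma3} and Lemma~\ref{grdeientsmallgamma6} (together with classical Schauder theory on the intermediate range of the rescaled parameter) after the flattening of Section~\ref{sec_4}.
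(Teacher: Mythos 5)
Your parts (1) and (3) follow essentially the same path as the paper (Lemma~\ref{Linftyui} plus Hopf's lemma read through the Robin condition for (1); the uniform gradient bound for $u_3$ for (3)), so there is little to say there beyond noting that the paper's own justification of (3) cites Proposition~\ref{prop_gradient}, which is stated only for $\gamma<\gamma_0$, so your patching of Theorem~\ref{gradientsmallgamma3}, Lemma~\ref{grdeientsmallgamma6} and Schauder theory over the remaining range of $\gamma$ is indeed what is needed for the statement as given. For part (2), however, you take a genuinely different route. The paper splits into two regimes: for $\gamma\in(0,\gamma_1)$ it uses the Robin condition together with the uniform gradient bound of Theorem~\ref{gradientsmallgamma3} to force $u_1>\tfrac12$ on $\partial D_1\setminus\Omega_R$, and then compares with a harmonic function posed in the \emph{fixed} domain $\Omega\setminus\overline{D}$, where $D$ is a fixed smooth set containing both inclusions, so that no estimate in the degenerating neck is ever needed; for $\gamma\in[\gamma_1,M)$ it proves the claim \eqref{aiiclaim} by a variational weak-compactness/contradiction argument. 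Your single Robin subsolution $\psi=(1+\gamma C_\Phi)^{-1}\Phi$, with $\Phi$ harmonic and Dirichlet data cut off near the neck, treats all $\gamma\in(0,M)$ in one stroke, avoids the nonconstructive compactness step, and yields explicit constants; the comparison argument itself (signs of $\partial_\nu\Phi$ on $\partial D_2$, the Robin inequality on $\partial D_1$, and the Hopf-based maximum principle for $\psi-u_1$) is correct.

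The price of your route is two uniform-in-$\varepsilon$ facts about $\Phi$ that you assert but do not prove. First, $\|\partial_\nu\Phi\|_{L^\infty(\partial D_1)}\le C_\Phi$ with $C_\Phi$ independent of $\varepsilon$: this is true, because $\Phi$ vanishes on both faces of the neck and therefore decays exponentially toward the near-contact point, but it requires an actual argument (a barrier or oscillation-decay/Saint-Venant estimate in the collapsing region); note that Theorem~\ref{boundedgradientw} does not literally apply, since it is formulated for the Robin condition with $\gamma>0$ rather than for Dirichlet data, and this neck estimate is precisely what the paper's choice of the fixed exterior domain $\Omega\setminus\overline{D}$ is designed to avoid. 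Second, the uniform positive lower bound $\Phi\ge c>0$ on a fixed interior ball (and hence $u_1\ge c(M)$ there), which needs a short harmonic-measure/boundary-Harnack argument near the portion of $\partial D_1$ where $\Phi=1$, uniform in $\varepsilon$, followed by a Harnack chain chosen so as to avoid the neck before the final collar comparison at $\partial\Omega$. Both facts are standard and fillable, so I regard your proposal as a correct alternative proof of the lemma once these two estimates are supplied in detail.
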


\begin{proof}
(1) It suffices to prove the inequalities: 
\begin{equation}\label{normalderivativev1}
\frac{\partial u_1}{\partial \nu} {\Big|}_{\partial D_1}>0,\quad \frac{\partial u_1}{\partial \nu} {\Big|}_{\partial D_2}<0,\quad 
\frac{\partial u_1}{\partial \nu} {\Big|}_{\partial \Omega}<0.
\end{equation}
Let $x_0\in\partial D_1$. We claim $ u_1(x_0)<1$. Suppose, for contradiction, that $u_1(x_0)= 1$. Since $0\leq u_1\leq 1$ by lemma \ref{Linftyui}, the function $u_1$ attains its maximum at $x_{0}$. By the strong maximum principle and the Hopf lemma, $\frac{\partial u_1}{\partial\nu}(x_0)>0$, which implies $ u_1(x_0)+\frac{\partial u_1}{\partial\nu}(x_0)>1$, contradicting the boundary condition. Hence, $ u_1(x_0)<1$, and the first inequality in \eqref{normalderivativev1} follows. The second inequality can be obtained by proving $u_1(x_0)>0$ for any $x_0\in\partial D_2$. Finally, since $u_1$ attains its minimum value on the boundary $\partial \Omega$, the Hopf lemma yields the last estimate.

A similar argument shows that $0< u_2<1$ on $\partial D_1\cup \partial D_2$ and 
\begin{equation*}%\label{normalderivativev2}
 \frac{\partial u_2}{\partial \nu} {\Big|}_{\partial D_1}<0,\quad \frac{\partial u_2}{\partial \nu} {\Big|}_{\partial D_2}>0,\quad 
 \frac{\partial u_2}{\partial \nu} {\Big|}_{\partial \Omega}<0.
\end{equation*}

(2) Since $0 \leq u_1 \leq 1$, Theorem~\ref{gradientsmallgamma3} implies that $\|\nabla u_1\|_{L^{\infty}(\Omega\setminus\Omega_R)} \leq C_0$. 
Choose a universal constant $\gamma_1 \in (0, \gamma_0)$ such that $C_0 \gamma_1 < \frac{1}{2}$. 
We now consider two cases:

\textbf{Case 1: $\gamma \in (0, \gamma_1)$}.  
In this case, the boundary condition on $\partial D_1$ implies that $u_1|_{\partial D_1 \setminus \Omega_R} > \frac{1}{2}$. 
Applying Theorem~\ref{gradientsmallgamma3} to $u_1$ in $\tilde{\Omega} \setminus \Omega_R$, we find a point $\bar{x} \in \tilde{\Omega} \setminus \Omega_{2R}$ 
and a radius $r > 0$ (independent of $\gamma$ and $\varepsilon$) such that $u_1 > \frac{1}{4}$ in $B(\bar{x}, 2r) \subset \tilde{\Omega} \setminus \Omega_{2R}$. 
Let $D$ be a fixed smooth domain containing both $D_1$ and $D_2$, with $\bar{x} \in \partial D$. 
Define a smooth function $\phi$ on $\partial D$ such that $\phi = \frac{1}{4}$ on $B(\bar{x}, r) \cap \partial D$, 
$0 \leq \phi \leq \frac{1}{4}$ on $\big(B(\bar{x}, 2r) \setminus B(\bar{x}, r)\big) \cap \partial D$, 
and $\phi = 0$ on $\partial D \setminus \overline{B(\bar{x}, 2r)}$.

Let $\tilde{u}_1 \in H^1(\Omega \setminus \overline{D})$ be the solution to  
$$
\begin{cases}
\Delta \tilde{u}_1 = 0 & \text{in } \Omega \setminus \overline{D}, \\
\tilde{u}_1 = \phi & \text{on } \partial D, \\
\tilde{u}_1 = 0 & \text{on } \partial \Omega.
\end{cases}
$$
By the strong maximum principle, $0 < \tilde{u}_1 < \frac{1}{4}$ in $\Omega \setminus \overline{D}$. 
Moreover, the Hopf lemma implies $\frac{\partial \tilde{u}_1}{\partial \nu}\big|_{\partial \Omega} < -\frac{1}{C}$, 
where $C > 0$ is a constant independent of $\gamma$ and $\varepsilon$.

Since $u_1 \geq \phi = \tilde{u}_1$ on $\partial D$ and $u_1 = \tilde{u}_1 = 0$ on $\partial \Omega$, 
the maximum principle implies $u_1 - \tilde{u}_1 \geq 0$ in $\Omega \setminus \overline{D}$. 
The harmonic function $u_1 - \tilde{u}_1$ attains its minimum on $\partial \Omega$, so  
$$
\frac{\partial (u_1 - \tilde{u}_1)}{\partial \nu} \bigg|_{\partial \Omega} \leq 0.
$$  
Recalling that  
$$
a_{11} + a_{21} = -\int_{\partial \Omega} \frac{\partial u_1}{\partial \nu}  dS,
$$  
we complete the proof of part (2) in this case.

\textbf{Case 2: $\gamma \in [\gamma_1, M)$}.  
Rewrite $u_1=u_{1, \gamma, \varepsilon}$, which is also the unique minimizer of the strictly convex functional  
\begin{equation*}%\label{1convexfunctional}
I_{\gamma,\varepsilon}[w] := \int_{\tilde{\Omega}^{\varepsilon}} |\nabla w|^2  \,dx + \frac{1}{\gamma} \int_{\partial D_1^{\varepsilon}} (w - 1)^2  \,dS + \frac{1}{\gamma} \int_{\partial D_2^{\varepsilon}} w^2  \,dS
\end{equation*}  
over the space $\sA := \{ w \in H^1(\tilde{\Omega}^{\varepsilon}) : w = 0 \text{ on } \partial \Omega \}$. 
For any $\gamma \in [\gamma_1, M]$ and $\varepsilon \in [0, \varepsilon_0]$, we have  
$$
\|\nabla u_{1, \gamma, \varepsilon}\|^2_{L^2(\tilde{\Omega}^{\varepsilon})} \leq I_{\gamma,\varepsilon}(u_{1,\gamma,\varepsilon}) \leq I_{\gamma,\varepsilon}[0] = \frac{1}{\gamma} \int_{\partial D_1^{\varepsilon}} 1  \,dS \leq \frac{|\partial D_1^{\varepsilon}|}{\gamma_1}.
$$  
Together with the Dirichlet condition $u_{1,\gamma,\varepsilon} = 0$ on $\partial \Omega$, this yields 
$$
\|u_{1, \gamma, \varepsilon}\|^2_{H^1(\tilde{\Omega}^{\varepsilon})} \leq C.
$$  
We may extend $u_{1, \gamma, \varepsilon}$ to $H^1(\Omega)$ while maintaining  $\|u_{1, \gamma, \varepsilon}\|_{H^1(\Omega)} \leq C$.

We now claim that for any $\varepsilon \in [0, \varepsilon_0]$ and any $\gamma \in [\gamma_1, M]$,  
\begin{equation}\label{aiiclaim}
\int_{\partial \Omega} \frac{\partial u_{1,\gamma,\varepsilon}}{\partial \nu}  \,dS < -\frac{1}{C},
\end{equation}  
where $C$ is independent of $\varepsilon$ and $\gamma$. 
Suppose, for contradiction, that this fails. Then there exist sequences $\{\varepsilon_j\} \subset [0, \varepsilon_0]$, $\{\gamma_j\} \subset [\gamma_1, M]$ with $\gamma_j \to \gamma^*$, $\varepsilon_j \to \varepsilon^*$, and a function $u^* \in H^1(\Omega)$ such that  
\begin{align}
&\int_{\partial \Omega} \frac{\partial u_{1,\gamma_j,\varepsilon_j}}{\partial \nu}  \,dS \to 0,\label{eq13.50}\\
&u_{1,\gamma_j,\varepsilon_j} \rightharpoonup u^*\,\, \text{weakly in }H^1(\Omega).\label{eq13.47}
\end{align}
Since $u_{1,\gamma_j,\varepsilon_j}$ minimizes $I_{\gamma_j, \varepsilon_j}$, for any $\phi \in H_0^1(\Omega)$ we have  
$$
\int_{\tilde{\Omega}^{\varepsilon_j}} \nabla u_{1,\gamma_j,\varepsilon_j} \cdot \nabla \phi  \,dx + \frac{1}{\gamma_j} \int_{\partial D_1^{\varepsilon_j}} (u_{1,\gamma_j,\varepsilon_j} - 1) \phi  \,dS + \frac{1}{\gamma_j} \int_{\partial D_2^{\varepsilon_j}} u_{1,\gamma_j,\varepsilon_j} \phi  \,dS = 0.
$$  
Passing to the limit $j \to +\infty$ and using \eqref{eq13.47}, we obtain  
$$
\int_{\tilde{\Omega}^{\varepsilon^*}} \nabla u^* \cdot \nabla \phi  \,dx + \frac{1}{\gamma^*} \int_{\partial D_1^{\varepsilon^*}} (u^* - 1) \phi  \,dS + \frac{1}{\gamma^*} \int_{\partial D_2^{\varepsilon^*}} u^* \phi  \,dS = 0,
$$  
so that $u^* = u_{1, \gamma^*, \varepsilon^*}$ in $\tilde{\Omega}^{\varepsilon^*}$.

Now, by \eqref{eq13.47}, the divergence theorem, and \eqref{eq13.50}, 
$$
\begin{aligned}
&\int_{\partial D_1^{\varepsilon^*}} \frac{u_{1,\gamma^*, \varepsilon^*} - 1}{\gamma^*}  \,dS + \int_{\partial D_2^{\varepsilon^*}} \frac{u_{1,\gamma^*, \varepsilon^*}}{\gamma^*}  \,dS \\
&= \lim_{j \to +\infty} \left[ \int_{\partial D_1^{\varepsilon_j}} \frac{u_{1,\gamma_j, \varepsilon_j} - 1}{\gamma_j}  \,dS + \int_{\partial D_2^{\varepsilon_j}} \frac{u_{1,\gamma_j, \varepsilon_j}}{\gamma_j}  \,dS \right] \\
&= - \lim_{j \to +\infty} \int_{\partial D_1^{\varepsilon_j} \cup \partial D_2^{\varepsilon_j}} \frac{\partial u_{1,\gamma_j, \varepsilon_j}}{\partial \nu}  \,dS \\
&= -\lim_{j \to +\infty} \int_{\partial \Omega} \frac{\partial u_{1,\gamma_j, \varepsilon_j}}{\partial \nu}  \,dS = 0.
\end{aligned}
$$  
However, from part (1) we know $u_{1, \gamma^*, \varepsilon^*}<1$ on $\partial D_1^{\varepsilon^{*}}$ and $u_{1, \gamma^*, \varepsilon^*}>0$ on $\partial D_{2}^{\varepsilon^{*}}$, contradicting the above.
Thus, the claim \eqref{aiiclaim} is proved.

(3) The estimate for $b_i$ follows directly from its definition and Proposition~\ref{prop_gradient}.
\end{proof}

\begin{prop}\label{aii}
For $i = 1, 2$, let $a_{ii}$ be defined as in \eqref{definitionaijbi0}, and let $\gamma_0$ be given by \eqref{gamma00}. Then, for all $\varepsilon \in (0, 1/4)$ and $\gamma \in (0, \gamma_0)$,
\begin{equation}\label{aii111}
\frac{1}{C \, \rho_{n}(\varepsilon,\gamma)} \leq a_{ii} \leq \frac{C}{\rho_{n}(\varepsilon,\gamma)},
\end{equation}
where $\rho_{n}(\varepsilon,\gamma)$ is the function from Proposition~\ref{prop_constant}. The constant $C$ depends only on $n$, $R$, $\alpha$, $\kappa$, a lower bound of $\gamma_0-\gamma$ and upper bounds of $\|f_{1}\|_{C^{2,\alpha}}$ and $\|f_{2}\|_{C^{2,\alpha}}$.
\end{prop}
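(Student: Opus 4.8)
The plan is to identify $a_{ii}$ with a Robin-type capacity and then bound that capacity from both sides. Recall from the proof of Lemma~\ref{freeconstant1} that $u_1=u_{1,\gamma,\varepsilon}$ is the unique minimizer over $\sA=\{w\in H^1(\widetilde{\Omega}):w=0\text{ on }\partial\Omega\}$ of
\[
I^{(1)}[w]:=\int_{\widetilde{\Omega}}|\nabla w|^2\,dx+\frac1\gamma\int_{\partial D_1}(w-1)^2\,dS+\frac1\gamma\int_{\partial D_2}w^2\,dS .
\]
Writing $I^{(1)}[w]=B(w,w)-\tfrac2\gamma\int_{\partial D_1}w\,dS+\tfrac1\gamma|\partial D_1|$ with $B(v,w):=\int_{\widetilde{\Omega}}\nabla v\cdot\nabla w+\tfrac1\gamma\int_{\partial D_1\cup\partial D_2}vw$, the Euler--Lagrange relation $B(u_1,\phi)=\tfrac1\gamma\int_{\partial D_1}\phi\,dS$ (which is exactly the weak form of \eqref{equv1}) applied with $\phi=u_1$ gives $B(u_1,u_1)=\tfrac1\gamma\int_{\partial D_1}u_1\,dS$, hence
\[
a_{11}=\int_{\partial D_1}\partial_\nu u_1\,dS=\frac1\gamma\int_{\partial D_1}(1-u_1)\,dS=I^{(1)}[u_1]=\min_{w\in\sA}I^{(1)}[w] ,
\]
and symmetrically for $a_{22}$ with $D_1$ and $D_2$ interchanged. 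It therefore suffices to prove $\tfrac1C\rho_n(\varepsilon,\gamma)^{-1}\le I^{(1)}[u_1]\le C\rho_n(\varepsilon,\gamma)^{-1}$, and likewise for the corresponding functional at $u_2$. Throughout we use the elementary comparison $\int_{\mathtt B_R}(\gamma+\varepsilon+|x'|^2)^{-1}\,dx'\sim\rho_n(\varepsilon,\gamma)^{-1}$, valid uniformly for $\varepsilon\in(0,\tfrac14)$, $\gamma\in(0,\gamma_0)$, which follows by integrating in polar coordinates in $\bR^{n-1}$ together with $\delta(x')\sim\varepsilon+|x'|^2$.

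For the lower bound we discard the part of the energy outside the narrow core. Since $\Omega_R\subset\widetilde{\Omega}$ and $\Gamma_{i,R}\subset\partial D_i$,
\[
I^{(1)}[u_1]\ \ge\ \int_{\Omega_R}|\partial_n u_1|^2\,dx+\frac1\gamma\int_{\Gamma_{1,R}}(u_1-1)^2\,dS+\frac1\gamma\int_{\Gamma_{2,R}}u_1^2\,dS .
\]
For a.e.\ $x'\in\mathtt B_R$ put $g_+(x')=u_1(x',\varepsilon+f_1(x'))$, $g_-(x')=u_1(x',f_2(x'))$; applying the Cauchy--Schwarz inequality along the vertical segment of length $\delta(x')$ gives $\int_{f_2(x')}^{\varepsilon+f_1(x')}|\partial_n u_1|^2\,dx_n\ge\delta(x')^{-1}(g_+-g_-)^2$, and $dS\ge dx'$ on $\Gamma_{i,R}$, so
\[
I^{(1)}[u_1]\ \ge\ \int_{\mathtt B_R}\Big(\frac{(g_+-g_-)^2}{\delta(x')}+\frac{(g_+-1)^2}{\gamma}+\frac{g_-^2}{\gamma}\Big)\,dx' .
\]
A direct computation shows that the minimum of $(s,t)\mapsto\delta^{-1}(s-t)^2+\gamma^{-1}(s-1)^2+\gamma^{-1}t^2$ over $(s,t)\in\bR^2$ equals $(\delta(x')+2\gamma)^{-1}$ (the ``series resistance'' $\delta+2\gamma$), attained precisely at the trace of the profile $\tilde{w}_1$ from \eqref{def_tildew}. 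Hence
\[
a_{11}=I^{(1)}[u_1]\ \ge\ \int_{\mathtt B_R}\frac{dx'}{\delta(x')+2\gamma}\ \ge\ \frac1C\int_{\mathtt B_R}\frac{dx'}{\gamma+\varepsilon+|x'|^2}\ \ge\ \frac1{C\,\rho_n(\varepsilon,\gamma)} .
\]

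For the upper bound we use minimality against the competitor $\bar w:=\zeta(x')\tilde{w}_1+(1-\zeta(x'))\bar v$, where $\tilde{w}_1$ is the profile \eqref{def_tildew}, $\zeta=\zeta(x')$ is a cutoff with $\zeta\equiv1$ on $\mathtt B_{R/2}$ and $\zeta\equiv0$ off $\mathtt B_R$, and $\bar v\in C^\infty(\overline{\widetilde{\Omega}})$ is a fixed function with $\bar v\equiv1$ on $\partial D_1$, $\bar v\equiv0$ on $\partial D_2\cup\partial\Omega$, and $|\nabla\bar v|\le C$ (such $\bar v$ exists since on $\{|x'|\ge R\}$ the sets $\partial D_1$, $\partial D_2$, $\partial\Omega$ are pairwise separated by a positive constant). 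Then $\bar w\in\sA$, so $a_{11}\le I^{(1)}[\bar w]$, and we bound the three terms. From \eqref{def_tildew} and \eqref{fg} (cf.\ \eqref{partialiwww}) one has $|\nabla\tilde{w}_1(x)|^2\le C(\gamma+\delta(x'))^{-2}$, whence $\int_{\Omega_{R/2}}|\nabla\bar w|^2\le C\int_{\mathtt B_{R/2}}\delta(x')(\gamma+\delta(x'))^{-2}\,dx'\le C\int_{\mathtt B_R}(\gamma+\varepsilon+|x'|^2)^{-1}\,dx'\le C\rho_n(\varepsilon,\gamma)^{-1}$, while on $\Omega_R\setminus\Omega_{R/2}$ (where $\delta\gtrsim R^2$) and on $\widetilde{\Omega}\setminus\Omega_R$ (where $\bar w=\bar v$) one has $|\nabla\bar w|\le C$, contributing $O(1)\le C\rho_n^{-1}$. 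Since $\bar w$ coincides with $1$ on $\partial D_1$ and with $0$ on $\partial D_2$ outside $\{|x'|<R\}$, and $|\tilde{w}_1-1|=|\tilde{w}_1|=\gamma/(\delta+2\gamma)$ on $\Gamma_{i,R/2}$,
\[
\frac1\gamma\int_{\partial D_1}(\bar w-1)^2\,dS+\frac1\gamma\int_{\partial D_2}\bar w^2\,dS\ \le\ C\gamma\int_{\mathtt B_R}\frac{dx'}{(\gamma+\delta(x'))^2}+C\gamma\ \le\ C\int_{\mathtt B_R}\frac{dx'}{\gamma+\varepsilon+|x'|^2}\ \le\ \frac C{\rho_n(\varepsilon,\gamma)} ,
\]
using $\gamma(\gamma+\delta)^{-2}\le(\gamma+\delta)^{-1}$ and that the transition annulus $\{R/2<|x'|<R\}$ contributes at most $C\gamma\le C\rho_n^{-1}$. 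Adding the three contributions yields $a_{11}\le C\rho_n(\varepsilon,\gamma)^{-1}$, and the same argument with $D_1,D_2$ interchanged gives the bounds for $a_{22}$, proving \eqref{aii111}.

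The crux of the argument---and the reason one cannot simply use the naive capacitary competitor equal to $1$ on $\partial D_1$ and $0$ on $\partial D_2$---is that such a competitor has vanishing Robin penalty but Dirichlet energy of order $\rho_n(\varepsilon,0)^{-1}$, which for $\gamma\gg\varepsilon$ strictly exceeds the asserted bound $\rho_n(\varepsilon,\gamma)^{-1}$. One must instead use the $\gamma$-tuned profile $\tilde{w}_1$ of \eqref{def_tildew}, whose trace is exactly the minimizer of the pointwise one-dimensional energy; this is precisely what makes the upper and lower estimates reduce to the same ``series-resistance'' integral $\int_{\mathtt B_R}(\delta(x')+2\gamma)^{-1}\,dx'$. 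The remaining steps---the gluing estimates outside the core and the polar-coordinate asymptotics of the weighted integral---are routine.
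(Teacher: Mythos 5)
Your starting identity is correct and is the heart of a genuinely different argument from the paper's: from the Robin condition and the weak form of \eqref{equv1} one indeed gets $a_{11}=\tfrac1\gamma\int_{\partial D_1}(1-u_1)\,dS=I^{(1)}[u_1]=\min_{\sA}I^{(1)}$ (the minimizing property of $u_1$ is the same variational characterization the paper itself invokes in the proof of Lemma~\ref{freeconstant1}), so $a_{ii}$ becomes a Robin capacity. The paper instead writes $a_{11}=\int_{\widetilde\Omega}\nabla u_1\cdot\nabla\psi_1$ for an auxiliary $\psi_1$ that is linear in $x_n$ in the neck, and then leans on the pointwise estimate $\|D(u_1-\tilde w_1)\|_{L^\infty(\Omega_R)}\le C$ from Proposition~\ref{prop_gradient} (hence on Theorem~\ref{boundedgradientw}), together with Lemma~\ref{freeconstant1} to handle $n=2,3$ when $\varepsilon+\gamma$ is not small. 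Your route avoids all of that: the lower bound is the fiberwise ``series resistance'' minimization, whose value $(\delta(x')+2\gamma)^{-1}$ and minimizer (the trace of $\tilde w_1$) you compute correctly, and the upper bound is a direct test-function computation; both sides reduce to $\int_{\mathtt B_R}(\delta(x')+2\gamma)^{-1}dx'\sim\rho_n(\varepsilon,\gamma)^{-1}$, and the paper's case split on the size of $\varepsilon+\gamma$ is not needed. This is more elementary and self-contained, at the price of being specific to $a_{ii}$ (the paper's machinery is reused elsewhere anyway).

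One step, as literally written, fails and must be repaired. No $\bar v\in C^\infty(\overline{\widetilde\Omega})$ with $\bar v\equiv1$ on $\partial D_1$, $\bar v\equiv0$ on $\partial D_2$ and $|\nabla\bar v|\le C$ uniformly in $\varepsilon$ exists: along the vertical segment of length $\varepsilon$ joining the two boundaries near the origin the prescribed values jump by $1$, so $|\nabla\bar v|\gtrsim\varepsilon^{-1}$ somewhere; your parenthetical justification only concerns $\{|x'|\ge R\}$. Relatedly, if $\bar w=\zeta(x')\tilde w_1+(1-\zeta(x'))\bar v$ is used on all of $\widetilde\Omega$, then on the portion of the cylinder $\{|x'|<R\}$ lying above $D_1$ and below $D_2$ one has $\zeta>0$ while $\tilde w_1$ is of size $(\varepsilon+\gamma+|x'|^2)^{-1}$ there, so $\bar w$ neither vanishes on $\partial\Omega$ (hence $\bar w\notin\sA$) nor has acceptable energy; your remark ``where $\bar w=\bar v$'' on $\widetilde\Omega\setminus\Omega_R$ shows you intend something else. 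Both defects vanish with the reading you evidently have in mind: set $\bar w:=\zeta\tilde w_1+(1-\zeta)\bar v$ in $\Omega_R$ and $\bar w:=\bar v$ in $\widetilde\Omega\setminus\Omega_R$ (continuous across $\{|x'|=R\}$ since $\zeta=0$ there), and require of $\bar v$ only that $\bar v=1$ on $\partial D_1\setminus\Omega_{R/2}$, $\bar v=0$ on $(\partial D_2\setminus\Omega_{R/2})\cup\partial\Omega$, with $|\nabla\bar v|\le C$; such a $\bar v$ exists because outside $\Omega_{R/2}$ the three boundary pieces are separated by a constant depending only on $R$ and $\kappa$ (this is exactly the status of the paper's $\psi_1$ outside the neck). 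With this modification every estimate you wrote goes through unchanged, and the proof is correct.
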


\begin{proof}
We compute only $a_{11}$; the case of $a_{22}$ is analogous.  
 
We first establish the estimate
\begin{equation}\label{aii11}
 \left| a_{11} - \int_{|x'|\leq R/2} \frac{1}{\delta(x') + 2\gamma}  dx' \right| \leq C.
\end{equation}
The proof relies on an application of the divergence theorem. Introduce a test function $\psi_1(x) \in C^2(\bar{\tilde{\Omega}})$ satisfying
$$
\psi_1(x) = 1 \text{ on } \partial D_1, \quad \psi_1(x) = 0 \text{ on } \partial D_2 \cup \partial \Omega,
$$
with $\|\psi_1\|_{C^2(\tilde{\Omega} \setminus \Omega_{R/2})} \leq C$, and which varies linearly in $x_n$ in the neck region:
$$
\psi_1(x) := \frac{x_n - f_2(x')}{\delta(x')} = \frac{x_n - f_2(x')}{\varepsilon + f_1(x') - f_2(x')} \quad \text{in } \Omega_R.
$$
By the divergence theorem, the flux $a_{11}$ can be expressed as
\begin{equation*}
 a_{11} = \int_{\partial D_1} \frac{\partial u_1}{\partial \nu}  dS 
     = \int_{\partial \tilde{\Omega}} \frac{\partial u_1}{\partial \nu} \psi_1  dS 
 = \int_{\tilde{\Omega}} \nabla \cdot (\psi_1 \nabla u_1)  dx 
 = \int_{\tilde{\Omega}} \nabla u_1 \cdot \nabla \psi_1  dx.
\end{equation*}
  
Since $\|Du_1\|_{L^{\infty}(\tilde{\Omega} \setminus \Omega_{R/2})} \leq C$, we obtain
\begin{equation*}
 a_{11} = \int_{\Omega_{R/2}} \nabla u_1 \cdot \nabla \psi_1  dx + \int_{\tilde{\Omega} \setminus \Omega_{R/2}} \nabla u_1 \cdot \nabla \psi_1  dx = \int_{\Omega_{R/2}} \nabla u_1 \cdot \nabla \psi_1  dx + O(1),
\end{equation*}
where $A = B + O(1)$ means $|A - B| \leq C$.
 
In the narrow region, we use the decomposition $u_1 = \tilde{w}_1 + w_1$, where $\tilde{w}_1$ is given by \eqref{def_tildew}:
$$\int_{\Omega_{R/2}} \nabla u_1 \cdot \nabla \psi_1  dx 
 = \int_{\Omega_{R/2}} \nabla w_1 \cdot \nabla \psi_1  dx 
 + \int_{\Omega_{R/2}} \nabla \tilde{w}_1 \cdot \nabla \psi_1  dx.$$
The term involving $\nabla w_1$ is bounded: since $\nabla w_1$ is bounded (see \eqref{u1minustildew1}) and $|\nabla \psi_1(x)| \leq C / \delta(x')$, we have
 $$\left| \int_{\Omega_{R/2}} \nabla w_1 \cdot \nabla \psi_1  dx \right| 
 \leq C \int_{\Omega_{R/2}} \frac{1}{\delta(x')}  dx \leq C.$$
For the term involving $\nabla \tilde{w}_1$,  by using
 $$|D_{x'} \tilde{w}_1(x)| + |D_{x'} \psi_1(x)| \leq \frac{C}{\delta(x')^{1/2}},$$
 we can estimate the tangential part $$\left|\int_{\Omega_{R/2}} D_{x'} \tilde{w}_1 \cdot D_{x'} \psi_1  dx\right| \leq C \int_{\Omega_{R/2}} \frac{1}{\delta(x')}  dx \leq C. $$
A direct computation shows that
\begin{equation*}
    \int_{\Omega_{R/2}} \partial_n \tilde{w}_1 \cdot \partial_n \psi_1  dx 
  = \int_{\Omega_{R/2}} \frac{1}{\delta(x')(\delta(x') + 2\gamma)}  dx 
  = \int_{|x'| \leq R/2} \frac{1}{\delta(x') + 2\gamma}  dx'. 
\end{equation*}
Combining the above estimates yields \eqref{aii11}.
 
Moreover, for the integral in \eqref{aii11}, we have
\begin{equation}\label{aii12}
 \frac{1}{C \rho_n(\varepsilon,\gamma)} 
 \leq \int_{|x'| \leq R/2} \frac{1}{\delta(x') + 2\gamma}  dx' 
 \leq \frac{C}{\rho_n(\varepsilon,\gamma)}.
\end{equation}
By Lemma  \ref{freeconstant1} (1)-(2), we also have
\begin{equation}\label{aii13}
 a_{11} > a_{11} + a_{21} > \frac{1}{C}.
\end{equation}
 
For $n \geq 4$, the desired estimate follows directly from \eqref{aii11}, \eqref{aii12}, and \eqref{aii13}.
 
For $n = 2, 3$, note that $1 / \rho_n(\varepsilon,\gamma) \to +\infty$ as $\varepsilon + \gamma \to 0$. Therefore, \eqref{aii111} holds when $\varepsilon + \gamma < 1/\tilde{M}$ for some sufficiently large universal constant $\tilde{M}$.
 
When $1/\tilde{M} \leq \varepsilon + \gamma \leq 1/4 + \gamma_0$ in dimensions $n = 2, 3$, we have
$$\frac{1}{C} \leq \frac{1}{\rho_n(\varepsilon , \gamma)} \leq C.$$
In this case, \eqref{aii111} follows again from \eqref{aii11} and \eqref{aii13}.
 
Combining all the above cases completes the proof.
\end{proof}

\begin{proof}[Proof of Proposition \ref{prop_constant}]   Solving the system \eqref{equfreconstant0} via Cramer's rule gives
$$K_1=\dfrac{-b_1a_{22}+b_2a_{12}}{a_{11}a_{22}-a_{12}a_{21}}, \quad K_2=\dfrac{-b_2a_{11}+b_1a_{21}}{a_{11}a_{22}-a_{12}a_{21}},$$
whence
\begin{equation}\label{c1c2C}
K_1-K_2=\dfrac{b_2(a_{12}+a_{11})-b_1(a_{21}+a_{22})}{a_{11}a_{22}-a_{12}a_{21}}.
\end{equation}
 Since 
$$a_{11}a_{22}-a_{12}a_{21}=(a_{22}+a_{12})(a_{11}-\frac{a_{11}+a_{21}}{a_{22}+a_{12}}a_{12}),$$
Lemma~\ref{freeconstant1} (1)–(2) implies $a_{22}>-a_{12}>0$, which together with the bounds in Lemma~\ref{freeconstant1} yields
\begin{equation}\label{c1c2A}
 0<\frac{1}{C}a_{11}< a_{11}a_{22}-a_{12}a_{21}<C(a_{11}+a_{22}).
\end{equation}
By symmetry, we also obtain $$ 0<\frac{1}{C}a_{22}< a_{11}a_{22}-a_{12}a_{21}<C(a_{11}+a_{22}).$$
Using the fact that $a_{22}>|a_{12}|$ and that $b_i(i=1,2)$ are bounded (Lemma~\ref{freeconstant1} (3)), we obtain
 \begin{equation*}
 |K_1|\leq \frac{C|b_1a_{22}|+C|b_2a_{12}|}{a_{22}}\leq C,
 \end{equation*}
and similarly, $| K_2|\leq C$. 

Finally, since the numerator in \eqref{c1c2C} is bounded (as both $b_{i}$ and $\|D(u_1+u_2)\|_{L^{\infty}(\tilde{\Omega})}$ are bounded), it follows that
$$|K_1-K_2|\leq \frac{C}{a_{11}}.$$ The result now follows from Proposition~\ref{aii}.
\end{proof}

\section{Proof of the lower bound}\label{sec_6}

This section is devoted to the proof of Theorem~\ref{lowerbound}, which establishes the optimality of the gradient estimates in Theorem~\ref{upperbound}. Since the decomposition \eqref{decomposition2} of $Du$ contains exactly one singular term, namely $Du_1$, while all other contributions remain uniformly bounded, it suffices to obtain a positive lower bound for $|K_1-K_2|$. This is achieved via the construction of a subsolution under a specific symmetry condition. 

\begin{proof}[Proof of Theorem \ref{lowerbound}]

From \eqref{c1c2A} and \eqref{c1c2C}, we obtain
\begin{equation}\label{lowerbdd}
|K_1-K_2|\geq \frac{1}{C(a_{11}+a_{22})}|b_2(a_{12}+a_{11})-b_1(a_{21}+a_{22})|.
\end{equation}
Under the symmetry assumptions of Theorem~\ref{lowerbound}, the domain $\tilde{\Omega}$ is symmetry with respect to $\{x_n=\frac{\varepsilon}{2}\}$, which implies 
$$a_{11}=a_{22},\quad a_{12}=a_{21}.$$ Moreover, by Lemma~\ref{freeconstant1} (2),
$$\frac{1}{C}<a_{11}+a_{12}=a_{22}+a_{21}=a_{11}+a_{12}<C. $$
Hence, inequality \eqref{lowerbdd} simplifies to
\begin{equation*}
|K_1-K_2|\geq \frac{1}{Ca_{11}}|b_1-b_2|.
\end{equation*}
It therefore suffices to prove that $|b_1-b_2|>\frac{1}{C}$.

To this end, introduce the change of variables
\begin{equation*}
y'=x',\quad\,y_n=x_n-\frac{\varepsilon}{2},
\end{equation*}
and define $w(y)=u_3(x)$. Then $w$ satisfies
\begin{equation*}
\left\{ \begin{aligned}
\Delta w&=0& \mbox{in}&~B_6\backslash (D_1^{\varepsilon}\cup D_2^{\varepsilon}),\\
w+\gamma\partial_{\nu}w&=0& \mbox{on}&~\partial D_1^{\varepsilon}\cup \partial D_2^{\varepsilon},\\
w&=y_n& \mbox{on}&~\partial \Omega,
 \end{aligned} \right.
\end{equation*}
where $B_6$ is the ball of radius $6$ centered at the origin, and $D_1^{\varepsilon}$, $D_2^{\varepsilon}$ are unit balls centered at $(0',1+\frac{\varepsilon}{2})$ and $(0',-1-\frac{\varepsilon}{2})$, respectively. By symmetry, $w(y',y_n)=-w(y',-y_n)$, so $w(y',0)=0$, and 
$$b_1-b_2=\int_{D_1^{\varepsilon}}\frac{\partial w}{\partial \nu}-\int_{D_2^{\varepsilon}}\frac{\partial w}{\partial \nu}=2\int_{D_1^{\varepsilon}}\frac{\partial w}{\partial \nu}.$$ 
We now claim that for any $\varepsilon,\gamma\in (0,\frac{1}{4})$, $-C<\int_{\partial D_1^{\varepsilon}}\frac{\partial w}{\partial \nu}\leq -\frac{1}{C}$ with $C>0$ independent of $\varepsilon$ and $\gamma$. 

To prove this, observe that $w$ solves the boundary value problem in the upper half-ball
\begin{equation*}
\left\{ \begin{aligned}
\Delta w&=0& \mbox{in}&~B_6^{+},\\
w+\gamma\partial_{\nu}w&=0& \mbox{on}&~\partial D_1^{\varepsilon},\\
w&=y_n& \mbox{on}&~\partial B_6^+,
\end{aligned} \right.
\end{equation*}
where $B_6^{+}=\{B_6\}\cap{y_n>0}$. The maximum principle implies $w\geq 0$ in $B_6^+$. 

We now construct a barrier function. Define
$$\hat{w}(y)=c_1(y_n-1-\frac{\varepsilon}{2})(|y'|^2+(y_n-1-\frac{\varepsilon}{2})^2-1)+c_2\gamma(y_n-1-\frac{\varepsilon}{2})^2$$ in the domain $A=B_6^+\cap \{y_n>1+\frac{\varepsilon}{2}\}\backslash \overline{{D}_1^{\varepsilon}}$, with $c_{1}=\frac{1}{500}$ and $c_2=\frac{1}{3000}$. 
A direct computation shows that
\begin{equation*}
\left\{ \begin{aligned}
\Delta \hat{w}&\geq 0& \mbox{in}&~A,\\
\hat{w}&\leq y_n& \mbox{on}&~A_1:=\partial B_6^{+}\cap\{y_n>1+\frac{\varepsilon}{2}\},\\
\hat{w}&=0& \mbox{on}&~A_2:=\{y_n=1+\frac{\varepsilon}{2}\}\cap \big( {B_6^+\backslash D_1^{\varepsilon}}\big),\\
\hat{w}+\gamma\frac{\partial \hat{w}}{\partial \nu}&\leq 0&\mbox{on}&~A_3:=\{y_n>1+\frac{\varepsilon}{2}\}\cap\partial D_1^{\varepsilon}.
\end{aligned} \right.
\end{equation*}
Since $w-\hat{w}$ is superharmonic in $A$, its minimum on $\bar{A}$ is attained on $\partial A$. Let $(w-\hat{w})(x_0)=\underset{x\in \bar{A}}{\min}(w-\hat{w})(x)$ for some $x_0\in\partial A$. Suppose $(w-\hat{w})(x_0)<0$. Since $w-\hat{w}\geq0$ on $A_1\cup A_2$, it follows that $x_0\in A_3$. At this point,
$$\frac{\partial(w-\hat{w})}{\partial\nu}(x_0)\leq 0,\quad\mbox{and}~ (w-\hat{w})(x_0)+\gamma\frac{\partial(w-\hat{w})}{\partial\nu}(x_0)<0,$$ which contradicts the boundary condition $w + \gamma\partial_{\nu}w = 0$ on $A_3$. Hence, $w \geq \hat{w}$ throughout $\overline{A}$.

For $\varepsilon\in(0,\frac{1}{4})$, we find that $w(x)\geq \hat{w}(x)\geq \frac{c_2}{64}\gamma$ on the portion $\partial D_1^{\varepsilon}\cap \{y_n\in(\frac{5}{4},2)\}$. Since $w\geq 0$ in $B_6^{+}$, it follows that
\begin{equation*}
\int_{\partial D_1^{\varepsilon}}\frac{\partial w}{\partial \nu}\,dS=\int_{\partial D_1^{\varepsilon}}-\frac{w}{\gamma}\,dS\leq \int_{\partial D_1^{\varepsilon}\cap \{y_n\in(\frac{5}{4},2)\}}-\frac{w}{\gamma}\,dS\leq \int_{\partial D_1^{\varepsilon}\cap \{y_n\in(\frac{5}{4},2)\}}-\frac{c_2}{64}\,dS\leq -\frac{1}{C}.
\end{equation*}
This, together with the upper bound from Lemma~\ref{freeconstant1}\,(3), yields
$$-C\leq\int_{\partial D_1^{\varepsilon}}\frac{\partial w}{\partial \nu}\,dS\leq -\frac{1}{C},$$
which completes the proof of Theorem \ref{lowerbound}.
\end{proof}

\appendix

\section{The perfect-conductivity limit}

We consider the perfect-conductor model
\begin{equation}\label{eqperfect}
\left\{ \begin{aligned}
\Delta u &=0&\mbox{in}&~\Omega\setminus \overline{(D_1 \cup D_2)},\\
u &= K_j &\mbox{on}&~\partial D_j, j = 1,2,\\
\int_{\partial D_j} \partial_\nu u \, \,dS& = 0 & j&=1,2,\\
u &= \varphi(x) &\mbox{on}&~\partial \Omega,
\end{aligned} \right.
 \end{equation}
where the free constant $K_j$ is uniquely determined by the zero-flux conditions (third line). The existence and uniqueness of the solution to \eqref{eqperfect} follows from \cite{BLY}. We now prove that solutions $u_{\gamma}$ of \eqref{lcctype} converge to the solution $u_0$ of \eqref{eqperfect} as $\gamma\to0$.

\begin{theorem}
Let $u_{\gamma}$ and $u_0$ be the solutions of \eqref{lcctype} and \eqref{eqperfect}, respectively. Then 
\begin{equation*}
u_{\gamma}\rightharpoonup u_0\quad\mbox{in}~H^1(\tilde{\Omega})\quad\mbox{as}~\gamma\to+0,
\end{equation*}
and $$ \underset{\gamma\to0}{\lim}\|u_{\gamma}-u_0\|_{L^2(\Omega\backslash\overline{D_1\cup D_2})}=0.$$
\end{theorem}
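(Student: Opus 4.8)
The plan is to use the variational characterizations of both $u_\gamma$ and $u_0$. Recall from \cite{DYZ}*{Lemma 2.2} that $u_\gamma$ is the unique minimizer of $I_\gamma[\cdot]$ over $\sA$, while by \cite{BLY} the solution $u_0$ of \eqref{eqperfect} is the unique minimizer of the Dirichlet energy $\int_{\tilde\Omega}|\nabla v|^2\,dx$ over the affine class
\[
\sA_0:=\{v\in H^1(\tilde\Omega):\ v=\varphi\ \text{on}\ \partial\Omega,\ v\ \text{is constant on each of}\ \partial D_1,\partial D_2\},
\]
and write $V_0:=\{\phi\in H^1(\tilde\Omega):\ \phi=0\ \text{on}\ \partial\Omega,\ \phi\ \text{constant on each of}\ \partial D_1,\partial D_2\}$ for the associated linear space.

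\emph{Uniform bounds.} Since $u_0\in\sA_0$, one has $u_0=(u_0)_{\partial D_i}$ on $\partial D_i$, so the interfacial term of $I_\gamma$ vanishes at $u_0$, giving
\[
\int_{\tilde\Omega}|\nabla u_\gamma|^2\,dx\le I_\gamma[u_\gamma]\le I_\gamma[u_0]=\int_{\tilde\Omega}|\nabla u_0|^2\,dx=:E_0 .
\]
Combined with the fixed Dirichlet datum $u_\gamma=\varphi$ on $\partial\Omega$ and the Poincar\'e inequality, this yields $\|u_\gamma\|_{H^1(\tilde\Omega)}\le C$ uniformly in small $\gamma$; moreover
\[
\sum_{i=1}^2\int_{\partial D_i}|u_\gamma-(u_\gamma)_{\partial D_i}|^2\,dS\le\gamma\,E_0\to0\quad\text{as}\ \gamma\to0 .
\]

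\emph{Passing to the limit.} Along any sequence $\gamma_k\to0$, after passing to a subsequence we have $u_{\gamma_k}\rightharpoonup u_\ast$ weakly in $H^1(\tilde\Omega)$; by Rellich's theorem and the compactness of the trace operator, $u_{\gamma_k}\to u_\ast$ strongly in $L^2(\tilde\Omega)$, in $L^2(\partial\Omega)$, and in $L^2(\partial D_i)$. Hence $u_\ast=\varphi$ on $\partial\Omega$, and letting $k\to\infty$ in the last display gives $\int_{\partial D_i}|u_\ast-(u_\ast)_{\partial D_i}|^2\,dS=0$, so $u_\ast$ is constant on each $\partial D_i$; thus $u_\ast\in\sA_0$. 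To identify $u_\ast$ we test the Euler--Lagrange identity for $u_{\gamma_k}$,
\[
\int_{\tilde\Omega}\nabla u_{\gamma_k}\cdot\nabla\phi\,dx+\frac1{\gamma_k}\sum_{i=1}^2\int_{\partial D_i}\bigl(u_{\gamma_k}-(u_{\gamma_k})_{\partial D_i}\bigr)\bigl(\phi-(\phi)_{\partial D_i}\bigr)\,dS=0,
\]
against an arbitrary $\phi\in V_0$: for such $\phi$ the interfacial term vanishes, and letting $k\to\infty$ yields $\int_{\tilde\Omega}\nabla u_\ast\cdot\nabla\phi\,dx=0$ for all $\phi\in V_0$. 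Since the Dirichlet energy is coercive and strictly convex on $\sA_0$ (Poincar\'e, using the datum on $\partial\Omega$), this together with $u_\ast\in\sA_0$ characterizes $u_\ast$ uniquely as $u_0$; in particular the zero-flux conditions in \eqref{eqperfect} follow by choosing $\phi\in V_0$ equal to $1$ on $\partial D_1$ (resp.\ $\partial D_2$) and $0$ on the remaining components and applying the divergence theorem. As every subsequential weak-$H^1$ limit equals $u_0$, the whole family converges: $u_\gamma\rightharpoonup u_0$ in $H^1(\tilde\Omega)$, and hence $u_\gamma\to u_0$ in $L^2(\tilde\Omega)$ by Rellich.

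\emph{On the main obstacle.} The argument is essentially soft; the only points requiring care are (i) upgrading the vanishing of $\gamma^{-1}\sum_i\int_{\partial D_i}|u_\gamma-(u_\gamma)_{\partial D_i}|^2$ into the membership $u_\ast\in\sA_0$, which relies on the compactness of the trace map $H^1(\tilde\Omega)\to L^2(\partial D_i)$, and (ii) correctly matching the limiting variational problem with the characterization of the perfect-conductivity solution from \cite{BLY}. One in fact obtains strong $H^1$ convergence as well: from $\int|\nabla u_\gamma|^2\le\int|\nabla u_0|^2$ and weak lower semicontinuity, $\|\nabla u_\gamma\|_{L^2}\to\|\nabla u_0\|_{L^2}$, which combined with the weak convergence forces $u_\gamma\to u_0$ strongly in $H^1(\tilde\Omega)$.
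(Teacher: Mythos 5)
Your argument is correct and follows essentially the same route as the paper's Appendix A: a uniform energy bound from the minimality of $u_\gamma$, extraction of a weak $H^1$ limit along subsequences, trace compactness to force constancy of the limit on $\partial D_1,\partial D_2$, the first-variation identity tested against functions that are constant on the inclusion boundaries to recover harmonicity and the zero-flux conditions, and uniqueness of the perfect-conductivity solution to upgrade subsequential to full-family convergence. Your one refinement---using $u_0$ itself as the competitor, which gives $\int_{\tilde\Omega}|\nabla u_\gamma|^2\le\int_{\tilde\Omega}|\nabla u_0|^2$ and hence even strong $H^1(\tilde\Omega)$ convergence---is correct and slightly stronger than the stated theorem.
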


\begin{proof}
By the uniqueness of $u_0$, it suffices to show that any subsequence $\{u_{\gamma_j}\}_{j=1}^{j=+\infty}$ with $\gamma_j\to0^{+}$ as $j\to+\infty$ has a subsequence (still denoted by $\{u_{\gamma_j}\}$), such that 
\begin{equation*}
u_{\gamma_j}\rightharpoonup u_0\quad\mbox{in}~H^1(\tilde{\Omega})\quad\mbox{as}~\gamma_j\to+0,
\end{equation*}
and $$ \underset{\gamma_j\to0}{\lim}\|u_{\gamma_{j}}-u_0\|_{L^2(\Omega\backslash\overline{D_1\cup D_2})}=0.$$
The proof is divided into three steps.
 
\textbf{Step 1. Uniform bound.} We first establish a uniform bound $|u_{\gamma}|_{H^1(\tilde{\Omega})} \leq M$ for some constant $M > 0$ independent of $\gamma$.

Indeed, since $u_{\gamma}$ is the minimizer of $I_{\gamma}$ in $\sA := \{ v \in H^{1}(\widetilde\Omega):\, v = \varphi~~\mbox{on}~~\partial\Omega \}$, we have, for any fixed function $\eta\in H_{\varphi}^{1}$ with $\eta=1$ on $\partial D_1\cup \partial D_2$, 
$$\frac{1}{2}\int_{\tilde{\Omega}}|D u_{\gamma}|^2\,dx \leq I_{\gamma}(u_{\gamma})\leq I_{\gamma}(\eta)=\frac{1}{2}\int_{\tilde{\Omega}}|D \eta|^2\,dx.$$
Combining this with the boundary condition $u_{\gamma}=\varphi$ on $\partial\Omega$, we obtain the uniform estimate
$$\|u_{\gamma}\|_{H^1(\tilde{\Omega})}\leq M.$$ Consequently, there exists a subsequence (still denoted by $\{u_{\gamma_j}\}$) that converges weakly in $H^1(\tilde{\Omega})$ to some limit $u_0$.
 
\textbf{Step 2. The limit is harmonic.} We show that $u_0$ is harmonic in $\tilde{\Omega}$.

For any test function $\eta \in C_{c}^{\infty}(\tilde{\Omega})$, the weak formulation gives
$$\int_{\tilde{\Omega}}Du_{\gamma}\cdot D\eta=0.$$ By the weak convergence of $u_{\gamma_j}$ to $u_0$, we may pass to the limit to obtain $\int_{\tilde{\Omega}}Du_{0}\cdot D\eta=0$, which implies $\Delta u_0 = 0$ in $\tilde{\Omega}$.

\textbf{Step 3. Boundary conditions.} We verify that $u_0$ satisfies the boundary conditions of \eqref{eqperfect}.

The energy bound $I_{\gamma}(u_{\gamma})\leq M$ implies $$\sum_{i=1}^2\int_{\partial D_i}|u_{\gamma}-(u_{\gamma})_{\partial D_i}|^2\,dS\leq 2M\gamma,$$ 
where $(u_{\gamma})_{\partial D_i}$ denotes the average of $u_{\gamma}$ over $\partial D_i$. Since $u_{\gamma_j} \rightharpoonup u_0$ in $H^1(\tilde{\Omega})$, the trace operator implies convergence in $L^2(\partial D_1)$. Therefore, for any $\eta \in L^{2}(\partial D_1)$, we have 
$$\int_{\partial D_1}(u_{0}-(u_{0})_{\partial_{D_1}})\eta \,dS=\underset{j\to\infty}{\lim}\int_{\partial D_1}(u_{\gamma_j}-(u_{\gamma_j})_{\partial_{D_1}})\eta \,dS=0.$$
It follows that $u_{0}-(u_{0})_{\partial_{D_1}}=0$ on $\partial D_1$, i.e., $u_0 = K_1$ on $\partial D_1$. A similar argument shows $u_0 = K_2$ on $\partial D_2$.

Finally, we prove the flux conditions. Let $\eta\in C^{\infty}(\bar{\tilde{\Omega}})$ be a test function such that $\eta=1$ on $\partial D_1$ and $\eta=0$ on $\partial\Omega\cup \partial D_2$. Consider the variation $$i(t)=I_{\gamma}(u_{\gamma}+t\eta)\quad (t\in \bR).$$
The first variation condition yields $$i'(0)=\frac{di}{dt}\big|_{t=0}=\int_{\tilde{\Omega}}Du_{\gamma}\cdot D\eta \,dx=0.$$
Using the weak convergence $u_{\gamma_j} \rightharpoonup u_0$ and applying the divergence theorem, we find
$$\int_{\partial D_1}\partial_{\nu}u_0 \,dS=\int_{\tilde{\Omega}}Du_{0}\cdot D\eta \,dx=\underset{j\to\infty}{\lim}\int_{\tilde{\Omega}}Du_{\gamma_j}\cdot D\eta \,dx=0.$$
An analogous argument applied to $\partial D_2$ shows that the flux through $\partial D_2$ also vanishes. This completes the proof that $u_0$ is the solution to \eqref{eqperfect}.
\end{proof}

\section*{Declarations}

\noindent{\bf Data availability.} This is a theoretical research, and no data was used or generated. %Generative AI is not involved in this research.

\noindent{\bf Conflict of interest.} The authors declare that they have no conflict of interest.

\end{document}